\newcommand{\ZZ}{\mathbb{Z}}
\newcommand{\QQ}{\mathbb{Q}}
\newcommand{\RR}{\mathbb{R}}
\newcommand{\CC}{\mathbb{C}}
\newcommand{\PP}{\mathbb{P}}
\newcommand{\NN}{\mathbb{N}}
\newcommand{\Gr}{\mathrm{Gr}}
\newcommand{\Sym}{\mathrm{Sym}}
\newcommand{\KKK}{\boldsymbol{K}}
\newcommand{\OOO}{\boldsymbol{O}}
\newcommand{\MMM}{\boldsymbol{M}}
\newcommand{\XX}{\mathcal{X}}
\newcommand{\sss}{\mathcal{S}}
\newcommand{\MM}{\mathcal{M}}
\newcommand{\kk}{\kappa}
\newcommand{\KK}{\mathbb{K}}
\newcommand{\dashedto}{\dashrightarrow}
\newcommand{\into}{\hookrightarrow}
\newcommand{\pp}{\mathcal{P}}
\DeclareMathOperator{\Trop}{Trop}
\DeclareMathOperator{\In}{in}
\DeclareMathOperator{\Span}{Span}
\DeclareMathOperator{\Toric}{Toric}
\DeclareMathOperator{\Hilb}{Hilb}
\newcommand{\GL}{\mathrm{GL}}
\DeclareMathOperator{\Spec}{Spec}
\DeclareMathOperator{\Proj}{Proj}
\DeclareMathOperator{\Star}{Star}
\newcommand{\Hom}{\mathrm{Hom}}
\newcommand{\BT}{\mathrm{BT}}
\newcommand{\PGL}{\mathrm{PGL}}
\newcommand{\Zo}{\mathring{Z}}
\newcommand{\zo}{\mathring{z}}
\newcommand{\OO}{\mathcal{O}}
\newcommand{\DD}{\mathcal{D}}
\newcommand{\EE}{\mathcal{E}}
\newcommand{\TT}{\mathbb{T}}
\newcommand{\T}{\mathcal{T}}
\newcommand{\isomorph}{\cong}
\theoremstyle{plain}
\newtheorem{conj}{Conjecture}[section]
\newtheorem{Theorem}[conj]{Theorem}
\newtheorem{prop}[conj]{Proposition}
\newtheorem{Lemma}[conj]{Lemma}
\newtheorem{lemma}[conj]{Lemma}
\newtheorem{Corollary}[conj]{Corollary}
\newtheorem{cor}[conj]{Corollary}
\newtheorem*{TheoremGZE*}{Theorem~\ref{GenusZeroExists}}
\newtheorem*{TheoremGOE*}{Theorem~\ref{GenusOneExists}}
\newtheorem*{TheoremGOSA*}{Theorem~\ref{GenusOneSuperAbundant}}
\newtheorem*{TheoremNC}{Theorem~\ref{NecessaryConditions} -- Reduced Case}
\theoremstyle{definition}
\newtheorem{ex}[conj]{Example}
\title{Uniformizing Tropical Curves I: Genus Zero and One}
\author{David E Speyer}
\begin{document}
\maketitle

\begin{abstract}
In tropical geometry, given a curve in a toric variety, one defines a corresponding graph embedded in Euclidean space. We study the problem of reversing this process for curves of genus zero and one. Our methods focus on describing curves by parameterizations, not by their defining equations; we give parameterizations by rational functions in the genus zero case and by non-archimedean elliptic functions in the genus one case. For genus zero curves, those graphs which can be lifted can be characterized in a completely combinatorial manner. For genus one curves, show that certain conditions identified by Mikhalkin are sufficient and we also identify a new necessary condition.
\end{abstract}

\tableofcontents

In the past five years, a group of mathematicians, lead by Grigory Mikhalkin, have pioneered a new method for studying curves in toric varieties. According to this perspective, one considers curves defined over a field with a non-archimedean valuation. Using this valuation and an embedding of a curve $X$ into an (algebraic) torus, one constructs a graph embedded in a real vector space. This graph is known as the tropicalization of the curve. From the tropicialization of $X$, one tries to read off information about the degree and genus of the original curve $X$, and its intersections with other subvarieties of the torus. In this introduction, we will write $X$ for a curve embedded in a torus $T \isomorph (\KK^*)^n$ and we will write $\Gamma \subset \RR^n$ for the tropicalization of $X$.

In order to use these tropical methods, we need to know which graphs are tropicalizations of curves. We will refer to a graph which actually is the tropicalization of a curve as a tropical curve. There are certain basic combinatorial conditions which hold for any tropical curve. The first, the \textbf{zero tension condition}, is a description of the possible local structures of a tropical curve around a given vertex. (See the beginning of Section~\ref{Results}.) We can assign to $X$ a multiset of lattice vectors, which we will call the degree of $X$, from which we can determine the homology class represented by the closure of $X$ when this closure is taken in  a suitable toric compactification of $T$. The second combinatorial condition is that the directions of the unbounded rays of $\Gamma$ are given by the degree of $X$. (See Section~\ref{CurvesinT}.) Thirdly, we can show that, modulo some technical conditions, the genus of $X$ is greater than or equal to the first Betti number of $\Gamma$. (See Theorem~\ref{GenusBound}.) We will define a \textbf{zero-tension curve of genus $g$ and degree $\delta$} to be a graph which has first Betti number $g$ and obeys the obvious conditions to be the tropicalization of a degree $\delta$ curve.

We attack the reverse problem: Given a zero-tension curve of genus $g$ and degree $\delta$, when does it come from an actual curve of genus $g$ and degree $\delta$? The main contribution of this paper is to show that methods of non-archimedean analysis can be used to construct algebraic curves with a given tropicalization. In this paper, we will consider this question for genus zero and genus one curves. In the sequel, we will describe the corresponding results for higher genus curves, where we will need to use Mumford's uniformization results. A second achievement of this paper is to describe an obstruction to lifting zero-tension curves of genus one  -- the condition of being \textbf{well spaced} --  which is more subtle that than those mentioned above but is still combinatorial and easy to test.

I want to state clearly that there is a major difficulty in directly using these results for enumerative purposes involving curves of positive genus. If $X$ is a curve of genus $g$, than the tropicalization of $X$ is a zero-tension curve with first Betti number less than or equal to $g$. Therefore, if we want to count genus $g$ curves with obeying some conditions, we should look at all zero-tension curves of genus less than or equal to $g$, and determine which of them lift to actual genus $g$ curves obeying the condition (and in how many ways the lifting can be done). However, we have almost no results restricting the capability to lift a tropical curve  whose first Betti number is strictly less than $g$ to an actual curve of genus $g$. When studying curves in toric surfaces, one can use basic dimension counting arguments to show that there are no such contributions, but this cannot be done for curves in higher dimensional toric varieties. We expect, therefore, that the primary use of these results will not be to prove exact combinatorial formulas, but rather to provide existence results or lower bounds.

The idea of studying curves via tropical varieties was proposed by Kontsevich and pioneered by Mikhalkin \cite{Mikh1}, \cite{Mikh2}. In Mikhalkin's view, tropical varieteis should be seen as spectra of semirings, built from the semiring of the real numbers under the operations $\min$ and $+$. He also points out, however, that they can be described using valuations over a nonarchimedean field. (As we do here.) Mikhalkin has proven our main theorems, in any genus, in the case of curves in toric surfaces. A purely algebraic proof was given by Shustin and Tyomkin \cite{ST}. Since Mikhlkain's work, there has been a great deal of research extending his results to more sophisticated enumerative problems concerning curves in toric surfaces. There has been far less work on curves in higher dimensional toric varieties. The most important exception is the work of Nishinou and Siebert \cite{SiebNish}, who use log geometry to analyze the case of genus zero curves and recover essentially all of our results in that case. Finally, we should note that H. Markwig and her collaborators, especially  Gathmann, have done major work building the tropical analogue of the moduli spaces of curves and of stable maps and studying it from a combinatorial persepective, see \cite{GKM} and the works cited therein.  Among their results is reestablishing the validity of the tropical enumeration of curves in $\PP^2$ by showing it matches the Caporaso-Harris formula. The moduli space of tropical genus zero curves was previously described by Mikhalkin in \cite{Mikh3}.

This paper, which has been several years in preparation, stems from portions of my Ph.D. dissertation. I am extremely grateful to my advisor Bernd Sturmfels, for his guidance in the writing of the dissertation and afterwards. In addition, I am grateful to Paul Hacking, Thomas Markwig, Grigory Mikhalkin, Sam Payne and Jenia Tevelev for many discussions of my work and theirs, including informing me of many results which were unpublished at the time. During the writing of this paper, I have been supported by a research fellowship from the Clay Mathematics Institute. 

\section{Curves in Toric Varieties} \label{CurvesinT}

In this section, we will describe how to assign a degree to a curve given with a map to an (algebraic) torus. Throughout this paper, we will write $\Lambda$ for the lattice of one parameter subgroups of the torus. We will call the dimension of the torus $n$. We write $\TT$ for the torus, or $\TT(\KK, \Lambda)$ when we want to specify the ground field $\KK$ and the lattice $\Lambda$.

Let $\Sigma$ be a complete rational fan in $\QQ \otimes \Lambda$ and let $\Toric(\Sigma)$ be the associated toric variety over an algebraically closed field $\KK$.  (See \cite{Fult} for background on toric varieties.) The open torus in $\Toric(\Sigma)$ is canonically $\Hom(\Lambda^{\vee}, \KK^*)$. For each ray (one dimensional cone) of $\Sigma$, there is a unique minimal element of $\Lambda$ on this ray; if we identify $\Lambda$ with $\ZZ^n$ then the element in question is the unique point of the ray whose coordinates are integers with no common factor. Let $\rho_1$, \dots, $\rho_N$ be the set of minimal vectors of the rays of $\Sigma$. The following is a special case of Theorem~3.1 of \cite{FultSturm}.
\begin{prop}
With the notation above, the Chow group $A^{n-1}$ is given by
$$A^{n-1}(\Toric(\Sigma)) \isomorph \{ (d_1, d_2, \ldots, d_N) \in \ZZ^N : \sum_{i=1}^N d_i \rho_i=0 \}. $$
This is a subgroup of $H^{2n-2}(\Toric(\Sigma), \ZZ)$, and equals $H^{2n-2}(\Toric(\Sigma),\ZZ)$ if $\Toric(\Sigma)$ is smooth.
\end{prop}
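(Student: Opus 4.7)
The plan is to realize the isomorphism as the intersection pairing of $1$-cycles against the $T$-invariant divisors $D_1,\dots,D_N$ corresponding to the rays $\rho_1,\dots,\rho_N$, and then to identify this pairing with the dual of the standard presentation of the Picard group.

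First I would define a map
$$\phi\colon A^{n-1}(\Toric(\Sigma))\to \ZZ^N,\qquad [C]\mapsto (C\cdot D_1,\dots,C\cdot D_N),$$
and check that its image lies in the kernel claimed in the proposition. For any $u\in \Lambda^\vee$, the character $\chi^u$ is a rational function on $\Toric(\Sigma)$ whose principal divisor is $\sum_i \langle u,\rho_i\rangle D_i$, and principal divisors have intersection zero with every $1$-cycle. Thus $\sum_i \langle u,\rho_i\rangle (C\cdot D_i)=0$ for every $u$, which forces $\sum_i (C\cdot D_i)\,\rho_i=0$ in $\Lambda$.

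To show $\phi$ is an isomorphism onto this kernel, I would use the exact sequence
$$0\to \Lambda^\vee \xrightarrow{\alpha} \ZZ^N \to \mathrm{Pic}(\Toric(\Sigma))\to 0$$
with $\alpha(u)=(\langle u,\rho_i\rangle)_i$ and the second map sending $(a_i)$ to $[\sum a_iD_i]$. Since $\mathrm{Pic}(\Toric(\Sigma))$ is free of finite rank in the smooth complete case, applying $\mathrm{Hom}(-,\ZZ)$ identifies $\mathrm{Hom}(\mathrm{Pic}(\Toric(\Sigma)),\ZZ)$ with $\{(d_i):\sum d_i\rho_i=0\}$. Poincar\'e duality on a smooth complete toric variety turns the intersection pairing into an isomorphism $A^{n-1}(\Toric(\Sigma))\isomorph \mathrm{Hom}(\mathrm{Pic}(\Toric(\Sigma)),\ZZ)$, which is exactly $\phi$ after unwinding the identifications.

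The main obstacle is the singular complete case, where the invariant Weil divisors $D_i$ need not be Cartier and $\phi$ is not directly defined by a naive intersection pairing. Here I would appeal to the Fulton--Sturmfels framework, which realizes $A^{n-1}(\Toric(\Sigma))$ as a group of weights on the cones of $\Sigma(n-1)$; after expressing each $T$-invariant curve in terms of its intersection numbers with the $D_i$, the codimension-one balancing condition on a weight translates exactly into the linear relation $\sum d_i\rho_i=0$. This perspective also clarifies why the resulting group sits inside $H^{2n-2}(\Toric(\Sigma),\ZZ)$ in general, and equals it precisely when $\Toric(\Sigma)$ is smooth, where $A^{n-1}$ and $A_1$ coincide by Poincar\'e duality.
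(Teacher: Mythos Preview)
The paper does not actually prove this proposition: it is introduced with the sentence ``The following is a special case of Theorem~3.1 of \cite{FultSturm}'' and no argument is given beyond that citation. So there is no proof in the paper to compare your argument against in detail.

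That said, your proposal is more than the paper offers in the smooth case, and it is essentially correct there: the map $\phi$ is well defined, its image lands in the claimed kernel by the principal-divisor computation, and the identification $A^{n-1}\isomorph\Hom(\mathrm{Pic},\ZZ)$ via Poincar\'e duality together with dualizing the short exact sequence $0\to\Lambda^\vee\to\ZZ^N\to\mathrm{Pic}\to 0$ gives exactly the description in the proposition. This is a genuine, self-contained argument that the paper does not supply.

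Where your proposal is weaker is the singular complete case. You correctly identify the obstacle (the $D_i$ need not be Cartier, so $\phi$ is not defined by naive intersection), but your resolution is to ``appeal to the Fulton--Sturmfels framework,'' which is precisely what the paper does by citation. You do not actually carry out the translation from Minkowski weights on $\Sigma(n-1)$ to the relation $\sum d_i\rho_i=0$; that translation is the content of the cited theorem, and invoking it is not an independent proof. So in the singular case your proposal and the paper's treatment coincide: both defer to \cite{FultSturm}. If you want a self-contained argument there, you would need to either work directly with the Minkowski-weight description of $A^{n-1}$ and verify the balancing condition reduces to $\sum d_i\rho_i=0$, or pass to a smooth toric resolution and control how $A^{n-1}$ behaves under that pullback.
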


Let $\overline{X}$ be a smooth, complete algebraic curve and let $\phi: \overline{X} \to \Toric(\Sigma)$ be a map from $\overline{X}$ into $\Toric(\Sigma)$. Let $X$ denote $\phi^{-1}(\TT) \subset \overline{X}$, the part of $\overline{X}$ which is mapped to the big torus. Let us say that $(\phi,\overline{X})$ is \textbf{torically transverse} if $X$ is nonempty and $\phi(\overline{X})$ is disjoint from the toric strata of codimension $2$ and higher. (This is a specialization of the definition of torically transverse in \cite{SiebNish}.)

For each ray $\QQ_{\geq 0} \rho_i$ of $\Sigma$, let $Y_i$ be the codimension $1$ stratum of $\Toric(\Sigma)$ associated to $\rho_i$. Let $d_i$ be the length of $\phi^*(\OO_{Y_i})$, in other words, the number of points in $\phi^{-1}(Y_i)$ counted with multiplicity. Then $(d_1, \ldots, d_N)$ 
satisfies\footnote{In two paragraphs, we will see an alternate description of the $d_i$ which makes this clear.}  $\sum d_i \rho_i=0$ and hence corresponds to a class in $A^{n-1}(\Toric(\Sigma))$. Capping with the fundamental class gives the class in $A_1(\Toric(\Sigma))$, and hence in $H_2(\Toric(\Sigma))$, corresponding to $X$. Thus, if we want to study torically transverse curves representing a particular class in $H_2$, we may begin by finding the possible preimages of this class in $A^{n-1}$ and studying curves of that degree.

Continue to assume that $(\phi,\overline{X})$ is torically transverse. Let $x \in \overline{X} \setminus X$. We now describe how to determine which ray of $\Sigma$ corresponds to $x$, and with what multiplicity, solely by examining the map $X \to \TT$. Namely, for $\lambda \in \Lambda^{\vee}$, let $\chi^{\lambda}$ be the function on $\TT=\Hom(\Lambda^{\vee}, \KK^*)$ corresponding to $\lambda$. Then $\chi^{\lambda} \circ \phi$ is a function on $X$ and it extends to a meromorphic function on $\overline{X}$. Let $\sigma_x(\lambda)$ be the order of vanishing of this function at $x$. Then $\sigma_x$ is a linear map $\Lambda^{\vee} \to \ZZ$ and thus  an element of $\Lambda$. If $\sigma_x=0$ then $\phi$ extends to $x$, contradicting our choice that $x$ lies in $\overline{X} \setminus X$. Instead, $\sigma_x$ is a nonzero element of $\Lambda$. Write $\sigma_x$ as $d_x \rho_x$ where $d_x$ is a positive integer and $\rho_x$ is minimal. Then $\rho_x$ is the ray of $\Sigma$ corresponding to $x$ and $d_x$ is the multiplicity. 

We now make a definition: let $X$ be a smooth algebraic curve and let $\phi : X \to \TT$ be an algebraic map. (Whenever $\phi$ is nonconstant, the curve $X$ is not complete.) Let $\overline{X}$ be the smooth complete curve compactifying $X$; we impose the condition that $\phi$ can not be extended to any point of $\overline{X} \setminus X$. Let $x$ be any point of $\overline{X} \setminus X$. Define $\sigma_x$, $\rho_x$ and $d_x$ as before. Let $\rho_1$, \dots, $\rho_N$ be the set of distinct values of $\rho_x$ as $x$ ranges over $\overline{X} \setminus X$. For $1 \leq i \leq N$, let $d_i=\sum_{\rho_x=\rho_i} d_x$ and set $\sigma_i=\sum_{\rho_x=\rho_i} \sigma_x=d_i \rho_i$. We define the set $\{ \sigma_1, \ldots, \sigma_N \}$ to be the \textbf{degree} of $(\phi,X)$. Note that this is defined without any choice of toric compactification of $\TT$. Note also that we have $\sum \sigma_i=0$, because any rational function has equally many zeroes and poles on $\overline{X}$.

We then have:

\begin{prop} \label{ToricTransverse}
Let $\Sigma$ be a complete fan with rays generated by $\rho_1$, \ldots, $\rho_N$ and let $(d_1, \ldots, d_N)$ be any class in $A^{n-1}(\Toric(\Sigma))$. Then there is a bijection between torically transverse curves $(\phi,\overline{X})$ which represent the given class and maps $X \to \TT$ which are incapable of being extended to any larger compactficiation of $X$ and have degree $(d_1 \rho_1, \ldots, d_N \rho_N)$; this bijection is given by restriction to the preimage of $\TT$ in $\overline{X}$.
\end{prop}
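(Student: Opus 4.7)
The plan is to verify that the restriction map $(\phi, \overline{X}) \mapsto \phi|_{X}$, with $X = \phi^{-1}(\TT)$, is a bijection by constructing an explicit inverse that extends a given $\phi \colon X \to \TT$ to a torically transverse map $\overline{\phi} \colon \overline{X} \to \Toric(\Sigma)$.

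First I would handle the forward direction. If $(\phi, \overline{X})$ is torically transverse and $X = \phi^{-1}(\TT)$, then $X$ is dense open in $\overline{X}$, since $X$ is nonempty and $\phi(\overline{X})$, being a proper curve, meets each toric divisor only finitely often; smoothness and completeness of $\overline{X}$ then force it to be the smooth compactification of $X$, and $\phi|_X$ fails to extend to any point of $\overline{X} \setminus X$ as a map into $\TT$ since $\phi(x)$ already lies on a toric divisor, whence $\sigma_x \neq 0$. The discussion preceding the proposition identifies $\sigma_x = d_x \rho_x$, where $\rho_x$ is the ray of $\Sigma$ whose associated stratum $Y_{\rho_x}$ contains $\phi(x)$ and $d_x$ is the local intersection multiplicity with $Y_{\rho_x}$. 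Summing over preimages of each $\rho_i$ gives $\sum_{\rho_x = \rho_i} d_x = d_i$, so $\phi|_X$ has the claimed degree $(d_1 \rho_1, \ldots, d_N \rho_N)$.

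Next I would construct the inverse. Given $\phi \colon X \to \TT$ of the prescribed degree that admits no further extension, let $\overline{X}$ be the smooth compactification of $X$. For each $x \in \overline{X} \setminus X$ we have $\sigma_x \neq 0$, and writing $\sigma_x = d_x \rho_x$ the degree hypothesis tells us $\rho_x$ is one of the $\rho_i$, hence a ray of $\Sigma$. To extend $\phi$ at $x$, I would work in the affine chart $U_{\rho_x} = \Spec \KK[\chi^\lambda : \lambda \in \Lambda^\vee,\ \langle \lambda, \rho_x \rangle \geq 0] \subseteq \Toric(\Sigma)$: by definition of $\sigma_x$, each generator $\chi^\lambda \circ \phi$ has order of vanishing $d_x \langle \lambda, \rho_x \rangle \geq 0$ at $x$, so extends to a regular function, and hence $\phi$ extends to a morphism from a neighborhood of $x$ into $U_{\rho_x}$. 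The point $\overline{\phi}(x)$ satisfies $\chi^\lambda(\overline{\phi}(x)) = 0$ if and only if $\langle \lambda, \rho_x \rangle > 0$, which places it in the open part of the codimension-one stratum $Y_{\rho_x}$ and in no deeper stratum, verifying torical transversality. The contribution of $x$ to the length of $\overline{\phi}^*(\OO_{Y_i})$ is the order of vanishing of a character $\chi^\lambda$ cutting out $Y_i$ transversely, namely $d_x$, so summing recovers the class $(d_1, \ldots, d_N)$.

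Finally, the two constructions are visibly mutually inverse: restricting the extension $\overline{\phi}$ to $\overline{\phi}^{-1}(\TT)$ returns the input $\phi$, and re-extending the restriction $\phi|_X$ of a torically transverse pair reconstructs the original $\overline{\phi}$ by uniqueness of extensions of a morphism from a dense open of a smooth curve into a separated scheme. I expect the main obstacle to be the local extension step: one must use that each $\rho_x$ actually appears as a ray of $\Sigma$ (which is exactly what the degree hypothesis guarantees) to ensure the extension lands in $\Toric(\Sigma)$ rather than in some further toric modification, and that a \emph{single} ray suffices in the chosen affine chart because torical transversality forces $\overline{\phi}(x)$ into the codimension-one stratum and not a deeper one.
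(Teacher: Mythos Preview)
Your proof is correct and makes explicit what the paper leaves implicit: the paper states this proposition without a formal proof, relying on the preceding two paragraphs (the computation of $\sigma_x$, $\rho_x$, $d_x$ from the map $X\to\TT$) to carry the forward direction and treating the extension step as routine. Your argument faithfully expands that sketch, and the key technical point you isolate---that the degree hypothesis forces each $\rho_x$ to be one of the rays of $\Sigma$, so that the affine chart $U_{\rho_x}\subset\Toric(\Sigma)$ is available for the local extension---is exactly the content the paper is taking for granted.
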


For this reason, we can reformulate questions about constructing torically transverse curves in toric varieties into questions about constructing curves of given degree in torii. Ordinarily, of course, one wishes to consider all curves in some toric variety, not only the torically transverse ones. In many applications, it can be shown by dimensional considerations that all of the curves of interest are torically transverse. Even when this is not true, it is true that, if we specify the cohomology class of a curve in $\Toric(\Sigma)$, then there are only finitely many possible degrees for curves realizing that cohomology class and not lying in the toric boundary -- and those curves which do lie in the toric boundary are in the interiors of smaller toric varieties. For this reason, in this paper we will study problems where we specify the degree of a curve in the torus rather than specifying degrees in a toric compactification. 

\section{Basic Tropical Background}

Before introducing any algebraic technology, we discuss an issue concerning polyhedral terminology. In this paper, we will study a great number of polyhedra. All of these will be rational polyhedra, meaning that they are defined by finitely many inequalities of the form $\{ (x_1, \ldots, x_n) : \sum a_i x_i \leq \lambda \}$ where $(a_1, \ldots, a_n)$ is an integer vector and $\lambda$ is a rational number. It will be most convenient to consider these as subsets of $\QQ^n$. (For example, in Theorem~\ref{DefnsOfTrop}.) However, we want to be able to use topological language to talk about polyhedral complexes. We therefore adopt the following conventions: a polyhedron is a subset of $\QQ^n$, defined by finitely many inequalities as above. When we refer to a point of a polyhedron, we mean a point of $\QQ^n$. Nonetheless, when we describe a polyhedral complex using topological terms, such as ``connected'', ``simply connected'' and so forth, we will mean the properties of the closure of that complex in $\RR^n$. Similar issues will arise concerning metrized graphs. We adopt the conventions that the edges of a metrized graph always have rational lengths and that, the points of this graph, considered as a metric space, are the points which have rational distances from all of the vertices. Nevertheless, we will freely speak of graphs as connected, as mapping continuously from one to another, and so forth. 

Let $\OOO$ be a complete discrete valuation ring with valuation $v: \OOO \to \ZZ_{\geq 0} \cup \{ \infty \}$. We write $\KKK$ for the fraction field of $\OOO$ and $\kk$ for the residue field. We always assume that $\kk$ is algebraically closed. Let $\KK$ be the algebraic closure of $\KKK$, let $v: \KK \to \QQ \cup \{ \infty \}$ be the extension of $v$ to $\KK$, let $\OO$ be the ring  $v^{-1}(\QQ_{\geq 0} \cup \{ \infty \})$ and $\MM$ and $\MMM$ be the maximal ideals of $\OO$ and $\OOO$ respectively. Note that, since $\kk$ is algebraically closed, the residue field $\OO/\MM$ is also equal to $\kk$. As our notation suggests, we will primarily consider the objects $\KK$, $\OO$ and $\MM$ and only occasionally need to deal with $\KKK$, $\OOO$ and $\MMM$.\footnote{Several earlier tropical works, including some of my own work, attempted to ignore $\OOO$, $\KKK$ and $\MMM$ entirely. The reader should still view these objects as being only technicial crutches. However, I have become convinced that it is not worth trying to avoid them completely. The first problem is that, $\OO$ is not noetherian. This has never caused any irreperable difficulties, but it slows down work by making standard results inapplicable. Second, one of my motivations for avoiding the boldfaced objects was to be able to work with other fields with valuation, such as the field of those Puiseaux series over $\CC$ that have some positive radius of convergence. In the current paper, however, we will need to have a field complete with respect to $v$. I don't know of any useful examples where $\KK$ is complete that are not some sort of completion of the algebraic closure of some $\KKK$.} We fix a group homorphism $w \mapsto t^w$ from $\QQ \to \KK^*$ giving a section of $v: \KK^* \to \QQ$; this is always possible because $v$ is surjective and $\KK^*$ is divisible.\footnote{It is quite possible to do without this choice, and Sam Payne has advocated doing so. This is doubtless the most morally correct way of proceeding, but it introduces a great deal of notational baggage. In particular, we would be forced to replace torii with principal homogeneous spaces over torii in several places, and would thus no longer have explicit coordinates.} 

Let $f \in \KK[\Lambda^{\vee}]$ and let $w \in \QQ \otimes \Lambda$. We can write $f=\sum_{\lambda \in L} f_{\lambda} \chi^{\lambda}$, where $L$ is a finite subset of $\Lambda^{\vee}$ and each $f_{\lambda}$ is a nonzero element of $\KK$. Let $L_0$ be the subset of $L$ on which the function $\lambda \mapsto v(f_\lambda)+\langle \lambda, w \rangle$ achieves its minimum. (Here $\langle \ , \ \rangle$ is the pairing $\Lambda^{\vee} \times (\QQ \otimes \Lambda) \to \QQ$.) We define $\In_w(f)=\sum_{\lambda \in L_0} [t^{-v(f_\lambda)} f_{\lambda}]  \chi^{\lambda}$ where $g \mapsto [g]$ is the projection $\OO \to \OO/\MM = \kk$. (We set $\In_w(0)=0$.)

Suppose that we have an injection of rings $\kk \into \OO$ with $[\alpha]=\alpha$ for $\alpha \in \kk$. Suppose also that each coefficient $f_{\lambda}$ of $f$ lies in $\kk$. Then we can describe $\In_w(f)$ more simply as $\sum_{\lambda \in L_0} f_{\lambda} \chi^{\lambda}$; note that we sum over $L_0$, not over $L$. This is the definition that is used in Gr\"obner theory, where the field $\KK$ is left hidden in the background. Using this alternate definition, we can define $\In_w(f)$ for $f \in \kk[\Lambda^{\vee}]$ and $w \in \QQ \otimes \Lambda$, even without choosing a field $\KK$ to use. We will say that $\KK$ is a \textbf{power-series field} if we are given such a section $\kk \into \OO$. If $\KK$ is a power series field, we will say that a polynomial $f \in \KK[\Lambda^{\vee}]$ has \textbf{constant coefficients} if its coefficients lie in the image of $\kk$ and that a variety $X \subset \TT(\KK, \Lambda)$ has constant coefficients if it is defined by polynomials with constant coefficients.

If $I$ is an ideal in $\KK[\Lambda^{\vee}]$, let $\In_w(I)$ denote the ideal of $\kk[\Lambda^{\vee}]$ generated by $\In_w(f)$ for $f \in I$. If $X$ is the closed subscheme of $\TT(\KK, \Lambda)$ corresponding to $I$ then we write $\In_w X$ for the closed subscheme of $\TT(\kk, \Lambda)$ corresponding to $\In_w I$. The geometric meaning of $\In_w X$ is the following: let $t^w$ denote the element of $\TT(\KK, \Lambda)=\Hom(\Lambda^{\vee}, \KK^*)$ described by $t^w(\lambda)=t^{\langle \lambda, w \rangle}$ for $\lambda \in \Lambda^{\vee}$. Consider the subvariety $t^{-w} \cdot X$ of $\TT(\KK, \Lambda)$ where $\cdot$ denotes the standard action of $\TT(\KK, \Lambda)$ on itself. Let $\overline{t^{-w} \cdot X}$ be the (Zariski) closure of $t^{-w} \cdot X$ in $\Spec \OO[\Lambda^{\vee}]$. Then $\In_w X$ is the fiber of $\overline{t^{-w} X}$ over $\Spec \kk$. Moreover, suppose that $t^w \in \KKK$ and that $X$ is defined over $\KKK$, meaning that there is a subscheme $\boldsymbol{X}$ of $\TT(\KKK, \Lambda)$ such that $X=\boldsymbol{X} \times_{\KKK} \KK$. (Note that we can always achieve these hypotheses by replacing $\KKK$ with a finite extension.) Then we may instead describe $\In_w X$ by taking the Zariski closure of $t^{-w} \boldsymbol{X}$ in $\Spec \OOO[\Lambda^{\vee}]$.

If $I$ is an ideal of $\kk[\Lambda^{\vee}]$, rather than of $\KK[\Lambda^{\vee}]$, and $w$ any point of $\QQ \otimes \Lambda$ then we can define $\In_w I$ to be the ideal in $\kk[\Lambda^{\vee}]$ generated by $\In_w f$ for all $f \in I$, where $\In_w f$ is defined by the alternate definition two paragraphs above. While this is a slight abuse of notation, it should cause no confusion: one meaning of $\In_w I$ is defined for $I \subset \kk[\Lambda^{\vee}]$ and the other for $I \subset \KK[\Lambda^{\vee}]$ and the meanings are extremely closely related. Specifically, let $I$ be an ideal of $\kk[\lambda]$, let $w \in \QQ \otimes \Lambda$ and let $\KK'$ be any power seres field with associated notation $(\KK', v', \OO', \MM', \kk')$ . Suppose that $w \in \QQ \otimes \Lambda$ and that $\kk' = \kk$. Then $\In_w I=\In_w (\KK' \otimes_{\kk'} I)$.

The following lemma will be of frequent use:

\begin{lemma} \label{PerturbInit}
Let $I$ be an ideal of $\KK[\Lambda^{\vee}]$. Let $w$ and $v$ be elements of $\QQ \otimes \Lambda$. Then, for any sufficiently small rational number $\epsilon$, we have $\In_{w+\epsilon v} I = \In_v \In_w I$.
\end{lemma}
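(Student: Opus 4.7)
The plan is to reduce the ideal identity to a single-polynomial identity, use Noetherianity of $\kk[\Lambda^{\vee}]$ to pass to a finite generating set on which $\epsilon$ can be chosen uniformly, and then promote a containment to equality via a flatness argument.

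For a single polynomial $f = \sum_{\lambda \in L} f_\lambda \chi^\lambda \in \KK[\Lambda^{\vee}]$, set $a_\lambda = v(f_\lambda) + \langle \lambda, w \rangle$ and $b_\lambda = \langle \lambda, v \rangle$. Let $L_0 \subseteq L$ be the subset on which $a_\lambda$ is minimized (the support of $\In_w f$), and let $L_{00} \subseteq L_0$ be the subset of $L_0$ on which $b_\lambda$ is minimized (the support of $\In_v \In_w f$). A direct check shows that whenever $0 < \epsilon < \min (a_\mu - a_\lambda)/(b_\lambda - b_\mu)$, taken over pairs $\lambda \in L_0$, $\mu \in L \setminus L_0$ with $b_\lambda > b_\mu$ (a finite positive minimum, taken to be $+\infty$ when no such pair exists), the function $a_\lambda + \epsilon b_\lambda$ is minimized on $L$ precisely at $L_{00}$. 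Comparing the coefficients $[t^{-v(f_\lambda)} f_\lambda]$ term by term yields the single-polynomial identity $\In_{w + \epsilon v}(f) = \In_v \In_w(f)$.

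The next step is to choose finitely many $f_1, \ldots, f_r \in I$ such that $\{\In_v \In_w f_i\}_{i=1}^r$ generates $\In_v \In_w I$. By Noetherianity, $\In_v \In_w I$ has finitely many generators of the form $\In_v(g_j)$ with $g_j \in \In_w I$. The essential auxiliary claim is that each such $g_j$ admits a lift $\tilde g_j \in I$ with $\In_w \tilde g_j = g_j$; this follows from the flat-degeneration description in the preceding paragraphs, which identifies $\In_w I$ with the reduction modulo $\MM$ of the ideal of $\overline{t^{-w} \cdot X}$ in $\OO[\Lambda^{\vee}]$, and realizes every element of that ideal as the translate of an element of $I$. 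Setting $\{f_i\} = \{\tilde g_j\}$ and choosing $\epsilon$ smaller than every single-polynomial threshold from the previous step applied to each $f_i$, we obtain $\In_{w + \epsilon v}(f_i) = \In_v \In_w(f_i) \in \In_{w + \epsilon v}(I)$. Since these generate $\In_v \In_w I$, we conclude $\In_v \In_w I \subseteq \In_{w + \epsilon v} I$.

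The reverse inclusion is the main obstacle. I would argue via flatness: each of $\In_{w + \epsilon v} X$ and $\In_v \In_w X$ arises as the special fiber of a flat $\OO$-degeneration of $X$ (the first directly, the second by composing two such degenerations). Passing to any toric compactification of $\TT(\kk, \Lambda)$ and restricting to an ample line bundle, the Hilbert polynomial is preserved by flat degeneration, so $V(\In_{w + \epsilon v} I)$ and $V(\In_v \In_w I)$ have the same Hilbert polynomial. Combined with the scheme-theoretic inclusion of one inside the other from the previous paragraph, this numerical equality forces the two ideals to coincide. An alternative, purely algebraic route is to verify directly that $\{f_1, \ldots, f_r\}$ is a $(w + \epsilon v)$-Gr\"obner basis of $I$ for small $\epsilon$, via a Buchberger-style division that tracks $(w + \epsilon v)$-weights; the single-polynomial identity then transports the Gr\"obner basis property between $(v,w)$ and $w + \epsilon v$ and gives the containment $\In_{w + \epsilon v} I \subseteq \In_v \In_w I$.
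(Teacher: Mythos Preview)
The paper does not actually prove this lemma: it cites Proposition~1.13 of Sturmfels' \emph{Gr\"obner Bases and Convex Polytopes} and asserts that the argument there (for polynomial rings with constant coefficients) adapts. Your proposal is, in outline, exactly that adaptation, and it is essentially correct.

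A few remarks on where your write-up is looser than it should be. Your lifting claim---that every $g\in\In_w I$ is $\In_w \tilde g$ for some $\tilde g\in I$---is correct and does follow from the flat-degeneration picture, but the one-line justification undersells the argument: what you are really using is that the ideal of $\overline{t^{-w}X}$ in $\OO[\Lambda^\vee]$ surjects onto $\In_w I$, and that an element $g$ of that ideal with $v(g_\lambda)\geq 0$ for all $\lambda$ pulls back to an $f\in I$ whose $w$-minimum is achieved exactly where $v(g_\lambda)=0$. It is worth spelling this out.

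For the reverse inclusion, your Hilbert-polynomial route is the standard one, but ``pass to any toric compactification'' hides work: you need the two degenerations to extend to flat families in that compactification, which is not automatic for an arbitrary choice. The clean way to carry this out is to contract to a polynomial ring (or homogenize) and compare graded Hilbert functions there; this is how Sturmfels does it, and it avoids choosing a compactification. Your alternative Buchberger-style suggestion is vaguer and would effectively require constructing a universal Gr\"obner basis in the valued Laurent setting, which is more than you want to do here.

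In short: your argument matches the one the paper is pointing to, with the expected places where the Laurent/valued-field setting needs a sentence or two of extra care.
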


\begin{proof}
This is proved in a slightly less general context as Proposition~1.13 in \cite{GB+CP}; the proof can be adapted to our setting.
\end{proof}

We now define tropicalization.

\begin{prop} \label{DefnsOfTrop}
Let $X$ be a closed subscheme of $\TT(\KK, \Lambda)$ and let $I \subset \KK[\Lambda^{\vee}]$ be the corresponding ideal. Let $w \in \QQ \otimes \Lambda$. Then the following are equivalent:
\begin{enumerate}
\item There is a point $x \in X(\KK)$ with $v(x)=w$.
\item There is a valuation $\tilde{v} : \KK[\Lambda^{\vee}]/I \to \QQ \cup \{ \infty \}$ extending $v : \KK \to \QQ \cup \{ \infty \}$ with the property that $v(\chi^{\lambda})=\langle \lambda, w \rangle$ for every $\lambda$ in $\Lambda$. 
\item For every $f \in I$, the polynomial $\In_w f$ is not a monomial.
\item The ideal $\In_w I$ does not contain any monomial.
\item The scheme $\In_w X$ is nonempty.
\end{enumerate}
\end{prop}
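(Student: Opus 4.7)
The plan is to establish the cycle of implications $(1)\Rightarrow(2)\Rightarrow(3)\Rightarrow(4)\Leftrightarrow(5)\Rightarrow(1)$. Three steps are essentially formal. For $(1)\Rightarrow(2)$, set $\tilde v(g):=v(g(x))$; this is a valuation on $\KK[\Lambda^\vee]/I$ extending $v$ with $\tilde v(\chi^\lambda)=\langle\lambda,v(x)\rangle=\langle\lambda,w\rangle$. For $(2)\Rightarrow(3)$, argue by contradiction: if $\In_w f$ is a single monomial for some $f\in I$, then a unique $\lambda_0$ attains the minimum $c_0:=\min_\lambda(v(f_\lambda)+\langle\lambda,w\rangle)$, and the strict case of the ultrametric inequality forces $\tilde v(f)=c_0<\infty$, contradicting $\tilde v(f)=\infty$. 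For $(4)\Leftrightarrow(5)$, the units of the Laurent polynomial ring $\kk[\Lambda^\vee]$ are precisely the nonzero monomials, so $\In_w I$ contains a monomial iff $\In_w I=\kk[\Lambda^\vee]$ iff $\In_w X$ is empty.

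The implication $(3)\Rightarrow(4)$ is a short computation. Suppose $\chi^\mu\in\In_w I$ and write $\chi^\mu=\sum_i g_i\In_w(f_i)$ with $g_i\in\kk[\Lambda^\vee]$ and $f_i\in I$. Expand each $g_i$ as a sum of monomials $c_{ij}\chi^{\nu_{ij}}$ and lift each $c_{ij}$ to some $\tilde c_{ij}\in\OO^*$. Since $\kk[\Lambda^\vee]$ is an integral domain, $\In_w$ is multiplicative on nonzero elements, so $c_{ij}\chi^{\nu_{ij}}\In_w(f_i)=\In_w(\tilde c_{ij}\chi^{\nu_{ij}}f_i)$. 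Relabeling the elements $\tilde c_{ij}\chi^{\nu_{ij}}f_i\in I$ as $h_1,\dots,h_N$, we obtain $\sum_k\In_w h_k=\chi^\mu$. After rescaling each $h_k$ by a suitable power of $t$ (which leaves $\In_w h_k$ unchanged) we may assume all $h_k$ share a common minimum $w$-weight $c^*$. A direct calculation then shows that for every $\lambda$ the coefficient of $\chi^\lambda$ in $\In_w(\sum_k h_k)$ equals $(\sum_k\In_w h_k)_\lambda=(\chi^\mu)_\lambda$, so $\In_w(\sum_k h_k)=\chi^\mu$. Since $\sum_k h_k\in I$, this contradicts $(3)$.

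The hard part is $(5)\Rightarrow(1)$, a version of the ``fundamental theorem'' of tropical geometry. After enlarging $\KKK$ by a finite extension so that $\boldsymbol{X}$ and $t^w$ are defined over $\KKK$, let $\boldsymbol{Y}:=\overline{t^{-w}\boldsymbol{X}}\subset\Spec\OOO[\Lambda^\vee]$; the text shows $\boldsymbol{Y}_\kk=\In_w X$, which is non-empty by hypothesis. Being the closure of its generic fiber inside an affine space over the DVR $\OOO$, $\boldsymbol{Y}$ is $\OOO$-flat. The strategy is to find a closed point $\bar y\in\boldsymbol{Y}_\kk$ at which the morphism $\boldsymbol{Y}\to\Spec\OOO$ is smooth and lift it by Hensel's lemma (using completeness of $\OOO$) to an $\OOO$-point $y\in\boldsymbol{Y}(\OOO)$; then $y_\lambda=\chi^\lambda(y)\in\OOO^*$ has valuation zero because $\chi^\lambda$ is a unit of the Laurent polynomial ring, and $x:=t^w y\in X(\KK)$ satisfies $v(x)=w$. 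Finding the smooth point is the delicate step: one uses that $\kk$ is algebraically closed (so the smooth locus of a reduced finite-type $\kk$-scheme is open dense) together with generic reducedness of $\boldsymbol{Y}_\kk$ on a component arising from a reduced component of the generic fiber. This argument, standard in non-archimedean analytic geometry, appears in the work of Einsiedler--Kapranov--Lind and in Payne's work on tropicalization via Berkovich spaces, whose treatments I would follow for the details.
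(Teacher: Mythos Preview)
Your argument is essentially correct, and in fact more explicit than the paper's own proof, which proceeds entirely by citation: the paper invokes \cite{SpeySturm} and \cite{Payne} for the equivalence of (1), (3), (4), the Nullstellensatz for $(4)\Leftrightarrow(5)$, and \cite{EKL} for $(2)\Leftrightarrow(5)$. Your direct treatments of $(1)\Rightarrow(2)$, $(2)\Rightarrow(3)$, and especially $(3)\Rightarrow(4)$ are clean and correct; the rescaling-and-summing trick in $(3)\Rightarrow(4)$ is exactly right, since once all $h_k$ share minimum $w$-weight $c^*$ one has $(\In_w h_k)_\lambda=[t^{-(c^*-\langle\lambda,w\rangle)}h_{k,\lambda}]$, and the reduction map is additive.

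The one place to be cautious is your sketch of $(5)\Rightarrow(1)$. The strategy of lifting a smooth point via Hensel is natural, but the assertion that $\boldsymbol{Y}_\kk$ is generically reduced ``on a component arising from a reduced component of the generic fiber'' is not true in general: flat families over a DVR with reduced generic fiber can have everywhere non-reduced special fiber (think $\Spec\OOO[x]/(x^2-\pi)$ with $\pi$ a uniformizer). The usual fixes---further finite base extension, reduction to the hypersurface case, or working with Berkovich points---are precisely what the references you cite supply, so deferring to \cite{EKL} and \cite{Payne} is appropriate and matches what the paper itself does. Just be aware that the smoothness step is not as immediate as your sketch suggests.
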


Note that, in conditions (3) and (4), zero is not considered a monomial.

\begin{proof}
The equivalence of (1), (3) and (4) is Theorem~2.1 of \cite{SpeySturm}. A very careful and detailed version of this result, by reduction to the hypersurface case, is in \cite{Payne}. The equivalence of (4) and (5) is simply the Nullstellansatz -- since monomials are units in $\kk[\Lambda^{\vee}]$, the ideal $\In_w I$ contains a monomial if and only if it is all of $\kk[\Lambda^{\vee}]$. The equivalence of (2) and (5) is Theorem~2.2.5 of \cite{EKL}. 
\end{proof}

Define the subset of $\QQ \otimes \Lambda$ where any of the equivalent conditions above holds to be $\Trop X$. Note that condition (3) of the above proposition clearly singles out a closed set.  If $\KK$ is a power series field and $X$ has constant coefficients, then we can define $\Trop X \subset \RR^n$ using the definition of $\In_w f$ which is defined for $f \in \kk[\Lambda^{\vee}]$. There is little risk of confusion in defining $\Trop X$ both for $X \subset \TT(\KK, \Lambda)$ and $X \subset \TT(\kk, \Lambda)$. The precise relation is as follows: Let $\KK'$ be any power series field, with associated notation $(\KK', v', \OO', \MM', \kk')$ such that $\kk' = \kk$ and let $X$ be a closed subscheme of $\TT(\kk, \Lambda)$. Then $\Trop X=\Trop (X \times_{\Spec \kk'} \Spec \KK')$. The next proposition summarizes basic results on the structure of $\Trop X$.

\begin{prop}
Let $X \subset \TT(\KK, \Lambda)$. Then $\Trop X$ can be given the structure of a polyhedral complex (with finitely many faces). Furthermore, we may do this in such a way that, for $\sigma$ any face of this polyhedral complex and $w$ and $w'$ two points in the relative interior of $\sigma$, we have $\In_w X=\In_{w'} X$. If $X$ is $d$-dimensional then $\Trop X$ has dimension $d$. If $X$ is pure of dimension $d$ then so is $\Trop X$. If $X$ is connected then $\Trop X$ is connected. If $X$ is connected in codimension $1$ and $\KK$ has characteristic zero then $\Trop X$ is connected in codimension $1$.

If $\KK$ is a power series field and $X$ has constant coefficients, then $\Trop X$ can be given the structure of a polyhedral fan.
\end{prop}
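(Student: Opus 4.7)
The plan is to dispatch each clause in turn, appealing to the existing literature (primarily \cite{SpeySturm} and \cite{EKL}) while using Lemma~\ref{PerturbInit} and condition~(1) of Proposition~\ref{DefnsOfTrop} as the glue.

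For the polyhedral complex structure, fix a finite generating set $\{f_1, \ldots, f_m\}$ of $I$. For each $f_i = \sum_\lambda f_{i,\lambda} \chi^\lambda$, the piecewise-linear concave function $w \mapsto \min_\lambda (v(f_{i,\lambda}) + \langle \lambda, w \rangle)$ induces a rational polyhedral decomposition of $\QQ \otimes \Lambda$ on whose open cells $\In_w f_i$ is constant. To track $\In_w I$ rather than each $\In_w f_i$, I would replace the generating set by a universal Gr\"obner basis in the valued setting; finiteness of such a basis is the Gr\"obner fan theorem in this generality, established in \cite{SpeySturm}. The common refinement yields a polyhedral complex with finitely many faces, and the constancy of $\In_w X$ on each relative interior then follows from Lemma~\ref{PerturbInit} applied with $w' = w + \epsilon(w'-w)$ for $\epsilon$ sufficiently small.

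The equality $\dim \Trop X = \dim X$, together with purity when $X$ is pure, is the Bieri--Groves structure theorem in \cite{EKL}. From condition~(1) of Proposition~\ref{DefnsOfTrop}, tropicalization commutes with finite unions, so $\Trop X = \bigcup_i \Trop X_i$ over the irreducible components $X_i$ of $X$; if all $X_i$ have dimension $d$, each $\Trop X_i$ is pure of dimension $d$ and hence so is $\Trop X$. Ordinary connectedness reduces similarly: the irreducible components of a connected $X$ form a connected dual graph, each $\Trop X_i$ is connected (\cite{EKL}), and whenever $X_i \cap X_j \neq \emptyset$ its tropicalization lies in $\Trop X_i \cap \Trop X_j$, so the union is connected.

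Connectedness in codimension one in characteristic zero is the expected main obstacle. The plan is to reduce to the irreducible case and analyze a putative codimension-two face $\sigma$ of $\Trop X$: the initial degeneration $\In_\sigma X$, defined over $\kk$, inherits connectedness in codimension one because in characteristic zero the initial degenerations of irreducible varieties remain equidimensional and retain enough generic reducedness for standard connectedness arguments to apply. Locally near $\sigma$ the complex $\Trop X$ looks like $\sigma \times \Trop(\In_\sigma X)$ (this uses Lemma~\ref{PerturbInit} again), so one translates connectedness-in-codimension-one of $\In_\sigma X$ into a path in $\Trop X$ avoiding codimension-two faces. Finally, for the fan structure in the power-series, constant-coefficients case: if every coefficient $f_\lambda$ lies in $\kk \subset \OO$ then $v(f_\lambda)=0$, so $w \mapsto \min_\lambda \langle \lambda, w\rangle$ is homogeneous of degree one. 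The polyhedral decomposition constructed above is therefore invariant under positive rational scaling of $w$, which forces $\Trop X$ to be a subfan.
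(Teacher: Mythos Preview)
The paper's own ``proof'' of this proposition is purely bibliographic: it simply attributes each clause to the literature --- polyhedral structure to Bieri--Groves \cite{BG}, the constancy of initial ideals to Sturmfels \cite{SPE} (with a forward reference to Corollary~\ref{GoodSubDiv}), dimension and purity to \cite{BG} and \cite{SPE}, connectedness to \cite{EKL}, and connectedness in codimension one to \cite{CTV}. Your proposal, by contrast, attempts to sketch actual arguments for several of the clauses. That is a genuinely different (and more ambitious) route, and for the polyhedral/fan structure and ordinary connectedness your sketches are reasonable and essentially correct, if loosely phrased.

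Two points are worth flagging. First, your attributions do not match the paper's: you credit the polyhedral structure to a Gr\"obner-fan argument in \cite{SpeySturm} and the dimension/purity statements to \cite{EKL}, whereas the paper credits these to \cite{BG} and \cite{SPE}. This is not a mathematical error, but it is a discrepancy. Second, and more substantively, your treatment of connectedness in codimension one is not a proof but a hope: the phrase ``retain enough generic reducedness for standard connectedness arguments to apply'' hides exactly the work that \cite{CTV} does. The strategy you outline --- pass to $\In_\sigma X$ at a codimension-two face and use that $\Trop X$ looks locally like $\sigma \times \Trop(\In_\sigma X)$ --- is indeed the one in \cite{CTV}, but the hard step (that $\In_\sigma X$ itself is connected in codimension one, which in \cite{CTV} is proved via a result of Kalkbrener--Sturmfels on initial ideals of primes in characteristic zero) is precisely what you have waved away. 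If you want to give a self-contained argument here rather than a citation, that step needs to be made explicit.
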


We will call a polyhedral subdivision of $\Trop X$ which is as described above a \textbf{good} subdivision.

\begin{proof}
The existence of the polyhedral structure is proved by Bieri and Groves \cite{BG} using description (2) of $\Trop X$.  The claim about initial ideals is proved by Sturmfels in a slightly more specialized context in the course of proving \cite[Theorem 9.6]{SPE}. 
We will reprove this porperty below in our context in Corollary~\ref{GoodSubDiv}. The dimensionality claim is also proven in Bieri and Groves \cite{BG} and is proven by a different method (under more restrictive hypotheses than we adopt here) by Sturmfels \cite{SPE}. Sturmfels proves the pureness claim as well. The connectivity result result is proven in \cite{EKL}.
The connectivity in codimension one is proven in \cite{CTV}. That proof is given in a somewhat more restrictive setting then we have adopted here, but there is no difficulty in extending the arguments. 
\end{proof}

\section{Statement of Results} \label{Results}

We now have enough tropical background to state our main results. We need one combinatorial definition:

Let $\Gamma$ be a finite graph. We write $\partial \Gamma$ for the set of degree $1$ vertices of $\Gamma$. Let $\iota$ be a continuous map $\Gamma \setminus \partial \Gamma \to \QQ \otimes \Lambda$ such that an edge $e$ of $\Gamma$ is taken to
\begin{enumerate}
\item either a finite line segment or a point if neither endpoint of $e$ is in $\partial \Gamma$,
\item an unbounded ray if one endpoint of $e$ is in $\partial \Gamma$ and
\item a line if both ends of $e$ are in $\partial \Gamma$.
\end{enumerate}
We consider such pairs $(\iota,\Gamma)$ up to reparameterization of the edges of $\Gamma$. We require that $\iota(e)$ has slope in $\Lambda$ for every edge $e$ of $\Gamma$.  If $v$ is a vertex of $\Gamma$, and $e$ is an edge of $\Gamma$ with an endpoint at $v$, then we write $\rho_v(e)$ for the minimal lattice vector parallel to $\iota(e)$ which points in the direction away from $\iota(v)$. If $e$ is mapped to a point, define $\rho_v(e)$ to be $0$. Suppose that $m$ is a function assigning a positive integer to each edge of $\Gamma$. We say that $(\iota,\Gamma,m)$ is a \textbf{zero tension curve} if, for every vertex $v$ in $\Gamma \setminus \partial \Gamma$, we have $\sum_{e \ni v} m(e) \rho_v(e)=0$. We introduce the notation $\sigma_v(e)$ for $m(e) \rho_v(e)$. If $(\iota, \Gamma, m)$ is any zero tension curve, we place a metric\footnote{This might be only a pseudo-metric; if $\iota$ collapses an edge of $\Gamma$ then we have $d(x,y)=0$ for some $x$ and $y$ which are not equal. This will not be a difficulty.} on $\Gamma$ by setting the edge $e$ of $\Gamma$ to have length $\ell$, where the endpoints of $\iota(e)$ differ by $\ell \rho_v(e)$.

We define a \textbf{partial zero tension curve} to be a triple $(\iota, \Gamma, m)$ which obeys the above conditions except that, if one endpoint of an edge $e$ is in $\partial \Gamma$, we permit that edge to be taken to a finite line segment rather than a ray. Partial zero tension curves are analogous to analytic maps from a Riemann surface with holes of positive area; zero tension curves, which will be our main concern, are analogous to algebraic maps from punctured Riemmann surfaces.

We define the genus of a zero tension curve to be the first Betti number of $\Gamma$. We define the degree of a zero tension curve as follows: Let $D \subset \Lambda$ be the (finite) set of values assumed by $-\rho_v(e)$  as $v$ ranges through $\partial \Gamma$. For each $\lambda \in D$, let $m_{\lambda}=\sum m(e)$ where the sum is over $e$ with an endpoint $v$ in $\partial \Gamma$ and $- \rho_v(e)=\lambda$. Then the degree of $(\iota, \Gamma,m)$ is the set $\{ m_{\lambda} \cdot \lambda: \lambda \in D \}$. We now state that, given a curve $X$, the polyhedral complex $\Trop X$ reflects the degree and genus of the curve.

\begin{Theorem} \label{NecessaryConditions}
Let $X$ be a connected (punctured) curve of genus $g$ over $\KK$ equipped with a map $\phi : X \to \TT(\KK, \Lambda)$. Let $\delta \subset \Lambda$ be the degree of $(X,\phi)$. Then there is a connected zero tension curve $(\iota, \Gamma,m)$ of degree $\delta$ and genus at most $g$ with $\iota(\Gamma)=\Trop \phi(X)$.

Extend $X$ to a flat family over $\Spec \OO$ whose fiber over $\Spec \kk$ consists of smooth reduced curves glued along nodes.\footnote{For example, extend $X$ to a family of stable curves.} If any of the components of the $\kk$-fiber are not rational, then we can take the genus of $\Gamma$ to be strictly less than $g$.
\end{Theorem}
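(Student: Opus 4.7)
The plan is to build $(\iota, \Gamma, m)$ from a semistable model of $\overline X$, the smooth projective completion of $X$. Since $\OO$ is non-noetherian, first replace $\KKK$ by a finite extension large enough that $\overline X$, $\phi$, and the punctures $\overline X \setminus X$ are all defined over $\KKK$, then invoke semistable reduction to obtain a regular semistable model $\overline{\mathcal{X}} \to \Spec \OOO$ whose special fiber $\overline{\mathcal{X}}_s$ is reduced with smooth components, and whose nodes have formal completions of the standard form $\OOO[[x,y]]/(xy - t^\ell)$. For the addendum I take the given flat family; for the main statement I am free to take any semistable model and refine it later as needed.

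The graph $\Gamma$ is the dual graph of $\overline{\mathcal X}_s$ with an additional ray attached at the specialization of each puncture. The map $\iota$ is defined at a vertex $v_C$ by $\langle \lambda, \iota(v_C) \rangle := \tilde v_C(\chi^\lambda \circ \phi)$, where $\tilde v_C$ is the valuation on $\KKK(\overline X)$ coming from the generic point of $C$, normalized so that $\tilde v_C|_{\KKK} = v$; Proposition~\ref{DefnsOfTrop}(2) then guarantees $\iota(v_C) \in \Trop \phi(X)$. On the ray at puncture $x$, $\iota$ travels in the direction $\sigma_x = d_x \rho_x$ of Section~\ref{CurvesinT}. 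On an edge with local model $xy = t^\ell$ meeting components $C_1, C_2$, I parameterize $\iota$ affinely between $\iota(v_{C_1})$ and $\iota(v_{C_2})$: unraveling $\tilde v(\chi^\lambda\circ\phi)$ in the formal coordinates $x,y,t$ (using the interpolating valuations $\tilde v_s(x)=s\ell,\ \tilde v_s(y)=(1-s)\ell$) shows that the two endpoints differ by exactly $\ell$ times a lattice vector, which I factor as $m(e)\rho$ with $\rho$ primitive.

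Zero tension, the degree condition, and the surjectivity $\iota(\Gamma)=\Trop\phi(X)$ then need to be checked. Zero tension at $v_C$ is the identity $\deg(\operatorname{div}(\chi^\lambda\circ\phi|_C)) = 0$ evaluated on each $\lambda\in\Lambda^{\vee}$: after dividing out an appropriate power of the uniformizer so as to obtain a unit along $C$, the function $\chi^\lambda\circ\phi$ restricts to a rational function on $C$ whose vanishing orders at the nodes and punctures of $C$ are exactly $\langle \lambda, \sigma_{v_C}(e) \rangle$. The degree data matches the ray directions by construction. Surjectivity is the main obstacle: every $w\in\Trop\phi(X)$ arises from a valuation on $\KK(\phi(X))$ by Proposition~\ref{DefnsOfTrop}(2), which lifts (non-uniquely) to a valuation on $\KKK(\overline X)$ realized by a component in some further blowup of $\overline{\mathcal{X}}$. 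Blowing up a node $xy=t^\ell$ produces a chain of rational components whose associated valuations fill in the rational interior points of the edge; because any good polyhedral subdivision of $\Trop\phi(X)$ has finitely many vertices, finitely many blowups suffice so that every such vertex lies in $\iota(\VERT\Gamma)$. Once this holds, the edges of $\Gamma$ cover the edges of $\Trop\phi(X)$, and I must verify that the blowups only subdivide the original edges (preserving slopes and integrated multiplicities), which is a direct calculation from $xy=t^\ell$.

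Finally, connectedness of $\Gamma$ is inherited from the connectedness of $\overline{\mathcal X}_s$ (flat limit of a connected curve). The first Betti number of $\Gamma$ equals that of the dual graph of $\overline{\mathcal X}_s$ (attaching rays does not change $b_1$), which by the standard normalization exact sequence equals $g(\overline X) - \sum_C g(C)$. Since $g(\overline X)=g$ and each $g(C)\geq 0$, we obtain $g(\Gamma) \leq g$ with strict inequality whenever some component has positive genus, which delivers both the main bound and the addendum simultaneously.
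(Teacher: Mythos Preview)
Your approach is correct in outline and is essentially the semistable-reduction/dual-graph argument that the paper attributes to Nishinou--Siebert (see the paragraph following the statement of Theorem~\ref{NecessaryConditions}). The paper's own appendix takes a genuinely different route: rather than degenerating the abstract curve and pushing forward via $\phi$, it degenerates the \emph{embedded} curve $\phi(X)$ inside a toric degeneration $\T(M,v)$ built from a Hilbert-scheme embedding, then assembles $\Gamma$ from the connected components of the initial schemes $\In_w X$ at vertices $w$ of $\Trop\phi(X)$. The genus bound there comes from an Euler-characteristic count using the exact complex of Lemma~\ref{ExactComplex}, not from the normalization sequence you invoke. Your approach has the advantage of handling the general case directly (no reducedness hypothesis on $\In_w X$), and the genus formula $g=b_1(\Gamma)+\sum_C g(C)$ makes the addendum immediate; the paper's approach, by contrast, keeps everything phrased in terms of $\Trop\phi(X)$ and its initial degenerations, which is closer to the tropical data one actually computes.

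One point in your sketch deserves more care. For surjectivity of $\iota$, you write that any $w\in\Trop\phi(X)$ lifts to a valuation ``realized by a component in some further blowup of $\overline{\mathcal X}$.'' Blowups over the fixed $\OOO$ only insert components at \emph{integer} points of the edges (the thicknesses $\ell$ are integers), so to hit an arbitrary rational $w$ you must also allow a finite base extension $\OOO'\!/\OOO$, after which the node $xy=t^\ell$ becomes $xy=(t')^{e\ell}$ and its resolution subdivides the edge into $e\ell$ pieces. Equivalently: a $\KK$-point of $X$ with valuation $w$ is a $\KKK'$-point for some finite $\KKK'$, and its specialization in the regular model over $\OOO'$ lands on a component whose image under $\iota$ is $w$. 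Once this is said, your claim that ``edges of $\Gamma$ cover edges of $\Trop\phi(X)$'' follows, since every rational point of $\Trop\phi(X)$ is of the form $\iota(v_C)$ for some component $C$ appearing after base change and resolution, and these lie on the original edges of $\Gamma$.
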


Theorem~\ref{NecessaryConditions} is implicit in the work of many authors, beginning with Grigory Mikhalkin. A complete proof is given in \cite{SiebNish}; we explain how to find this theorem in that work: Proposition~6.3 of \cite{SiebNish} states that, given $X$ and $\phi$, there is degeneration of $\TT(\KK, \Lambda)$ (over $\Spec \OO$) to a union of toric varieties and a degeneration of $X$ (over $\Spec \OO$) to a nodal curve so that $\phi$ extends on the $\Spec \kk$ fiber to a torically transverse stable map. Write $X_0$ for the nodal curve and $\phi_0$ for the map from $X_0$. In the course of proving Theorem~8.3, Nishinou and Siebert verify that $(X_0, \phi_0)$ is an object they call a pre-log curve. In Construction~4.4, they explain how to build a zero tension curve from a pre-log curve. 

In the appendix, we give a proof of Theorem~\ref{NecessaryConditions} on the assumption that $\In_w X$ is reduced for every vertex $w$ of $\Trop X$. This gives us an opportunity to demonstrate a number of general tropical tools that should be recorded for general use. Removing the reducedness hypothesis would probably require working with stable maps rather than subvarieties throughout and introduces a number of technicalities -- most likely, there is no better way to handle this problem than to mimic Siebert and Nishinou's argument.

We now state the main  results of the paper.

\begin{Theorem} \label{GenusZeroExists}
Let $(\iota, \Gamma, m)$ be a zero tension curve of genus zero. Then there is a (punctured) genus zero curve $X$ over $\KK$, and a map $\phi : X \to \TT(\KK, \Lambda)$ so that $(X, \phi)$ has degree $\delta$ and $\iota(\Gamma)=\Trop \phi(X)$.
\end{Theorem}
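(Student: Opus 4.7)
The plan is to parametrize the desired curve by rational functions on $\PP^1$, with prescribed zeros and poles at points whose $\KK$-adic ``Berkovich hull'' realizes the tree $\Gamma$. Since the first Betti number of $\Gamma$ is zero, $\Gamma$ is a tree. Label its leaves $p_1,\ldots,p_N$, with leaf edges $e_1,\ldots,e_N$ whose outward degree vectors are $\sigma_i := m(e_i)\rho_{v_i}(e_i)\in\Lambda$; a telescoping sum of the zero-tension condition over all internal vertices gives $\sum_i \sigma_i=0$, so the degree datum is $\delta=\{\sigma_i\}$.

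First I would choose points $a_1,\ldots,a_N\in\KK^*$ such that the Berkovich convex hull of $\{a_i\}$ inside $\PP^1_{\mathrm{Berk}}(\KK)$ is combinatorially isomorphic to $\Gamma$, with each edge $e$ of tropical length $\ell(e)$ corresponding to a Berkovich edge of length $\ell(e)/m(e)$. This is constructed by induction on the tree structure: at each internal vertex one chooses an element of $\KK$ with appropriate valuation and sufficiently generic residue, using that $\KK$ is algebraically closed and $v(\KK^*)=\QQ$ is divisible. I would then define
$$ \phi:\PP^1_\KK\setminus\{a_1,\ldots,a_N\}\longrightarrow\TT(\KK,\Lambda),\qquad \phi(z)\;=\;c\cdot\prod_{i=1}^N(z-a_i)^{\sigma_i}, $$
read as an element of $\Hom(\Lambda^\vee,\KK^*)$ for each $z$, where $c\in\TT(\KK)$ is an overall translation constant to be fixed later. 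Because $\sum\sigma_i=0$ the map extends over $z=\infty$ into $\TT$, and the order-of-vanishing vector at each $a_i$ is $\sigma_i$, so $(\phi,X)$ has exactly the desired degree $\delta$.

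Next I would compute the tropicalization using the Berkovich framework: for $\eta\in\PP^1_{\mathrm{Berk}}$ set $\mu(\eta):=v_\eta\circ\phi=v(c)+\sum_i\sigma_i\,v_\eta(z-a_i)$. Outside the Berkovich hull of $\{a_i\}$ the values $v_\eta(z-a_i)$ are all equal (the Berkovich point does not separate the leaves), so $\mu$ is locally constant there by $\sum\sigma_i=0$. Along an edge of the Berkovich hull, parametrize by the radius coordinate $r$; for each $a_i$ on the ``downstream'' side $v_\eta(z-a_i)=r$, while for $a_i$ on the other side it is a constant. Hence $\mu$ is linear in $r$ with slope $\sum_{i\in I_{\mathrm{down}}}\sigma_i$, which (again by zero tension, applied to the cut of $\Gamma$ along the corresponding edge) equals $m(e)\rho_v(e)$. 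Combined with the length choice $r_2-r_1=\ell(e)/m(e)$ in Step 1, this shows $\mu$ restricted to the Berkovich hull is a piecewise-linear realization of $\Gamma$ with the same combinatorial type, directions, and lengths as $\iota(\Gamma)$, up to an overall translation. That translation is absorbed by choosing $c\in\TT(\KK)$ appropriately, after which $\mu(\text{Berkovich hull})=\iota(\Gamma)$. Finally, since all zeros and poles of $\phi$ lie in $\{a_i\}$, all Berkovich points outside the hull contribute only constant values to $\mu$ and hence nothing new to $\Trop\phi(X)$; thus $\Trop\phi(X)=\iota(\Gamma)$.

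The hard part is the very first step: realizing a prescribed metric tree as a Berkovich hull in $\PP^1(\KK)$. The other pieces are essentially bookkeeping once this is done: the slope computation on each Berkovich edge is automatic, the matching of edge directions is forced by the combinatorics and the zero-tension equality $\sum_{i\in I_{\mathrm{down}}}\sigma_i=-\sum_{i\notin I_{\mathrm{down}}}\sigma_i$, and the matching of lengths is exactly what dictates the choice of Berkovich radii $\ell(e)/m(e)$. A minor technical check is that the generic formula $v(z-a_i)=\min(v(z),v(a_i))$ indeed computes $v_\eta(z-a_i)$ at the type-2 points of interest, which holds whenever the residues $[t^{-v(a_i)}a_i]\in\kk$ are chosen sufficiently generically -- a condition easily arranged since $\kk$ is infinite.
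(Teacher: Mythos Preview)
Your approach is essentially the paper's: realize $\Gamma$ as a subtree of the Bruhat--Tits tree of $\PGL_2(\KK)$ (which is the skeleton of $\PP^1_{\mathrm{Berk}}$, so your Berkovich language is only a change of vocabulary), use the leaf points as the zeros and poles of the coordinate rational functions, and compute the tropicalization edge by edge. The paper's Proposition~\ref{FindATree} is your inductive Step~1, and its Theorem~\ref{GenusZeroFundamentalComputation} is your Steps~2--3, with the cross-ratio lemma (Proposition~\ref{CrossRatio}) playing exactly the role of your Berkovich slope computation; the paper's retraction $u\mapsto b(u)$ is your retraction to the hull.

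One small correction worth noting: the claim that ``outside the Berkovich hull the values $v_\eta(z-a_i)$ are all equal'' is false as stated. For instance with $a_1=0$, $a_2=1$ and $\eta$ the disk of valuative radius $2$ around $t$, one gets $v_\eta(z-0)=1$ but $v_\eta(z-1)=0$. What is true, and what you actually need, is that each $v_\eta(z-a_i)$ is \emph{individually constant} along any branch leaving the hull toward a classical point not among the $a_j$, while along the branch toward $\infty$ all of them decrease at the same unit rate, so $\sum_i\sigma_i\,v_\eta(z-a_i)$ is still constant by $\sum_i\sigma_i=0$. Equivalently, $\mu$ factors through the retraction to the hull; this is precisely what the paper packages in the map $b(u)$ and then computes via cross ratios.
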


Theorem~\ref{GenusZeroExists} was previously proven, by methods of log geometry, in \cite{SiebNish}. 

\begin{Theorem} \label{GenusOneExists}
Let $(\iota, \Gamma, m)$ be a zero tension curve of genus one and degree $\delta$. Let $e_1$, \dots, $e_r$ be the edges of the unique circuit of $\Gamma$. Assume that the slopes of $\iota(e_1)$, \dots, $\iota(e_r)$ span $\QQ \otimes \Lambda$. Then there is a (punctured) genus one curve $X$ over $\KK$, and a map $\phi : X \to \TT(\KK, \Lambda)$ so that $(X, \phi)$ has degree $\delta$ and $\iota(\Gamma)=\Trop \phi(X)$.
\end{Theorem}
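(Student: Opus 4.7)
The plan is to parameterize $X$ by a non-archimedean Tate curve $E = \KK^*/q^\ZZ$ and construct $\phi$ coordinatewise out of $q$-theta functions whose divisors are dictated by the unbounded rays of $\Gamma$. Let $\ell$ denote the total length of the unique cycle of $\Gamma$; choose $q \in \KK^*$ with $v(q) = \ell$, reserving the residue class of $q/t^{\ell}$ in $\kk^*$ as a free parameter to be fixed later. The quotient valuation $E \to \RR/\ell\ZZ$ identifies the tropicalization of $E$ with the cycle of $\Gamma$. For each maximal subtree of $\Gamma$ meeting the cycle at a single vertex, Theorem~\ref{GenusZeroExists} provides a rational parameterization which will be grafted onto the parameterization of the cycle; this will in particular fix the residues at the attachment points and partially constrain where each puncture of $X$ maps on $E$.

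To each leaf $e \in \partial\Gamma$ I would associate a puncture $a_e \in \KK^*$; tracing back along $\Gamma$ from the leaf to the cycle fixes the valuation $w_e := v(a_e) \in \RR/\ell\ZZ$, and writing $a_e = t^{w_e} u_e$ leaves the residue $[u_e] \in \kk^*$ as the remaining freedom. For each character $\lambda \in \Lambda^\vee$ I would then form
\[
f_\lambda(z) \;=\; c_\lambda \prod_{e \in \partial\Gamma} \theta(z/a_e)^{\, m(e)\,\langle \lambda,\, \rho_v(e) \rangle},
\]
where $\theta$ is the standard $q$-theta function $\theta(z) = \prod_{n\geq 0}(1 - q^{n+1}/z)\prod_{n\geq 0}(1 - q^n z)$ and $c_\lambda \in \KK^*$ is a normalizing constant. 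Setting $\phi^*\chi^\lambda = f_\lambda$ defines the candidate map $X \to \TT(\KK,\Lambda)$.

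For $f_\lambda$ to descend from $\KK^*$ to $E$, the functional equation $\theta(qz) = -z\,\theta(z)$ reduces well-definedness to two conditions: the total divisor degree $\sum_e m(e)\langle \lambda,\rho_v(e)\rangle$ must vanish, which is the zero-tension condition, and the Abel--Jacobi product $\prod_e a_e^{m(e)\langle\lambda,\rho_v(e)\rangle}$ must lie in $q^\ZZ$. The valuation part of the latter is automatic from the fact that $\iota$ closes up around the cycle in $\QQ \otimes \Lambda$; the residue part produces, as $\lambda$ varies over a basis of $\Lambda^\vee$, a system of equations in $\kk^*$ for the residues $[u_e]$ and the residue of $q/t^\ell$. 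The main obstacle is solving this system consistently with the tree data from Theorem~\ref{GenusZeroExists}. This is precisely where the hypothesis enters that the slopes of the cycle edges span $\QQ\otimes\Lambda$: the spanning condition expresses that the image of the cycle does not lie in a proper subtorus, and it guarantees that the residue of $q/t^\ell$ in $\kk^*$ is uniquely determined by and compatible with the Abel--Jacobi obstructions for all $\lambda$ simultaneously. When spanning fails, the equations become either over- or under-determined and one is in the superabundant regime of Theorem~\ref{GenusOneSuperAbundant}.

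Finally, I would verify that $\Trop \phi(X) = \iota(\Gamma)$. On the cycle side, the function $w \mapsto v(f_\lambda(t^w u))$ for a generic unit $u$ is piecewise-linear in $w \in \RR/\ell\ZZ$, with breakpoints at the valuations $w_e$ of the punctures and slopes matching the $\lambda$-projections of the cycle edges of $\iota(\Gamma)$; on the tree side, the contributions of the grafted rational parameterizations produce the correct tree tropicalizations by Theorem~\ref{GenusZeroExists}. Reading off the orders at each puncture exhibits the degree as $\delta$, completing the construction.
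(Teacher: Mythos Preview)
Your overall framework---parameterize by a Tate curve $\KK^*/q^{\ZZ}$ with $v(q)$ equal to the cycle length, build the coordinate functions as ratios of $q$-theta functions indexed by the leaves of $\Gamma$, and identify the obstruction to descent as an Abel--Jacobi condition $\prod_e a_e^{m(e)\langle\lambda,\rho_v(e)\rangle}\in q^{\ZZ}$---is exactly the paper's approach. You have also correctly isolated the point where the ordinariness hypothesis must enter: the valuation part of the Abel--Jacobi condition holds automatically because the cycle closes up in $\QQ\otimes\Lambda$, and what remains is a system of $n$ equations in $\kk^*$ on residues.

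The gap is in how you propose to solve that residue system. You write that the spanning hypothesis ``guarantees that the residue of $q/t^{\ell}$ in $\kk^*$ is uniquely determined by and compatible with the Abel--Jacobi obstructions for all $\lambda$ simultaneously.'' But the residue of $q$ is a \emph{single} element of $\kk^*$, and there are $n=\dim\Lambda$ independent residue conditions; one free parameter cannot absorb $n$ of them. The paper does not vary $q$ at this stage at all. Instead, having cut the cycle at its $r$ vertices $v_1,\ldots,v_r$ into $r$ subtrees $T_1,\ldots,T_r$, it rescales all punctures lying in $T_j$ by a common unit $u_j\in v^{-1}(0)$. This preserves the Bruhat--Tits tree $\tilde\gimel$ (multiplication by a unit fixes the axis $[0,\infty]$ pointwise) and hence leaves the combinatorial picture and the map $f$ unchanged, while multiplying the $i$-th Abel--Jacobi product by $\prod_j u_j^{a_{ij}}$ with $a_{ij}=|Z_i^+\cap T_j|-|Z_i^-\cap T_j|$. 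The key point is then that the column $(a_{1j},\ldots,a_{nj})$ is exactly the slope of the ``collapsed'' ray emanating from $v_j$ in the auxiliary zero-tension curve obtained by contracting each $T_j$ to a single ray; Lemma~\ref{LocalMaxLemma} applied to this auxiliary curve, together with the hypothesis that the cycle edges span $\QQ\otimes\Lambda$, shows that these $r$ column vectors span $\QQ^n$. Hence the $n\times r$ matrix $A=(a_{ij})$ has rank $n$, and since $v^{-1}(0)$ is divisible the induced map $v^{-1}(0)^r\to v^{-1}(0)^n$ is surjective, so the $u_j$ can be chosen to kill all $n$ residue obstructions simultaneously.

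So the missing idea is: the free parameters are not the residue of $q$ but rather one unit per vertex of the cycle, and the spanning hypothesis is converted into a rank statement about the matrix of tree-collapsed slopes via Lemma~\ref{LocalMaxLemma}. Your sketch of the final verification that $\Trop\phi(X)=\iota(\Gamma)$ is in the right spirit; the paper carries this out by the explicit cross-ratio computation in Proposition~\ref{GenusOneFundamentalComputation}, mirroring the genus-zero case.
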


The importance of the criterion that the slopes of the edges $\iota(e_1)$, \dots, $\iota(e_r)$ span $\QQ \otimes \Lambda$ was first pointed out by Mikhalkin. Following Mikhalkin, we say that $(\iota, \Gamma,m)$ is \textbf{ordinary} when this condition holds, and \textbf{superabundant} when it does not. When dealing with superabundant curves, we need to impose a further criterion, which is original to this paper. Let $(\iota, \Gamma, m)$ be a zero tension curve of genus one and degree $\delta$. Let $e_1$, \dots, $e_r$ be the edges of the unique circuit of $\Gamma$. Let $H$ be an affine hyperplane in $\QQ \otimes \Lambda$ containing all of the line segments $\iota(e_i)$. Let $\Delta$ be the connected component of $\Gamma \cap \iota^{-1}(H)$ which contains the circuit of $\Gamma$ and let $x_1$, \dots, $x_s$ be the vertices of $\Delta$ which are also in the (topological) closure of $\Gamma \setminus \Delta$; we call these the \textbf{boundary vertices of $\Delta$}. Let $d_1$, \dots, $d_s$ be the distances from $x_1$, \dots $x_s$ to the nearest point on the circuit of $\Gamma$. Then we say that $(\iota, \Gamma,m)$ is \textbf{well spaced with respect to $H$} if the minimum of the numbers $(d_1, \dots, d_s)$ occurs more than once. We say that $(\iota, \Gamma, m)$ is \textbf{well spaced} if it is well spaced with respect to every $H$ containing the circuit of $\Gamma$. 

In \cite{Mikh2}, Mikhalkin states without proof a result (Theorem 1) which includes both Theorems~\ref{GenusZeroExists} and \ref{GenusOneExists}. It is my understanding that this proof will appear shortly.

\begin{Theorem} \label{GenusOneSuperAbundant}
Assume that $\kk$ has characteristic zero. Let $(\iota, \Gamma, m)$ be a zero tension curve of genus one and degree $\delta$ and assume that $(\iota, \Gamma, m)$ is well spaced. Then there is a (punctured) genus one curve $X$ over $\KK$, and a map $\phi : X \to \TT(\KK, \Lambda)$ so that $\iota(\Gamma)=\Trop X$.
\end{Theorem}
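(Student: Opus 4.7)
The plan is to extend the proof of the ordinary case (Theorem~\ref{GenusOneExists}) using parameterization by non-archimedean elliptic (theta) functions, and then analyze the obstruction arising from superabundance via a Newton polygon argument driven by the well-spaced hypothesis.

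First I would set up a family of candidate maps. Let $\ell$ denote the total length of the circuit of $\Gamma$, set $q=t^{\ell}$, and form the Tate curve $E_{q}=\KK^{*}/q^{\ZZ}$, a genus one curve over $\KK$. Using $\Gamma$ as a combinatorial blueprint, construct a family of maps $\phi_{\mathbf{u}}:E_{q}\to\TT(\KK,\Lambda)$ whose coordinate functions are ratios of Tate theta functions, with zeros and poles at positions $\mathbf{u}=(u_{1},\dots,u_{N})\in(\KK^{*})^{N}$ (weighted by $m$) that encode the unbounded ends of $\Gamma$. The tropicalization of $\phi_{\mathbf{u}}$ is a piecewise linear function of the valuations $v(u_{i})$, and the zero tension condition together with the genus zero construction of Theorem~\ref{GenusZeroExists} already dictates how to realize every bounded edge lying away from the circuit once $v(\mathbf{u})$ is chosen in the appropriate tropical chamber.

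Next I would isolate the superabundant obstruction. In the ordinary case, the remaining free parameters are enough to place the circuit anywhere consistent with zero tension, and ordinary Hensel lifting then produces honest elements of $\KK^{*}$. In the superabundant case, pick any affine hyperplane $H$ containing the circuit and let $\lambda_{H}\in\Lambda^{\vee}$ be a primitive normal; then $\chi^{\lambda_{H}}\circ\phi_{\mathbf{u}}$ must have constant valuation around the cycle of $E_{q}$, yet the expansion of this function acquires a sum of residue contributions, one from each branch of $\Gamma$ meeting the circuit at a boundary vertex of the component $\Delta\subset\Gamma\cap\iota^{-1}(H)$. A lift exists for the prescribed $\iota$ if and only if, for each such $H$, this residue sum vanishes in $\KK$.

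I would then analyze the residue sum via Newton polygons. The branch leaving the circuit at the boundary vertex $x_{i}$ of $\Delta$ contributes a term of $t$-adic order exactly $d_{i}$ with a nonzero leading coefficient $c_{i}\in\kk^{*}$ extracted from the genus zero parameterization of that branch. The vanishing condition therefore reads $\sum_{i=1}^{s}c_{i}t^{d_{i}}+(\text{higher order})=0$, which can be solved by a perturbation of $\mathbf{u}$ if and only if the minimum $\min_{i}d_{i}$ is attained at least twice — exactly well-spacedness with respect to $H$. The characteristic zero hypothesis on $\kk$ then guarantees that a Hensel/Newton iteration converges: the initial cancellation among the leading $c_{i}$'s is upgraded to an honest solution, and because the independent constraints coming from the different hyperplanes $H$ containing the circuit act on independent coordinate directions, they can be killed simultaneously, producing a $\mathbf{u}\in(\KK^{*})^{N}$ for which $\Trop\phi_{\mathbf{u}}(X)=\iota(\Gamma)$. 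The main obstacle will be the residue computation of Step~3: identifying $c_{i}$ explicitly, verifying that it is nonzero and lives at exactly order $d_{i}$, requires a careful tracking of the theta-function data through the genus zero branch construction, and showing that the various hyperplane-level obstructions can be solved independently rather than giving new coupled conditions.
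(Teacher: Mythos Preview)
Your setup is on the right track: Tate uniformization $E_q=\KK^*/q^{\ZZ}$ with $v(q)=\ell$, coordinate functions as ratios of theta functions, and the obstruction being that the products $w_i:=\prod_{z\in Z_i^+}z\big/\prod_{z\in Z_i^-}z$ must equal $1$ rather than merely have valuation $0$. This matches the paper exactly through the end of the ordinary case.

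However, your account of \emph{how} well-spacedness enters the sufficiency argument is off, and this is a genuine gap. You frame the obstruction as a sum $\sum_i c_i t^{d_i}+\cdots$ whose leading term must cancel, and claim that well-spacedness (the minimum $d_i$ occurring twice) is what permits that cancellation. That is essentially the shape of the \emph{necessity} argument (the paper's Proposition~\ref{Necessity}), not the sufficiency proof. In the paper's sufficiency proof, once one sets up the right inductive filtration $H_{n_0}\subsetneq H_{n_1}\subsetneq\cdots\subsetneq H_{n_s}=\QQ\otimes\Lambda$ (where $H_{n_j}$ is the affine span of the points of $\Gamma$ within distance $R_{n_j}$ of the circuit), making $w_i=1$ for $i\le n_{j+1}$ is \emph{always} achievable: the slopes $s_1,\dots,s_p$ of the edges leaving $\Gamma_{n_j}$ span $H_{n_{j+1}}$ by Lemma~\ref{LocalMaxLemma}, and the unit group $U=\{u:v(u-1)\ge R_{n_j}\}$ is divisible in characteristic zero, so the monomial map $(u_1,\dots,u_p)\mapsto\bigl(\prod_k u_k^{s_k^i}\bigr)_i$ is surjective onto $U^{n_{j+1}}$. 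No Newton-polygon cancellation is needed here.

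The real difficulty, which your proposal does not address, is that multiplying the $Z$'s by these units $u_k$ can alter the combinatorial type of the tree $\tilde{\gimel}$: two edges $e_c,e_d$ attached at the same boundary vertex of $\Gamma_{n_j}$ may be swung on top of each other, destroying the isometry $\gimel\cong\Gamma$. Well-spacedness is used precisely to rule this out, via Lemma~\ref{PairedRays}: it guarantees a vector $(a_1,\dots,a_p)$ with $\sum a_k s_k=0$ (so perturbing by $u^{a_k}$ leaves all $w_i$ fixed) and $a_c\ne a_d$ whenever $e_c,e_d$ share an endpoint (so a generic such perturbation separates any overlapping edges). The proof of Lemma~\ref{PairedRays} is where well-spacedness is invoked: if the kernel were trapped in some diagonal $\{a_c=a_d\}$, one constructs a hyperplane $H\supset H_{n_j}$ for which the shared endpoint is the \emph{unique} closest boundary vertex of $\Delta$, contradicting well-spacedness. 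Your Hensel/Newton sketch does not engage with this tree-preservation issue at all, and your claim that the hyperplane constraints ``act on independent coordinate directions'' and can be killed simultaneously substitutes for, but does not replace, the paper's inductive filtration argument.
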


There is a partial converse to this theorem, see Theorem~\ref{Necessity}.

It is possible to prove enumerative versions of all of these results, where we count curves $(X, \phi)$ with $\Trop \phi(X)=\iota(\Gamma)$ that meet subvarieties of $\TT(\KK, \Lambda)$. We do not do so here. Partially, this is because it would add greatly to the length of the exposition. A more important reason is that, as described in the introduction, we have no results regarding the lifting of zero tension curves of genus zero to actual curves of genus one and therefore we do not know how to productively apply such enumerative results.

I intend to write a sequel to this paper, proving analogous results for curves of genus greater than $1$.

\section{The Bruhat-Tits Tree} \label{BTTree}

We will spend the rest of this paper proving Theorems~\ref{GenusOneExists} and \ref{GenusOneSuperAbundant}. One of our main technical tools is the Bruhat-Tits tree. A good
reference our discussion of the Bruhat-Tits tree is Chapter 2 of \cite{MS}.

We denote by $\BT(\KK)$ the set of $\OO$-submodules of $\KK^2$ which are
isomorphic to $\OO^2$, modulo $\KK^*$-scaling. We write $\overline{M}$ for the equivalence class 
of a module $M$. We equip $\BT(\KK)$ with the
metric where $d(\overline{M_1}, \overline{M_2})$ is the minimum of the set of $\epsilon$ such that
there exists an $\alpha$ with $M_1 \supseteq t^{\alpha} M_2 \supseteq
t^{\alpha + \epsilon} M_1$; this minimum exists and is independent of the choice
of representatives $M_1$ and $M_2$. Clearly, $d(\overline{M_1}, \overline{M_2})$ is always in $A=v(\KK^*)$. The metric space $\BT(\KK)$ is called the Bruhat-Tits
tree of $K$.

If we made the analogous construction working over $\KKK$ we could equip $\BT$ with the
structure of the vertices of a tree so that distance was the graph
theoretic distance. Instead, $\BT(\KK)$ is what is called an $\QQ$-tree
(see \cite{MS}). We remind our reader of the convention that all metric trees have edges whose lengths are in $\QQ$, and the points of such a tree are the points whose distances from the vertices are rational. Being a $\QQ$-tree is a more general concept than this; the following proposition lists the ``tree-like'' properties of $\QQ$.

\begin{prop}
If $\overline{M_1}$ and $\overline{M_2} \in \BT(\KK)$ with $d(\overline{M_1}, 
\overline{M_2})=d$ then there is a
unique distance preserving map $\phi : [0,d] \cap \QQ \to \BT(\KK)$ with
$\phi(0)=\overline{M_1}$ and $\phi(d)=\overline{M_2}$. Explicilty, if 
$t^{\alpha} M_1 \supseteq M_2 \supseteq t^{\alpha+d} M_1$ then $\phi(e)=\overline{t^{\alpha+e} M_1 + M_2}$ We will call the image of $\phi$ the
path from $\overline{M_1}$ to $\overline{M_2}$ and denote it by $[\overline{M_1}, 
\overline{M_2}]$. If $\overline{M_1}$,
\dots, $\overline{M_n} \subset \BT(\KK)$ then $\cup_{i \neq j} [\overline{M_1}, 
\overline{M_2}]$ is a metric tree.
\end{prop}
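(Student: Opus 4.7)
The plan is to reduce all three claims---existence of $\phi$ with the stated formula, uniqueness, and the tree property---to explicit $2 \times 2$ computations in a common Smith basis. Since $\OO$ is a valuation ring, hence a B\'ezout domain, any two lattices $M_1, M_2 \subset \KK^2$ that are $\OO$-free of rank $2$ admit a common basis: there exist $(v_1, v_2)$ spanning $\KK^2$ and scalars $a, b \in \KK^*$ with $M_1 = \OO v_1 + \OO v_2$ and $M_2 = \OO a v_1 + \OO b v_2$. Rescaling in $\KK^*$, I would normalize so that $M_1 = \OO v_1 + \OO v_2$ and $M_2 = \OO v_1 + \OO t^d v_2$, which simultaneously identifies $d(\overline{M_1}, \overline{M_2}) = d$ and reduces the proposed formula (in the case $\alpha = 0$) to $\phi(e) = \overline{\OO v_1 + \OO t^e v_2}$. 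This lattice is visibly $\OO$-free of rank $2$, and distance preservation follows because for $0 \leq e \leq e' \leq d$ the chain
$$\OO v_1 + \OO t^e v_2 \;\supseteq\; \OO v_1 + \OO t^{e'} v_2 \;\supseteq\; t^{e'-e}(\OO v_1 + \OO t^e v_2)$$
gives $d(\phi(e), \phi(e')) \leq e' - e$, while the elementary divisor form of the two lattices obstructs any smaller $\epsilon$.

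For uniqueness, let $\psi : [0, d] \cap \QQ \to \BT(\KK)$ be another distance-preserving map with the required endpoints, fix $e \in (0, d) \cap \QQ$, and let $N$ represent $\psi(e)$. The condition $d(\overline{M_1}, \overline{N}) = e$ lets us rescale so that $M_1 \supseteq N \supseteq t^e M_1$, and applying elementary divisors to $(M_1, N)$ produces a basis $(w_1, w_2)$ of $\KK^2$ with $M_1 = \OO w_1 + \OO w_2$ and $N = \OO w_1 + \OO t^e w_2$. The complementary condition $d(\overline{N}, \overline{M_2}) = d - e$ combined with $M_2 \subseteq M_1$ then forces the class $\overline{v_2} \in M_1 / tM_1$ (the ``direction'' picked out by the elementary divisor form of $(M_1, M_2)$) to agree with $\overline{w_2}$: running the distance computation for $(N, M_2)$ in the $(w_1, w_2)$ basis, one finds that any deviation between these two directions strictly increases $d(\overline{N}, \overline{M_2})$ above $d - e$. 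Consequently $\overline{N} = \overline{\OO v_1 + \OO t^e v_2} = \phi(e)$.

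For the tree claim I would induct on $n$. Assume $T_{n-1} := \bigcup_{1 \leq i < j \leq n-1} [\overline{M_i}, \overline{M_j}]$ is already a metric tree. Running the uniqueness argument for each triple $\{\overline{M_n}, \overline{M_i}, \overline{M_j}\}$ shows that the geodesics $[\overline{M_n}, \overline{M_i}]$ and $[\overline{M_n}, \overline{M_j}]$ agree on a common initial segment and then split at a branch point $p_{ij} \in [\overline{M_i}, \overline{M_j}]$; a short three-lattice calculation in a common basis shows that all the $p_{ij}$ are consistent, so attaching the single segment from $\overline{M_n}$ to the common branch point into $T_{n-1}$ yields the desired metric tree. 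The main obstacle is the uniqueness step: over the discretely valued $\KKK$, uniqueness of geodesics in $\BT$ reduces to standard simplicial tree theory, but over our non-Noetherian, densely valued $\OO$ one must confirm that the Smith form determines the lattice classes to the necessary extent. The B\'ezout property of $\OO$ rescues the argument, since all lattices in question are finitely generated and $\OO$-free of rank two, so elementary divisors are available at every step.
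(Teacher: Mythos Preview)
The paper does not give its own proof of this proposition: it is stated as background, with Chapter~2 of \cite{MS} cited at the opening of Section~\ref{BTTree} as the reference. So you are supplying what the paper outsources.

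Your existence argument is correct: valuation rings are elementary divisor rings, so the simultaneous Smith form is available, and the chain of inclusions you write down gives $d(\phi(e),\phi(e'))=|e-e'|$ exactly.

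Your uniqueness argument, however, has a real gap. Agreement of the classes $\overline{v_2}$ and $\overline{w_2}$ in $M_1/tM_1$ (or in $M_1/\MM M_1$) does \emph{not} force $\overline N=\phi(e)$. Take $M_1=\OO e_1\oplus\OO e_2$, $M_2=\OO e_1\oplus\OO t^d e_2$, and for $0<e<d$ set $w_1=e_1+t^{e/2}e_2$, $w_2=e_2$, $N=\OO w_1\oplus\OO t^e w_2$. Then $(w_1,w_2)$ is an $\OO$-basis of $M_1$, one checks $M_1\supseteq N\supseteq t^eM_1$ tightly so $d(\overline{M_1},\overline N)=e$, and $\overline{w_2}=\overline{v_2}=\overline{e_2}$ in any quotient of $M_1$. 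Yet a direct Smith computation gives $d(\overline N,\overline{M_2})=d$, not $d-e$, so $N\neq\phi(e)$. The point is that over a densely valued $\OO$ the germ of a geodesic at $\overline{M_1}$ (which is what the reduction mod $\MM$ sees) does not determine the geodesic out to distance $e$: branching can occur at any intermediate rational distance. What you must actually prove is that the \emph{two} distance constraints together force $N\supseteq M_2$; once you have that, $N\supseteq t^eM_1+M_2=\phi(e)$ and an index comparison finishes. Alternatively, and closer to the Morgan--Shalen argument the paper cites, pass to a finite extension $\KKK'\subset\KK$ containing all the scalars in play: there $\BT(\KKK')$ is an honest simplicial tree by the classical theory, geodesics are unique for elementary graph-theoretic reasons, and the statement over $\KK$ follows by taking the union over all such $\KKK'$.

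The same issue infects your tree argument: the ``short three-lattice calculation'' you defer is exactly the place where the densely-valued branching must be controlled, and it is the substantive content of the proposition. Either carry out that calculation explicitly (put $M_1,M_2$ in Smith form, write $M_3$ in that basis, and locate the median by comparing valuations of the entries), or again reduce to the simplicial case over finite subextensions.
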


Suppose
now that $(x_1:y_1)$ and $(x_2:y_2)$ are distinct members of
$\PP^1(\KK)$. Then we can similarly define a map $\phi: \QQ \to \BT(\KK)$
by $\phi(e)=\overline{\OO (x_1,y_1)+ t^e \OO(x_2, y_2)}$. We will call the image of
this $\phi$ the \textbf{path from $(x_1:y_1)$ to $(x_2:y_2)$} and denote
it $[(x_1:y_1), (x_2:y_2)]$. Similarly, if $\overline{M} \in \BT(\KK)$ and
$(x:y) \in \PP^1(\KK)$, we can define a semi-infinite path from
$(x:y)$ to $M$ denoted $[(x:y),\overline{M}]$.

If $Z$ is a subset of $\BT(\KK) \cup \PP^1(\KK)$, we denote by $[Z]$ the subspace $\cup_{z, z' 
\in Z} [z,z']$ of $\BT(\KK)$. For simplicity, assume that $|Z| \geq 3$. If $Z$ is finite, 
then $[Z]$ is a metric tree with a semi-infinite ray for each member of $Z \cap \PP^1(\KK)$. We will say 
that this ray has its end at the corresponding member of $Z \cap \PP^1(\KK)$. We will 
abbreviate $[\{ z_1, \ldots, z_n \}]$ as $[z_1, \ldots, z_n]$.

The particular case where $Z$ is a four element subset of $\PP^1(\KK)$ is of particular 
importance for us. Let $\{ w,x,y,z \} \subset \PP^1(\KK)=\KK \cup \{ \infty \}$. We define the 
\textbf{cross ratio} $c(w,x:y,z)$ by
$$c(w,x:y,z)=\frac{(w-y)(x-z)}{(w-z)(x-y)}.$$
Note that $c(w,x:y,z)=c(x,w:z,y)=c(y,z:w,x)=c(z,y:x,w)$ and $c(w,x:y,z)=c(w,x:z,y)^{-1}$.

\begin{prop} \label{CrossRatio}
The metric space $[ w,x,y,z ]$ is a metric tree with $4$ semi-infinite rays and either $1$ or $2$ internal vertices. 

If $[ w,x,y,z ]$ has $2$ internal vertices, let $d$ be the length of the internal edge and 
suppose that the rays ending at $w$ and $x$ lie on one side of that edge and the rays through 
$y$ and $z$ on the other. Then $v(c(w,x:y,z))=0$, $v(c(w,y:x,z))=-v(c(w,y:z,x))=d$ and $v(c(w,z:x,y))=-v(c(w,z:y,x))=d$. The first statement can be strengthened to say that $v(c(w,x:y,z)-1)=d$.
(The valuations of all other permutations of $\{ w,x,y,z \}$ can be deduced from these by the symmetries of the crossratio.)

If $[\{ w,x,y,z \}]$ has only $1$ internal vertex then $v(c(w,x:y,z)-1)=v(c(w,x:y,z))=0$ and the same holds for all permutations of $\{ w,x,y,z \}$.
\end{prop}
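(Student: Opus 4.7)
The plan is to exploit the $\PGL_2(\KK)$-action to reduce to a standard configuration and then run a case analysis. The action of $\GL_2(\KK)$ on $\KK^2$ descends to an action of $\PGL_2(\KK)$ on $\BT(\KK)$ by $g\cdot\overline{M}=\overline{gM}$, and since $g$ is a $\KK$-linear isomorphism the condition $M_1\supseteq t^\alpha M_2\supseteq t^{\alpha+\epsilon}M_1$ is preserved, so this action is by isometries. The cross-ratio is the classical $\PGL_2(\KK)$-invariant of four-tuples in $\PP^1(\KK)$. Using $3$-transitivity I first apply a Möbius transformation to assume $y=(0:1)$ and $z=(1:0)$; distinctness then forces $w,x\in\KK^*$, and we set $\alpha:=v(w)$, $\beta:=v(x)\in\QQ$.

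Next I would identify the tree $[w,x,y,z]$ in these coordinates. The path $[y,z]$ is the standard apartment $A=\{V_n:n\in\QQ\}$ with $V_n:=\overline{\OO e_1+t^n\OO e_2}$. For any $a\in\KK^*$, the parametrization $\phi(e)=\overline{\OO(a,1)+t^e\OO(0,1)}$ of the path from $(a:1)$ to $0$, evaluated at $e=0$, gives $\overline{a\OO e_1+\OO e_2}$; rescaling by $a^{-1}$ identifies this with $V_{-v(a)}$, which is therefore the foot of $(a:1)$ on $A$. The tree then falls into three cases:
(A) $\alpha\neq\beta$: the feet are distinct, yielding an ``H'' with internal edge of length $|\alpha-\beta|$ and geometric split $\{y,x\}\mid\{w,z\}$ (when $\alpha<\beta$) or $\{y,w\}\mid\{x,z\}$;
(B1) $\alpha=\beta$ with $\overline{w/x}\neq 1$ in $\kk^*$: the four rays leave the common foot $V_{-\alpha}$ in distinct residue directions, giving a single internal vertex;
(B2) $\alpha=\beta$ with $\overline{w/x}=1$: the rays to $w$ and $x$ share a direction at $V_{-\alpha}$; a direct calculation in the basis $f_1=t^\alpha e_1$, $f_2=e_2$ shows that the midpoint of the path from $w$ to $x$ is a lattice class lying at distance $v(w-x)-\alpha$ from $V_{-\alpha}$, which is therefore the branching vertex, giving an ``H'' with split $\{w,x\}\mid\{y,z\}$ and that internal edge length.

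A direct limit computation in the standardized coordinates yields $c(w,x:y,z)=w/x$. The routine identities $c(w,x:z,y)=c(w,x:y,z)^{-1}$ and $c(w,x:y,z)+c(w,y:x,z)=1$ (verified by expanding the formula) then express all six cross-ratios as the orbit $\{c,\,1-c,\,1/c,\,1/(1-c),\,c/(c-1),\,(c-1)/c\}$. Using $v(w/x)=\alpha-\beta$ and $v(1-w/x)=v(w-x)-\beta$, I can evaluate each in the three cases and verify the proposition: in case B2 the split is $\{w,x\}\mid\{y,z\}$ as stated, and $v(c)=0$, $v(c-1)=v(w-x)-\alpha=d$, giving exactly the strengthened first statement; in case B1 every cross-ratio and every $c-1$ has valuation $0$; in case A the identities give the remaining valuations $\pm d=\pm|\alpha-\beta|$ after the matching relabeling.

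The main obstacle is simply bookkeeping: the geometric split in case A is not $\{w,x\}\mid\{y,z\}$ in the original labels, so to check the proposition as written one must first relabel the four points so that the split matches the proposition's convention, then track which of the six rational expressions in $c$ plays the role of each named cross-ratio. Once this relabeling is standardized, every numerical claim in the proposition reduces to the single computation $c(w,x:y,z)=w/x$ together with the case analysis of $v(w/x)$ and $v(1-w/x)$.
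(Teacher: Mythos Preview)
Your proposal is correct and follows essentially the same approach as the paper: exploit $\PGL_2(\KK)$-invariance of both the Bruhat--Tits tree and the cross ratio to reduce to a normalized configuration, then compute directly. The only difference is that the paper uses the full $3$-transitivity to send $(w,x,y)\mapsto(0,1,\infty)$, leaving a single free parameter $z$ whose valuation immediately encodes the tree shape; you normalize only $(y,z)\mapsto(0,\infty)$, which leaves two free parameters $w,x$ and forces the three-case split (A), (B1), (B2) plus the relabeling bookkeeping you flag at the end. Since you already invoke $3$-transitivity, you could shorten your argument considerably by fixing the third point as well.
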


This proposition can be remembered as saying ``$v(c(w,x:y,z))$ is the signed length of $[w,x] 
\cap [y,z]$'' where the sign tells us whether the two paths run in the same direction or the 
opposite direction along their intersection.

\begin{proof}
The group
$\GL_2(\KK)$ acts on $\BT(\KK)$ through the action on $\KK^2$.  This
action is compatible with the standard action of $\PGL_2(\KK)$ on
$\PP^1(\KK)=\KK \cup \{ \infty \}$. It is well known that $c$ is
$\PGL_2(\KK)$ invariant. So the whole theorem is invariant under
$\PGL_2(\KK)$ and we may use this action to take $w$, $x$ and $y$ to
$0$, $1$ and $\infty$. Our hypothesis in the second paragraph is that
$[0,1,\infty, z]$ is a tree with $0$ and $1$ on one side of a finite
edge of length $d$ and $z$ and $\infty$ on the other. It is easy to
check that this is equivalent to requiring that $v(z)=-d<0$. Then
$c(0,1:\infty,z)=1-1/z$ which does indeed have
valuation $0$ and $c(0,1:\infty,z)-1=-1/z$ does indeed have valuation $d$. Similarly, $c(0,\infty:1,z)=1/z$ which has valuation $d$ and
$c(0,z:1,\infty)=1/(1-z)$ which has valuation $d$. In the second paragraph of the proposition,
the assumption that the tree has no finite edge implies that
$v(z)=v(z-1)=0$ and the argument then continues as before.
\end{proof}

\section{Lemmas on Zero Tension Curves} \label{ZTLemma}

We pause to prove two combinatorial lemmas about zero tension curves. 

\begin{Lemma} \label{LocalMaxLemma}
Let $(\iota, \Gamma,m)$ be a connected partial \footnote{Recall that the adjective partial means we permit edges which end at a degree one vertex of $\Gamma$ to be taken to a finite line segment rather than to an infinite ray.} zero tension curve in $\QQ^n$. Suppose that $\iota(\Gamma)$ is not contained in any hyperplane. Then the set of vectors $\sigma_v(e)$, where $v$ runs over the degree one vertices  of $\Gamma$, spans $\QQ^n$.
\end{Lemma}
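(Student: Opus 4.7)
The plan is to argue by contradiction using a tropical maximum principle. Suppose the span $V = \Span\{\sigma_v(e) : v \in \partial \Gamma\}$ is a proper subspace of $\QQ^n$. Choose a nonzero linear functional $\ell : \QQ^n \to \QQ$ with $\ell|_V \equiv 0$, and set $F = \ell \circ \iota$, a continuous piecewise linear function on $\Gamma \setminus \partial \Gamma$. The goal is to show $F$ is constant, which forces $\iota(\Gamma)$ to lie in an affine hyperplane, contradicting the hypothesis.

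First I would record the ``boundary flatness'' observation: for every edge $e$ with an endpoint $v \in \partial \Gamma$, we have $\ell(\sigma_v(e)) = 0$, and since $\sigma_v(e) = m(e) \rho_v(e)$ with $m(e) > 0$, the slope vector $\rho_v(e)$ also satisfies $\ell(\rho_v(e)) = 0$. Thus $F$ is constant along every such edge, whether it is mapped to an unbounded ray or (in the partial case) to a finite segment. In particular $F$ is bounded on $\Gamma \setminus \partial \Gamma$, since outside these boundary edges we are on a compact subgraph, and on the boundary edges $F$ equals its value at the interior endpoint. Hence $F$ achieves a maximum $M$ on $\Gamma \setminus \partial \Gamma$, and I set $S = F^{-1}(M)$.

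The main step is the maximum-principle argument showing $S$ is open in $\Gamma \setminus \partial \Gamma$. If $p \in S$ lies in the interior of an edge, then $F$ is linear there, so achieving the maximum in the interior forces the slope to vanish and $F$ to be constant in a neighborhood. If $p \in S$ is a vertex (necessarily in $\Gamma \setminus \partial \Gamma$, since $F$ is not defined on $\partial \Gamma$), then moving away from $p$ along any incident edge $e$ cannot increase $F$, so $\ell(\rho_p(e)) \leq 0$ for every $e \ni p$. The zero tension condition now does the essential work: summing against $m(e) > 0$ yields
\[
\sum_{e \ni p} m(e)\, \ell(\rho_p(e)) \;=\; \ell\Bigl(\sum_{e \ni p} m(e)\, \rho_p(e)\Bigr) \;=\; 0,
\]
and a sum of nonpositive terms equalling zero forces each term to vanish, so $F$ is again constant in a neighborhood of $p$. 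The hard conceptual point is precisely this: without zero tension, a maximum at a branching vertex gives no local rigidity, and the argument would fail.

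To finish, $S$ is closed by continuity and open by the above, and $\Gamma \setminus \partial \Gamma$ is connected because $\Gamma$ is connected and removing the isolated degree one vertices (keeping the incident edges as half open) does not disconnect it as a topological space. Hence $S = \Gamma \setminus \partial \Gamma$, so $F$ is constant, meaning $\iota(\Gamma \setminus \partial \Gamma)$ is contained in a translate of $\ker \ell$, a hyperplane. This contradicts the hypothesis and proves the lemma.
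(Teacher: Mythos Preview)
Your proof is correct and follows essentially the same approach as the paper's: both argue by contradiction, choose a linear functional annihilating the boundary slope vectors, and use the zero tension condition as a maximum principle to force the resulting function to be constant. Your version is slightly more careful about working consistently on $\Gamma \setminus \partial \Gamma$, but the structure and key idea are identical.
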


\begin{proof}
Suppose, for the sake of contradiction that there is some nonzero $\lambda \in \QQ^n$ with $\langle \lambda, \sigma_v(e) \rangle=0$ for every degree one vertex $v$ of $\Gamma$. Let $h(u)$ be the function $\langle \lambda, \iota(u) \rangle$ on $\Gamma \setminus \partial \Gamma$. As $h$ is constant on every ray of $\Gamma$ ending at a degree one vertex, and in particular on all of the unbounded rays of $\Gamma$, the function $h$ is bounded on $\Gamma$. Let $U$ be the (nonempty) subset of $\Gamma$ on which $h$ achieves its maximum. Clearly,  $U$ is closed.

We now show that $U$ is also open. Clearly, if $U$ contains a point $p$ in the interior of an edge of $\Gamma$, then it contains that entire edge and, in particular, $U$ contains an open neighborhood of $p$. So we just need to show that, if $u$ is a vertex of $\Gamma$ contained in $U$ then $U$ contains an open neighborhood of $u$. If $u$ is a degree one vertex of $\Gamma$ then $h$ is constant in a neighborhood of $u$ by our hypothesis. If $u$ is not a degree one vertex then, by the zero tension condition, we have $\sum_{e \ni u} \langle \lambda, \sigma_u(e) \rangle =0$. Since we have assumed that $h$ is maximized at $u$, we have  $\langle \lambda, \sigma_u(e) \rangle \leq 0$ for every edge $e$ containing $u$ and we conclude that $h$ is constant in a neighborhood of $u$, as desired. 

So $U$ is open, closed and nonempty. As $\Gamma$ is connected, $U=\Gamma$ and we have that $h$ is constant on $\Gamma$. This contradicts our assumption that $\iota(\Gamma)$ is not contained in any hyperplane.
\end{proof}

One difficulty with Theorem~\ref{NecessaryConditions} is that it states that a zero tension curve with $\iota(\Gamma)=\Trop X$ exists, but it doesn't help us choose from among several possible candidates for $(\iota, \Gamma, m)$. We now introduce a concept that will let us guarantee that essentially only such $(\iota, \Gamma, m)$ exists.  We define $(\phi, X)$ to be \textbf{trivalent} if, for every edge $e$ of (some, equivalently any, good subdivision of) $\Trop \phi(X)$ we have $m_e=1$ and, for any vertex $v$ of (some, equivalently any, good subdivision of) $\Trop \phi(X)$ the degree of $v$ is at most $3$.

\begin{Lemma} \label{trivalent}
Let $(\phi, X)$ be a trivalent curve. Let $(\iota, \Gamma,m)$ be a zero tension curve with $\iota(\Gamma)=\Trop \phi(X)$ such that, if $e$ is any edge of (a good subdivision of) $\Trop \phi(X)$, then we have $m_e=\sum_{f \in \iota^{-1}(e)} m_f$. 

Than there is a subgraph $\Gamma'$ of $\Gamma$ which maps isomorphically onto $\Trop \phi(X)$. Specifically, we take $\Gamma'$ to be the union of all edges of $\Gamma$ which are not contracted to a point under $\iota$.
\end{Lemma}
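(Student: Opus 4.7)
The plan is to extract from the trivalent hypothesis a bijection between edges of $\Gamma'$ and edges of $\Trop \phi(X)$, and then to use zero tension at vertices of $\Gamma$ lying over a vertex of $\Trop \phi(X)$ to show the vertex map is also a bijection. After passing to a common refinement, I may assume that every edge of $\Gamma$ maps either to a point or onto a single edge of a good subdivision of $\Trop \phi(X)$.

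The edge bijection is immediate: since $m_e=1$ and $\sum_{f\in \iota^{-1}(e)} m_f = m_e$ with each $m_f\ge 1$, there is a unique non-contracted edge $\tilde e \subset \Gamma$ over each edge $e$ of $\Trop \phi(X)$, and $m_{\tilde e}=1$. Thus edges of $\Gamma'$ are in canonical bijection with edges of $\Trop \phi(X)$, and the bijection preserves slopes and lengths.

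Now fix a vertex $v$ of $\Trop \phi(X)$ with incident edges $e_1,\dots,e_k$ where $k\le 3$, and let $V_v = \iota^{-1}(v)\cap (\Gamma\setminus\partial\Gamma)$. Each $\tilde e_i$ has a unique endpoint $u_i \in V_v$; no non-contracted edge can have both endpoints in $V_v$, since its image would be a point. Contracted edges contribute nothing to $\rho$-sums, so summing zero tension over all of $V_v$ telescopes to $\sum_i \rho_v(e_i)=0$, while zero tension at an individual $u\in V_v$ reads
\[
\sum_{i\,:\,u_i = u} \rho_v(e_i) \;=\; 0.
\]
The crux is to verify that for $k\le 3$ the only subsets of $\{\rho_v(e_1),\dots,\rho_v(e_k)\}$ summing to zero are the empty set and the full set: for $k=2$ the two vectors are nonzero opposites, and for $k=3$ any two of them sum to minus the third, which is again nonzero. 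Since $u_i$ certainly supports $\tilde e_i$, the full subset must appear at each $u_i$, so all $u_i$ coincide at a single vertex $u(v)\in V_v$.

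Assembling over all $v$, $\Gamma'$ has vertex set $\{u(v)\}$ in bijection with vertices of $\Trop \phi(X)$ and edge set $\{\tilde e\}$ in bijection with edges, with all incidences preserved, so $\iota|_{\Gamma'}$ is an isomorphism of metric graphs. The main subtlety I expect to check is the case where $v$ is incident to an unbounded ray $e_i$: one must ensure that the endpoint of $\tilde e_i$ lying over $v$ is in $\Gamma\setminus\partial\Gamma$ (so that zero tension applies there), which holds because the $\partial\Gamma$ endpoint of $\tilde e_i$ is the one at infinity, not the one at $v$.
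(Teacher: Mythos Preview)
Your proof is correct and follows essentially the same strategy as the paper's. Both arguments first use $m_e=1$ to get a unique non-contracted preimage over each edge, and then use zero tension at a preimage vertex together with the valence bound $k\le 3$ to force all non-contracted edges over a vertex to share a single endpoint. The paper is terser --- it simply says the zero tension condition would be violated and then counts that no edges remain for a second preimage vertex --- while you spell out explicitly why no proper nonempty subset of $\{\rho_v(e_1),\dots,\rho_v(e_k)\}$ can sum to zero when $k\le 3$; and where the paper concludes with a short topological argument (bijective closed map, hence homeomorphism), you argue combinatorially via matching incidences. These are cosmetic differences; the underlying idea is the same.
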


\begin{proof}
First, we note that if $e$ is an edge of $\Trop \phi(X)$ then the interior of $e$ can have only one preimage in $\Gamma$, by the equation $1=m_e=\sum_{f \subset \iota^{-1}(e)} m_f$. Next, let $x$ be a vertex of $\Trop \phi(X)$ with edges $e_1$, $e_2$ and possibly $e_3$ coming out of $x$. Let $y \in \Gamma'$ be a preimage vertex of $x$. Then there must be edges leaving $y$ which map down to each of the $e_i$, as otherwise the zero tension condition would be violated. (If all of the edges leaving $y$ map down to a point then $y$ is not in $\Gamma'$.) Then there can be no other vertex $z$ of $\Gamma'$ which maps to $x$, as there are no edges of $\Trop \phi(X)$ left for the edges coming from $z$ to map to.

So every vertex of $\Trop \phi(X)$ and the interior of every edge of $\Trop \phi(X)$ has only one preimage in $\Gamma'$; \emph{i.e.} the restriction of $\iota$ to $\Gamma'$ is bijective onto its image. Moreover, $\Gamma'$ is closed in $\Gamma$, so the map $\Gamma' \to \Trop \phi(X)$, like the map $\Gamma \to \Trop \phi(X)$, is a closed map. But we know  $\Gamma' \to \Trop \phi(X)$ is bijective, so this is also an open map. A continuous open bijective map is a homeomorphism.
\end{proof}

\section{Tropical Curves of Genus Zero} \label{GenusZeroProof}

The aim of this section is to prove Theorem~\ref{GenusZeroExists}. This result
will appear in a future publication of Mikhalkin; it also appears with
many results on incidence conditions in \cite{SiebNish}. Our method of proof
is not only more explicitly constructive than these, but will also preview many of the methods which we will use to deal with higher genus curves. Let $(\iota, \Gamma, m)$ be a zero tension curve with
$\Gamma$ a tree. Put a metric on $\Gamma$ in the following manner: Give the unbounded
edges of $\Gamma$ length infinity. If $e$ is a finite edge running from $u$ to $v$, and $\iota(v)=\iota(u)+\ell \sigma_u(e)$, then give $e$ length $\ell$ Fix an identification of $\Lambda$ with $\ZZ^n$.

\begin{prop} \label{FindATree}
Let $T$ be a metric tree with finitely many vertices. Then there is a subset $Z$ of $\PP^1(\KK) \cup \BT(\KK)$ such that $[Z]$ is isometric to $T$. If every leaf of $T$ is at the end of an unbounded edge, then we can take $Z \subset \PP^1(\KK)$.
\end{prop}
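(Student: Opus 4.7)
The plan is induction on the number of leaves of $T$. For the base case, if $T$ is a single edge, place two points $z_1, z_2$ with $z_i \in \PP^1(\KK)$ for each leaf at an unbounded end and $z_i \in \BT(\KK)$ otherwise, arranging the rational distance $d(z_1, z_2)$ to equal the edge length; such points exist by the homogeneity of $\BT(\KK)$ under $\PGL_2(\KK)$ and the surjectivity of $v \colon \KK^* \to \QQ$.

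For the inductive step, let $T$ have at least three leaves. Pick any leaf $\ell$ of $T$, let $e$ be its incident edge, and let $v$ be the other endpoint of $e$. Deleting $\ell$ and the interior of $e$ (and smoothing the resulting degree-$2$ vertex at $v$ if one appears) yields a smaller metric tree $T'$ with one fewer leaf. Deleting a leaf cannot turn an unbounded edge of $T$ into a bounded edge of $T'$, so if every leaf of $T$ is unbounded the same holds for $T'$, and by the induction hypothesis there is $Z'$ with $[Z']$ isometric to $T'$ and $Z' \subset \PP^1(\KK)$ in the unbounded case. Let $\tilde v \in [Z']$ denote the image of $v$. It now suffices to exhibit a single point $z \in \BT(\KK) \cup \PP^1(\KK)$, taken in $\PP^1(\KK)$ precisely when $\ell(e) = \infty$, such that $[Z' \cup \{z\}] = [Z'] \cup [\tilde v, z]$ with $[\tilde v, z]$ of length $\ell(e)$ and leaving $\tilde v$ in a direction unoccupied by any edge of $[Z']$ there.

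To construct such a $z$, fix $z_1, z_2 \in Z'$ with $\tilde v \in [z_1, z_2]$; these exist by the definition of $[Z']$. Using the $\PGL_2(\KK)$-action I normalize $(z_1, z_2)$ so that the path $[z_1, z_2]$ lies along the standard path $\{\overline{\OO e_2 + t^s \OO e_1} : s \in \QQ\}$ of $\BT(\KK)$ from $(0\colon1)$ to $(1\colon0)$; for instance, $(z_1, z_2) = (0, \infty)$ when both lie in $\PP^1(\KK)$, $(0, \overline{\OO^2})$ when exactly one of them does, and two explicit lattice classes otherwise. Under this normalization $\tilde v$ equals some point $M_{e^*} = \overline{\OO e_2 + t^{e^*} \OO e_1}$. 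The directions at $M_{e^*}$ in $\BT(\KK)$ are parameterized by $\PP^1(\kk)$, an infinite set since $\kk$ is algebraically closed, so after excluding the finitely many directions already used by edges of $[Z']$ at $\tilde v$ there remain infinitely many available. For $\ell(e) < \infty$, take $z \in \BT(\KK)$ at distance $\ell(e)$ from $\tilde v$ along the chosen direction; for $\ell(e) = \infty$, take $z = t^{e^*} u \in \KK^*$ for a unit $u \in \OO^*$ whose residue $[u] \in \kk^*$ supplies the chosen element of $\PP^1(\kk)$. A direct computation with the moduli defined in Section~\ref{BTTree}, or Proposition~\ref{CrossRatio} applied to a four-point subset $\{z_1, z_2, z_3, z\}$ of $Z' \cup \{z\}$ lying in $\PP^1(\KK)$ when available, then confirms that the resulting tree branches precisely at $M_{e^*}$ in the specified direction.

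The main obstacle is verifying that enlarging $Z'$ by this single $z$ yields \emph{only} the new branch $[\tilde v, z]$ and does not disturb the existing embedding of $T'$. This reduces to showing that, for every $z' \in Z'$, the geodesic $[z, z']$ passes through $\tilde v$; this follows immediately from the fact that $z$ lies in a direction at $\tilde v$ disjoint from every direction used by $[Z']$, so $z$ and $z'$ lie in distinct components of $\BT(\KK) \setminus \{\tilde v\}$. Hence $[Z' \cup \{z\}] = [Z'] \cup [\tilde v, z]$, and adjoining this edge of length $\ell(e)$ to $[Z'] \cong T'$ reproduces $T$, completing the induction.
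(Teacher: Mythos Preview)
Your proof is correct (up to the harmless omission of the single-point tree in the base case and an implicit assumption that $T$ has no degree-$2$ vertices), but the induction is organized differently from the paper's.

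The paper inducts on the number of \emph{finite} edges. Its base case is a tree with no finite edges at all --- either a star of $\ell\geq 3$ infinite rays, realized as $[z_1,\ldots,z_\ell]$ for units $z_i$ with distinct residues, or one of the degenerate trees $[0,\infty]$, $[\overline{\OO^2},\infty]$, $[\overline{\OO^2}]$. For the inductive step the paper picks a finite edge $e$ of length $d$, removes it, and obtains \emph{two} subtrees $T_1,T_2$; it then appends an unbounded ray to each at the cut vertex, realizes $T'_1,T'_2$ by induction, and uses the $\PGL_2(\KK)$-action to send the two new ends to $0$ and $\infty$ and to slide the two pieces along $[0,\infty]$ until the cut vertices sit distance $d$ apart. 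The union $[(Z_1\setminus\{0\})\cup(Z_2\setminus\{\infty\})]$ is then isometric to $T$.

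Your argument instead inducts on the number of leaves: you prune a single leaf, realize the smaller tree, and graft one new branch back on. The cost is that you need the fact that the directions at a point of $\BT(\KK)$ form a copy of $\PP^1(\kk)$ (hence an infinite set, since $\kk$ is algebraically closed), so that you can pick a direction not already used by $[Z']$. This is standard, but the paper never states it; its Section~\ref{BTTree} only records the metric and cross-ratio description of $\BT(\KK)$, not the link structure. The paper's cut-and-glue argument sidesteps this entirely --- it never needs to know how many directions leave a vertex, only that $\PGL_2(\KK)$ acts transitively enough to align two half-trees along $[0,\infty]$. On the other hand, your approach is more local (one subtree rather than two at each step) and makes the ``every leaf unbounded $\Rightarrow Z\subset\PP^1(\KK)$'' clause slightly more transparent, since the only new point added at each step lies in $\PP^1(\KK)$ exactly when the new edge is unbounded.
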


\begin{proof}

Our proof is by induction on the number of finite length edges of $T$. If $T$
has $l \geq 3$ leaves and no finite edges then $T$ is isometric to $[z_1,
\ldots, z_l]$ for $\{ z_1, \ldots, z_l \}$ any $l$ elements of $\KK^*$
with valuation $0$ and distinct images in $\kk^*$. (Such elements exist because $\kk$ is algebraically closed and hence infinite.) If $T$ is an unbounded edge which is infinite in both directions then $T \isomorph [0, \infty]$; if $T$ is an unbounded ray which is infinite in one direction then $T=[\OO^2, \infty]$; if $T$ is a point then $T \isomorph [\OO^2]$. These are all of the cases with no finite edges, so our base case is complete.

Now, let $e$ be a finite edge of $T$ of length $d$ joining vertices
$v_1$ and $v_2$. Remove $e$ from $T$, separating $T$ into two trees
$T_1$ and $T_2$. 
Define trees $T'_s$, where $s=1$, $2$, by adding an
unbounded edge to $T_s$ at $v_s$. 
By induction, we can find subsets
$Z_1$ and $Z_2 \subset \PP^1(\KK) \cup \BT(\KK)$ with $[Z_s]$ isometric to $T'_s$. Let
$z_s \in Z_s$ be the element of $Z_s$ at the end of the new ray added
to $T_s$. Without loss of generality, we may assume that $z_1=0$ and
$z_2=\infty$. Then the point of $T_s$ corresponding to $v_s$ lies
somewhere on $[0,\infty]$. By multiplying $Z_1$ and $Z_2$ by elements
of $\KK^*$, we may assume that these points lie distance $d$ apart with
$v_2$ closer to $z_1$ than $v_1$ is. Then $T$ is isometric to $[
(Z_1 \setminus \{ z_1 \}) \cup ( Z_2 \setminus \{ z_2 \})]$.

If all of the leaves of $T$ were at the end of unbounded edges, then this would also be true of $T'_1$ and $T'_2$, and tracing through the proof we see that $Z \subset \PP^1(\KK)$.
\end{proof}

Recal that we have been given a zero tension curve $(\iota, \Gamma, m)$ of genus zero and we want to construct an actual genus zero curve with $\iota(\Gamma)$ as its tropicalization. Let $Z \subset \PP^1(\KK)$ be such that $[Z]$ is isometric to
$\Gamma$. We define multisets $Z^+_1$, \dots, $Z^+_n$, $Z^-_1$, \dots,
$Z^-_n$ as follows: All of the elements of $Z^{\pm}_i$ lie in $Z$. Let
$z \in Z$ correspond to the end of an infinite ray $e$ of
$\Gamma$. Suppose that $\sigma_z(e)=(s_1,\ldots, s_n)$. Then $z \in
Z^{\pm}_i$ if and only if $\pm s_i<0$. In this case, the
number of times that $z$ occurs in $Z^{\pm}_i$ is $|s_i|$. Let $\phi_i$ be a rational function on $\PP^1$ with zeroes at the points of $Z^{+}_i$ and poles at the points of $Z^{-}_i$. Assuming that $\infty$ is not in $Z^{+}_i$ or $Z^{-}_i$, we may take $\phi_i(u)= \prod_{z \in Z_i^{+}} (u-z) /
\prod_{z \in Z_i^{-}} (u-z)$. Define a rational map
$\phi: \PP^1(\KK) \to \KK^n$ by the formula $\phi(u)=(\phi_1(u), \ldots, \phi_n(u))$.
Here $u$ is a coordinate on $\PP^1(\KK)$, thought of as $\KK \cup \{ \infty \}$.

We now show that we have indeed built a genus zero curve with tropicalization $\iota(\Gamma)$. Thus, when we prove the following theorem, we will complete the proof of Theorem~\ref{GenusZeroExists}.

\begin{Theorem} \label{GenusZeroFundamentalComputation}
The curve $\phi(\PP^1(\KK))$ is a genus zero curve of degree the degree of $(\iota, \Gamma, m)$.  $\Trop \phi(\PP^1(\KK))$ is a translation of $\iota(\Gamma)$. Furthermore, $\Trop \phi(\KK)$ is a translation of $\iota(\Gamma)$. Thus, by rescaling the $\phi_i(u)$ by elements of $\KK^*$, we can arrange that $\Trop \phi(\PP^1(\KK))$ is $\iota(\Gamma)$ 
\end{Theorem}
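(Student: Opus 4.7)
The plan is to reduce the theorem to an explicit valuation computation using the Bruhat-Tits tree framework of Section~\ref{BTTree}. The degree assertion is essentially built into the construction: for each $z \in Z$ corresponding to an unbounded ray with $\sigma_z(e) = (s_1, \ldots, s_n)$, the partition of $z$ into the multisets $Z^{\pm}_i$ is precisely arranged so that the order of vanishing of $\chi^{e_i} \circ \phi$ at $z$ is the $i$-th component of $\sigma_z(e)$. Hence the degree vector of $\phi$ at $z$ agrees with $\sigma_z(e)$, the total degree coincides with $\delta$, and the source curve, being a Zariski-open subset of $\PP^1$, has genus zero.

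For the tropicalization, I would introduce the canonical retraction $r : \PP^1(\KK) \to [Z]$ which sends $u \notin Z$ to the unique point where the geodesic from $u$ to any $z \in Z$ first enters $[Z]$, and which fixes $Z$ itself. Identifying $[Z]$ with $\Gamma$ via the isometry from Proposition~\ref{FindATree}, the main claim becomes: there is a constant $c \in \QQ^n$ such that $v(\phi(u)) = \iota(r(u)) + c$ for every $u \in \PP^1(\KK)$ with $\phi(u) \in \TT$. Once this identity is established, the surjectivity of $r$ onto $[Z]$ -- which remains surjective after restricting the domain to $\KK \setminus Z$, since only one point of $\PP^1 \setminus \KK$ needs to be omitted -- yields simultaneously $\Trop \phi(\PP^1(\KK)) = \iota(\Gamma) + c$ and $\Trop \phi(\KK) = \iota(\Gamma) + c$, and rescaling each $\phi_i$ by $t^{-c_i}$ absorbs the translation.

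The technical heart is verifying that $v(\phi_i(u))$ is a piecewise-linear function of $r(u) \in [Z]$ with the correct slope on each edge. Fix an edge $\tilde e \subset [Z]$ corresponding to an edge $e$ of $\Gamma$; removing its interior partitions $Z$ into two disjoint sets $Z'$ and $Z''$. As $r(u)$ traverses $\tilde e$ by a distance $t$ in the direction from $Z'$ toward $Z''$, Proposition~\ref{CrossRatio}, applied to cross ratios of the form $c(u, u_0 : z, z')$ with $z \in Z'$ and $z' \in Z''$, implies that each $v(u-z)$ changes linearly in $t$ at a rate determined by which side of $\tilde e$ the point $z$ lies on. Weighting these changes by the multiplicities dictated by the construction of $Z^{\pm}_i$ and summing, one obtains a total rate of change expressible as a signed sum of the vectors $(\sigma_z(e_z))_i$ over $z \in Z''$. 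The zero tension condition, applied to the subgraph of $\Gamma$ on the $Z''$ side of $\tilde e$, identifies this sum with the $i$-th component of the slope $\sigma_v(\tilde e)$ of $\iota$ along $\tilde e$, which matches the required slope of $\iota(r(u))$. The main obstacle I anticipate is careful bookkeeping of signs, multiplicities, and orientations, together with the mild technical adjustment needed when some $z_j = \infty$ (which can be sidestepped by a preliminary change of coordinates so that $Z \subset \KK^*$).
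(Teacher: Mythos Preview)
Your proposal is correct and follows essentially the same approach as the paper: define the retraction to $[Z]$ (the paper calls it $b(u)$ rather than $r(u)$), then verify edge-by-edge that $v(\phi_i(u))$ moves with the correct slope by invoking Proposition~\ref{CrossRatio} and the zero tension condition. The paper's execution is marginally cleaner in that it writes $\phi_i(u_1)/\phi_i(u_2)$ directly as the product $\prod_j c(u_1,u_2:z_j^+,z_j^-)$ by pairing each zero with a pole (possible since $|Z_i^+|=|Z_i^-|$), which avoids tracking individual terms $v(u-z)$ and the attendant dependence on a choice of $\infty$; but this is only a packaging difference, and your anticipated bookkeeping issues are exactly the ones that this pairing sidesteps.
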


From now until the 
end of the proof, we identify $[Z]$ with $\Gamma$ so that we can write $\iota : [Z] \to 
\QQ^n$.

\begin{proof}
The curve $\phi(\PP^1(\KK))$ has genus zero because it is defined as the image of a genus zero curve. It has the same degree as $(\iota, \Gamma, m)$ because, from Proposition~\ref{ToricTransverse}, the degree can be computed simply by looking at the orders of vanishing of the coordinate functions on $(\KK^*)^n$ at the points of $\PP^1$ where $\phi$ is not defined. We built $\phi$ to have exactly the required zeroes and poles. We now move to the interesting point, the claim that $\Trop \phi(\PP^1(\KK))$ is a translation of $\iota(\Gamma)$. 

Let $u \in \PP^1(\KK) \setminus Z$. Then $[Z]$ is a tree and $[Z \cup \{
u \}]$ is a tree with one additional end. Let $b(u)$ be the point of $[Z]$
at which that end is attached. (See Figure~\ref{BOfU}. Here $[Z]$ is shown in solid lines, the points of $Z$ are represented by $z$'s, the path from $u$ to $b(u)$ is dashed and $b(u)$ is the solid dot.) We claim that, up to a translation,
$v(\phi(u))$ is $\iota(b(u))$. In other words, if $u_1$ and $u_2$ are distinct
members of $u \in \PP^1(\KK) \setminus Z$, we must show that for each
$i$ between $1$ and $n$ we have
$$v(\phi_i(u_1))-v(\phi_i(u_2))=\iota(b(u_1))_i - \iota(b(u_2))_i.$$

\begin{figure}
\centerline{\includegraphics{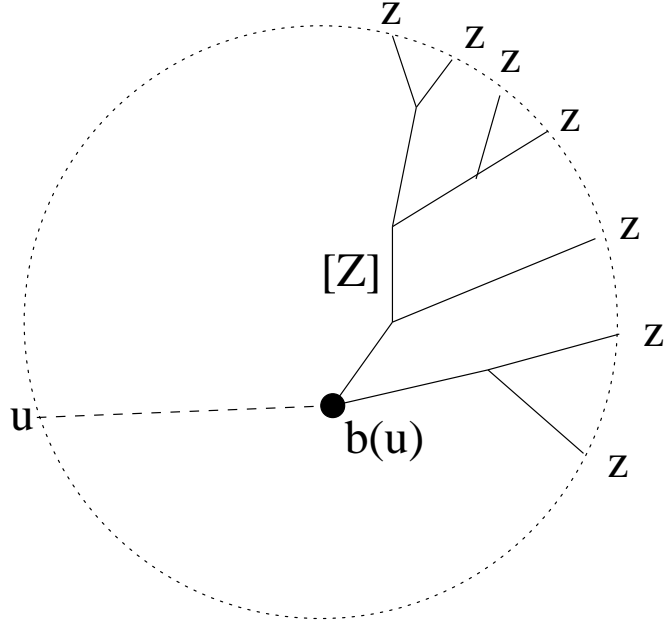}}
\caption{The Definition of $b(u)$} \label{BOfU}
\end{figure}

It is enough to show this in the case where $b(u_1)$ and $b(u_2)$ lie in the same edge $e$ of 
$[Z]$. Let $\sigma_{u_2}(e)=(s_1,\cdots s_n)$. We will fix one coordinate $i$ to pay attention to, so $i$ will not appear in our 
notation. Let $Z_i^{+}= \{ z_1^{+}, \ldots, z_r^{+} \}$ and $Z_i^{-}= \{ z_1^{-}, \ldots, 
z_r^{-} \}$. We may find constants $1 \leq s^+, s^{-} \leq n$ and order the $z_j^{\pm}$ such 
that $z_j^{\pm}$ is on the $b(u_1)$ side of $e$ for $1 \leq j \leq s^{\pm}$ and on the 
$b(u_2)$ side of $e$ for $s^{\pm} +1 \leq j \leq r$. Let $d$ be the distance from $b(u_1)$ to 
$b(u_2)$.

We have 
\begin{eqnarray*}
v(\phi_i(u_1))-v(\phi_i(u_2)) &=& v \left( \frac{\phi_i(u_1)}{\phi_i(u_2)} \right) \\
&=& v \left( \frac{ \left( \prod_{j=1}^r (u_1-z_j^{+}) / \prod_{j=1}^r (u_1-z_j^{-}) \right) 
}{ \left( \prod_{j=1}^r (u_2-z_j^{+}) / \prod_{j=1}^r (u_2-z_j^{-}) \right) } \right) \\
&=& v \left( \prod_{j=1}^r c(u_1,u_2: z_j^{+}, z_j^{-}) \right) \\
&=& \sum_{j=1}^r v( c(u_1,u_2: z_j^{+}, z_j^{-})) = d (s^+ - s^- ).
\end{eqnarray*}
The last equality is by applying Proposition~\ref{CrossRatio} to each term.

By the zero tension condition, $s_i=s^+ - s^-$. So $\iota(b(u_1))_i-\iota(b(u_2))_i$ is also $d(s^+ - s^-)$.
\end{proof}

We pause for three examples. 

\begin{ex}
Consider the tree in $\QQ^3$ with a finite edge running from $(0,0,0)$ to $(1,1,1)$ , infinite edges leaving $(1,1,1)$ in directions $(1,0,0)$ and $(0,1,1)$ and edges departing $(0,0,0)$ in directions $(0,-1,0)$ and $(-1,0,-1)$. Then $[0,t,1,t^{-1}]$ is isometric to $\Gamma$, with $0$, $t$,$ 1$ and $t^{-1}$ respectively corresponding to the endpoints of the above infinite rays. We have 
\begin{align*}
Z^{+,1} &= \{ 0 \} & Z^{+,2} &= \{ t \} & Z^{+,3} &= \{ t \} \\
Z^{-,1} &= \{ t^{-1} \} & Z^{-,2} &= \{ 1 \} & Z^{-,3} &= \{ t^{-1} \}
\end{align*}
Thus, the map $\phi$ is given by 
$$u \mapsto \left( \frac{u}{u-t^{-1}}, \frac{u-t}{u-1}, \frac{u-t}{u-t^{-1}} \right).$$
The image of this map is a genus $0$ curve $X$ with $\Trop X$ equal to the given tree.
\end{ex}

\begin{ex}
This time we choose a tree with no internal edges but complicated slopes. Consider the tree $T$ in $\QQ^3$ with no internal edges and four unbounded rays of slope $(1,2,3)$, $(5,-3,4)$, $(-7,1,-2)$, $(1,0,-5)$. Assuming that $\kk$ has characteristic $0$, the tree $[1,2,3,4] \subset \BT(K)$ is isometric to $T$. Our multisets $Z^{\pm}_i$ are
\begin{align*}
Z^{+,1} &= \{ 1, 2,2,2,2,2, 4 \} & Z^{+,2} &= \{ 1,1,3 \} & Z^{+,3} &= \{ 1,1,1,2,2,2,2 \} \\
Z^{-,1} &= \{ 3,3,3,3,3,3,3 \} & Z^{-,2} &= \{ 2,2,2 \} & Z^{-,3} &= \{ 3,3,4,4,4,4,4 \}
\end{align*}
For example, there are $5$ occurences of the number $4$ in $Z^{-,3}$ because ray number $4$ of our tree has slope $-5$ in the $x_3$ direction.

Our map $\phi$ is given by
$$u \mapsto \left(  \frac{(u-1)(u-2)^5 (u-4)}{(u-3)^7}, \frac{ (u-1)^2 (u-3)}{ (u-2)^3 }, \frac{ (u-1)^3 (u-2)^4}{ (u-3)^2 (u-4)^5 } \right).$$

Once again, the image of $\phi$ is a genus zero curve whose tropicalization is the given tree.
\end{ex}

\begin{ex}\label{Unfolded}
For our third example, we will engineer a genus zero zero tension curve such that $\iota$ is not injective. This curve is depicted in Figure~\ref{Genus0WithLoop} and will cause us several techinical difficulties in the Appendix. (In Figure~\ref{Genus0WithLoop}, we have indicated the slopes of the unbounded rays next to those rays. The plane $z=0$ is shown as the dashed parallelogram; the rays with slope $(0,0,-1)$ are below this plane while the rays with slope $(-2,1,1)$ and $(1,-2,1)$ lie above it.) Let $T$ be the tree shown in Figure~\ref{SCUnfolded}, where the unbounded rays are drawn with arrowheads and the internal edge has length $1$. We note that $T$ is isometric to $[ 0, 1, -1, t^{-1}, -t^{-1}, \infty ]$. So we take $X$ to be the curve $\PP^1(\KK) \setminus \{ 0, 1, -1, t^{-1}, -t^{-1}, \infty \}$. We now need to choose the map $\phi$. Writing $\phi$ in coordinates as $(\phi_1, \phi_2, \phi_3)$, we want $\phi_3$ to have a simple pole at $-1$ and $- t^{-1}$ and a simple zero at $1$ and $2 t^{-1}$. This way, the unbounded rays ending at $-1$ and $- t^{-1}$ will stretch downward with a slope whose third component is $-1$, the rays ending at $-1$ and $- 2 t^{-1}$ will stretch upward with a slope whose third component is $1$ and the rays ending at $0$ and $\infty$ will be contained in a plane where the third coordinate is constant. Similarly, we want $\phi_1$ to have a double zero at $-1$ and simple poles at $0$ and $- t^{-1}$ and we want $\phi_2$ to have a double zero at $- t^{-1}$ and simple pole and $-1$ and $\infty$. So we take $\phi$ to be given by
$$\phi : u \mapsto \left( \frac{(u+1)^2}{u(u+t^{-1})},  \frac{(u+t^{-1})^2}{(u+1)},  \frac{(u-1)(u-t^{-1})}{(u+1)(u+t^{-1})} \right).$$ 
For future reference, we note that $\Trop \phi(X)=\Trop \phi'(X)$ where
$$\phi' : u \mapsto \left( \frac{(u+1)^2}{u(u+t^{-1})},  \frac{(u+t^{-1})^2}{(u+1)},  \frac{(u-1)(u-2 t^{-1})}{(u+1)(u+t^{-1})} \right)$$

\begin{figure}
\centerline{\includegraphics{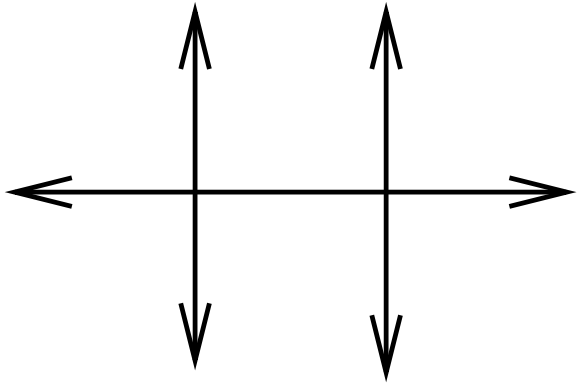}}
\caption{The tree $T$ in Example~\ref{Unfolded}} \label{SCUnfolded}
\end{figure}
\end{ex}

\begin{figure}
\centerline{\includegraphics{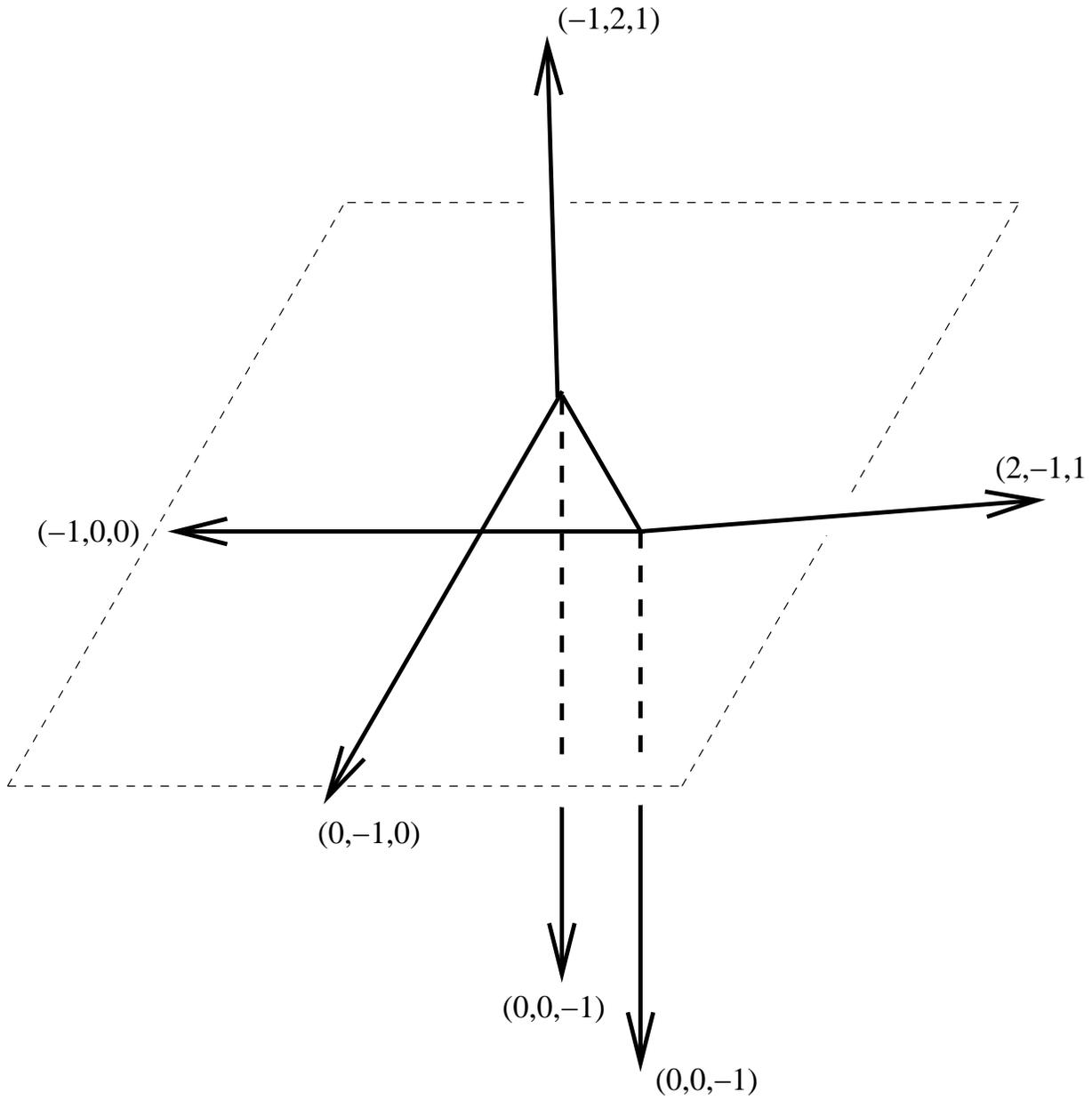}}
\caption{A tropicalization of a genus zero curve which crosses through itself} \label{Genus0WithLoop}
\end{figure}

\section{Tropical Curves of Genus One} \label{GenusOneSection}

Let $( \Gamma, \iota, m)$ be a zero tension curve where $\Gamma$ is connected with first Betti 
number $1$. This means that $\Gamma$ has a unique cycle; let $e_1$, \ldots, $e_r$ be the 
edges of this cycle and let $\sigma_i$ be $\sigma(e_i)$.

Our aim in this section is to prove 

\begin{TheoremGOE*} 
Let $(\iota, \Gamma, m)$ be a zero tension curve of genus one and degree $\delta$. Assume that the slopes of $\iota(e_1)$, \dots, $\iota(e_r)$ span $\QQ \otimes \Lambda$. Then there is a (punctured) genus one curve $X$ over $\KK$, and a map $\phi : X \to \TT(\KK, \Lambda)$ so that $\iota(\Gamma)=\Trop \phi(X)$.
\end{TheoremGOE*}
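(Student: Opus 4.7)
The plan is to extend the parameterization method of Section \ref{GenusZeroProof} from $\PP^1$ to the Tate elliptic curve $E_q = \KK^*/q^{\ZZ}$, replacing rational functions by ratios of non-archimedean theta functions. First I would cut the unique cycle of $\Gamma$ at an interior point of some edge $e_{i_0}$, producing a tree $\tilde\Gamma$ with two new degree-one vertices $p^+$ and $p^-$. Attaching unbounded rays of slopes $+m_{i_0}\rho_{i_0}$ and $-m_{i_0}\rho_{i_0}$ at these vertices yields a zero-tension curve $\tilde\Gamma^+$ whose underlying graph is a tree. By Proposition \ref{FindATree} I can find $\tilde Z \subset \PP^1(\KK)$ with $[\tilde Z]$ isometric to $\tilde\Gamma^+$, and by the action of $\PGL_2(\KK)$ I may arrange that the two points of $\tilde Z$ corresponding to $p^+, p^-$ are $1$ and $q$, where $q \in \KK^*$ is chosen with $v(q)$ equal to the total cycle length $L = \sum_{i=1}^r \ell_i$.

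The construction of Theorem \ref{GenusZeroFundamentalComputation} applied to $\tilde\Gamma^+$ then produces a map $\tilde\phi: \PP^1(\KK) \to \TT(\KK, \Lambda)$ whose $i$-th coordinate is a rational function $\prod_{z \in Z_i^+}(u-z)/\prod_{z \in Z_i^-}(u-z)$. To lift this to a map on $E_q$, I would replace each factor $(u - z)$ by the non-archimedean theta function $\theta_q(u/z) = (1 - u/z)\prod_{n \geq 1}(1 - q^n u/z)(1 - q^n z/u)$. A short computation using the identity $\theta_q(qw) = -w^{-1} \theta_q(w)$ shows that the modified coordinate $\phi_i$ is only quasi-periodic: $\phi_i(qu) = C_i \phi_i(u)$, where $C_i = \prod_{z \in Z_i^+} z \cdot \prod_{z \in Z_i^-} z^{-1} \in \KK^*$. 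The closure of the tropical cycle forces $v(C_i) \in L\ZZ$, and multiplying $\phi_i$ by a suitable integer power of $u$ absorbs this valuation; it then remains to arrange that the resulting $n$ quasi-periodic constants all reduce to $1$ in $\kk^*$.

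This final step is the heart of the argument. There is residual freedom to rescale each point of $\tilde Z$ by a unit of $\OO^*$ without altering $[\tilde Z]$ as a metric tree, and this freedom is encoded in the generic unit choices made during the proof of Proposition \ref{FindATree}. The effect of such rescalings on $\log C_i \in \kk^*$ is linear in their logarithms, and the matrix of partial derivatives, restricted to the rescaling parameters attached to the edges traversed by the cycle, has rows equal to the slope vectors $\sigma_1, \ldots, \sigma_r$. Under the spanning hypothesis this matrix has rank $n$, so the $n$ unit-level equations are simultaneously solvable; the corrected coordinate functions are then genuinely $q$-periodic and descend to a morphism $\phi: E_q \to \TT(\KK, \Lambda)$.

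Finally, the identity $\Trop \phi(E_q) = \iota(\Gamma)$ should be verified by a cross-ratio-style computation parallel to the proof of Theorem \ref{GenusZeroFundamentalComputation}: for a point $\bar u \in E_q$ lifting to $u \in \KK^*$, the valuation $v(\phi_i(u))$ is expressed as a sum of valuations of theta ratios, each of which behaves like a cross ratio in $\BT(\KK)$, giving the expected $i$-th coordinate of $\iota(\bar u)$. The main obstacle is the rescaling step in the previous paragraph, and the spanning hypothesis is precisely the linear-algebraic condition that makes it succeed; its failure in the superabundant case is what necessitates the additional well-spaced condition appearing in Theorem \ref{GenusOneSuperAbundant}.
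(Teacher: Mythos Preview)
Your approach is essentially the paper's: parametrize by a Tate curve $\KK^*/q^{\ZZ}$ with $v(q)$ equal to the cycle length, cut the cycle to get a tree, realize it in the Bruhat--Tits tree via Proposition~\ref{FindATree}, build coordinate functions as theta-quotients, and then use unit rescalings of the marked points to force the quasi-periodicity constants to equal~$1$. The cross-ratio verification you outline is exactly Proposition~\ref{GenusOneFundamentalComputation}, and the paper handles the valuation of $C_i$ by shifting one element of each $Y_i^+$ by a power of $q$ rather than by your device of multiplying $\phi_i$ by a power of $u$, but these are equivalent bookkeeping moves.

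The one place your sketch is genuinely looser than the paper is the linear-algebra step. You assert that the matrix encoding the effect of unit rescalings on the $C_i$ ``has rows equal to the slope vectors $\sigma_1,\dots,\sigma_r$,'' so that the spanning hypothesis immediately gives surjectivity. In the paper's parametrization---one unit $u_j$ per subtree $T_j$ hanging off the $j$-th circuit vertex---the columns of the relevant matrix $A=(a_{ij})$ are \emph{not} the circuit-edge slopes $\sigma_j$ but the outward slopes at the circuit vertices, i.e.\ the net $Z_i^+/Z_i^-$ imbalance inside $T_j$. That these span $\QQ\otimes\Lambda$ is not a tautology from the spanning of the $\sigma_j$; the paper deduces it by assembling an auxiliary zero-tension curve $\Gamma'$ (the bare circuit with one ray of slope $(a_{1j},\dots,a_{nj})$ attached at each vertex $v_j$) and invoking the maximum-principle argument of Lemma~\ref{LocalMaxLemma}. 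Your sketch either needs that lemma, or else must specify a different family of rescalings (for instance ``multiply every $z$ beyond edge $e_j$ along the path $[0,\infty]$ by $u_j$'') under which the matrix really does have the $\sigma_j$ as columns---and in that case you must also verify that those rescalings still preserve the isometry type of the realized tree, which is not automatic when several subtrees attach at the same point of $[0,\infty]$.
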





We use Tate's nonarchimedean uniformizations of elliptic curves. A
good reference for this subject is sections 2 and 3 of \cite{Roq}. Let $q
\in \KK^*$ with $v(q)>0$. Tate constructs an elliptic curve $E$ over $\KK$
with a bijection $\pp$ from  $\KK^*/q^{\ZZ}$ to   $E(\KK)$. For $u$ and $z \in
\KK^*$, define
$$\Theta_z(u)=\prod_{j=-\infty}^0 (1-u/(z q^j)) \prod_{j=1}^{\infty} (1-q^j z/u).$$
For any $u$, $z$ and $q$ in $\KK^*$, there is a finite extension $K$ of $\KKK$ containing $u$, $z$ and $q$ and this product is convergent in the nonarchimedean topology on $K$ as long as none of the individual terms are zero. (We can not directly argue that the product is convergent in $\KK$ because $\KK$ is not complete.) Thus, the product above is well defined for
all $u \in \KK^* \setminus q^{\ZZ} \cdot (z^{+}, z^{-})$ and it satisfies
$\Theta_z(qu)=(-z/u) \Theta_z(u)$. (Remember that $\lim_{n \to \infty} q^n=0$ because $v(q)>0$.) Thus, if $Z^+=\{ z^+_1, \ldots,
z^+_k \}$ and $Z^-=\{ z^-_1, \ldots, z^-_k \}$ are finite multisubsets
of $K^*$ with the same cardinality and $\prod_{i=1}^k
(z_i^{+}/z_i^{-})=1$ then 
$$\phi_{Z^+, Z^-}(u):=\prod_{z \in Z^{+}_i} \Theta_z(u)/\prod_{z \in Z^{-}_i} \Theta_z(u)$$
 is a well defined function on
$(\KK^*/q^{\ZZ}) \setminus \left( \bigcup_{j=\infty}^{\infty} q^{j} \cdot \{ z_1^{+}, \ldots, z_k^{+},
z_1^{-}, \ldots, z_k^{-} \} \right)$. Consider this product as a function on
$E(\KK)$ with the appropriate points removed. Tate proves that this is a
meromorphic function with zeroes at the points $\pp(z_i^{+})$ and poles
at $\pp(z_i^{-})$. Every nonzero meromorphic function on $E(\KK)$, up to scalar multiples, occurs in this way. (See \cite[Section 2, Proposition 1]{Roq} for the statement that all the nonzero functions in the field which Roquette calls $M_K$ are of this form. See \cite[Section 2, Statement IV]{Roq} for the fact that this field is the meromorphic functions on an elliptic curve over $\KK$.)


Our goal, given the input data $(\Gamma, \iota, m)$, is to construct a genus $1$ curve $X$ over $\KK$ and a rational map $\phi: X \to (\KK^*)^n$ such that $\Trop \phi(X)=\iota(\Gamma)$. The way our construction will proceed is to use $(\iota, \Gamma, m)$ to construct an element $q \in \KK^*$, with $v(q) >0$, and finite multisubsets $Z_1^+$, \ldots, $Z_n^+$, $Z_1^-$, \ldots, $Z_n^-$, of $\PP^1(\KK)$. We will have $\prod_{z \in Z_i^+} z = \prod_{z \in Z_i^-} z$, which will actually be the hardest part of the construction to achieve. Thus, we will have a rational map $\phi$ from the Tate curve $X:=\KK^*/q^{\ZZ}$ to $(\KK^*)^n$ by $\pp(u) \mapsto (\phi_{Z_1^+, Z_1^-}(u), \ldots, \phi_{Z_n^+, Z_n^-}(u))$, for $u \in (K^*/q^{\ZZ}) \setminus \left( \bigcup_{j=\infty}^{\infty} q^{j} \bigcup_{i=1}^n ( Z_i^+ \cup Z_i^- ) \right)$. We will have arranged our choices so that $\Trop \phi(X)=\iota(\Gamma)$. 

We begin by discussing the situation for an arbitrary choice of $q$ and $Z^{\pm}_1$, \ldots, $Z^{\pm}_n$. Later, we will specialize our discussion to the particular choices that will derive from the graph $\Gamma$. We will use $\gimel(q, Z^{\pm}_{\bullet})$ to denote the graph we will construct from $q$ and $Z^{\pm}_1$, \dots, $Z^{\pm}_n$ eventually $\gimel$ will be isomorphic to $\Gamma$. \footnote{The symbol $\gimel$ is the Hebrew letter ``gimmel" which makes the same sound as the Greek letter $\Gamma$. } We will drop the arguments of $\gimel$ when they should be free from context.

So, let $q \in \KK^*$ with $v(q) >0$ and let $Z_1^+$, $Z_1^-$, \dots, $Z_n^+$, $Z_n^-$ be $2n$ finite nonempty multisubsets of $\KK^*$ with $|Z_i^+|=|Z_i^-|$ for $i=1$, $2$, \dots $n$.  We introduce the notation $Z$ for $\bigcup_{i=\infty}^{\infty} q^i \left( \bigcup_{j=1}^n \left( Z^+_j \cup Z^-_j \right) \right)$. Let $\tilde{\gimel}(q,Z^{\pm}_{\bullet})=\bigcup_{z, z' \in Z} [z,z'] \subset \BT(\KK)$. The metric space $\tilde{\gimel}(q, Z^{\pm}_{\bullet})$ is invariant under multiplication by $q^{\ZZ}$, we define $\gimel(q, Z^{\pm}_{\bullet})=\tilde{\gimel}(q, Z^{\pm}_{\bullet})/q^{\ZZ}$.

\begin{Lemma}
The metric space $\tilde{\gimel}(q, Z^{\pm}_{\bullet})$ is an infinite tree. The quotient $\tilde{\gimel}(q, Z^{\pm}_{\bullet})/q^{\ZZ}$ is a finite graph, with first Betti number $1$, and $\gimel(q, Z^{\pm}_{\bullet})$ is isometric to a dense subset of $U$.
\end{Lemma}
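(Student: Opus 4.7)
The plan is to prove the four claims (tree structure, infiniteness, finite quotient, and first Betti number equal to one) separately, using the translation action of $q$ on $\BT(\KK)$ as the key tool; the density statement is then a formal consequence of the paper's $\QQ$-valued conventions.

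First I would verify that $\tilde{\gimel}$ is a tree. Since $\BT(\KK)$ is a $\QQ$-tree it has no embedded cycles, and any union $\bigcup_{z,z'\in Z}[z,z']$ of geodesics inherits this property; connectedness is immediate after fixing a basepoint $z_0\in Z$, since $[z,z']\subseteq [z,z_0]\cup[z_0,z']$ for every pair $z,z'\in Z$. Infiniteness is equally easy: $Z$ is $q^{\ZZ}$-invariant and, because $v(q)>0$, the element $q$ has infinite multiplicative order, so $Z$ is an infinite subset of $\PP^1(\KK)$ whose distinct points index distinct ends of $\tilde{\gimel}$.

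The heart of the proof is the geometry of $q$-translation. The scalar $q$ acts on $\KK^2$ as $\diag(q,1)$ and hence on $\BT(\KK)$ as a hyperbolic isometry with axis $A=[(0:1),(1:0)]$ and translation length $v(q)$. The set $Z$, and therefore $\tilde{\gimel}$, is preserved. The nontrivial inclusion $A\subseteq\tilde{\gimel}$ follows by fixing any $z\in Z$ and noting that $v(q^jz)=jv(q)+v(z)\to+\infty$ as $j\to+\infty$ (so $q^jz\to(0:1)$) and $v(q^{-j}z)\to-\infty$ (so $q^{-j}z\to(1:0)$) in $\PP^1(\KK)$; the geodesics $[q^{-j}z,q^jz]\subseteq\tilde{\gimel}$ then cover arbitrarily long segments of $A$, exhausting it in the limit.

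Finally, quotienting by $q^{\ZZ}$, the image of $A$ in $\gimel$ is a circle of length $v(q)$, while the remainder of $\tilde{\gimel}$ consists, modulo $q^{\ZZ}$, of finitely many finite trees attached to this circle: the set $Z/q^{\ZZ}$ injects into $\bigsqcup_i(Z_i^+\sqcup Z_i^-)$ and is therefore finite, and each class of ends spans only a finite convex hull in $\BT(\KK)$ once one leaves the axis. Thus $\gimel$ is a finite graph; deleting the circle leaves a forest, so $b_1(\gimel)=1$. The density statement is then formal: distances in $\BT(\KK)$ take values in $v(\KK^*)=\QQ$, so $\gimel$ naturally consists of the $\QQ$-rational points of its $\RR$-topological realization, which is dense there. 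The main technical step will be the inclusion $A\subseteq\tilde{\gimel}$, together with the claim that the branches of $\tilde{\gimel}$ off $A$ are finitely many modulo $q^{\ZZ}$ and each individually finite; both are standard features of translation actions on Bruhat--Tits trees, but they are the only places in the argument where the fine geometry of $\BT(\KK)$ is genuinely used.
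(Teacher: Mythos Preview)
Your approach is essentially the same as the paper's: both identify the axis $[0,\infty]\subset\BT(\KK)$ as the ``spine'' of $\tilde{\gimel}$, observe that multiplication by $q$ translates along it by $v(q)$, and argue that finitely many finite trees hang off this axis in each period. The paper reaches this picture by first proving the explicit equality $\tilde{\gimel}=\bigcup_{z\in Z}[0,z,\infty]$ and then grouping the $z$'s by their valuation into finite subtrees $T_u$; you instead invoke the language of hyperbolic isometries and establish $A\subseteq\tilde{\gimel}$ by a limiting argument on $[q^{-j}z,q^jz]$. Both routes work; the paper's decomposition makes the finiteness of the side-branches immediate, whereas your sketch (``each class of ends spans only a finite convex hull once one leaves the axis'') would benefit from the same observation the paper uses, namely that for fixed $u\in\QQ$ only finitely many $z\in Z$ satisfy $v(z)=u$, and that $v(Z)$ meets any interval of length $v(q)$ in finitely many points.

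One small correction: your claim that $Z/q^{\ZZ}$ \emph{injects into} $\bigsqcup_i(Z_i^+\sqcup Z_i^-)$ has the arrow reversed. What is true is that the natural map $\bigcup_i(Z_i^+\cup Z_i^-)\to Z/q^{\ZZ}$ is a surjection, which is all you need for finiteness. This does not affect the argument.
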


Note that we call a graph ``finite" when it has a finite number of vertices and of edges. When the graph is equipped with a metric, as $\tilde{\gimel}$ is, we do \textbf{not} take ``finite" to imply that all of the edges have finite length.

\begin{proof}
First note that, for any $a$ and $b \in \KK^*$, and any $d \in [0, \infty]$ we either have $d \in [a, q^j b]$ for $j$ sufficiently large or we have $d \in [a, q^{-j} b]$ for $j$ sufficiently large. The same holds if we take $d \in [a, \infty]$ or $d \in [a, 0]$.  So $\tilde{\gimel}$, which by definition is $\bigcup_{z, z' \in Z} [z,z']$, is also $\bigcup_{z, z' \in Z} [0, \infty, z, z']$. Also, note that for any $a$ and $b \in \KK^*$, we have $[a,b, 0, \infty]
=[a,0, \infty] \cup [b,0, \infty]$. (Just check the three possible topologies for the tree  $[a,b, 0, \infty]$.) So $\tilde{\gimel}=\bigcup_{z \in Z} [0,z,\infty]$.

For $u \in \QQ$, set $T_u=\bigcup_{z \in Z,\ v(z)=u} [0,z,\infty]$, so $\tilde{\gimel}=\bigcup_{u \in \QQ} T_u$. If $u \neq u'$ then $T_u \cap T_{u'}=[0, \infty]$. So $\tilde{\gimel}$ is just a central path $[0,\infty]$ with the side stalk $T_u \setminus [0,\infty]$ stuck on for each $u \in v(Z)$. But, for every $u$, there are only finitely many $u \in Z$ with valuation $u$, so $T_u$ is just a finite tree. And $v(Z)$ is a union of finitely many copies of $v(q) \cdot \ZZ$, so in particular $v(Z)$ is a discrete subset of $A$. So we see that $\tilde{\gimel}$ really is just an infinite tree, consisting of an infinite path with a periodic pattern of finite trees branching off from it.

When we take the quotient of this infinite tree by the periodic shift by $v(q)$ along this path, we get a finite graph.
\end{proof}

In Figure~\ref{G&TG}, we show $\tilde{\gimel}$ on the left and $\gimel$ (on the right) for a sample choice of $q$ and $Z$. The action of $q$ is shown by the boldfaced arrow.

\begin{figure}
 \centerline{\scalebox{0.8}{\includegraphics{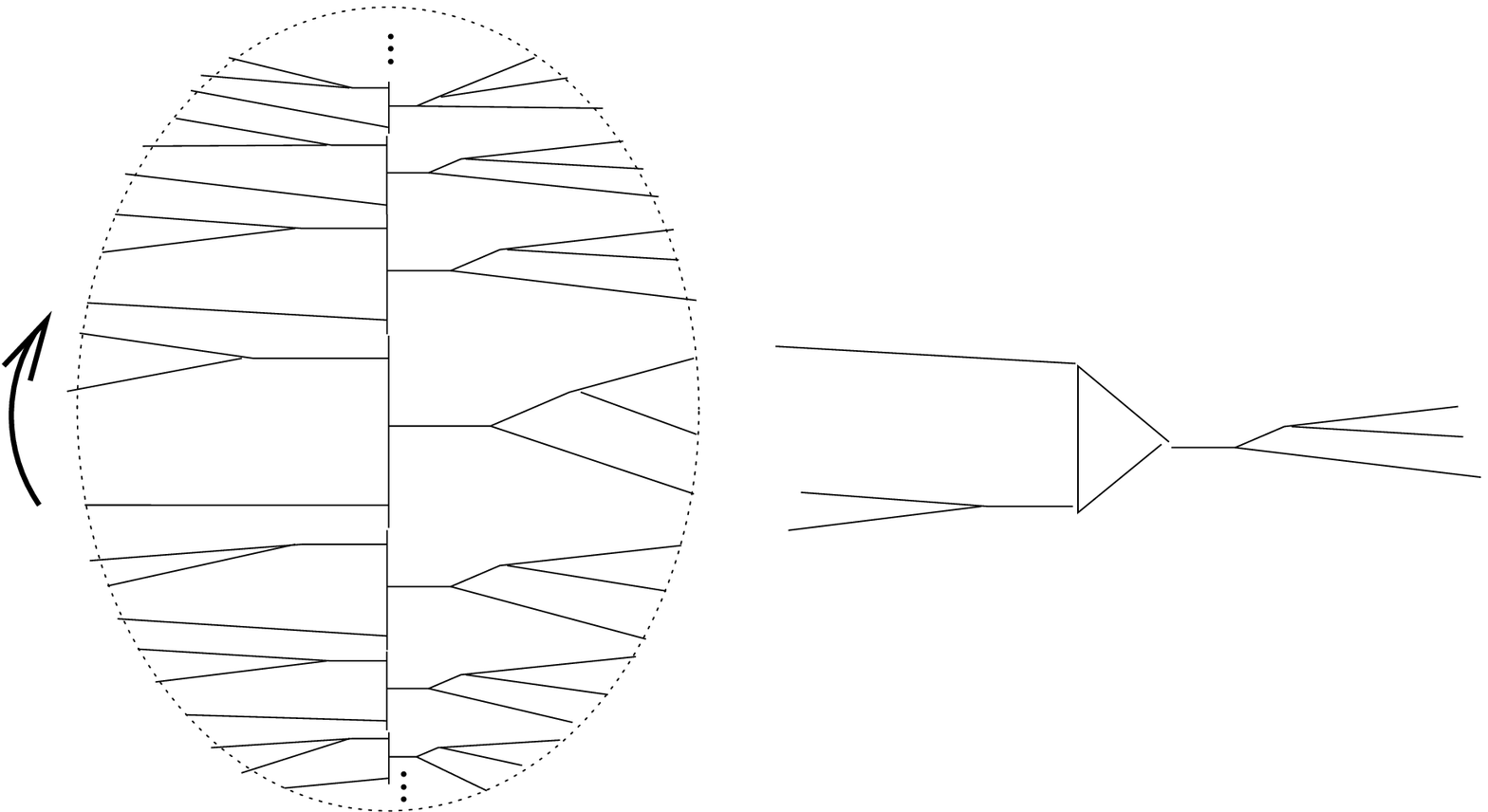}}}
\caption{The graphs $\tilde{\gimel}$ and $\gimel$} \label{G&TG}
\end{figure}

We now define a function $s$ assigning a vector in $\ZZ^n$ to each edge of $\gimel$; this function will eventually describe the slopes of edges of our tropical curve. Fix an index $i$ between $1$ and $n$ and an oriented edge $e$ of $\tilde{\gimel}$. Removing $e$ from $\tilde{\gimel}$ divides $\tilde{\gimel}$ into two components. This gives rise to partitions $Z_i^{\pm}=H_i^{\pm} \sqcup T_i^{\pm}$ of $Z_i^{\pm}$, where $H_i^{\pm}$ consists of those elements of $Z^{\pm}_i$ which are on the ``head" side of $e$ and $T_i^{\pm}$ consists of elements of $Z_i^{\pm}$ which are on the ``tail" side of $e$. Define
$$\tilde{s}_i(e)=|H_i^+| - |T_i^+| - |H_i^-| + |T_i^-|.$$
Note that each of these multisets is finite, so $\tilde{s}_i(e)$ is well defined. Also, note that $\tilde{s}_i(e)$ is zero for all but finitely many $e$. We now define
$$s_i(e)=\sum_{j=-\infty}^{\infty} \tilde{s}_i(q^j \cdot e).$$
Note that all but finitely many terms of this sum are zero. We then define $s(e)$ to be the vector $(s_1(e), \ldots, s_n(e))$. Note that $s(e)=s(qe)$, so we may think of $s$ a function on directed edges of $\gimel$, and note that reversing an edge negates $s$. 

\begin{lemma} \label{CombinatoricsGivesZTC}
The vectors $s(e)$ obey the zero tension condition.
\end{lemma}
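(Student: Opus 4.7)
The plan is to reduce the zero-tension condition at a vertex $v$ of $\gimel$ to a local counting statement in the universal cover $\tilde{\gimel}$. First I would lift $v$ to a vertex $\tilde v$ of $\tilde{\gimel}$. Because $q^{\ZZ}$ acts freely on $\tilde{\gimel}$ by translation along the axis $[0,\infty]$, the edges incident to $v$ in $\gimel$ are in canonical bijection with the edges incident to $\tilde v$ in $\tilde{\gimel}$, and under this bijection the vector $s(e)$ attached to an outgoing edge $e$ at $v$ corresponds to $\sum_{k \in \ZZ} \tilde s(q^k e)$ at $\tilde v$.

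Next I would exchange the two summations, writing
\[ \sum_{e \ni v} s(e) \;=\; \sum_{k \in \ZZ} \sum_{e \ni \tilde v} \tilde s(q^k e), \]
where each inner sum is the $\tilde s$-sum at the vertex $q^k \tilde v$ of $\tilde{\gimel}$, with edges oriented outward. Since the translate of a vertex is again an interior vertex of the tree $\tilde{\gimel}$, it suffices to prove the local identity $\sum_{e \ni w} \tilde s(e) = 0$ at every interior vertex $w$ of $\tilde{\gimel}$.

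For this local identity I would use the tree structure of $\tilde{\gimel}$ directly. Let $e_1,\ldots,e_d$ be the outgoing edges at $w$. Deleting any $e_j$ separates $\tilde{\gimel}$ into a ``head" component and a ``tail" component, and this separation is exactly what partitions $Z_i^{\pm}$ into $H_i^{\pm}(e_j) \sqcup T_i^{\pm}(e_j)$ in the definition of $\tilde s_i$. Because $\tilde{\gimel}$ is a tree and each element of $Z$ sits at an end, each element of $Z_i^{\pm}$ lies on the head side of exactly one $e_j$ and on the tail side of the remaining $d-1$. Substituting this into the definition of $\tilde s_i$ and summing over $j$ yields the scalar $(2-d)(|Z_i^+| - |Z_i^-|)$, which vanishes by the standing hypothesis $|Z_i^+| = |Z_i^-|$.

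The only point requiring care is that the iterated sum over $k$ and $j$ in the second paragraph makes sense, i.e., that only finitely many terms are nonzero. This is the finiteness observation already recorded right after the definition of $s_i$, and it is forced by the finiteness of each $Z_i^{\pm}$. Beyond that bookkeeping, there is no genuine obstacle — the argument is essentially the well-known fact that signed endpoint counts on a tree satisfy a local conservation law, combined with averaging over the $q^{\ZZ}$-orbit to descend to $\gimel$.
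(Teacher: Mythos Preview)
Your proposal is correct and follows essentially the same approach as the paper: both reduce to showing that $\tilde s$ satisfies the zero tension condition at each interior vertex of $\tilde{\gimel}$, and then observe that $s$ is a finite sum of $q^{\ZZ}$-translates of $\tilde s$. Your local computation, counting the contribution of each element of $Z_i^{\pm}$ to $\sum_j \tilde s_i(e_j)$ and obtaining $(2-d)(|Z_i^+|-|Z_i^-|)$, is just a repackaging of the paper's computation in terms of the quantities $D_k = |Z_i^+ \cap T_k| - |Z_i^- \cap T_k|$ and the identity $\tilde s_i(e_k) = D_k - \sum_{m \neq k} D_m$.
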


\begin{proof}
We will show that the function $\tilde{s}$ on the edges of $\tilde{\gimel}$ obeys the zero tension condition, so the translates of $\tilde{s}$ will as well and hence their sum $s$ will. Let $v$ be an internal vertex of $\tilde{\gimel}$ and let $e_1$, $e_2$, \dots, $e_p$ be the edges incident to $v$, which we will direct away from $v$. Fix an index $i$ between $1$ and $n$, we need to show that $\sum_k \tilde{s}(e_k)_i =0$. Let $T_k$ be the component of $\tilde{\gimel} \setminus \{ v \}$ which contains  (the interior of) $e_k$. Let $D_k$ be the difference between the cardinalities of $Z^{+}_i \cap T_k$ and $Z^{-}_i \cap T_k$. Since $|Z^+_i|=|Z^-_i|$, we know that $\sum D_k=0$. We have $\tilde{s}(e_k)_i=D_k - \sum_{m \neq k} D_m$, so $\sum_{k=1}^p \tilde{s}(e_k)_i =(p-1) \sum_{k=1}^p D_k=0$.
\end{proof}

We will now consider the effect of adding the following additional condition:
\begin{equation}
\hbox{For $i=1$, \dots, $n$, \  } v\left( \prod_{z \in Z^{+}_i} z \right) = v\left( \prod_{z \in Z^{-}_i} z \right). 
\label{CombProductOne} \end{equation} 
Note that condition~(\ref{CombProductOne}) is an immediate consequence of the stronger condition
\begin{equation}
\hbox{For $i=1$, \dots, $n$, \ } \prod_{z \in Z^{+}_i} z  =  \prod_{z \in Z^{-}_i} z . \label{ProductOne}
\end{equation}

\begin{prop} \label{CycleCloses}
Suppose that condition (\ref{CombProductOne}) holds. Let $e_1$, $e_2$, \dots, $e_r$ be the edges of the unique cycle of $\gimel$, ordered and oriented cyclically. Let $\ell(e)$ be the length of the edge $e$ of $\tilde{\gimel}$. Then
$$\sum_{k=1}^r \ell(e_k) s(e_k)=0$$
\end{prop}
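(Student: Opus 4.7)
The plan is to rewrite the cyclic sum on the cycle of $\gimel$ as an integral along the $q$-axis of $\tilde\gimel$, and then to identify that integral with the valuations of the products $\prod_{Z_i^+} z$ and $\prod_{Z_i^-} z$, which agree by hypothesis~(\ref{CombProductOne}).

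First I would observe that the cycle of $\gimel$ is precisely the image of the $q$-axis of $\tilde\gimel$ under the quotient map $\tilde\gimel \to \gimel$. The preceding lemma's proof shows that $[0,\infty] \subset \BT(\KK)$ is contained in $\tilde\gimel$, and multiplication by $q$ acts on $[0,\infty]$ as translation by $v(q)$ (the standard fact that a scalar matrix of nonzero valuation is hyperbolic on the Bruhat--Tits tree with axis through $0$ and $\infty$); since $\tilde\gimel$ is a tree this is the unique axis of the $q^\ZZ$-action. Lifting the cycle edges to a path $\tilde e_1,\dots,\tilde e_r$ along the axis running from some $p$ to $q\cdot p$, the family $\{q^j \tilde e_k : j \in \ZZ,\ 1 \le k \le r\}$ exhausts the axis edges exactly once, and $\ell(q^j \tilde e_k) = \ell(e_k)$ because $q$ acts by isometries. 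Hence
\begin{equation*}
\sum_{k=1}^r \ell(e_k)\, s(e_k) \;=\; \sum_{k=1}^r \sum_{j \in \ZZ} \ell(q^j \tilde e_k)\, \tilde s(q^j \tilde e_k) \;=\; \sum_{\tilde e \subset \mathrm{axis}} \ell(\tilde e)\, \tilde s(\tilde e).
\end{equation*}

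Next I would compute $\tilde s_i$ edge-by-edge along the axis. Parameterize the axis by $u \in \QQ$ so that each $z \in \KK^*$ attaches at $u = v(z)$, and orient every axis edge in the direction of increasing $u$. After passing to a good subdivision so that no $v(z)$ with $z \in Z_i^\pm$ lies in the interior of an axis edge, cutting $\tilde\gimel$ at an axis edge $\tilde e$ of position $u$ separates exactly the points with $v(z) > u$ from those with $v(z) < u$. Writing $N_i^\pm(u) := \#\{z \in Z_i^\pm : v(z) > u\}$ and using $|Z_i^+| = |Z_i^-|$, a direct computation gives $\tilde s_i(\tilde e) = 2\bigl(N_i^+(u) - N_i^-(u)\bigr)$ (or its negative, depending on which side is labelled ``head''; the sign is uniform along the axis).

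The step function $N_i^+ - N_i^-$ is compactly supported: it vanishes for $u$ larger than every $v(z)$ (both counts equal $0$) and for $u$ smaller than every $v(z)$ (both counts equal $|Z_i^+| = |Z_i^-|$), with jumps only at $u \in v(Z_i^+ \cup Z_i^-)$. The sum over axis edges is therefore finite and equals the integral of $\tilde s_i$ against Lebesgue measure on the axis; Fubini, applied to the identity $N_i^\pm(u) = \sum_{z \in Z_i^\pm} \mathbf{1}_{u < v(z)}$ after subtraction (the common divergent tail cancels thanks to $|Z_i^+| = |Z_i^-|$), gives
\begin{equation*}
\sum_{\tilde e \subset \mathrm{axis}} \ell(\tilde e)\, \tilde s_i(\tilde e) \;=\; \pm 2 \int_{\RR} \bigl(N_i^+(u) - N_i^-(u)\bigr)\, du \;=\; \pm 2 \left( \sum_{z \in Z_i^+} v(z) - \sum_{z \in Z_i^-} v(z) \right),
\end{equation*}
which is zero by (\ref{CombProductOne}). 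Doing this for every coordinate $i$ gives the vector identity $\sum_k \ell(e_k)\, s(e_k) = 0$. The main obstacle is simply the identification of the cycle of $\gimel$ with the axis of the $q^\ZZ$-action on $\tilde\gimel$ and keeping orientations consistent; after that the argument reduces to a routine computation with step functions.
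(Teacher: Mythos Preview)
Your argument is correct and follows the same route as the paper: unfold the cycle sum to a sum of $\ell(\tilde e)\,\tilde s_i(\tilde e)$ over all edges of the axis $[0,\infty]\subset\tilde\gimel$, then identify that sum with $\sum_{z\in Z_i^+} v(z) - \sum_{z\in Z_i^-} v(z)$ and invoke~(\ref{CombProductOne}). The only difference is cosmetic: the paper carries out the second identification by pairing each $z_m^+$ with a $z_m^-$ and recognizing the pair's contribution as the signed length of $[0,\infty]\cap[z_m^+,z_m^-]$ via Proposition~\ref{CrossRatio}, whereas you package the same computation as the integral of the compactly supported step function $N_i^+ - N_i^-$.
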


\begin{proof}
Pick an index $i$ between $1$ and $n$, we will show that $\sum_{k=1}^r \ell(e_k) s_i(e_k)=0$. Recall that we have $|Z_i^+| = |Z_i^-|$. Set $N=|Z_i^+|=|Z_i^-|$ and fix orderings $(z_1^{\pm}, z_2^{\pm}, \ldots, z_N^{\pm})$ of $Z_i^+$ and $Z_i^-$. For an oriented edge $e$ of $\tilde{\gimel}$ and an index $m$ between $1$ and $N$, define $\delta_m(e)$ to be $1$ if $z_m^+$ is on the head side of $e$ and $z_m^-$ is on the tail side of e;  define $\delta_m(e)$ to be $-1$ if $z_m^+$ is on the tail side of $e$ and $z_m^-$ is on the head side of e; define $\delta_m(e)$ to be $0$ if $z_m^+$ and $z_m^-$ are on the same side of $e$. Then $\tilde{s}_i(e)=\sum_{m=1}^N \delta_m(e)$. 

Now, we have
$$\sum_{k=1}^r \ell(e_k) s_i(e_k)=\sum_{k=1}^r \ell(e_k) \sum_{j= -\infty}^{\infty} q^j \tilde{s}_i(q^j e_k)=\sum_{e \subset [0,\infty]} \ell(e) \tilde{s}_i(e).$$
Using the expression in the previous paragraph for $\tilde{s}_i(e)$, we see that this is equal to
$$\sum_{e \subset [0,\infty]} \ell(e) \sum_{m=1}^N \delta_m(e)= \sum_{m=1}^N \sum_{e \subset [0,\infty]} \ell(e) \delta_m(e),$$
where we may interchange summation because all but finitely many terms are zero. Now, the inner sum 
$\sum_{e \subset [0,\infty]} \ell(e) \delta_m(e)$ is the (signed) sum of the lengths of all edges in $[0,\infty]$ which separate $z_m^+$ from $z_m^-$. In other words, the inner sum is the signed length of $[0, \infty] \cap [z_m^+, z_m^-]$. But, by Proposition~\ref{CrossRatio}, this is simply $v(z_m^+) - v(z_m^-)$. Plugging this into our sum, we see that the quantity we wish to show is zero is $\sum_{m=1}^N \left( v(z_m^+) - v(z_m^-) \right)$.  But, by condition~(\ref{CombProductOne}), we have $v\left( \prod_{z \in Z^{+}_i} z \right) - v\left( \prod_{z \in Z^{-}_i} z \right)=0$ so  $\sum_{m=1}^N \left( v(z_m^+) - v(z_m^-) \right)=0$.
\end{proof}

Define a continuous map $\tilde{f}_{q, Z^{\pm}_{\bullet}} : \tilde{\gimel} \to \QQ^n$ such that, if $e$ is any finite edge of $\tilde{\gimel}$, directed from vertex $x$ to vertex $y$ then $\tilde{f}_{q, Z^{\pm}_{\bullet}}(y)=\tilde{f}_{q, Z^{\pm}_{\bullet}}(x)+\ell(e) s(e)$ and $\tilde{f}_{q, Z^{\pm}_{\bullet}}(e)$ is the line segment connecting $\tilde{f}_{q, Z^{\pm}_{\bullet}}f(x)$ and $\tilde{f}_{q, Z^{\pm}_{\bullet}}(y)$. If $e$ is an infinite ray of $\gimel$ then $\tilde{f}_{q, Z^{\pm}_{\bullet}}(e)$ is an unbounded ray of slope $s(e)$. This map is unqiquely determined by the $s(e)$ up to translation by an element of $\QQ^n$. Using proposition~\ref{CycleCloses}, we see that $\tilde{f}$ factors through the quotient graph $\gimel$, we will write the map $\gimel \to \QQ^n$ as $f_{q, Z^{\pm}_{\bullet}}$. We now come to the fundamental computation that unites our combinatorial constructions with actual geometry:

\begin{prop} \label{GenusOneFundamentalComputation}
Given $q$ and $Z^{\pm}_1$, \dots, $Z^{\pm}_n$ subject to condition~(\ref{ProductOne}), define the curve $X$ and the map $\phi$ as above. Define also the graph $\gimel(q, Z^{\pm}_{\bullet})$ and the map $f_{q, Z^{\pm}_{\bullet}}$. (Since we have assumed (\ref{ProductOne}), we have (\ref{CombProductOne}), so we can define $f_{q, Z^{\pm}_{\bullet}}$.) Then $\Trop \phi(X)$ is a translation of $f_{q, Z^{\pm}_{\bullet}}(\gimel(q, Z^{\pm}_{\bullet}))$.
\end{prop}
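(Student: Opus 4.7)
The plan is to adapt the proof of Theorem~\ref{GenusZeroFundamentalComputation} to the Tate setting. For $u \in \KK^*$ outside $Z$, the union $[u] \cup \tilde{\gimel} \subset \BT(\KK)$ attaches to $\tilde{\gimel}$ at a unique point $b(u)$. Since $b(qu) = q \cdot b(u)$ and $\tilde{\gimel}$ is $q^{\ZZ}$-invariant, this assignment descends to a retraction $b : X \setminus \{\text{zeros and poles of }\phi\} \to \gimel$. I will show that $v \circ \phi$ and $f_{q, Z^{\pm}_\bullet} \circ b$ agree up to a single translation in $\QQ^n$; this immediately yields that $\Trop \phi(X)$ is a translation of $f_{q, Z^{\pm}_\bullet}(\gimel)$.

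Both $v \circ \phi$ and $f_{q, Z^{\pm}_\bullet} \circ b$ are continuous and piecewise linear, so by telescoping it suffices to fix lifts $u_1, u_2 \in \KK^*$ whose retractions lie in the interior of a common edge $e$ of $\tilde{\gimel}$ and show, for each coordinate $i$, that $v(\phi_i(u_1)) - v(\phi_i(u_2)) = d \cdot s_i(e)$, where $d$ is the signed distance from $b(u_2)$ to $b(u_1)$ along the head-to-tail orientation of $e$ used to define $s(e)$. By construction of $f_{q, Z^{\pm}_\bullet}$ the right-hand side equals $f_{q, Z^{\pm}_\bullet}(b(u_1)) - f_{q, Z^{\pm}_\bullet}(b(u_2))$.

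Expanding
\[ v(\phi_i(u_1)) - v(\phi_i(u_2)) = \sum_{z \in Z_i^+} v\bigl(\Theta_z(u_1)/\Theta_z(u_2)\bigr) - \sum_{z \in Z_i^-} v\bigl(\Theta_z(u_1)/\Theta_z(u_2)\bigr), \]
I would handle each $\Theta_z(u_1)/\Theta_z(u_2)$ factor by factor in the Tate product. Only finitely many of the factors $1 - u_k/(z q^j)$ (for $j \leq 0$) and $1 - q^j z/u_k$ (for $j \geq 1$) have nonzero valuation at either $u_1$ or $u_2$, because as $|j| \to \infty$ the translate $q^j z$ moves far from the edge $e$ and both factors become units. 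For each surviving $j$, the factor equals, up to a monomial unit, the cross-ratio $c(u_1, u_2 : q^j z, \infty)$, and Proposition~\ref{CrossRatio} evaluates its valuation as $\pm d$, with the sign recording whether $q^j z$ lies on the head or on the tail side of $e$. Summing over $j$ gives a contribution of $d \bigl(|H^{\pm}_i(q^j e)| - |T^{\pm}_i(q^j e)|\bigr)$ per $z$, and the monomial corrections telescope and cancel across $Z_i^+$ and $Z_i^-$ thanks to $|Z_i^+| = |Z_i^-|$ and condition~(\ref{ProductOne}). What remains is $d$ times $\sum_j \tilde{s}_i(q^j e) = s_i(e)$, as desired.

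The main obstacle is this bookkeeping. For a single $\Theta_z$, the zero set $q^{\ZZ} z$ has infinitely many points on each side of a typical edge of $\tilde{\gimel}$, so the heuristic ``slope equals net zero count across $e$'' is not individually well-defined; only after combining the Tate factors into the ratio $\phi_i$ do the divergent tails cancel. The hypotheses $|Z_i^+|=|Z_i^-|$ and (\ref{ProductOne}) -- the very hypotheses that make $\phi_i$ descend to $X = \KK^*/q^{\ZZ}$ -- are exactly what forces that cancellation, while Proposition~\ref{CycleCloses} ensures that the resulting slope function $s_i$ yields a consistent map $f_{q,Z^{\pm}_\bullet}$ around the cycle of $\gimel$. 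Carrying out this cancellation carefully -- which reduces, factor by factor, to the genus zero cross-ratio computation from the proof of Theorem~\ref{GenusZeroFundamentalComputation} -- is the technical heart of the proof.
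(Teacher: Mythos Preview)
Your overall architecture is exactly the paper's: define the retraction $b$, reduce to two points $u_1,u_2$ whose retractions lie in a single edge $e$ of $\tilde{\gimel}$, and verify that $v(\phi_i(u_1))-v(\phi_i(u_2))=d\,s_i(e)$ via cross-ratios and Proposition~\ref{CrossRatio}. Where you diverge is in the organization of the cross-ratio computation, and this is precisely the spot where you flag the ``bookkeeping obstacle'' without quite resolving it.

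You expand $\Theta_z(u_1)/\Theta_z(u_2)$ for a single $z$ and aim for three-point cross-ratios $c(u_1,u_2:q^j z,\infty)$. As you noticed, this product does not behave well term-by-term: for large $j$ the factor has valuation $0$, but the cross-ratio $c(u_1,u_2:q^j z,\infty)$ has valuation $v(u_1)-v(u_2)$, and your ``monomial correction'' $u_2/u_1$ occurs for infinitely many $j$, so there is no finite telescoping to perform. (Relatedly, your claim that $v(c(u_1,u_2:q^jz,\infty))=\pm d$ according to the side of $e$ on which $q^jz$ lies is not correct: with $\infty$ fixed, one side gives $\pm d$ and the other gives $0$.) The paper sidesteps all of this by pairing $z_m^+$ with $z_m^-$ \emph{before} expanding: one writes
\[
\frac{\Theta_{z_m^+}(u_1)\,\Theta_{z_m^-}(u_2)}{\Theta_{z_m^-}(u_1)\,\Theta_{z_m^+}(u_2)}
=\prod_{j=-\infty}^{\infty} c\bigl(u_1/q^j,\,u_2/q^j : z_m^+,\,z_m^-\bigr),
\]
a genuine four-point cross-ratio in each factor. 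Now all spurious monomials cancel within each $j$, all but finitely many factors have valuation $0$, and Proposition~\ref{CrossRatio} gives the valuation of each factor as the signed length of $[u_1/q^j,u_2/q^j]\cap[z_m^+,z_m^-]$. Summing over $m$ yields $d\,\tilde{s}_i(q^j e)$ directly, and then summing over $j$ gives $d\,s_i(e)$. No appeal to condition~(\ref{ProductOne}) is needed in this step; that condition is used only to make $\phi_i$ descend to $\KK^*/q^{\ZZ}$, and Proposition~\ref{CycleCloses} (which needs only the weaker~(\ref{CombProductOne})) is what makes $f_{q,Z^{\pm}_\bullet}$ well-defined on $\gimel$. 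So your plan is sound, but the single missing idea---pair $z_m^+$ with $z_m^-$ first---turns the ``technical heart'' you anticipate into a two-line computation.
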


This proof is very similar to the proof of Theorem~\ref{GenusZeroFundamentalComputation}. We will make the inconsequential abuse of notation of considering $\phi$ both as a map from (an open subset of) $X$ and from $\KK^* \setminus Z$. (Here $Z$, as above, is $\bigcup_{i=\infty}^{\infty} q^i \left( \bigcup_{j=1}^n \left( Z^+_j \cup Z^-_j \right) \right)$.) 

\begin{proof}
Let $u \in \KK^* \setminus Z$. Then $[Z \cup \{ u \}]$ is a tree, which is the union of $\tilde{\gimel}$ and a path ending at $u$. Let $b(u) \in \tilde{\gimel}$ be the other end of this path. We claim that, up to a translation,
$v(\phi(u))$ is $f(b(u))$. In other words, if $u_1$ and $u_2$ are distinct
members of $u \in \KK^* \setminus Z$, we must show that for each
$i$ between $1$ and $n$ we have
$$v(\phi_i(u_1))-v(\phi_i(u_2))=\iota(b(u_1))_i - \iota(b(u_2))_i.$$

It is enough to show this in the case where $b(u_1)$ and $b(u_2)$ lie in the same edge $e$ of 
$[Z]$. Let $s(e)=(s_1,\cdots s_n)$. We will fix one coordinate $i$ to pay attention to, so $i$ will not appear in our 
notation. Let $Z_i^{+}= \{ z_1^{+}, \ldots, z_N^{+} \}$ and $Z_i^{-}= \{ z_1^{-}, \ldots, 
z_N^{-} \}$.
 Let $d$ be the distance from $b(u_1)$ to $b(u_2)$.

We have 
\begin{eqnarray*}
v(\phi_i(u_1))-v(\phi_i(u_2)) &=& v \left( \frac{\phi_i(u_1)}{\phi_i(u_2)} \right) \\
&=& v \left( \frac{ \prod_{m=1}^N \Theta_{z_m^{+}}(u_1)/ \Theta_{z_m^{-}}(u_1) }{\prod_{m=1}^N \Theta_{z_m^{+}}(u_2)/ \Theta_{z_m^{-}}(u_2) }\ \right) \\
&=& \sum_{m=1}^N v \left(  \frac{ \prod_{m=1}^N \Theta_{z_m^{+}}(u_1)/ \Theta_{z_m^{-}}(u_1) }{\prod_{m=1}^N \Theta_{z_m^{+}}(u_2)/ \Theta_{z_m^{-}}(u_2) } \right)
\end{eqnarray*}
Now, by definition,
\begin{eqnarray*}
 \frac{ \prod_{m=1}^N \Theta_{z_m^{+}}(u_1)/ \Theta_{z_m^{-}}(u_1) }{\prod_{m=1}^N \Theta_{z_m^{+}}(u_2)/ \Theta_{z_m^{-}}(u_2)} &=&
  \prod_{j=-\infty}^0 \left(
\frac{(1-u_1/(q^j z_m^{+})) (1-u_2/(z_m^{-} q^j))}{(1-u_1/(z_m^{-} q^j) )(1-u_2/(z_m^{+} q^j))} \right) \\
& & \quad \quad \times \prod_{j=1}^{\infty} \left(
\frac{(1-q^j z_m^+/u_1)(1-q^j z_m^-/u_2)}{(1-q^j z_m^{-}/u_1)(1-q^j z_m^+/u_2)} \right) \\
&=& \prod_{j=-\infty}^{\infty} c(u_1/ q^j, u_2 / q^j : z_m^-,  z_m^-).
\end{eqnarray*}
Here we may rearrange our product freely because in non-archimedean analysis all convergent sums and products converge absolutely.

So, by proposition~\ref{CrossRatio}, $v \left( \phi_{z_m^+, z_m^-}(u_1) /  \phi_{z_m^+, z_m^-}(u_2) \right)$ is the sum on $j$ (from $- \infty$ to $\infty$) of the signed length of $[u_1/ q^j,u_2 / q^j] \cap [z_m^+, z_m^-]$. Now, the sum on $m$ (from $1$ to $N$) of the signed length of $[u_1/ q^j,u_2 / q^j] \cap [z_m^+, z_m^-]$ is $d \tilde{s}_i(q^j e)$. So, interchanging summation, we obtain
$$v(\phi_i(u_1))-v(\phi_i(u_2)) = \sum_{j=- \infty}^{\infty} d \tilde{s}_i(q^{-j} e) = d s_i(e)=f(b(u_1))_i-f(b(u_2))_i$$
as desired.
\end{proof}

We now know how to built a genus one curve over $\KK$ with a rational map $\phi : X \to (\KK^*)^n$ such that $\Trop \phi(X)$ has a given form. In order to prove Theorem~\ref{GenusOneExists}, we need to use $(\iota, \Gamma, m)$ to find $q$ and to find multisubsets $Z^{\pm}_1$, \dots, $Z^{\pm}_n$ of $\KK^*$ such that 
\begin{itemize}
\item[(i)] for $i=1$, \dots, $n$, we have $|Z^{+}_i|=|Z^{-}_i|$
\item[(ii)] for $i=1$, \dots, $n$, we have $\prod_{z \in Z_i^{+}} z = \prod_{z \in Z_i^{-}} z$
\item[(iii)] the graph  $\gimel(q, Z^{\pm}_{\bullet})$ is isometric to $\Gamma$
\item[(iv)] the map $f_{q, Z^{\pm}_{\bullet}} $ corresponds to $\iota$ under the isometry $\gimel(q, Z^{\pm}_{\bullet}) \isomorph \Gamma$. 
\end{itemize}
We spend the rest of this section describing the necessary construction. We will first build sets $\Zo^{\pm}_1$, \dots, $\Zo^{\pm}_n$ which obey all of these conditions except that, instead of condition (ii) (which is the same as condition~(\ref{ProductOne})), they satisfy the weaker~(\ref{CombProductOne}). We will then perturb the $\Zo^{\pm}_i$ to give sets $Z^{\pm}_i$ satisfying this last condition.

Let $\ell$ be the length of the circuit of $\Gamma$. First, we choose $q$ so that $v(q)=\ell$. Let $\tilde{\Gamma}$ be the universal cover of $\Gamma$. The graph $\tilde{\Gamma}$ is an infinite tree which has one doubly infinite path with finite trees periodically branching off this path. Choose a point in the interior of an edge $e_0$ of the circuit of $\Gamma$ and cut $\Gamma$ at this point to produce a tree $T$, so $\tilde{\Gamma}$ is a union of a doubly infinite sequence of copies of $T$. The tree $T$ has two finite rays and a number of infinite rays. By Proposition~\ref{FindATree}, we can find a subset $U$ of $\BT(\KK) \cup \PP^1(\KK)$ such that $T \isomorph [U]$. There are two elements of $\BT(\KK)$ in $U$, which we will call $u_0$ and $u_1$; the rest of $U$ is contained in $\PP^1(\KK)$. The distance from $u_0$ to $u_1$ is $\ell=v(q)$, so we may (and do) use a transformation in $\PGL_2(\KK)$ to assume that $u_0=\Span_{\OO}((1,0), (0,1))$ and $u_1=\Span_{\OO}((q,0), (0,1))$. Since $u_0$ and $u_1$ are leaves of $[U]$, all the other elements of $U \setminus \{ u_0, u_1 \}$ must be contained in $v^{-1}( (0,\ell) ) \subset \KK^* \subset \PP^1(\KK)$. We set $\Zo =\bigcup_{j=- \infty}^{\infty} q^j (U \setminus \{ u_0, u_1 \})$. Then $[\Zo] \isomorph \tilde{\Gamma}$. 

We now describe how to choose the (finite) multisets $\Zo^{\pm}_1$, \ldots, $\Zo^{\pm}_n$ as subsets of the (infinite) set $\Zo$. Fix an index $i$ between $1$ and $n$. We first define multisubsets $Y_i^{+}$ and $Y_i^{-}$ of $U$. Let $v$ be a leaf of $\Gamma$, at the end of the edge $e$, and let $u$ be the element of $U$ at the end of the corresponding ray of $T$. Then $u$ will occur $|\sigma_v(e)_i|$ times in $Y^{\pm}_i$, where $\pm$ is the opposite of the sign of $\sigma_v(e)_i$, and will not occur at all in $Y^{\mp}_i$.  Note that the assumption that $(\iota, \Gamma, m)$ is zero tension forces $Y^+_i$ and $Y^-_i$ to have the same cardinality. We now modify $Y^{\pm}_i$ slightly to produce $\Zo^{\pm}_i$. Let $(s_1, \ldots, s_n)$ be the slope of the edge $e_0$ which we cut to make $T$. For each $i$ between $1$ and $n$ we take one element of $Y^+_i$ and multiply it by $q^{s_i}$, leaving the other elements of $Y^+_i$ unchanged. We call the resulting multiset $\Zo^+_i$. We take $\Zo^-_i=Y^-_i$. Then $\bigcup_{j=-\infty}^{\infty} q^j \bigcup_{i=1}^n (\Zo^+_i \cup \Zo^-_i)=\bigcup_{j=- \infty}^{\infty} q^j (U \setminus \{ u_0, u_1 \})$ so we have $\tilde{\gimel}(q,Y^{\pm}_{\bullet}) \isomorph \tilde{\gimel}(q,\Zo^{\pm}_{\bullet}) \isomorph \tilde{\Gamma}$ and $\gimel(q,Y^{\pm}_{\bullet}) \isomorph \gimel(q,\Zo^{\pm}_{\bullet}) \isomorph \Gamma$

We now need to check that, if $e$ is an oriented edge of $\Gamma$ then $\sigma(e)$ is equal to $s(e)$, where in the second expression we have thought of $e$ as an edge of $\gimel$. The $s(e)$ obey the zero tension condition, by Lemma~\ref{CombinatoricsGivesZTC}, so it is enough to check that $s(e)=\sigma(e)$ for (1) $e$ an unbounded ray and (2) $e=e_0$; the zero tension condition will then determine both $s(e)$ and $\sigma(e)$ for the remaining edges. If $e$ is an unbounded ray, with  the leaf $v$ at its end, then we forced there to be $\mp \sigma_v(e)_i$ elements of $\Zo^{\pm}_i$ lying at the end of the preimages of $e$ in $\tilde{\gimel}$. For each unbounded ray $e'$ of $\tilde{\gimel}$ (directed towards its leaf), $\tilde{s}(e')_i$ is $\pm$ times the number of elements of $\Zo^{\pm}_i$ at the end of $e'$.  So $s(e)_i$ is the sum of $\tilde{s}(e')_i$ over all preimages $e'$ of $e$ -- in other words precisely $\sigma(e)_i$ as desired. 

Now, let us consider the case $e=e_0$. If we computed $s(e_0)_i$ using $Y^+_i$ and $Y^-_i$, we would get zero as, for every preimage $e'$ of $e$ in $\tilde{\gimel}$, all of $U \setminus \{ u_0, u_1 \}$ lies to one side of $e'$. When we use $\Zo^+_i$ and $\Zo^-_i$ instead, only one element moves and that element moves past $\sigma(e_0)_i$ preimages of $e_0$, and  we have $\sigma(e_0)_i=s(e_0)_i$.

\begin{lemma} \label{CombProdOneHolds}
For each $i$ between $1$ and $n$, we have $v(\prod_{z \in \Zo_i+} z)=v(\prod_{z \in \Zo^-_i} z)$.
\end{lemma}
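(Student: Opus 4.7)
The plan is to unwind both sides explicitly in terms of the leaves of $\Gamma$ and then exploit the zero tension condition. From the definition of $Y^{\pm}_i$, for each leaf $w \in \partial \Gamma$ the multiplicity of $u_w$ in $Y^{-}_i$ minus its multiplicity in $Y^{+}_i$ equals $\sigma_w(e_w)_i$. Combining this with $\Zo^{-}_i = Y^{-}_i$, with the fact that $\Zo^{+}_i$ is obtained from $Y^{+}_i$ by multiplying one entry by $q^{s_i}$, and with $v(q) = \ell$, I obtain
\[
v\Bigl(\prod_{z \in \Zo^{-}_i} z\Bigr) - v\Bigl(\prod_{z \in \Zo^{+}_i} z\Bigr) \;=\; \sum_{w \in \partial \Gamma} \sigma_w(e_w)_i \, v(u_w) \;-\; s_i \ell.
\]
The lemma therefore reduces to the identity $\sum_{w} \sigma_w(e_w)_i\, v(u_w) = s_i \ell$.

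Next I would identify $v(u_w)$ geometrically. Given the normalisation $u_0 = \Span_{\OO}((1,0),(0,1))$ and $u_1 = \Span_{\OO}((q,0),(0,1))$, a direct computation using the standard geodesic $\phi(e) = \overline{\OO(0,1) + t^e \OO(1,0)}$ from $0$ to $\infty$ shows that for any $u \in \KK^{*}$ with $0 < v(u) < \ell$ the path $[0, u]$ in $\BT(\KK)$ merges with the axis $[u_0, u_1]$ precisely at parameter $v(u)$. So, parameterising the circuit of $\Gamma$ by $t \in [0, \ell]$ running from the $u_0$-side of the cut to the $u_1$-side and writing $\pi(w)$ for the unique vertex of the circuit at which the hanging subtree through $w$ attaches, the isometry $T \isomorph [U]$ yields $v(u_w) = t_{\pi(w)}$.

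Grouping $\sum_w \sigma_w(e_w)_i\, v(u_w)$ by $\pi(w)$ and invoking zero tension on each hanging subtree shows that $\sum_{w: \pi(w) = v} \sigma_w(e_w)$ equals the jump, at $v$, of the weighted tangent $\nu$ to the circuit (with $\nu$ on the edge $e_k$ equal to $\sigma(e_k)$ taken in the direction of increasing $t$). A summation by parts along the circuit then gives
\[
\sum_{w} \sigma_w(e_w)_i\, v(u_w) \;=\; \ell \, \nu_i(0^{+}) \;-\; \int_0^{\ell} \nu_i(t) \, dt,
\]
whose boundary term equals $s_i\ell$ by the convention that $s_i$ is the slope of $e_0$ in the direction from $u_0$ to $u_1$, and whose integral vanishes by the combinatorial content of Proposition~\ref{CycleCloses} applied on the geometric side.

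The main obstacle I anticipate is the careful bookkeeping of orientations and the reconciliation of the weighted tangent $\nu$ on the circuit with the closure identity for $\iota(\Gamma)$: the summation by parts and the vanishing of $\int \nu\, dt$ both need to be executed with the multiplicity factors consistent with $s_i = \sigma(e_0)_i$. Getting the signs right — which endpoint of $e_0$ sits on the $u_0$-side, and how the sign of $\sigma_w(e_w)_i$ assigns $u_w$ to $Y^{+}_i$ versus $Y^{-}_i$ — is the most delicate piece.
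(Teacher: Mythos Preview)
Your plan is sound and would produce a correct proof, but it is organized differently from the paper's argument. The paper pairs off the elements of $\Zo^{+}_i$ and $\Zo^{-}_i$ in an arbitrary order and interprets each $v(\zo^{+}_k/\zo^{-}_k)$ via Proposition~\ref{CrossRatio} as the signed length of $[0,\infty]\cap[\zo^{+}_k,\zo^{-}_k]$; summing over $k$ and then decomposing by edges of the circuit yields $\sum_j \ell(e_j)\,\sigma(e_j)_i$, which telescopes to zero because the $\iota(e_j)$ form a closed loop. In other words, the paper literally runs the proof of Proposition~\ref{CycleCloses} backwards, with no grouping by circuit vertices and no summation by parts. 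Your route instead isolates the effect of each leaf, uses zero tension on the hanging subtrees to convert the leaf sum at a circuit vertex into the jump of the circuit tangent $\nu$, and then Abel-sums along the circuit. The paper's approach is shorter and avoids the orientation bookkeeping you flag; your approach has the virtue of never invoking cross ratios beyond the single observation that $v(u_w)$ records the branch parameter on $[u_0,u_1]$.

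Two small points to tighten in your write-up. First, the vanishing of $\int_0^{\ell}\nu_i(t)\,dt=\sum_j\ell(e_j)\,\sigma(e_j)_i$ is not really ``Proposition~\ref{CycleCloses} on the geometric side'': that proposition concerns the combinatorially-defined slopes $s$ on $\gimel$, whereas here you only need that $\iota$ carries the circuit of $\Gamma$ to a closed polygon in $\QQ^n$, so the displacements sum to zero. Second, your boundary term should strictly be $\ell\,\nu_i(\ell^{-})$ from the Abel summation; this equals $\ell\,\nu_i(0^{+})$ only because both ends of $[0,\ell]$ lie on the same edge $e_0$, and it is this common value that must be matched with $s_i$. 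Once you fix the convention that $s$ is the slope of $e_0$ in the direction of increasing $t$ (equivalently, from the $u_0$-side to the $u_1$-side), the signs you worry about fall into place.
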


This proof is essentially reversing the proof of proposition~\ref{CycleCloses}.

\begin{proof}
Fix an index $i$ between $1$ and $n$. We choose orderings $( \zo_1^+, \zo_2^+, \ldots \zo_m^+)$ and  $( \zo_1^-, \zo_2^-, \ldots \zo_m^-)$ of $\Zo^+_i$ and $\Zo_i^-$; we want to establish that $\sum_{k=1}^m v(\zo^+_k/\zo^-_k)=0$. Now, $\zo^+_k/\zo^-_k$ is the crossratio $c(\zo_k^+, \zo_k^-: 0, \infty)$ so, by Proposition~\ref{CrossRatio}, $v(\zo^+_k/\zo^-_k)$ is the signed length of the intersection of $[0, \infty]$ with $[\zo_k^+, \zo_k^-]$. We can break up this intersection as a sum over the various edges in the doubly infinite path $[0, \infty]$, so $v(\zo^+_k/\zo^-_k)=\pm \sum_{e' \in [0, \infty] \cap [\zo_k^+, \zo_k^-]} \ell(e')$, where $\ell(e')$ is the length of $e'$ and the sign describes whether the orientation of the path $
[\zo_k^+, \zo_k^-]$ matches the orientation of $e'$ from $0$ to $\infty$. 

We know that, for any edge $e_j$ in the circuit of $\Gamma$, the signed number of intersections of the paths $[\zo_1^+, \zo_1^-]$, \dots, $[\zo^+_m, \zo^-_m]$ with the preimages of $e_j$ is $s(e_j)_i$. So $\sum_{k=1}^m v(\zo^+_k/\zo^-_k)=\sum_{j=1}^r \ell(e_j) s(e_j)_i$. But the $j^{\textrm{th}}$ summand on the right is the displacement in the $i^{\textrm{th}}$ coordinate between the two endpoints of $\iota(e_j)$. Since the edges $\iota(e_1)$, $\iota(e_2)$, \dots, $\iota(e_r)$ form a closed loop, the sum telescopes to zero.
\end{proof}

Now that we have proven Lemma~\ref{CombProdOneHolds}, we know that the map $f_{q, \Zo^{\pm}_{\bullet}}$ is well defined.

We have now shown that $\gimel \isomorph \Gamma$, and that, under this identification, $\sigma=s$. It follows that $f(\gimel)$ is a translation of $\iota(\Gamma)$, and we will no longer distinguish $\gimel$ from $\Gamma$. So, if we had $\prod_{z \in \Zo_i+} z=\prod_{z \in \Zo^-_i} z$, so that the map $\phi : X \to (\KK^*)^n$ existed in the first place, then $\Trop \phi(X)$, which we know would be a translation of $f(\Gamma)$, would also be a translation of $\iota(\Gamma)$. So we will have established Theorem~\ref{GenusOneExists} if we can simply show that $\prod_{z \in \Zo_i+} z=\prod_{z \in \Zo^-_i} z$. More precisely, what we show is that we can perturb each $\Zo^{\pm}_i$ to $Z^{\pm}_i$ so that this product relation holds without altering $\tilde{\gimel}$ as an abstract tree.

We now describe our perturbative method.  We remember that the edges of the circuit of $\Gamma$ are called $e_1$, \dots, $e_r$ and we introduce the notation $(v_{j-1}, v_j)$ for the endpoints of $e_j$, where the indices are cyclic modulo $r$.  Delete the interiors of the edges $e_1$, \dots, $e_r$ from $\Gamma$. Let $T_j$ be the component of the remaining forest containing the vertex $v_j$ of $\Gamma$.  We will choose constants $u_1$, \dots, $u_r$ in $\KK^*$ with $v(u_1)=\dots=v(u_r)=0$, which we think of as associated to the $T_j$. We modify the $\Zo^{\pm_i}$ as follows -- let $\zo$ be an element of $\Zo^{\pm}_i$.   Consider the component $T_j$ to which $\zo$ is attached. Then we multiply $\zo$ by $u_j$ to obtain a new element $z$. The multiset of thus modified elements will form the set $Z^{\pm}_i$. 

Now, multiplication by an element $u$ of $\KK^*$ with $v(u)=0$ is an automorphism of $\BT(\KK)$ which fixes (pointwise) the path $[0, \infty]$. This transformation modifies each component of $\tilde{\gimel} \setminus [0, \infty]$ by such an automorphism, so $\tilde{\gimel}$ is left unchanged as an abstract tree.  So we must understand how multiplication by $u_j$ effects the ratio $\prod_{z \in Z^{+}_i} z / \prod_{z \in Z^{-}_i} z$. All of the elements of $\Zo^{\pm}_i$ which are in $T_j$ are multiplied by $u_j$. Let $a_{ij}=|\Zo^{+}_i \cap T_j| - |\Zo^{-}_i \cap T_j|$. So our modification of the $\Zo^{\pm}$ multiplies $\prod_{z \in \Zo^{+}_i} z / \prod_{z \in \Zo^{-}_i} z$ by $\prod_{j=1}^r u_j^{a_{ij}}$. 

We want to know that we can choose $u_1$, \dots, $u_r$ in $v^{-1}(0)$ such that $ \prod_{j=1}^r u_j^{a_{ij}}=\left( \prod_{z \in \Zo^{+}_i} z / \prod_{z \in \Zo^{-}_i} z \right)$ for $i=1$, \dots $n$. We have shown (Lemma~\ref{CombProdOneHolds}) that the right hand side of this equation is in $v^{-1}(0)$. So we want to know that the map of abelian groups $v^{-1}(0)^r \mapsto v^{-1}(0)^n$ given by the matrix $A:=(a_{ij})$ is surjective. Since $\KK$ is algebraically closed, $v^{-1}(0)$ is a divisible group and hence it is enough to know that $A$ has rank $n$ over $\QQ$. We now turn to verifying this.

Let $\Gamma'$ be the graph obtained by taking the circuit of $\Gamma$ and adding an unbounded ray $r_j$ at each vertex $v_j$. Let $\iota'$ be the map from $\Gamma'$ into $\QQ^n$ where $\iota'$ restricted to the circuit of $\Gamma'$ is $\iota$ and the slope of $\iota'(r_j)$ is $(a_{1 j}, a_{2j} , \dots, a_{nj})$. Then $\Gamma'$ and $\iota'$ give a zero tension curve. By our assumption that $\Gamma$ is ordinary, we know that $\iota'(\Gamma)$ is not contained in any hyperplane. So, by Lemma~\ref{LocalMaxLemma}, the slopes of the $\iota'(r_j)$ span $\QQ^n$. Since the slopes of the  $\iota'(r_j)$ are the columns of $A$, this is the same as saying that $A$ has rank $n$.

So, we deduce that $A$ has full rank over $\QQ$ and therefore we can choose $u_1$, \dots, $u_r$ such that $\prod_{z \in \Zo^{+}_i} z / \prod_{z \in \Zo^{-}_i} z = \prod_{j=1}^r u_j^{a_{ij}}$ for $i=1$, \dots $n$. This, in turn allows us to construct the desired curve $X$ and desired map $\phi$. 

We conclude with an example of this construction. 

\begin{ex}
Consider the zero tension curve of genus one shown in Figure~\ref{GenusOneExample}, where the map 
$\iota$ is simply an injection. We take all of the edges of the square to have length $1$. Then the universal cover, $\tilde{\Gamma}$, is as shown in Figure~\ref{GenusOneCover}. We need to pick $q$ with $v(q)=4$; we make the simple choice $q=t^4$. We now cut the circuit of $\Gamma$ at the edge labeled $e$ to produce the tree $T$. Set $u_0=\Span_{\OO}( (1,0),(0,1))$ and $u_1=\Span_{\OO}( (1,0), (0,t^4))$. We take $U=\{ u_0, t^{0}, t^{1}, t^{2}, t^{3}, u_1 \}$. The reader is invited to check that, indeed, $[U]$ is isometric to $T$ and $[ \bigcup_{j=- \infty} ^{\infty} q^j (U \setminus \{u_0, u_1 \})]=[\{ t^k \}_{k \in \ZZ}] $ is isometric to $\tilde{\Gamma}$. The points $t^0$, $t^1$, $t^2$ and $t^3$ of $\PP^1(\KK)$ correspond to the rays of $\Gamma$ with slopes $(1,1)$, $(1,-1)$, $(-1,-1)$ and $(-1,1)$ respectively. 

\begin{figure}
 \centerline{\includegraphics{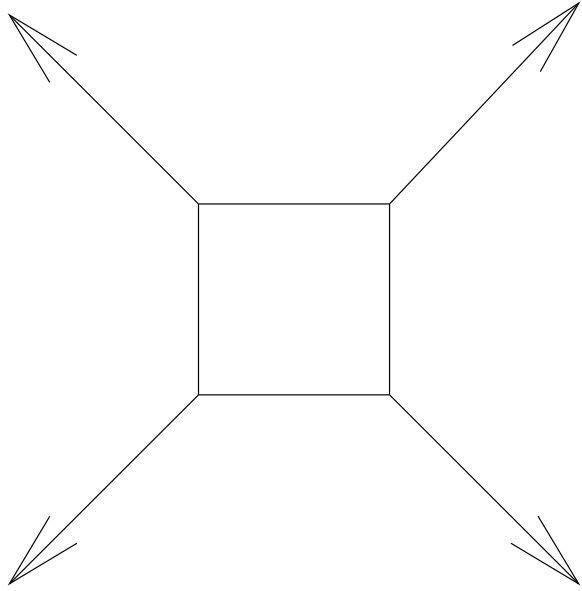}}
\caption{An example of a curve of genus one} \label{GenusOneExample}
\end{figure}

\begin{figure}
 \centerline{\includegraphics{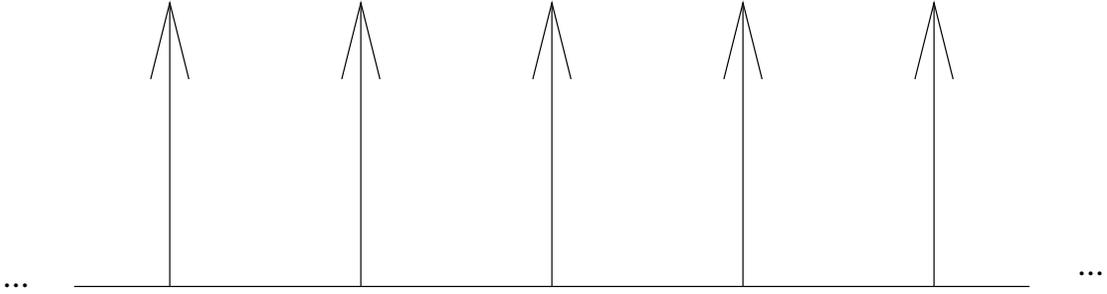}}
\caption{The universal cover of our example} \label{GenusOneCover}
\end{figure}

We now need to choose the multisets $Z^{+}_1$, $Z^{-}_1$, $Z^{+}_2$ and $Z^{-}_2$. First we pick subsets $Y^{\pm}_i$ of $U$:
$$Y^{+}_1 = \{ t^0, t^1 \} \quad Y^{-}_1 = \{ t^2, t^3 \} \quad Y^{+}_2 = \{ t^0, t^3 \} \quad Y^{-}_2 = \{ t^1, t^2 \}.$$
We then modify the $Y$'s to produce the $\Zo$'s. Specifically, we must multiply one of the members of $Y^{+}_1$ by $q$. We obtain
$$\Zo^{+}_1 = \{ t^4, t^1 \} \quad \Zo^{-}_1 = \{ t^2, t^3 \} \quad \Zo^{+}_2 = \{ t^0, t^3 \} \quad \Zo^{-}_2 = \{ t^1, t^2 \}.$$
This is the stage in the process where we would perturb the $\Zo$'s to produce the $Z$'s. However, we got lucky this time -- we already have $\prod_{z \in \Zo^{+}_i} z = \prod_{z \in \Zo^{-}_i} z$ for $i=1$, $2$, so no perturbation is necessary and we just take $Z^{\pm}_i=\Zo^{\pm}_i$. So we take the curve $X$ to be $\KK^*/t^4$ and we take the map $\phi$ to be given by $u \mapsto (\phi_{Z^{+}_1, Z^{-}_1}(u), \phi_{Z^{+}_2, Z^{-}_2}(u))$ or, to be completely explicit,
\begin{multline*}
u \mapsto \left( \prod_{j=- \infty}^{0}  \frac{(u/t^4-t^{4j})(u/t^1-t^{4j})}{(u/t^2-t^{4j})(u/t^3-t^{4j})} \prod_{j=1}^{\infty}  \frac{(u-t^{4j+4})(u-t^{4j+1})}{(u-t^{4j+2})(u-t^{4j+3})} , \right. \\
\left. \prod_{j=- \infty}^{0}  \frac{(u/t^0-t^{4j})(u/t^3-t^{4j})}{(u/t^1-t^{4j})(u/t^2-t^{4j})} \prod_{j=1}^{\infty}  \frac{(u-t^{4j})(u-t^{4j+3})}{(u-t^{4j+1})(u-t^{4j+2})} \right)
\end{multline*}

At this point, the reader may reasonably wonder how to extract an actual equation for the curve $X$. This is basically a problem of \emph{implicitization}, the recovery of the equation of an algebraic variety from a parametric representation, but it is worse than the usual implicitization problem because the parameterization is analytic, not algebraic. This problem deserves a great deal of study, which has been begun in \cite{Implicit1} and \cite{Implicit2}. We will describe here a straightforward, but unwieldly method. Let $\Sigma$ denote the complete fan whose rays point in directions $(1,1)$, $(1,-1)$, $(-1,-1)$ and $(-1,1)$. Then, by Proposition~\ref{ToricTransverse}, the closure of $\phi(X)$ in $\Toric(\Sigma)$ is torically transverse and has cohomology class corresponding to the linear relation $(1,1)+(1,-1)+(-1,-1)+(-1,1)=0$ between the rays of $\Sigma$. Thus,  $\phi(X)$ obeys an equation of the form $Ax+By+C+Dx^{-1}+Ey^{-1}=0$, for some $A$, $B$, $C$, $D$ and $E$ in $\KK$. Here we have written the coordinates on $(\KK^*)^2$ as $x$ and $y$. In order to find $A$, $B$, $C$, $D$ and $E$, expand the infinite products for $x$ and $y$ as Laurent series around $u-1$, and compare the coefficients of $(u-1)^k$ for $k=-1$, $0$, $1$, $2$, $3$. 
Note that this will express $A$, $B$, $C$, $D$ and $E$ in terms of infinite sums. There is no reason to expect a better result; our algorithm will usually produce curves defined by equations whose coefficients are not algebraic functions of $t$.

\end{ex}

\section{Superabundant Curves}
In this section, we prove Theorem~\ref{GenusOneSuperAbundant}, which we restate for the reader's convenience.

\begin{TheoremGOSA*}
Suppose that $\kk$ has characteristic zero. 

Let $(\iota, \Gamma, m)$ be a zero tension curve of genus one and degree $\delta$. Suppose also that $(\iota, \Gamma, m)$ is well spaced. Then there is a (punctured) genus one curve $X$ over $\KK$, and a map $\phi : X \to \TT(\KK, \Lambda)$ so that $\iota(\Gamma)=\Trop \phi(X)$.
\end{TheoremGOSA*}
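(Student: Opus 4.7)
The plan is to bootstrap the argument of Section~\ref{GenusOneSection}.  Apply the construction preceding Proposition~\ref{GenusOneFundamentalComputation} to produce $q \in \KK^*$ with $v(q)$ equal to the circuit length, together with initial multisubsets $\Zo^{\pm}_1, \ldots, \Zo^{\pm}_n \subset \KK^*$ such that $\gimel(q, \Zo^{\pm}_{\bullet}) \cong \Gamma$ and $f_{q, \Zo^{\pm}_{\bullet}}$ agrees with $\iota$.  By Lemma~\ref{CombProdOneHolds} the ratios $c_i := \prod_{\zo \in \Zo^{+}_i} \zo / \prod_{\zo \in \Zo^{-}_i} \zo$ lie in $\OO^*$, and as in the ordinary case it suffices to perturb the $\Zo^{\pm}_{\bullet}$ within the space of multisets realizing the same tree $\tilde\gimel$ so that every $c_i$ equals $1$.

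Two classes of perturbations are at our disposal.  The coarse perturbations are those used in the ordinary case: multiply every element attached to a component $T_j$ of $\Gamma$ minus the circuit by a fixed unit $u_j$ with $v(u_j)=0$, modifying $(c_i)$ by $\prod_j u_j^{a_{ij}}$.  The fine perturbations exploit the fact that each $\zo$ is only determined by its location in $\tilde\gimel$ up to a factor $1+\epsilon$ with $v(\epsilon)>0$, and that at each internal vertex of $\tilde\gimel$ there is a $\PGL_2(\kk)$-worth of freedom in the residue classes of the incident branches.  Writing $V = \Span_{\QQ}(\iota(e_1),\ldots,\iota(e_r))$, Lemma~\ref{LocalMaxLemma} applied to the ``exit tree'' at each $v_j$ shows that the matrix $A = (a_{ij})$ has rank $\dim V$.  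Hence coarse perturbations alone can kill $(c_i)$ modulo the subgroup of $(\OO^*)^n$ cut out by $V^{\perp} \cap \Lambda^{\vee}$; the residual component in this subgroup must be eliminated using fine perturbations, and this is where the well-spacedness hypothesis must enter.

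Fix $\lambda \in V^{\perp} \cap \Lambda^{\vee}$ and set $H := \iota(e_1) + \lambda^{\perp}$, so the circuit of $\Gamma$ lies in $H$; let $\Delta$ and $d_1 \leq d_2 \leq \cdots \leq d_s$ be as in the definition of well-spacedness, so the hypothesis gives $d_1 = d_2$.  The central computation is to expand $\prod_i c_i^{\lambda_i}$ as a series in $t$: using Proposition~\ref{CrossRatio} and the cancellation among the portions of $\Gamma$ in which $\iota$ runs parallel to $H$, each leaf contributes a term whose $t$-valuation is the distance of its nearest boundary vertex of $\Delta$ to the cycle, weighted by its signed $\lambda$-slope.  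Consequently the leading $t$-term of $\prod_i c_i^{\lambda_i} - 1$ is a polynomial (honest and nontrivial by the characteristic-zero hypothesis on $\kk$) in the fine parameters at precisely those boundary vertices of $\Delta$ realizing the minimum distance; the condition $d_1 = d_2$ guarantees that at least two such parameters appear, making this polynomial nonconstant and hence forceable to zero.  A Hensel-style iteration, applied to the joint system indexed by a basis of $V^{\perp} \cap \Lambda^{\vee}$ and invoking the implicit function theorem at each step, then solves $\prod_i c_i^{\lambda_i} = 1$ exactly for all such $\lambda$, after which the coarse perturbations finish the job.  The main obstacle will be this last paragraph: setting up the joint leading-order system for all $\lambda$ simultaneously, verifying that well-spacedness really does keep the relevant Jacobian of fine parameters nondegenerate at each stage of the iteration, and ensuring that the resulting $t$-adic limit lies in $\KK$ rather than in a larger completion.
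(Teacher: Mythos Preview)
Your setup and first paragraph are correct and match the paper. The divergence comes in how you attack the residual obstruction in $V^{\perp}$. You propose a Hensel iteration on leading $t$-coefficients, treating well-spacedness as the condition that makes the leading-order system nondegenerate; you correctly flag this as the hard part, and indeed as stated it has real gaps (having two parameters does not make a polynomial nonconstant, let alone jointly solvable over all $\lambda$, and the convergence issue you raise is genuine).

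The paper avoids all of this with a different organizing idea: a finite filtration by radius from the circuit. Let $\Gamma_R$ be the points within distance $R$ of the circuit and $H_R$ its affine span; this gives a chain $H_{n_0} \subsetneq H_{n_1} \subsetneq \cdots \subsetneq H_{n_s} = \QQ^n$ jumping at radii $R_{n_j}$. At stage $j$ one perturbs not the components of $\Gamma$ minus the circuit, but the components of $\Gamma \setminus \Gamma_{n_j}$, multiplying each by an element of $U_j := \{u \in \OO^* : v(u-1) \geq R_{n_j}\}$. The point is that $U_j$ is still divisible (this is the only use of characteristic zero), and by Lemma~\ref{LocalMaxLemma} the slopes of the edges leaving $\Gamma_{n_j}$ span $H_{n_{j+1}}$, so the map $U_j^{p} \to U_j^{n_{j+1}}$ is surjective and one can force $w_i = 1$ for all $i \leq n_{j+1}$ in one step---no iteration, no Jacobian, no limit. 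Well-spacedness enters in a different place than you expect: it is used (Lemma~\ref{PairedRays}) to guarantee that after this perturbation one can further twist by $u^{a_k}$ along a relation $\sum a_k s_k = 0$ with all $a_c \neq a_d$ for edges sharing a boundary vertex, so that distinct branches at the same vertex of $\Gamma_{n_j}$ stay separated in $\tilde\gimel$. So in the paper well-spacedness protects the tree structure, not the solvability of the product equations.
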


See the end of Section~\ref{Results} for the definition of ``well spaced".

For each nonnegative rational number $R$, let $\Gamma_R$ be the subgraph of $\Gamma$ consisting of those points  with distance less than or equal to $R$ from the circuit of $\Gamma$. Let $H_R$ be the affine linear space spanned by $\Gamma_R$. Then $H_R$ is an increasing chain of affine spaces which increases at a finite number of discrete indices. Let $n_0$, $n_1$, \dots, $n_s=n$ be the dimensions of the various $H_R$'s. Let $R_{n_i}$ be the value of $R$ at which the jump from $n_i$ to $n_{i+1}$ occurs and let $\Gamma_{n_0}$, \dots, $\Gamma_{n_s}$ be the corresponding subgraphs of $\Gamma$. We will also occasionally need to refer to the open subset of $\Gamma_{n_j}$ where we do not include those points at distance precisely $R_{n_j}$ from the circuit of $\Gamma$; we denote this by $\mathring{\Gamma}_{n_j}$. Set $H_{n_i}=H_{R_{n_i}}$. We can make a change of basis in $\Lambda$ such that $H_{n_i}$ is $\Span_{\QQ}(e_1, e_2, \dots, e_{n_i})$. By the arguments in the previous section, we can find $Z^{\pm}_1$, \dots, $Z^{\pm}_n$, multisubsets of $\PP^1(\KK)$, and $q \in \KK^*$ with $v(q)$ equal to the length of the loop of $\Gamma$ such that $f(\gimel(q,Z^{\pm}_{\bullet}))=\iota(\Gamma)$. We can use the perturbation arguments of the preceding section to arrange that $\prod_{z \in Z^{+}_i} z=\prod_{z \in Z^{-}_i} z$ for $i=1$, \dots, $n_0$. However, for $i > n_0$, all we can conclude is that $v \left( \prod_{z \in Z^{+}_i} z \right)=v \left( \prod_{z \in Z^{-}_i} z \right)$. The strategy of our proof will be to show, by induction on $j$, that we can arrange for the equality $\prod_{z \in Z^{+}_i} z=\prod_{z \in Z^{-}_i} z$ to hold for $i \leq n_j$.

So, suppose that we have $Z^{\pm}_1$, \dots, $Z^{\pm}_n$ and $q$ such that $f(\gimel(q, Z^{\pm}_{\bullet}))=\iota(\Gamma)$ and such that $\prod_{z \in Z^{+}_i} z=\prod_{z \in Z^{-}_i} z$ holds for $i \leq n_j$. We introduce the notation $U$ for the group of units $u$ of $\OO$ such that $v(u-1) \geq R_{n_j}$.

\begin{Lemma} \label{Ratios}
For each $i$ between $1$ and $n$, the ratio $\prod_{z \in Z^{+}_i} z/ \prod_{z \in Z^{-}_i} z$ is in $U$.
\end{Lemma}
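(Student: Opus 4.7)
The plan is as follows. For $i \leq n_j$, the ratio equals $1$ by the inductive hypothesis and hence lies trivially in $U$.

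For $i > n_j$, the first step is to simplify the ratio. Since the circuit of $\Gamma$ is contained in $\Gamma_{n_j}$, every circuit edge has slope in $H_{n_j} = \Span_{\QQ}(e_1,\ldots,e_{n_j})$; in particular $\sigma(e_0)_i = 0$ and $a_{ik} = \sigma(e_k)_i - \sigma(e_{k+1})_i = 0$. Therefore the $q^{\sigma(e_0)_i}$-factor introduced in passing from $Y^+_i$ to $\Zo^+_i$ and the $\prod_k u_k^{a_{ik}}$-factor from perturbing $\Zo^{\pm}_i$ to $Z^{\pm}_i$ are both trivial, and the ratio reduces to
\[ \prod_{v \in \partial \Gamma} u_v^{-\sigma_v(e)_i}, \]
where $u_v \in \PP^1(\KK)$ is the element associated to the leaf $v$ and $e$ its incident edge. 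Group the product by the branches $T_k$ of $\Gamma$ (the components of the complement of the interior of the circuit) and write $u_v = t^{r_k} w_v$ with $w_v \in \OO^*$, where $r_k$ is the common valuation of the elements of $U$ attached to $T_k$. Zero tension applied to $T_k$ (combined with $\sigma(e_k)_i = \sigma(e_{k+1})_i = 0$) gives $\sum_{v \in T_k} (-\sigma_v(e)_i) = 0$, so the partial product $P_k := \prod_{v \in T_k} u_v^{-\sigma_v(e)_i}$ has zero valuation and equals $\prod_{v \in T_k} w_v^{c_v}$ with $c_v := -\sigma_v(e)_i$; it remains to establish $P_k \equiv 1 \pmod{t^{R_{n_j}}}$.

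To analyze $P_k$ modulo $t^{R_{n_j}}$, first subdivide edges so that the boundary of $\Gamma_{n_j}$ meets $\Gamma$ only at vertices. Decompose $T_k$ as $T_k \cap \Gamma_{n_j}$ together with the subtrees $C_{x,f}$ lying beyond a boundary vertex $x$ of $\Gamma_{n_j}$ through an outgoing edge $f$ (pointing out of $H_{n_j}$). Every edge of $T_k \cap \Gamma_{n_j}$ has slope in $H_{n_j}$, so the only leaves with $c_v \neq 0$ lie in some $C_{x,f}$. For two such leaves $v,v'$ whose attachment paths meet the boundary at the same $x$, the paths in $T_k$ from $v$ and $v'$ share an initial segment of length at least $R_{n_j}$ starting at $v_k$, so by Proposition~\ref{CrossRatio} $v(w_v - w_{v'}) \geq R_{n_j}$. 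Consequently the reduction $w_v \bmod t^{R_{n_j}}$ is a common unit $\alpha_x \in (\OO/t^{R_{n_j}}\OO)^*$ depending only on $x$, not on the particular outgoing edge $f$. The elementary bound $(1+\eta)^c \equiv 1 \pmod{t^{R_{n_j}}}$ (for $v(\eta) \geq R_{n_j}$ and $c \in \ZZ$) then yields $w_v^{c_v} \equiv \alpha_x^{c_v} \pmod{t^{R_{n_j}}}$, and zero tension applied inside $C_{x,f}$ identifies $\sum_{v \in C_{x,f}} c_v = \sigma_x(f)_i$. Hence $\prod_{v \in C_{x,f}} w_v^{c_v} \equiv \alpha_x^{\sigma_x(f)_i} \pmod{t^{R_{n_j}}}$.

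Collecting over pairs $(x,f)$ in $T_k$ and applying zero tension at each $x$ itself --- every edge at $x$ going into $\Gamma_{n_j}$ has zero $i$-th slope component --- yields $\sum_f \sigma_x(f)_i = 0$, so $P_k \equiv \prod_x \alpha_x^{0} = 1 \pmod{t^{R_{n_j}}}$. Multiplying over $k$ completes the proof. The main obstacle is establishing that the residue $w_v \bmod t^{R_{n_j}}$ depends only on the boundary vertex $x$ through which $v$ is attached and not on the particular outgoing edge $f$; once this is in hand, the three scales of zero tension --- within each $C_{x,f}$, at each boundary vertex $x$, and across $T_k$ as a whole --- combine cleanly into the desired congruence.
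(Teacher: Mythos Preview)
Your argument is correct, but it takes a considerably longer path than the paper's.  The paper partitions $Z^{\pm}_i$ by the connected components $T_1,\dots,T_r$ of $\Gamma \setminus \mathring{\Gamma}_{n_j}$ --- that is, by the subtrees lying \emph{beyond distance $R_{n_j}$} from the circuit.  For $i>n_j$ the inward-pointing edge at each $T_k$ lies in $\Gamma_{n_j}$ and hence has vanishing $i$-th slope component, so zero tension gives $|Z^+_i\cap T_k|=|Z^-_i\cap T_k|$.  One then simply pairs off $z^+$ with $z^-$ inside each $T_k$; since both leaves lie in the same $T_k$, the tree $[z^+,z^-,0,\infty]$ has internal edge of length at least $R_{n_j}$, and a single application of Proposition~\ref{CrossRatio} yields $v(z^+/z^- - 1)\geq R_{n_j}$, i.e.\ $z^+/z^- \in U$.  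The full ratio is then a product of elements of $U$.

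Your version instead partitions at the circuit (distance~$0$), writes the ratio explicitly as $\prod_v u_v^{-\sigma_v(e)_i}$, factors out the common valuation $t^{r_k}$ on each branch, and must then introduce a \emph{second} decomposition by the boundary vertices $x$ of $\Gamma_{n_j}$ and outgoing edges $f$.  This forces you to invoke zero tension three separate times (inside $C_{x,f}$, at $x$, and across $T_k$) where the paper needs it once.  The extra work stems from partitioning at the wrong radius: by cutting at distance $R_{n_j}$ rather than at the circuit, the paper arranges that any two paired leaves automatically share a path of length $\geq R_{n_j}$ before reaching $[0,\infty]$, so the cross-ratio estimate is immediate and no intermediate residues $\alpha_x$ are needed.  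Your reduction to $\prod_v u_v^{-\sigma_v(e)_i}$ is valid (the $q^{s_i}$-shift and all multiplicative perturbations preserve the multiset structure for $i>n_j$), but it is also unnecessary: the paper's pairing argument applies directly to whatever the current $Z^{\pm}_i$ are, using only that $f(\gimel)=\iota(\Gamma)$.
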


\begin{proof}
For $i \leq n_j$, we have $\prod_{z \in Z^{+}_i} z/ \prod_{z \in Z^{-}_i}=1$, which is in $U$. So fix $i > n_j$.  Let $T_1$, \dots, $T_r$ be the components of $\Gamma \setminus \mathring{\Gamma}_{n_j}$. We note that, for each $k$, we have $|Z^{+}_i \cap T_k| = |Z^{-}_i \cap T_k|$. This is because, by the zero tension condition,  $|Z^{+}_i \cap T_k| - |Z^{-}_i \cap T_k|$ is the $i^{\textrm{th}}$ component of the edge of $\Gamma_{n_j}$ pointing inward from $T_k$. Since this is an edge of $\Gamma_{n_j}$, that component is zero. So we can pair of the elements of  $Z^{+}_i \cap T_K$ with those of $Z^{-}_i \cap T_k$. In each pair $(z^+, z^{-})$, we have $v(z^{+}/z^{-}-1)=v(c(z^{+}, \infty; z^{-}, 0)-1)$ which is $\geq R$ by Proposition~\ref{CrossRatio}. Then $ \prod_{z \in Z^{+}_i} z/\prod_{z \in Z^{-}_i} z$ is a product of ratios which are in $U$ and hence is itself in $U$ as we claimed.
\end{proof}

We introduce the notation $w_i$ for $\prod_{z \in Z^{+}_i} z/ \prod_{z \in Z^{-}_i} z$, which we have just shown to be in $U$. When $1 \leq i \leq n_j$, then $w_i$ is simply $1$.

\begin{Lemma} \label{Divisible}
The abelian group $U$ is divisible.
\end{Lemma}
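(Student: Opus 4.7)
The plan is to verify divisibility of $U$ by showing that every element of $U$ admits $n$-th roots in $U$ for each positive integer $n$. The key point is that in residue characteristic zero, every positive integer is a unit in $\OO$, so the usual obstructions to taking roots vanish.

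Fix $u \in U$ and a positive integer $n$, and write $u = 1 + x$ with $v(x) \geq R_{n_j} > 0$. I would consider the polynomial $f(Y) = Y^n - u \in \OO[Y]$. Then $f(1) = -x$ satisfies $v(f(1)) \geq R_{n_j}$, while $f'(1) = n$ is a unit of $\OO$ since $\kk$ has characteristic zero, so $v(f'(1)) = 0$. The inequality $v(f(1)) > 2 v(f'(1))$ required by Hensel's lemma therefore holds, and because $\KK$ is complete with respect to $v$, the Newton iteration starting from $Y_0 = 1$ converges to a root $w \in \OO$ of $f$ satisfying $v(w - 1) \geq v(f(1)) - v(f'(1)) \geq R_{n_j}$. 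Thus $w \in U$ and $w^n = u$, so $U$ is divisible.

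The main thing to check is that Hensel's lemma really is available, given that the value group $v(\KK^*) = \QQ$ is not discrete and $\OO$ is not noetherian; one cannot invoke the Hensel's lemma for discrete valuation rings off the shelf. However, the classical proof via Newton iteration needs only completeness of the ambient valued field, which the paper has already stipulated, and it goes through verbatim. Alternatively, and perhaps more transparently, one can use the residue-characteristic-zero power series $\log(1+x) = -\sum_{k \geq 1} (-x)^k / k$ and $\exp(y) = \sum_{k \geq 0} y^k/k!$, which converge whenever $v(x), v(y) > 0$ since every positive integer is a unit; these furnish a group isomorphism from $U$ to the additive group $\{ y \in \KK : v(y) \geq R_{n_j} \}$, and the latter is manifestly divisible because dividing by $n$ does not change the valuation.
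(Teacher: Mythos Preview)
Your argument has a genuine gap: you assert that $\KK$ is complete with respect to $v$, but the paper does \emph{not} stipulate this. In fact, $\KK$ is defined as the algebraic closure of the complete field $\KKK$, and the paper explicitly notes elsewhere that ``we can not directly argue that the product is convergent in $\KK$ because $\KK$ is not complete.'' So neither Newton iteration nor the $\exp/\log$ series can be run directly in $\KK$.

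The gap is easily repaired: since $u \in \KK$ is algebraic over $\KKK$, it lies in some finite extension $K/\KKK$, and finite extensions of complete fields are complete. Hensel's lemma (or the $\exp/\log$ isomorphism) then applies in $K$ and produces a root $w \in K \subset \KK$ with $v(w-1) \geq R_{n_j}$, as you want. With this fix your proof is correct.

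By contrast, the paper's argument avoids completeness entirely. It uses algebraic closedness of $\KK$ to guarantee an $m$-th root $1+\pi$ with $v(\pi)>0$ (one of the $m$ roots must reduce to $1$ in $\kk$), and then observes from the binomial expansion $u = 1 + m\pi + \pi^2 c$ that $v(u-1) = v(m\pi) = v(\pi)$, since $v(m)=0$ in characteristic zero. This forces $v(\pi) \geq R_{n_j}$, so $1+\pi \in U$. Your approach (once repaired) is more analytic and gives a cleaner quantitative bound on $v(w-1)$ from the outset, while the paper's approach is purely algebraic and needs no convergence at all.
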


\begin{proof}
This is where we will use that $\kk$ has characteristic zero, and hence that $v(m)=0$ for every nonzero integer $m$. Let $u \in U$ and let $m$ be a nonzero integer. Then, since $\KK$ is algebraically closed, $u$ has an $m^{\textrm{th}}$ root in $\KK$, and even has a root which lies in $\OO$ and reduces to $1$ in $\kk$. (Proof -- let $u_1$, \dots, $u_m$ be the roots of $z^m -u$ in $\KK$. Since $mv(u_i)=v(u_i^m)=v(u)=0$, we know that all of the $u_i$ are in $\OO$. Let $\overline{u_i}$ be the reduction of $u_i$ in $\kk$. Then we have $z^m-1=\prod (z-\overline{u_i})$ in $\kk$ so, by unique factorization in $\kk[z]$, one of the $u_i$ reduces to $1$ in $\kk$.) So, let $(1+\pi)^m=u$, with $v(\pi) >0$. Then $u=1+m\pi + \pi^2 c$ where $c \in \OO$. As $v(m)=0$, we have $v(m \pi) < v(\pi^2 c)$ so $v(u-1)=v(m \pi)=v(\pi)$ and we deduce that $v(\pi) \geq R_{n_j}$ so $1+\pi$ is in $U$ as desired.
\end{proof}

Let $e_1$, \dots, $e_p$ be the edges of $\Gamma$ which are not in $\Gamma_{n_j}$, but which have endpoints that are in $\Gamma_{n_j}$. Let $s_1$, \dots, $s_p$ be the slopes of these edges, directed away from $\Gamma_{n_j}$. 

 \begin{Lemma} \label{PairedRays}
There exist scalars $a_1$, \dots, $a_p \in \QQ$ such that $\sum a_i s_i =0$ and such that, if $e_c$ and $e_d$ are distinct edges with the same endpoint, then $a_c \neq a_d$.
\end{Lemma}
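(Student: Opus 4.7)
The plan is to argue by contradiction, producing from an assumed failure of the lemma a hyperplane that violates well-spacedness. Over the infinite field $\QQ$, the relation space $K = \ker\bigl(\QQ^p \to \QQ^n,\ (a_i)\mapsto \sum a_i s_i\bigr)$ cannot be a union of finitely many proper subspaces, so failure of the lemma forces the existence of a pair $(e_c,e_d)$ sharing an endpoint $v$ such that $a_c = a_d$ identically on $K$. By linear duality, this is equivalent to the existence of $\lambda \in \Hom_\QQ(\QQ^n,\QQ)$ with $\lambda(s_c) = 1$, $\lambda(s_d) = -1$, and $\lambda(s_i) = 0$ for all other $i$.

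The key technical step -- and the one I expect to be the main obstacle -- is a reduction ensuring that $\lambda$ vanishes on the linear part of $H_{n_j}$. The strategy is to add to $\lambda$ an element of $\Span(s_1,\dots,s_p)^\perp$, which preserves all prescribed pairings, and to exploit the fact that $H_{n_j}$ is spanned by slope vectors of internal edges of $\Gamma_{n_j}$. The compatibility condition for such a correction is that $\lambda$ already vanishes on $\Span(s_i)\cap H_{n_j}$; this intersection contains the zero-tension sums $T_u = \sum_{e_i \ni u} s_i \in H_{n_j}$ at each boundary vertex $u$ of $\Gamma_{n_j}$. On each such $T_u$ one has $\lambda(T_u)=0$ since $c$ and $d$ appear in $T_u$ with equal coefficients (both $0$ when $u\ne v$, both $1$ when $u=v$). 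Handling other elements of $\Span(s_i)\cap H_{n_j}$ will require a careful case analysis using the hypothesis that $a_c-a_d$ vanishes identically on $K$.

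With $\lambda|_{H_{n_j}}$ constant, set $H = \lambda^{-1}(\lambda(\iota(v)))$. This is a hyperplane containing $H_{n_j}$ and therefore containing the circuit, so the well-spacedness hypothesis applies to $H$. Let $\Delta$ denote the connected component of $\Gamma \cap \iota^{-1}(H)$ containing the circuit. Since $\lambda \circ \iota$ is constant on $\Gamma_{n_j}$, and is constant along every external edge $e_i$ with $i \neq c,d$ (because $\lambda(s_i)=0$), we have $\Gamma_{n_j} \cup \bigcup_{i \neq c,d} e_i \subseteq \Delta$. On the other hand $e_c$ and $e_d$ exit $H$ immediately from $v$, because both $\lambda(s_c)$ and $\lambda(s_d)$ are nonzero, making $v$ a boundary vertex of $\Delta$.

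Moreover $v$ is the \emph{only} boundary vertex of $\Delta$ lying in $\Gamma_{n_j}$: every other vertex of $\Gamma_{n_j}$ has all its incident edges contained in $\Delta$. Any remaining boundary vertex of $\Delta$ must therefore lie in some external subtree $F_i$ with $i \neq c,d$, and is consequently at distance strictly greater than $R_{n_j}$ from the circuit. Since $v$ itself is at distance at most $R_{n_j}$, it is the unique minimum-distance boundary vertex of $\Delta$, contradicting the well-spacedness of $(\iota,\Gamma,m)$ with respect to $H$. This contradiction establishes the lemma.
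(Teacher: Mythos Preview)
Your approach is sound and essentially dual to the paper's, but you have left the very step you flagged as the main obstacle unfilled, and without it the argument is incomplete. The correction you need is not a ``careful case analysis'' but a direct application of Lemma~\ref{LocalMaxLemma}: the inward slopes $t_k$ at the boundary vertices $x_k$ of $\Gamma_{n_j}$ span $H_{n_j}$, and by the zero-tension condition $t_k = -T_{x_k} = -\sum_{e_i \ni x_k} s_i$. Hence any $w = \sum_i \alpha_i s_i$ lying in $H_{n_j}$ can also be written as $-\sum_k \beta_k T_{x_k}$, giving a relation $\sum_i (\alpha_i + \beta_{k(i)}) s_i = 0$ in $K$, where $k(i)$ is the index of the unique boundary vertex touching $e_i$. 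Your hypothesis that $a_c - a_d$ vanishes on $K$ then forces $\alpha_c + \beta_{k(c)} = \alpha_d + \beta_{k(d)}$, and since $k(c) = k(d)$ (both index $v$) you get $\alpha_c = \alpha_d$ and thus $\lambda(w) = 0$. With this, the modification of $\lambda$ to vanish on $H_{n_j}$ goes through, and your well-spacedness contradiction is correct as written.

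For comparison, the paper organizes the same ingredients in the reverse order. It first works in the quotient $\QQ^n / H_{n_j}$, where the assumption that the relation space lies in $\{a'_c = a'_d\}$ immediately yields $\overline{s_d} \notin \Span(\overline{s_i})_{i \neq c,d}$ and hence a hyperplane $H \supseteq H_{n_j} \cup \{s_i\}_{i \neq c,d}$ with $s_d \notin H$; well-spacedness is violated at $v$ exactly as in your argument. The paper then lifts the resulting $(a'_i)$ with $\sum a'_i s_i \in H_{n_j}$ to an exact relation $\sum a_i s_i = 0$ using the same identity $t_k = -\sum_{e_i \ni x_k} s_i$, observing that this correction adds the same constant $b_k$ to $a'_c$ and $a'_d$. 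So your ``modify $\lambda$ to kill $H_{n_j}$'' step is precisely the dual of the paper's ``lift the $(a'_i)$ from the quotient'' step; neither approach avoids Lemma~\ref{LocalMaxLemma} and the zero-tension relations at the boundary. The paper's ordering has the minor advantage that the hyperplane for the well-spacedness contradiction appears directly without first having to justify the correction.
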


\begin{proof}
Let $\overline{s_i}$ be the image of $s_i$ in $\QQ^n / H_{n_j}$. We first show that we can find $a'_1$, \dots, $a'_p$ obeying the required inequalities with $\sum a'_i \overline{s'_i}=0$. Let $L \subset \QQ^p$ be the space of $p$-tuples of rational numbers $(a'_1, \ldots, a'_p)$ such that $\sum a'_i \overline{s_i} =0$. We want to show that $L$ has a point not contained in any of the hyperplanes $a'_c=a'_d$, where $(c,d)$ ranges over pairs of indices such that $e_c$ and $e_d$ share an endpoint. Since $L$ is a linear space, we just need to show that $L$ is not contained in any of these hyperplanes.

Let $e_c$ and $e_d$ share the endpoint $x$, which is necessarily a boundary vertex of $\Gamma_{n_j}$. Suppose, for the sake of contradiction that $L$ is contained in the hyperplane $a_c=a_d$. Then in particular, $L$ does not contain any point with $a'_c=0$ and $a'_d=1$. This means that $\overline{s_d}$ is not in the span of $\{ \overline{s_i} \}_{i \neq c,d }$

Equivalently, $s_d$ is not in $V := \Span(H_{n_j} \cup \{ s_i \}_{i \neq c,d})$. Let $H$ be a hyperplane in $\QQ^n$ which contains $V$ and does not contain $s_{d}$. Then $x$ is at distance $R_{n_j}$ from the loop of $\Gamma$, while every other boundary vertex of $\Gamma \cap H$ is farther away, contradicting our hypothesis that $\Gamma$ is well spaced. 

So, now we have $(a'_1, \ldots, a'_p)$ obeying the required inequalities with $\sum a'_i s_i$ in $H_{n_j}$. Let $x_1$, \dots, $x_r$ be the boundary vertices of $\Gamma_{n_j}$ and let $t_k$ be the slope of the edge of $\Gamma_{n_j}$ pointing inward from $x_k$. Then, by Lemma~\ref{LocalMaxLemma}, the $t_k$ span $H_{n_j}$. Let $\sum a'_i s_i =\sum b_k t_k$. Now, by the zero tension condition, $t_k=-\sum_{e_i \ni x_k} s_i$. So we have $\sum_{i=1}^p a'_i s_i + \sum_{k=1}^r  b_k \sum_{x_k \ni e_i} s_i=0$. We regroup this expression and take the coefficient of $s_i$ to be our $a_i$. If $e_c$ and $e_d$ share the endpoint $x_k$ then the coefficients of $s_c$ and $s_d$ in this expression are $a'_c+b_k$ and $a'_d+b_k$. Since $a'_c \neq a'_d$, we also have $a'_c+b_k \neq a'_d+b_k$ and we have achieved the goal.

\end{proof}

Our strategy will be to choose $u_1$, \dots $u_p \in U$ and, for each $Z^{\pm}_i$, multiply those elements of $Z^{\pm}_i$ which lie in the component of $\Gamma \setminus \Gamma_{n_j}$ containing $e_k$ by $u_k$. This will have the effect of multiplying $w_i$ by $\prod_{k=1}^p u_k^{s^i_k}$ where $s_k=(s_k^1, \dots, s_k^n)$. We need to achieve two things -- we must make the new values of $w_i$ be $1$ for $i \leq n_{j+1}$ and we must not change the fact that $f(\gimel)=\iota(\Gamma)$.

Now, by lemma~\ref{LocalMaxLemma} applied to $\Gamma_{R_{n_j}+\epsilon}$ for some small $\epsilon$, we know that the $s_j$ span $H_{n_{j+1}}$. Since $U$ is a divisible group, this means that we can arrange to make $\prod_{k=1}^p u_k^{s^i_k}$ assume any value that we want for $1 \leq i \leq n_{j+1}$ and in particular, we can make $w_1=w_2=\cdots=w_{n_{j+1}}=1$. The trouble is that we might no longer have $f(\gimel)=\iota(\Gamma)$. Now, multiplication by an element of $u$ will fix all of the points of $\Gamma_{n_j}$ and will move each component of $\Gamma \setminus \Gamma_{n_j}$ by an isomorphism. Our only fear is that two of these components which are connected to $\Gamma_{n_j}$ at the same vertex, say $x$, will swing so that the edges at which they attach to $x$, say $e_c$ and $e_d$, overlap for some length. 

Let $(a_1, \dots, a_q) \in \QQ^q$ be the vector found in Lemma~\ref{PairedRays}. Assume that we have arranged for $w_i$ to be $1$ for $i=1$, $2$, \dots, $n_{j+1}$. Now consider choosing some $u \in U$ and, for each $Z^{\pm}_i$, further multiplying those elements of $Z^{\pm}_i$ which lie in the component of $\Gamma \setminus \Gamma_{n_j}$ containing $e_k$ by $u^{a_k}$. This will multiply $w_i$ by $u^{\sum a_k s_k^i}=u^0=1$, so this will not break our achievement of making the $w_i=1$ (for $i=1$, \dots, $n_{j+1}$.) Also, if $e_c$ and $e_d$ share an end point $x$, then the components of $\Gamma \setminus \Gamma_{n_j}$ containing $e_c$ and $e_d$ will be multiplied by $u^{a_c}$ and $u^{a_d}$ respectively, two different scalars, and so, for generic $u \in U$, they will not overlap.

We have now completed the inductive step, showing how to make $w_i$ equal to $1$ for $i \leq n_{j+1}$. Continuing in this manner, we will eventually have all the $w_i$ equal to $1$, and thus the map $\phi$ will be well defined. At that point, we will have a curve $X$ and a map $\phi$ with $\Trop \phi(X)=\iota(\Gamma)$ as desired.

\section{The necessity of well-spacedness and the $j$-invariant}

We have been studying genus $1$ curves over $X$ by identifying them with Tate curves. It is therefore natural to ask to what extent we may assume that a genus one curve over $\KK$ is a Tate curve.

\begin{Theorem}
Let $X$ be a genus one curve over $\KK$ and $\overline{X}$ its projective completion. Let $j$ be the $j$-invariant of $X$. Let $\mathcal{X}$ be the extension of $(\overline{X}, \overline{X} \setminus X)$ to a (flat, projective) family of stable curves with marked points over $\OO$. (There is no need to take a branched cover of $\OO$ because $\KK$ is algebraically closed.)\footnote{More precisely, after taking a finite base extension of $\KKK$, we may assume that $X$ is defined over $\KKK$. Since $\OOO$ is noetherian, after another finite base extension, we may complete $X$ to a family of stable curves over $\OOO$. Then go back to $\OO$, which was all we ever actually cared about.}  Then the following are equivalent:

\begin{enumerate}
\item $X$ can be expressed as a Tate curve for some $q \in \KK^*$ with $v(q)>0$.
\item $v(j) <0$.
\item The fiber of $\mathcal{X}$ over $\Spec \kk$ is a union of genus zero curves.
\end{enumerate}

Moreover, if these equivalent conditions hold, we have $v(q)=-v(j)$.
\end{Theorem}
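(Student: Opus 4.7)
My plan is to prove the three implications cyclically and extract the formula $v(q)=-v(j)$ from the first step. The implications $(1)\Rightarrow(2)$ and $(2)\Rightarrow(3)$ are essentially standard computations, while $(3)\Rightarrow(1)$ is the substantive content and amounts to invoking Tate's uniformization theorem together with a descent argument.

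For $(1)\Rightarrow(2)$ and the formula, I would use the classical explicit expansion of the $j$-invariant for a Tate curve:
$$j(E_q) \;=\; \frac{\bigl(1 + 240 \sum_{n\geq 1} \sigma_3(n) q^n\bigr)^3}{q \prod_{n\geq 1}(1-q^n)^{24}}.$$
Since $v(q)>0$, every $q^n$ with $n\geq 1$ lies in $\MM$, so the numerator is a unit in $\OO$ of valuation $0$ and the denominator has valuation $v(q)$. Hence $v(j)=-v(q)$, which simultaneously proves that $v(j)<0$ and establishes the final assertion $v(q)=-v(j)$. (This relies on $\KK$ being complete enough that these convergent infinite products define elements of $\OO$; as in Section 7, one descends to a complete finite extension $\KKK'$ of $\KKK$ containing $q$, evaluates there, and reads off the valuation.)

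For $(2)\Rightarrow(3)$, I would argue via the stable reduction. The $j$-invariant is a regular function on the coarse moduli space $\overline{M}_{1,1}$ with a unique pole at the boundary point parameterizing the irreducible nodal cubic (equivalently, the Néron $1$-gon). The family $\mathcal{X}/\Spec \OO$ induces a morphism $\Spec \OO \to \overline{M}_{1,1}$. If the special fiber of $\mathcal{X}$ contained any component of genus $\geq 1$, then by stability of $\mathcal{X}$ the stable reduction would in fact be a smooth elliptic curve, and $j$ would specialize in $\kk$ to a finite value, contradicting $v(j)<0$. Hence the special fiber is forced to be a chain of rational components meeting at nodes — a Néron $N$-gon for some $N\geq 1$.

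For $(3)\Rightarrow(1)$, which is the main obstacle, I would invoke Tate's uniformization theorem. After a finite extension of $\KKK$, we may assume $X$ and its stable model are defined over a complete discretely valued field $\KKK'$ with residue field $\kk$ (still algebraically closed); the special fiber is a Néron polygon, so $X_{\KKK'}$ has multiplicative reduction, which is automatically \emph{split} because $\kk$ is algebraically closed. Tate's theorem (see \cite{Roq}, especially the statement that each such $E$ is analytically isomorphic to $\KKK'^{*}/q^{\ZZ}$ for a unique $q\in\KKK'^{*}$ with $v(q)>0$) then produces $q$ and an isomorphism $\KKK'^{*}/q^{\ZZ} \isom E(\KKK')$ of abelian groups compatible with the analytic structure. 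Base-changing the scheme $E$ to $\KK$ recovers our $X$, and the induced map on $\KK$-points extends the bijection to $\pp:\KK^{*}/q^{\ZZ}\to X(\KK)$ as required. The delicate issue here is purely one of bookkeeping across the tower $\KKK\subset\KKK'\subset\KK$; the essential analytic content is in Tate's construction, which has already been recorded in the form we need in \cite{Roq}. Combining with the formula $v(j)=-v(q)$ established in the first step closes the argument.
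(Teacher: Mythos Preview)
Your proposal is correct and follows essentially the same route as the paper: both arguments rest on Roquette's uniformization statement for the equivalence with Tate curves, the $q$-expansion of $j$ for the formula $v(j)=-v(q)$, and the identification of $\overline{M}_{1,1}$ with the $j$-line for the link between $v(j)<0$ and rational special fiber. The only differences are organizational---the paper proves $(1)\Leftrightarrow(2)$ and $(2)\Leftrightarrow(3)$ separately rather than cyclically, and phrases $(2)\Leftrightarrow(3)$ directly as ``the map $\Spec\OO\to\overline{M}_{1,1}$ sends $\Spec\kk$ to $\infty$''---together with the paper's remark that the Hasse-invariant hypothesis in Roquette is vacuous over the algebraically closed field $\KK$, which is the precise form of your ``split multiplicative reduction'' observation.
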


\begin{proof}
The equivalence of (1) and (2) is \cite[Section 3, Statement VIIIa]{Roq}. (Observe that Roquette's conditions (i) and (iii) are automatic when $\KK$ is algebraically closed, the latter because the Hasse invariant lives in the group $\KK^*/(\KK^*)^2$ which is trivial for $\KK$ algebraically closed.) If $N$ is the number of points in $\overline{X} \setminus X$, then $\mathcal{X}$ gives a map from $\Spec(\OO)$ to the moduli space of stable genus 1 curves with $N$ marked points. Forgetting all but one of the points, we get a map to the moduli space of stable genus 1 curves with one marked point. This moduli space is a copy of the projective line, commonly known as the $j$-line. Condition (3) is equivalent to saying that this map sends $\Spec \kk$ to the point at infinity on the $j$-line, which is exactly what condition (2) says. We have $v(j)=-v(q)$ because $j=q^{-1}+\sum_{i=0}^{\infty} c_i q^i$ where each $c_i$ is an integer and hence has nonnegative valuation.
\end{proof}

Combining the above with Corollary~\ref{TotRat}, we see that if $(\phi,X)$ is trivalent and $\Trop \phi(X)$ has first Betti number one then $X$ is a Tate curve. In this case, we can show that the condition that $\Trop \phi(X)$ be well spaced is necessary, and that the length of the loop of $\Trop \phi(X)$ is necessarily $-v(j)$. Recall that \emph{every} nonzero meromorphic function on a Tate curve is of the form $a \phi_{Z^{+}, Z^{-}}$ for some $Z^{+}=\{ z^{+}_1, \dots, z^{+}_n \}$ and $Z^{-}=\{ z^{-}_1, \dots, z^{-}_n \} \subset \KK^*$ with $\prod z^{+}_i=\prod z^{-}_i$. 

\begin{prop} \label{Necessity}
Let $(\phi,X)$ be a trivalent tropical curve of genus $1$ and $(\iota, \Gamma,m)$ a zero tension curve parameterizing $\Trop \phi(X)$, with $b_1(\Gamma)=1$ and $\iota$ injective on $\Gamma$. Then the length of the loop of $\Gamma$ is $-v(j(X))$, where $j(X)$ is the $j$-invariant of $X$. Now suppose that $\kk$ has characteristic zero. Then $(\iota, \Gamma, m)$ is well spaced.
\end{prop}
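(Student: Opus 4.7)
The plan is to leverage the Tate curve structure of $X$. Since $(\phi, X)$ is trivalent with $b_1(\Trop \phi(X)) = 1$, the preceding theorem combined with Corollary~\ref{TotRat} shows that $X$ is a Tate curve $\KK^*/q^{\ZZ}$ and $\phi = (\phi_{Z^+_i, Z^-_i})_i$ for appropriate $q$ and multisets. Under the isomorphism $\Gamma \isomorph \gimel(q, Z^{\pm}_\bullet)$ supplied by Lemma~\ref{trivalent}, the circuit of $\Gamma$ corresponds to the quotient $[0,\infty]/q^{\ZZ}$, which has length $v(q)$; combining with $v(j(X)) = -v(q)$ from the preceding theorem yields the first assertion.

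For well-spacedness I proceed by contradiction. Suppose some hyperplane $H$ containing the circuit of $\Gamma$ violates well-spacedness, with a unique closest boundary vertex $x_1$ of $\Delta$ at distance $d_1$, and every other boundary vertex at distance at least $d_2 > d_1$. Choose $\lambda \in \Lambda^{\vee}$ vanishing on the linear direction of $H$, and for each leaf $v$ of $\Gamma$ with corresponding element $z \in \KK^*$ set $n_z = -\langle \lambda, \sigma_v(e_v) \rangle$. The function $\chi^{\lambda} \circ \phi$ descends to $X$ precisely when $\sum_z n_z = 0$ and $\prod_z z^{n_z} = 1$. Grouping leaves by the subtree $B_k$ of $\Gamma \setminus \Delta$ containing them, attached at boundary vertex $x_k$, and letting $P_k = \prod_{z \in B_k} z^{n_z}$, we have $\prod_k P_k = 1$. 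A zero-tension argument on $B_k \cup \{x_k\}$, using that edges in $\Delta$ have trivial $\lambda$-component, shows $\sum_{z \in B_k} n_z = 0$; writing $z = c_k t^{\alpha_k}(1 + u_z)$ the monomial prefactor cancels, and a cross-ratio computation in the spirit of Lemma~\ref{Ratios} gives $v(u_z) \geq d_k$, hence $v(P_k - 1) \geq d_k$. Using the nonarchimedean logarithm (convergent because $\kk$ has characteristic zero), $\sum_k \log P_k = 0$ with $v(\log P_k) = v(P_k - 1) \geq d_k$; since $\log P_1$ is the only summand whose valuation could be as low as $d_1$, this forces $v(P_1 - 1) \geq d_2 > d_1$.

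The contradiction comes from showing $v(P_1 - 1) = d_1$ exactly. Trivalence is crucial here: with $m_e = 1$, zero tension at $x_1$ gives $\sigma_1 + \sigma_2 + \sigma_3 = 0$, which forbids $x_1$ from having two edges in $H$ and only one leaving (else all three would lie in $H$), so $x_1$ has exactly one edge in $\Delta$ and two edges $e_{\beta_1}, e_{\beta_2}$ into $B_1$. At the lift $\tilde{x}_1 \in \tilde{\gimel}$ these correspond to two distinct edges whose residual directions give distinct $\mu_{\beta_1} \neq \mu_{\beta_2}$ in $\kk$; concretely, for $z$ in sub-branch $\beta_i$ we have $u_z \equiv \mu_{\beta_i} t^{d_1} \pmod{t^{>d_1}}$. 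Expanding $\log P_1 \equiv \sum_z n_z u_z \pmod{t^{2 d_1}}$ and collecting the $t^{d_1}$-coefficient by sub-branch gives $N_{\beta_1} \mu_{\beta_1} + N_{\beta_2} \mu_{\beta_2} = N_{\beta_1}(\mu_{\beta_1} - \mu_{\beta_2})$, where $N_{\beta_i} = \sum_{z \in \beta_i} n_z$ satisfies $N_{\beta_1} + N_{\beta_2} = 0$ (by zero tension at $x_1$) and each $N_{\beta_i} \neq 0$ because $e_{\beta_i}$ leaves $H$. Both factors being nonzero forces $v(P_1 - 1) = d_1$, contradicting the previous paragraph. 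The main obstacle is precisely this last step, where trivalence, zero tension, and injectivity of $\iota$ must jointly prevent cancellation in the leading coefficient—this is the necessity-side mirror of the combinatorial flexibility exploited in the existence proof of Theorem~\ref{GenusOneSuperAbundant}.
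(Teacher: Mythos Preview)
Your argument is correct and follows essentially the same route as the paper's: both proofs identify $X$ with a Tate curve via Corollary~\ref{TotRat}, read off the loop length as $v(q)=-v(j(X))$, and then derive a contradiction from the product condition $\prod_{z} z^{n_z}=1$ by isolating the contribution at the unique closest boundary vertex. The only cosmetic differences are that the paper changes coordinates so that $H$ is the $n^{\textrm{th}}$ coordinate hyperplane and works directly with the multisets $Z_n^{\pm}$ (computing $v(\prod z_i^+/z_i^- - 1)=R$ by hand), whereas you keep a general $\lambda$, package the branch contributions as $P_k$, and additivize via the nonarchimedean logarithm; the underlying valuation estimates and the use of trivalence at $x_1$ to force the leading coefficient to be nonzero are identical.
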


The idea that the length of the circuit of $\Gamma$ is the tropical analogue of the $j$-invariant was suggested in \cite{Mikh2} and pursued in \cite{jInv}. E. Katz and H. and T. Markwig, have proven that the length of the circuit of $\Gamma$ is the $j$-invariant of a trivalent curve for tropicalizations of cubic curves in $\PP^2$.

\begin{proof}
Our hypotheses are enough to ensure that $X$ is a Tate curve $\KK^*/q^{\ZZ}$ for some $q$ and that the map $\phi : X \to (\KK^*)^n$ is given by $(\phi_{Z^{+}_1, Z^{-}_1}, \ldots, \phi_{Z^{+}_n, Z^{-}_n})$ for some $Z^{\pm}_1$, \dots, $Z^{\pm}_n$. 

By our combinatorial construction of $\gimel(q, Z^{\pm}_{\bullet})$, the length of the loop of $\gimel$ is $v(q)=-v(j(X))$. Lemma~\ref{trivalent} ensures that $f_{q, Z^{\pm}_{\bullet}}$ is injective and thus the loop of $\gimel$ and the loop of $\Gamma$ have the same length.

Now, assume that $\kk$ has characteristic zero and (for the sake of contradiction) that $(\iota, \Gamma,m)$ is not well-spaced. After a change of coordinates, we may assume that $H$ is the $n^{\textrm{th}}$ coordinate hyperplane. Assume for the sake of contradiction that $x$ is the boundary vertex of $\Gamma \cap H$ which is closest to the loop of $\Gamma$ and let $R$ denote the distance from $x$ to the loop of $\Gamma$. Then $x$ is trivalent by hypothesis. One of the edges of $x$ is between $x$ and the loop of $\Gamma$, this edge is contained in $H$ so its $n^{\textrm{th}}$ component is $0$. Let the $n^{\textrm{th}}$ components of the other two edges be $a$ and $-a$ for some positive integer $a$. Let $\{ z_1^{+}, \dots, z_a^{+} \}$ and $\{ z_1^{-}, \dots, z_a^{-} \}$ be the elements of $Z^{+}_n$ and $Z^{-}_n$ respectively which are beyond these edges. Then, we have $v(z^{+}_i/z^{+}_j-1) > R$ for any indices $i$ and $j$ from $1$ to $a$ and similarly $v(z^{-}_i/z^{-}_j-1) > R$. On the other hand, $v(z^{+}_i/z^{-}_j-1)=R$. So, 
$$\prod_{i=1}^a (z^{+}_i/z^{-}_i)= (z^{+}_1/z^{-}_1)^a (1+t^R c)$$
for some $c$ with $v(c)>0$ and thus $v(\prod_{i=1}^a (z^{+}_i/z^{-}_i)-1)=R$. We have used that $\kk$ has characteristic zero to deduce that, if $v(u-1)>0$ then $v(u^a-1)=v(u-1)$.

Pair off the elements of $Z^{+}_n \setminus  \{ z_1^{+}, \dots, z_a^{+} \}$ with elements of  $Z^{-}_n \setminus \{ z_1^{-}, \dots, z_a^{-} \}$ that lie in the same component of $\Gamma \setminus (H \cap \Gamma)$, this is possible by the same argument as in the proof of Lemma~\ref{Ratios}. We write the pairs as $(z^{+}_i, z^{-}_i)$ for $i > a$. Then in each pair $(z^{+}, z^{-})$, we have $v(z^{+}/z^{-}-1) > R$. But we are supposed to have $\prod_{z \in Z^{+}_n} z/\prod_{z \in Z^{-}_n} z=1$, so $\prod_{i=1}^a (z^{+}_i/z^{-}_i)=\left( \prod_{i>a} (z^{+}_i/z^{-}_i) \right)^{-1}$. Then the left hand side differs from $1$ by an element of valuation $R$ while the right hand side differs from $1$ by an element of valuation greater than $R$, a contradiction.
\end{proof}

\section{Appendix: Tropical Generalities and Theorem~\ref{NecessaryConditions}}

In this appendix, we describe some general tropical constructions. Our eventual aim is to explain and prove Theorem~\ref{NecessaryConditions} in the case where $X$ is a curve such that $\In_w X$ is reduced for every vertex $w$ of $\Trop X$. In this section we will prove many theorems for tropical varieties in general and then apply them to the case of curves. For clarity, we use $X$ for a variety which may have any dimension and $C$ for a curve. We restate our goal for the reader's convenience.

\begin{TheoremNC}
Let $C$ be a connected (punctured) curve of genus $g$ over $\KK$ equipped with a map $\phi : C \to \TT(\KK, \Lambda)$. Let $\delta \subset \Lambda$ be the degree of $(C,\phi)$. Suppose that $\In_w X$ is reduced for every vertex $w$ of $\Trop X$. Then there is a connected zero tension curve $(\iota, \Gamma,m)$ of degree $\delta$ and genus at most $g$ with $\iota(\Gamma)=\Trop \phi(C)$.

If there is some vertex $w$ of $X$ for which $\In_w X$ is not rational, then we can take $b_1(\Gamma) < g$.
\end{TheoremNC}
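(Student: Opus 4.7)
The plan is to construct $(\iota,\Gamma,m)$ as the dual graph of the special fibre of a semistable model of $\overline{C}$, decorated with tropical data read off from $\phi$. Let $\overline{C}$ be the smooth completion of $C$ with punctures $P_1,\dots,P_k$. After replacing $\OOO$ by a sufficiently ramified finite extension, I extend $(\overline{C},\{P_j\})$ to a semistable model $\mathcal{C}\to\Spec\OOO$ whose special fibre $\mathcal{C}_0$ is a reduced nodal curve with disjoint smooth sections $\widetilde{P}_j$, and I further arrange that $\phi$ is defined over $\KKK$.

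I take $\Gamma$ to have an interior vertex for each irreducible component $Y_i$ of $\mathcal{C}_0$, a boundary vertex for each puncture $P_j$, a bounded edge for each node of $\mathcal{C}_0$, and an unbounded ray joining each $P_j$ to the $Y_i$ containing its specialization. For each $Y_i$ and each $\lambda\in\Lambda^{\vee}$, the rational function $\chi^{\lambda}\circ\phi$ on $\overline{C}$ extends to a rational function on $\mathcal{C}$; let $w_i(\lambda)\in\QQ$ be its order of vanishing along $Y_i$, rescaled to agree with the valuation $v$ on $\KK$. This defines $w_i\in\QQ\otimes\Lambda$. Put $\iota(Y_i):=w_i$, extend $\iota$ affinely on each bounded edge, and send each ray out of the appropriate $w_i$ with slope $-\sigma_{P_j}$ as in Section~\ref{CurvesinT}. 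Edge lengths come from the local equation $xy=t^{\ell}$ at each node, edge multiplicities are determined by the divisibility of the slope vector, and the ray multiplicities are the degrees $d_{P_j}$.

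Verifying zero tension at each internal vertex $Y_i$ reduces to showing $\sum_{e\ni Y_i}m(e)\langle\lambda,\rho_{Y_i}(e)\rangle=0$ for every $\lambda\in\Lambda^{\vee}$. Multiplying $\chi^{\lambda}\circ\phi$ by $t^{-w_i(\lambda)}$ produces a rational function on $\mathcal{C}$ whose restriction to the normalization $\widetilde{Y}_i$ is a well-defined nonzero rational function; the order of this function at each node or puncture of $\widetilde{Y}_i$ matches the corresponding term in the sum, and the total vanishes because a nonzero rational function on a complete smooth curve has equally many zeros and poles. The degree equals $\delta$ by construction. For $\iota(\Gamma)=\Trop\phi(C)$, the inclusion $\iota(\Gamma)\subseteq\Trop\phi(C)$ follows from Proposition~\ref{DefnsOfTrop}(1) applied to $\KK$-points obtained by Hensel-lifting smooth $\kk$-points of $\mathcal{C}_0$; the reverse inclusion uses the same proposition to write any $w\in\Trop\phi(C)$ as $v(\phi(x))$ for some $x\in C(\KK)$ and then exploits that $x$ specializes in $\mathcal{C}$ to a unique component or node, placing $w$ in the image of the corresponding vertex or edge under $\iota$.

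For the genus assertion, flatness of $\mathcal{C}/\Spec\OOO$ gives arithmetic genus $g$ on the special fibre, so the standard identity $g=\sum_i g(\widetilde{Y}_i)+b_1(\Gamma')$ for a connected nodal curve with dual graph $\Gamma'$ yields $b_1(\Gamma)=b_1(\Gamma')\le g$, since attaching rays to $\Gamma'$ does not change $b_1$. If some $\In_w\phi(C)$ has a non-rational irreducible component $Y$, then at least one $Y_i$ with $w_i=w$ maps dominantly onto $Y$ under $t^{-w_i}\circ\phi$, whence $g(\widetilde{Y}_i)\ge g(\widetilde{Y})>0$ and $b_1(\Gamma)<g$. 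The main obstacle I expect is precisely this identification between the components of $\mathcal{C}_0$ with $w_i=w$ and the components of $\In_{w_i}\phi(C)$: the reducedness hypothesis is essential here, since without it embedded components or nontrivial multiplicities on $\mathcal{C}_0$ would break the correspondence, forcing one into the stable-map framework of \cite{SiebNish}.
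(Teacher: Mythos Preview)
Your approach is correct in outline but takes a genuinely different route from the paper's appendix. You work upstairs with a semistable model of the \emph{domain} curve and read off the tropical curve from its dual graph; the paper works downstairs with the \emph{image} $\phi(C)$, embedding the torus orbit of $\overline{\phi(C)}$ into a Hilbert scheme to obtain a toric degeneration $\T(M,v)$, and building $\Gamma$ from the connected components of the strata $Y_\sigma$ of the degenerate image $\XX_0$. The genus bound in the paper then comes from an Euler-characteristic computation via an exact complex of structure sheaves (Lemma~\ref{ExactComplex}), while yours comes from the standard genus formula for a connected nodal curve. Your method is essentially the Nishinou--Siebert argument the paper cites for the unrestricted statement, and in fact you never really use the reducedness of $\In_w\phi(C)$ for the main assertions --- that hypothesis is what makes the paper's image-side approach go through (via Proposition~\ref{ReducedClosure} and the sheaf complex). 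What the paper's route buys is a suite of tropical tools (Propositions~\ref{HilbProj} and~\ref{DegenerationLemma}, Corollary~\ref{GoodSubDiv}) that apply to subvarieties of any dimension.

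There is one genuine gap in your zero-tension step. You assert that the orders of $t^{-w_i(\lambda)}\chi^\lambda\circ\phi\big|_{Y_i}$ at nodes and marked points account for \emph{all} zeros and poles, so that their sum vanishes. This is not automatic: you must rule out zeros or poles at smooth unmarked points of $Y_i$. The clean way to see this is to take $\mathcal{C}$ \emph{regular} (so every node has $\ell=1$) and compute $\mathrm{div}(\chi^\lambda\circ\phi)$ on the surface $\mathcal{C}$: the horizontal part is $\sum_j\sigma_{P_j}(\lambda)\,\overline{P_j}$ because $\chi^\lambda\circ\phi$ is a unit on $C$, and the vertical part is $\sum_k w_k(\lambda)\,Y_k$; restricting $t^{-w_i(\lambda)}\chi^\lambda\circ\phi$ to $Y_i$ then yields a divisor supported only on nodes and marked points, with exactly the orders you claim. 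If you allow $xy=t^\ell$ with $\ell>1$ you should first resolve to a regular model (inserting a chain of $\ell-1$ rational components), which leaves both $b_1(\Gamma)$ and $\sum g(\widetilde{Y}_i)$ unchanged. Similarly, your final claim that a non-rational component of $\In_w\phi(C)$ is dominated by some $Y_i$ with $w_i=w$ needs the density statement that a generic $\kk$-point of $\In_w\phi(C)$ arises as the reduction of $t^{-w}\phi(x)$ for some $x$ specializing to a smooth point of $\mathcal{C}_0$; you should make this explicit rather than leave it as an ``expected obstacle.''
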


We begin by discussing the local geometry of $\Trop X$.

\begin{prop}
Let $X \subset \TT(\KK, \Lambda)$ and let $w \in \Trop X$. Then the star of $\Trop X$ at $w$ is $\Trop \In_w X$.
\end{prop}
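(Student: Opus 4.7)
The plan is to prove the set-theoretic equality directly from the membership criterion for tropicalization, and then upgrade to an equality of polyhedral structures. Let $I$ be the ideal of $X$ in $\KK[\Lambda^{\vee}]$. By definition, a direction $v \in \QQ \otimes \Lambda$ lies in the star of $\Trop X$ at $w$ if and only if $w + \epsilon v \in \Trop X$ for every sufficiently small positive rational $\epsilon$. By Proposition~\ref{DefnsOfTrop}, this in turn is equivalent to the assertion that $\In_{w+\epsilon v}(I)$ contains no monomial for all small enough $\epsilon$.

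The key input is Lemma~\ref{PerturbInit}, which says $\In_{w+\epsilon v}(I) = \In_v(\In_w I)$ for small $\epsilon$. Substituting, the membership of $v$ in the star of $\Trop X$ at $w$ becomes the condition that $\In_v(\In_w I)$ contains no monomial. Since $\In_w X$ has constant coefficients, $\In_w I$ is an ideal of $\kk[\Lambda^{\vee}]$, and the operation $\In_v$ on this ideal is the alternate initial ideal defined in the ``Basic Tropical Background'' section. Applying Proposition~\ref{DefnsOfTrop} to $\In_w X$ (or, equivalently, using the compatibility relation $\Trop Y = \Trop(Y \times_{\Spec \kk} \Spec \KK')$ between tropicalizations of $\kk$-subschemes and their base changes), this is precisely the condition $v \in \Trop \In_w X$. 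Chaining the equivalences yields the equality of underlying sets.

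To match polyhedral structures, I would fix a good subdivision of $\Trop X$ and observe that the faces incident to $w$ give a well-defined fan at $w$ (the star), with $v$ and $v'$ in the same cone of the star precisely when they lie in the relative interior of a common face of $\Trop X$ translated so that $w$ is at the origin. On the other hand, $\Trop \In_w X$ has a natural fan structure by the final sentence of the proposition preceding Lemma~\ref{PerturbInit}. The same application of Lemma~\ref{PerturbInit} identifies $\In_{w+\epsilon v} X = \In_v(\In_w X)$, so two directions $v,v'$ produce the same initial ideal of $\In_w X$ iff they lie in a common face of the star; this matches the condition that defines the fan structure on $\Trop \In_w X$ (using a good subdivision of $\Trop \In_w X$).

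The main obstacle is purely a bookkeeping one: making sure the fan structure on the star, as inherited from a good subdivision of $\Trop X$, really does coincide with the fan structure on $\Trop \In_w X$ rather than just agreeing as sets. This is handled by the observation just made, namely that $\In_{w'} X$ depends only on which face of $\Trop X$ contains $w'$ in its relative interior, so a good subdivision of $\Trop X$ restricts to one of the star, and Lemma~\ref{PerturbInit} identifies this with a good subdivision of $\Trop \In_w X$.
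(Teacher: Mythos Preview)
Your proof is correct and follows essentially the same route as the paper: combine the definition of the star, Lemma~\ref{PerturbInit}, and the monomial-free characterization of tropicalization from Proposition~\ref{DefnsOfTrop}. The paper's proof is a two-line version of your first two paragraphs (using description (3) rather than (4), which is immaterial); note that the paper treats the star purely as a set, so your additional discussion of matching fan structures goes beyond what the statement asks for.
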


\begin{proof}
By definition, if $\Delta$ is a polyhedral complex and $w \in \Delta$, then the star of $\Delta$ at $w$ is the set of $v \in \QQ^n$ such that $w + \epsilon v \in \Delta$ for $\epsilon$ sufficiently small. Thus, this result follows from Proposition~\ref{PerturbInit} and description (3) of $\Trop X$ in Theorem~\ref{DefnsOfTrop}.
\end{proof}

The next result roughly states that, when $w$ is a pretty general point of $\Trop X$ then $\In_w X$ is very symmetric. For $\sigma$ any polytope in $\QQ \otimes \lambda$, let $H(\sigma)$ denote the linear space spanned by vectors of the form $w-w'$ for $w$, $w' \in \sigma$. If $\sigma$ is defined by inequalities of the form $\{ w : \langle \lambda, w \rangle \geq a \}$ for various $\lambda \in \Lambda^{\vee}$, then there is a subtorus $\exp(H(\sigma)) \subseteq \TT(\kk, \Lambda)$ which has dimension $\dim \sigma$. The torus $\exp(H(\sigma))$ is cut out by the equations $\chi^{\lambda}=1$ where $\lambda$ ranges over those elements of $\Lambda^{\vee}$ such that $\langle \lambda, \cdot \rangle$ is constant on $\sigma$. 

\begin{prop}
Let $\sigma$ be a face of a good subdivision of $\Trop X$. Then, for $w$ in the relative interior of $\sigma$, the variety $\In_w X$ is invariant under the action on $\TT(\kk, \Lambda)$ of the torus $\exp(H(\sigma))$.\end{prop}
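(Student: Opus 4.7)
The plan is to combine the two tools we already have on the shelf, namely Lemma~\ref{PerturbInit} (perturbation of initial ideals) and the defining property of a good subdivision, to reduce the claim to a purely algebraic statement about $\In_w I$, and then use a standard Gr\"obner-theoretic weight-space argument to upgrade ``invariance under all $\In_v$'' to ``invariance under the torus $\exp(H(\sigma))$''.

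\textbf{Step 1 (from polyhedral geometry to initial ideals).} Fix $w$ in the relative interior of $\sigma$ and let $v\in H(\sigma)\cap\QQ^n$. Because $w$ is in the relative interior of $\sigma$ and $v\in H(\sigma)$, the point $w+\epsilon v$ still lies in the relative interior of $\sigma$ for all sufficiently small $\epsilon\in\QQ_{>0}$. By the defining property of a good subdivision recorded in the structural proposition for $\Trop X$, we therefore have $\In_{w+\epsilon v}X=\In_{w}X$. On the other hand, Lemma~\ref{PerturbInit} gives $\In_{w+\epsilon v}X=\In_{v}\In_{w}X$ for $\epsilon$ small. Comparing, $\In_{v}\In_{w}X=\In_{w}X$ for every $v\in H(\sigma)\cap\QQ^n$, and at the level of ideals $\In_{v}\In_{w}I=\In_{w}I$.

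\textbf{Step 2 (from initial-ideal invariance to torus invariance).} Let $J:=\In_{w}I\subset\kk[\Lambda^{\vee}]$ and let $L\subset\Lambda^{\vee}$ be the sublattice of characters with $\langle\lambda,\cdot\rangle$ constant on $\sigma$, so that $L$ is exactly the set of $\lambda$ with $\langle\lambda,v\rangle=0$ for all $v\in H(\sigma)$. The torus $\exp(H(\sigma))\subset\TT(\kk,\Lambda)$ has character lattice $\Lambda^{\vee}/L$, and $T'$-invariance of $J$ is equivalent to $J$ being graded with respect to the decomposition
\[
\kk[\Lambda^{\vee}]=\bigoplus_{\mu\in\Lambda^{\vee}/L}\kk[\Lambda^{\vee}]_{\mu},
\]
i.e.\ to the condition that every weight component $f^{(\mu)}$ of every $f\in J$ again lies in $J$. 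So it suffices to extract each weight component.

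\textbf{Step 3 (extracting weight components).} Take $f=\sum_{\lambda}f_{\lambda}\chi^{\lambda}\in J$ and decompose it as $f=\sum_{\mu}f^{(\mu)}$ where $\mu$ ranges over the finitely many classes in $\Lambda^{\vee}/L$ actually appearing in the support. Because $\langle\lambda,v\rangle$ depends only on the class $[\lambda]\in\Lambda^{\vee}/L$ when $v\in H(\sigma)$, the linear functional $\mu\mapsto\langle\mu,v\rangle$ is well defined on $\Lambda^{\vee}/L$ for each $v\in H(\sigma)$. Choose a rational $v\in H(\sigma)$ generic enough that this functional is injective on the finite set of weights of $f$; such a $v$ exists because $\QQ$-rational points are dense in $H(\sigma)$ and the excluded set is a finite union of proper rational hyperplanes. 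Then $\In_{v}f=f^{(\mu_0)}$ for the unique weight $\mu_0$ minimising $\langle\mu,v\rangle$. By Step 1, $\In_{v}f\in J$, so $f^{(\mu_0)}\in J$; subtract and induct on the number of weights remaining. Hence every $f^{(\mu)}$ lies in $J$, so $J$ is $\Lambda^{\vee}/L$-graded, i.e.\ $\exp(H(\sigma))$-invariant, and therefore $\In_{w}X=V(J)$ is invariant under the action of $\exp(H(\sigma))$ as claimed.

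The main obstacle I expect is purely notational: carefully tracking the distinction between ideals in $\KK[\Lambda^\vee]$ and in $\kk[\Lambda^\vee]$ when invoking Lemma~\ref{PerturbInit} (whose dual meaning for coefficient-field ideals was flagged earlier in the section), and making sure that one does not need to enlarge $v$ beyond $H(\sigma)\cap\QQ^n$ for the rationality/denseness argument in Step 3. Once one accepts those points the proof is a short synthesis of facts already in hand.
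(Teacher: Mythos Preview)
Your proof is correct and follows essentially the same route as the paper: both hinge on the identity $\In_v(\In_w I)=\In_w I$ for $v\in H(\sigma)$, obtained by combining Lemma~\ref{PerturbInit} with the defining property of a good subdivision. The paper's version of your Steps~2--3 is simply more compressed: instead of peeling off weight components by induction on a generic $v$, it observes that for each fixed $u\in H(\sigma)$ the ideal $\In_u(\In_w I)$ is automatically generated by $u$-homogeneous elements (since $\In_u f$ is $u$-homogeneous for any $f$), so $\In_w I=\In_u(\In_w I)$ is already invariant under the one-parameter subgroup $\exp(\QQ\cdot u)$; letting $u$ range over $H(\sigma)$ gives invariance under $\exp(H(\sigma))$.
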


\begin{proof}
By proposition \ref{PerturbInit}, for $u \in H(\sigma)$, we have $\In_u \In_w X=\In_w X$. But $\In_u \In_w X$ is clearly invariant under $\exp(\QQ \cdot u)$, as, for any $f \in \kk[\Lambda^{\vee}]$, the polynomial $\In_u f$ is homogenous with respect to the grading by $u$. 
\end{proof}

Note that this has an immediate corollary:

\begin{cor} \label{DimAtMost}
The dimension of $\Trop X$ is less than or equal to that of $X$.
\end{cor}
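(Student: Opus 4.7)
The plan is to extract the dimension bound from the symmetry statement of the preceding proposition by choosing $\sigma$ to be a top-dimensional face of a good subdivision of $\Trop X$.

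First I would pick a face $\sigma$ of a good subdivision of $\Trop X$ with $\dim \sigma = \dim \Trop X$ and a point $w$ in its relative interior. By the preceding proposition, $\In_w X$ is a closed subscheme of $\TT(\kk, \Lambda)$ that is invariant under the action of the torus $\exp(H(\sigma))$, which has dimension $\dim \sigma$. Since $w \in \Trop X$, condition (5) of Theorem~\ref{DefnsOfTrop} tells us that $\In_w X$ is nonempty; being a nonempty closed subscheme of $\TT(\kk, \Lambda)$ stable under a $(\dim \sigma)$-dimensional torus, it contains a full orbit, so $\dim \In_w X \geq \dim \sigma$.

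Next I would argue the reverse inequality $\dim \In_w X \leq \dim X$. Using the geometric description of $\In_w X$ recalled just after Lemma~\ref{PerturbInit}, we can (after passing to a finite extension of $\KKK$, which does not change $\Trop X$) assume $t^w \in \KKK$ and that $X$ descends to a subscheme $\boldsymbol{X} \subset \TT(\KKK, \Lambda)$. Then $\In_w X$ is the special fiber of the Zariski closure $\overline{t^{-w} \boldsymbol{X}} \subset \Spec \OOO[\Lambda^\vee]$, whose generic fiber is $t^{-w} \boldsymbol{X}$, a scheme of the same dimension as $X$. Since $\OOO$ is a discrete valuation ring and $\overline{t^{-w} \boldsymbol{X}}$ is, by construction, $\OOO$-flat (being the closure of the generic fiber, it is $\OOO$-torsion-free, hence flat over the DVR $\OOO$), the dimensions of the two fibers agree. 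Thus $\dim \In_w X = \dim X$, and combining with the previous paragraph yields
\[
\dim \Trop X = \dim \sigma \leq \dim \In_w X \leq \dim X,
\]
which is the desired inequality.

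The main obstacle is the upper bound $\dim \In_w X \leq \dim X$: the ring $\OO$ itself is not Noetherian, so one cannot talk about flatness over $\OO$ as cleanly as over a DVR. The trick is precisely to descend to $\OOO$ via a finite base extension so that the standard fact ``flat families over a DVR have fibers of equal dimension'' applies. Once this descent is in place, everything else is a formal consequence of the previous proposition.
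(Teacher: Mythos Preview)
Your argument is correct and follows essentially the same approach as the paper: both use the preceding proposition to get $\dim \In_w X \geq \dim \sigma$ from torus invariance and nonemptiness, and both bound $\dim \In_w X \leq \dim X$ by invoking flat degeneration. The only differences are cosmetic---the paper phrases it as a proof by contradiction, while you argue directly, and you spell out the descent to $\OOO$ that justifies the flatness step, whereas the paper simply asserts that $\In_w X$ is a flat degeneration of $X$.
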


\begin{proof}
Let $\dim X=d$ and suppose for the sake of contradiction that $\dim \Trop X > d$. Then there must exist a face $\sigma$ of $\Trop X$ with dimension $>d$. Let $w$ lie in the relative interior of $\sigma$. By description (5) of $\Trop X$ in proposition~\ref{DefnsOfTrop}, $\In_w X$ is not empty. Thus, since $\exp(H(\sigma))$ acts freely on $\TT(\KK, \Lambda)$, we have $\dim \In_w X \geq \dim \exp(H(\sigma))=\dim \sigma > d$. But $\In_w X$ is a flat degeneration of $X$, so its dimension can be no more than $d$, a contradiction.
\end{proof}

Let $\sigma$ be a face of a good subdivision of $\Trop X$ and let $w$ lie in the relative interior of $\sigma$. Set  $Y_{\sigma}: = \In_w X/\exp(H(\sigma)) \subset \TT(\kk, \Lambda/(\Lambda \cap H(\sigma)) )$. Then $\TT(\kk, \Lambda)$ splits (noncanonically) as $\exp(H(\sigma)) \times  \TT(\kk, \Lambda/(\Lambda \cap H(\sigma)) )$ and we have $\In_w X = \exp(H(\sigma)) \times Y_{\sigma}$ and $\Trop \In_w X=H(\sigma) \times \Trop Y_{\sigma}$. We have $\dim \In_w X=\dim \sigma+\dim Y_{\sigma}$. When $X$ is pure of dimension $d$, this allows us to improve Corollary~\ref{DimAtMost}. 

\begin{prop} 
If $X$ is integral and $d$-dimensional then, for any $w \in \Trop X$, $\In_w X$ is pure of dimension $d$. As a corollary, if $\sigma$ is any face of a good subdivision of $\Trop X$ then $d=\dim \sigma+\dim Y_{\sigma}$.
\end{prop}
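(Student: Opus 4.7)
The plan is to use the flat degeneration interpretation of $\In_w X$ and invoke Krull's Hauptidealsatz. Everything hinges on reducing to a noetherian setting so that we can use standard dimension theory.

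First I would reduce to working over $\OOO$. After replacing $\KKK$ by a finite extension, we may assume both that $t^w \in \KKK$ and that $X$ arises by base change from some integral subscheme $\boldsymbol{X}$ of $\TT(\KKK,\Lambda)$. The discussion following the definition of $\In_w X$ in the excerpt then tells us that $\In_w X$ is the special fiber of $\overline{Y} := \overline{t^{-w} \boldsymbol{X}} \subset \Spec \OOO[\Lambda^{\vee}]$, where the closure is taken in the Zariski sense. Since $\boldsymbol{X}$ is integral of dimension $d$ (as $X$ is, being a base change under a field extension) and $\Spec \OOO[\Lambda^{\vee}]$ is integral, the scheme $\overline{Y}$ is integral of dimension $d+1$.

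Next I would observe that $\overline{Y}$ is flat over $\OOO$. Indeed, $\OOO$ is a discrete valuation ring, so a finitely generated module over $\OOO[\Lambda^{\vee}]$ is $\OOO$-flat iff it has no $\OOO$-torsion; this holds because $\overline{Y}$ was built by taking closure, so its coordinate ring injects into the coordinate ring of its generic fiber, which is an $\OOO$-free (even $\KKK$-vector-space) object. The special fiber $\In_w X$ is then cut out in the integral noetherian scheme $\overline{Y}$ by a single nonzerodivisor, namely the uniformizer of $\OOO$ (it is a nonzerodivisor precisely because $\overline{Y}$ is $\OOO$-flat). By Krull's Hauptidealsatz, every irreducible component of the special fiber has codimension exactly $1$ in $\overline{Y}$, and hence dimension exactly $d$. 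Thus $\In_w X$ is pure of dimension $d$, as claimed.

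For the corollary, let $\sigma$ be a face of a good subdivision of $\Trop X$ and pick $w$ in its relative interior. The preceding proposition gives $\In_w X \isomorph \exp(H(\sigma)) \times Y_{\sigma}$, where $\exp(H(\sigma))$ is a torus of dimension $\dim \sigma$. Combining this with the pureness just established, one has
\[
d \;=\; \dim \In_w X \;=\; \dim \exp(H(\sigma)) + \dim Y_{\sigma} \;=\; \dim \sigma + \dim Y_{\sigma},
\]
which is the desired equality.

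The main obstacle is the non-noetherianness of $\OO$, which would otherwise let us apply Krull's Hauptidealsatz directly to the closure of $t^{-w} X$ inside $\Spec \OO[\Lambda^{\vee}]$. The workaround is precisely the reduction above: replacing $\OO$ by $\OOO$ after a finite base extension, which the excerpt explicitly permits. Once the ambient ring is noetherian and the total space is integral and flat over a DVR, the codimension-one statement for the special fiber is standard.
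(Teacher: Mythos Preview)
Your proof is correct and takes essentially the same approach as the paper: reduce to the noetherian DVR $\OOO$ by a finite base extension, note that the Zariski closure $\overline{t^{-w}\boldsymbol{X}}$ is an integral $(d+1)$-dimensional scheme in which the uniformizer is a nonzerodivisor, and then cut by $t$ to drop the dimension by one. You are somewhat more explicit than the paper in naming flatness and Krull's Hauptidealsatz, but the argument is the same.
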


Note that ``pure of dimension $d$" means that every irreducible component of a scheme has dimension $d$, but it permits that there may be associated primes of smaller dimension.

\begin{proof}
After taking a finite extension of $\KKK$, we may assume that $t^w \in \KKK$ and $X=\boldsymbol{X} \times \KK$ for some $\boldsymbol{X} \subset \TT(\KKK,\Lambda)$.  Recall our description of $\In_w X$ as the fiber over $\Spec \kk$ of $\overline{t^{-w} \cdot \boldsymbol{X}}$. Now, $\overline{t^{-w} \cdot \boldsymbol{X}}$ is integral because it is the Zariski closure of an integral variety. (A subring of a domain is a domain.) Taking the closure increases the dimension by one. Taking the quotient by $t$ brings the dimension back down to $d$ again, as in a domain any single non-zero element is a regular sequence. So the fiber of $\overline{t^{-w} X}$ over $\Spec \kk$ is $d$-dimensional again. 
\end{proof}

In particular, suppose that $X$ is pure of dimension $d$ and $\sigma$ is a $d$-dimensional face of a good subdivision of $\Trop X$. Then $Y_{\sigma}$ is $0$-dimensional and thus has finite length. Let $m_{\sigma}$ be the length of $Y_{\sigma}$. Note that we now know, if $C$ is a curve in $\TT(\KK, \Lambda)$, that $\Trop C$ is pure of dimension $1$. In other words, $\Trop C$ is a graph in this case.

We now prove a result which shows, in particular, that if $C$ is a curve then $\Trop C$ is a zero tension curve.

Let $\Sigma \subset \QQ \otimes \Lambda$ be a finite rational polyhedral complex, pure of dimension $d$, with a positive integer $m_{\sigma}$ assigned to each maximal face of $\Sigma$. Let $\rho$ be a $(d-1)$-dimensional face of $\Sigma$ and let $\sigma_1$, \dots, $\sigma_k$ be the collection of $d$-dimensional faces of $\Sigma$ which contain $\rho$. Then $\Lambda / (H(\rho) \cap \Lambda)$ is a lattice of dimension $(n-d+1)$ and the image of $H(\sigma_i) \cap \Lambda$ is a ray in this lattice. Let $v_i$ be the minimal lattice vector along this ray. We say that the \textbf{zero tension condition holds at $\rho$} if $\sum m_{\sigma_i} v_i=0$. We say that $\Sigma$ is a \textbf{zero tension complex} if the zero tension condition holds at every $(d-1)$-dimensional face $\rho$ of $\Sigma$. Note that, if $\Sigma'$ is a subdivision of $\Sigma$ and we define $m_{\sigma'}$ to be $m_{\sigma}$ where $\sigma'$ is a $d$-dimensional face of $\Sigma'$ and $\sigma$ is the face of $\Sigma$ containing $\sigma'$, then $\Sigma'$ is a zero tension complex if and only if $\Sigma$ is. We will now show that tropical varieties are zero tension complexes. We first tackle the case of a curve with constant coefficients.

\begin{prop} \label{ZeroTensionForCCCurves}
 Let $C \subset \TT(\kk, \Lambda)$ be an algebraic curve. Let $\{ \sigma_1, \ldots, \sigma_N \}$ be the degree of $C$; write $\sigma_i$ as $d_i \rho_i$ where $d_i$ is a positive integer and $\rho_i$ a primitive lattice vector of $\Lambda$. Then the rays of $\Trop C$ are precisely in the directions $\rho_1$, \dots, $\rho_N$ and the sum of the multiplicities of the rays in direction $d_i$ is $\rho_i$. As a corollary, $\sum \sigma_i=0$.
\end{prop}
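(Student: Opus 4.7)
My plan is to identify the rays of $\Trop C$ with the punctures of $\overline{C}$ by extending $\phi$ to a torically transverse map into a suitable toric compactification, and then match tropical multiplicities with the intersection degrees $d_i$. Throughout, I work with $C$ as a constant-coefficient curve, base-changed to $\KK$ when tropicalizing.

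\textbf{Setup.} I would first choose a complete rational fan $\Sigma$ in $\QQ \otimes \Lambda$ whose rays include $\rho_1,\dots,\rho_N$ (and possibly others, chosen fine enough that after the compactification every puncture maps into an open codimension-one toric stratum). By Proposition~\ref{ToricTransverse}, $\phi$ extends to a torically transverse map $\overline{\phi} : \overline{C} \to \Toric(\Sigma)$ whose class is $(d_1,\dots,d_N,0,\dots,0)$, and the punctures $x \in \overline{C} \setminus C$ are partitioned according to which divisor $Y_{\rho_i}$ they map to, with $\sum_{\rho_x=\rho_i} d_x = d_i$.

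\textbf{Rays of $\Trop C$ lie in the $\rho_i$ directions.} Suppose $\rho \in \Lambda$ is primitive and that the ray $\QQ_{\geq 0} \rho$ belongs to $\Trop C$ (as a polyhedral fan). Pick $w = \lambda \rho$ in the relative interior. By Theorem~\ref{DefnsOfTrop}(1) applied to $C \times_\kk \KK$, there exists $c \in C(\KK)$ with $v(c)=w$. Viewed as a $\KK$-point of $\overline{C}$, the valuative criterion extends $c$ to a morphism $\Spec \OO \to \overline{C}$. If the closed point landed inside $C$, then $\chi^\lambda \circ \phi \circ c \in \OO^*$ for every $\lambda \in \Lambda^{\vee}$ and $v(c)=0$, contradicting $w \neq 0$. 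So the closed point lands at some puncture $x$, and writing a uniformizer $s$ at $x$ in terms of the uniformizer $t$ of $\OO$ as $s = a t^e + \cdots$, a direct computation of orders of vanishing of the characters yields $v(c) = e \, \sigma_x = e d_x \rho_x$. Hence $\rho = \rho_x$, establishing that every ray direction of $\Trop C$ is one of the $\rho_i$.

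\textbf{Matching multiplicities.} Fix $i$ and let $m_i$ denote the multiplicity of the ray along $\rho_i$ in $\Trop C$, namely the length of $Y_{\rho_i} = \In_{w_i} C / \exp(H(\QQ_{\geq 0}\rho_i))$ for any $w_i$ in the interior of that ray. The key comparison is that $\In_{w_i} C$, obtained as the special fiber of the closure of $t^{-w_i}\cdot (C \times_\kk \KK)$ in $\Spec \OO[\Lambda^{\vee}]$, is exactly the degeneration of $C$ realized by the toric boundary construction at $Y_{\rho_i}$: the local uniformizer computation from the previous step shows that the preimages in $\overline{C}$ of $Y_{\rho_i}$ contribute to $\In_{w_i} C$ precisely $d_x$ copies of a translated subtorus $\exp(H(\rho_i))$ for each puncture $x$ with $\rho_x = \rho_i$. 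Summing over those punctures gives $m_i = \sum_{\rho_x=\rho_i} d_x = d_i$. Since distinct rays of a fan have distinct directions, this also shows that every $\rho_i$ really does appear as a ray (the ray cannot have multiplicity zero, since $Y_{\rho_i}$ meets $\overline{\phi}(\overline{C})$ in a nonempty set by transversality). The main obstacle here is this bookkeeping identification of $\In_{w_i} C$ with the contributions from the punctures; I would make it precise by working locally at each puncture and invoking the compatibility of $\In_{w_i}$ with the toric degeneration of $\Toric(\Sigma)$ along the one-parameter subgroup $t^{w_i}$.

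\textbf{Corollary.} The relation $\sum \sigma_i = 0$ is immediate from the definition of degree as the formal sum of divisor contributions of the meromorphic functions $\chi^{\lambda} \circ \phi$ on $\overline{C}$, but it is also the zero-tension condition at the apex of the fan $\Trop C$: combining the two parts above gives $\sum_i m_i \rho_i = \sum_i d_i \rho_i = \sum_i \sigma_i$, and this sum vanishes as a principal divisor calculation on $\overline{C}$.
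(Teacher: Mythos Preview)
Your approach and the paper's share the same core idea --- identify rays of $\Trop C$ with punctures of the normalization by partially compactifying the torus --- but the paper executes it more directly and your multiplicity step has a real gap.

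The paper does not build a complete fan $\Sigma$. Instead it fixes one primitive direction $w$, changes coordinates so $w=e_n$, and works in the partial compactification $Y=(\kk^*)^{n-1}\times\kk$. The heart of the argument is the scheme-theoretic identity
\[
\In_w C \;=\; \bigl(\phi'(C')\cap\{x_n=0\}\bigr)\times\kk^*,
\]
proved by an elementary manipulation of ideal generators: reduce to $f$ homogeneous in the $w$-grading, strip the power of $x_n$, and observe that $f\in\In_w I$ iff some $f+x_nf_1+\cdots+x_n^df_d$ lies in $I$, iff $f$ vanishes on the closure cut by $x_n=0$. This single identity gives both the ray directions and the multiplicities at once, since the length of $\phi'(C')\cap\{x_n=0\}$ is visibly $\sum_{\rho_x=e_n} d_x$.

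Your valuative-criterion argument for the ray directions is correct and arguably more conceptual than the paper's. But your multiplicity claim --- that $\In_{w_i}C$ receives ``$d_x$ copies of a translated subtorus $\exp(H(\rho_i))$'' from each puncture --- is not right as stated and you acknowledge it is the obstacle. Distinct punctures with $\rho_x=\rho_i$ can map to the same boundary point, $C$ itself may be singular there, and the contributions need not be reduced or disjoint; what survives is only the \emph{length} equality. Making that precise in your framework amounts to proving, locally at each boundary divisor $Y_{\rho_i}$, exactly the scheme identity above. So your route is not wrong, but the missing step is essentially the paper's whole proof, and the extra apparatus of a full fan and torically transverse extension buys you nothing beyond what the one-variable partial compactification already gives.
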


\begin{proof}
Let $\tilde{C}$ be the projective completion of the normalization of $C$ and $\phi : \tilde{C} \dashedto \TT(\kk, \Lambda)$ the rational map with image $C$. Let $w \in \Lambda$ be a primitive lattice vector. We will show that, first, $w \in \Trop C$ if and only if there there is a multiple of $w$ in the degree of $C$ and, second, that $w$ has the same multiplicity in both cases. 

Without loss of generality, we can choose coordinates such that $\Lambda \isomorph \ZZ^n$ and $w=e_n$. Then we can embed $\TT(\kk, \Lambda)$ into $Y := (\kk^*)^{n-1} \times \kk$. Let $C'$ be the subset of $\tilde{C}$ on which $\phi$ can be extended to a map to $Y$ and write $\phi'$ for the extended map. The closure of $\phi(C)$ in $Y$ is $\phi'(C')$. We claim that we have the equality 
\begin{equation}
\In_w C = \left( \phi'(C') \cap \{ x_n=0 \} \right) \times \kk^* \label{SchemeInitEqual} \end{equation}
of subschemes of $(\kk^*)^n$.
To see this, let $f$ be a polynomial in $\kk[\Lambda^{\vee}]=\kk[x_1^{\pm}, \ldots, x_{n-1}^{\pm}, x_n^{\pm}]$.  We must show that $f$ vanishes on the left hand scheme if and only if it vanishes on the right hand scheme. Note that both sides are clearly homogenous in the $n^{\textrm{th}}$ coordinate, so we may assume that $f$ is homogenous in $x_n$. Since $x_n$ is a unit in $\kk[x_1^{\pm}, \ldots, x_{n-1}^{\pm}, x_n^{\pm}]$, we may furthermore assume that $x_n$ does not occur in $f$ at all.

Let $I \subset \kk[\Lambda^{\vee}]$ be the ideal defining $C$. Now, since $f$ is homogenous in the $w$-grading, the condition that $f$ vanishes on the left hand scheme is that there be a $g \in I$ with $\In_w g=f$. If such a $g$ exists, it is of the form $f+x_n f_1 + x_n^2 f_2 + \cdots + x_n^d f_d$. In particular, $g$ is in the coordinate ring of $Y$ and hence vanishes on the closure of $C$ in $Y$, which is $\phi'(C')$. But then, as $g \equiv f \mod x_n$, we have that $f$ vanishes on $\phi'(C') \cap \{ x_n =0 \}$. This chain of reasoning can easily be reversed. So the same polynomials vanish on the schemes on each side of (\ref{SchemeInitEqual}) and so (\ref{SchemeInitEqual}) is established.

Now, by definition, $w$ is in $\Trop C$ if and only if the left side of (\ref{SchemeInitEqual}) is nonempty. So $w$ is in $\Trop C$ if and only if there is a point $x$ of $\tilde{C}$ on which $\phi'$ is defined and such that $\phi'(x)$ is in $\{ x_n=0 \}$. Now, $\phi'$ is defined at a point $x$ of $\tilde{C}$ if and only if the rational functions $x_i \circ \phi$ on $\tilde{C}$ have no zeroes or poles at $x$ for $i=1$, \dots, $n-1$ and $x_n \circ \phi$ has no pole at $x$. Moreover, $\phi'(x)$ lands in $\{ x_n=0 \}$ if and only if $x_n \circ \phi$ is $0$ at $x$. So we see that $w=e_n$ is in $\Trop C$, if and only if the left hand side of (\ref{SchemeInitEqual}) is nonempty, if and only if the right hand side of (\ref{SchemeInitEqual}) is nonempty, if and only if there is a point $x$ in $\tilde{C}$ such that $x_i \circ \phi$ has neither a zero nor pole at $x$ for $i=1$ \dots, $n-1$ and $x_n \circ \phi$ has a zero there. But this last is precisely the definition of a multiple of $w$ being in the degree of $C$. Moreover, if $x$ is as above, then the length of the scheme $\{ (\phi')^*(x_n) =0 \}$ at $x$ is the order of vanishing of $x_n \circ \phi$ at $x$; this yields the equality of multiplicities. 

It follows that $\sum \sigma_i=0$, as we have now shown that the $\sigma_i$ are the degree of a curve.
\end{proof}

We have just shown that, for a curve $C$ with constant coefficients the unbounded rays of $\Trop C$ correspond to the degree of $C$. In fact this is true for curves that do not have constant coefficients; we leave the details to the reader. 
\begin{prop} \label{TropGivesDeg}
 Let $C \subset \TT(\KK, \Lambda)$ be an algebraic curve. Let $\{ \sigma_1, \ldots, \sigma_N \}$ be the degree of $C$; write $\sigma_i$ as $d_i \rho_i$ where $d_i$ is a positive integer and $\rho_i$ a primitive lattice vector of $\Lambda$. Then the unbounded rays of $\Trop C$ are precisely in the directions $\rho_1$, \dots, $\rho_N$ and the sum of the multiplicities of the rays in direction $\rho_i$ is $d_i$. 
\end{prop}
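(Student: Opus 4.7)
\emph{Plan.} The plan is to reduce to the constant-coefficient case already established in Proposition~\ref{ZeroTensionForCCCurves}, by extracting, for each candidate primitive ray direction $w \in \Lambda$, a constant-coefficient curve that captures the behavior of $C$ both at the unbounded rays of $\Trop C$ in direction $w$ and at those points of $\overline{C} \setminus C$ whose associated $\sigma_x$ is a positive multiple of $w$.

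First I would fix a primitive $w \in \Lambda$ and show that the initial degenerations $\In_{\lambda w} C$ stabilize, as $\lambda \to \infty$, to a single constant-coefficient curve $C_w \subset \TT(\kk, \Lambda)$. Indeed, by Lemma~\ref{PerturbInit} and the finiteness of the good polyhedral subdivision of $\Trop C$, for all sufficiently large $\lambda$ the point $\lambda w$ lies in the relative interior of a single unbounded face $\sigma$ of $\Trop C$; and by the earlier description of the star at $w$ as $\Trop \In_w X$, the recession cone of $\sigma$ contains $\QQ_{\geq 0} w$. Thus $\In_{\lambda w} C$ is independent of $\lambda$ (for $\lambda$ large) and agrees with $\Trop C$ in a neighborhood of the $w$-direction at infinity; in particular the multiplicity of $w$ as an unbounded ray direction of $\Trop C$ equals that of $w$ as an unbounded ray direction of $\Trop C_w$.

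Second, I would identify the degree of $C_w$ (in the direction $w$) with the contribution to the degree of $C$ from points of $\overline{C} \setminus C$ whose $\sigma_x$ is a positive multiple of $w$. After passing to a finite extension of $\KKK$, assume $C = \boldsymbol{C} \times_{\KKK} \KK$ for $\boldsymbol{C} \subset \TT(\KKK, \Lambda)$, and let $\overline{\boldsymbol{C}}$ be the projective completion of the normalization of $\boldsymbol{C}$ in a toric compactification whose fan contains $w$ as a ray. Using the description of $\In_{\lambda w} C$ as the special fiber of the Zariski closure of $t^{-\lambda w} \cdot \boldsymbol{C}$ in $\Spec \OOO[\Lambda^{\vee}]$, the points of $C_w$ corresponding to the $w$-direction at infinity are read off from the local behavior of $\overline{\boldsymbol{C}}$ near the toric divisor $Y_w$: at a point $x \in \overline{\boldsymbol{C}} \setminus \boldsymbol{C}$ with $\sigma_x = d_x w$, working in a local parameter $u$ at $x$ on the normalization, the function $\chi^{\lambda} \circ \boldsymbol{C}$ has leading term a constant times $u^{\langle \lambda, d_x w\rangle}$, and the translate by $t^{-\lambda w}$ is designed precisely to bring this leading behavior to finite non-zero value in the residue field, contributing $d_x$ to the multiplicity of the $w$-ray of $C_w$.

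The main obstacle is the bookkeeping in this third step: I need to check that \emph{only} boundary points $x$ with $\sigma_x$ a positive multiple of $w$ contribute to the $w$-ray of $C_w$, and that each such $x$ contributes with the correct multiplicity $d_x$. Boundary points with $\sigma_x$ not on $\QQ_{\geq 0} w$ either remain in the open torus after the translation by $t^{-\lambda w}$ (for $\lambda$ large) and so contribute nothing at infinity in direction $w$, or are swept off in another direction; interior points of $\boldsymbol{C}$ similarly either pass to points of the open torus or fall into deeper toric strata (outside the scope of $w$-rays). Applying Proposition~\ref{ZeroTensionForCCCurves} to $C_w$ then gives that the total multiplicity of rays of $\Trop C_w$ in direction $w$ equals the sum of $d_x$ over boundary points with $\sigma_x \in \QQ_{>0} w$, which is exactly $d_i$ when $w = \rho_i$, completing the proof.
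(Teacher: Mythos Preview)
Your approach has a genuine gap in the first step. You claim that for $\lambda$ sufficiently large the point $\lambda w$ lies in the relative interior of a single unbounded face of $\Trop C$, but this need not hold: an unbounded ray of $\Trop C$ in direction $w$ has the form $p + \QQ_{\geq 0} w$ for some base point $p$, and the points $\lambda w$ lie on this ray only when $p \in \QQ w$. In general $\Trop C$ may have several unbounded rays in direction $w$, none of which contain any $\lambda w$; then $\In_{\lambda w} C$ is empty for all large $\lambda$ and your $C_w$ carries no information at all. Even when $\lambda w$ does land on one such ray, $\Trop C_w$ (being the star of $\Trop C$ at $\lambda w$) records only the multiplicity of that single ray, not the sum over all rays in direction $w$, which is what the proposition asserts equals $d_i$.

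The fix is to work ray by ray rather than with a single global $C_w$: for each unbounded ray $e$ of $\Trop C$ in direction $w$, pick $p_e$ in its relative interior and study $\In_{p_e} C$, a constant-coefficient curve invariant under $\exp(\QQ w)$ whose quotient $Y_e$ has length $m_e$. One must then show $\sum_e m_e = d_i$. The cleanest route, directly mirroring the proof of Proposition~\ref{ZeroTensionForCCCurves}, is to choose coordinates with $w = e_n$, pass to the partial compactification $(\KK^*)^{n-1} \times \KK$, and relate the zero-dimensional scheme $\overline{C} \cap \{x_n = 0\}$ (whose length is $d_i$ by the definition of degree) to the collection $\{Y_e\}$: a point of $\overline{C} \cap \{x_n = 0\}$ with first $n-1$ coordinates $(a_1, \ldots, a_{n-1}) \in (\KK^*)^{n-1}$ contributes to the ray whose image in $\QQ \otimes (\Lambda/\ZZ w)$ is $(v(a_1), \ldots, v(a_{n-1}))$, with matching multiplicity. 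Your second and third steps are gesturing at exactly this bookkeeping, but it has to be organized around the individual rays, not a single $C_w$. (The paper itself leaves this proposition to the reader, so there is no official argument to compare against.)
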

So, if $C$ is a curve, then $\Trop C$ is a graph of the correct degree. We now finish proving that, if $C$ is a curve, then $\Trop C$ is a zero tension curve.

\begin{prop} \label{TropIsZeroTension}
Let $X \subset \TT(\KK, \Lambda)$ be pure of dimension $d$. Fix any good subdivision $\Sigma$ of $\Trop X$ and define weights $m_{\sigma}$ by the length of $Y_{\sigma}$ as described above. Then $\Sigma$ is a zero tension complex.
\end{prop}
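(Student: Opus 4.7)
The plan is to reduce the zero-tension condition at a codimension-one face $\rho$ to the curve case already handled by Proposition~\ref{ZeroTensionForCCCurves}. Specifically, I would fix a $(d-1)$-dimensional face $\rho$ of $\Sigma$ and a point $w$ in its relative interior, and work entirely inside $\In_w X \subset \TT(\kk,\Lambda)$, which has constant coefficients, is pure of dimension $d$, and is invariant under $\exp(H(\rho))$ by results proved above. The quotient $Y_\rho := \In_w X / \exp(H(\rho))$ then lives in $\TT(\kk,\Lambda/(\Lambda\cap H(\rho)))$ and is pure of dimension $1$, i.e.\ a (possibly reducible, possibly non-reduced) algebraic curve over~$\kk$.

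Next I would identify the star of $\Trop X$ at $w$ with $\Trop\In_w X = H(\rho)\times \Trop Y_\rho$. The $d$-dimensional faces $\sigma_1,\dots,\sigma_k$ of $\Sigma$ containing $\rho$ are exactly those faces giving rise to the unbounded rays of $\Trop Y_\rho$; concretely, each $\sigma_i$ contributes the ray generated by the primitive vector $v_i \in \Lambda/(\Lambda\cap H(\rho))$ defined in the statement. The key compatibility step is to check that the multiplicity of the ray $\QQ_{\geq 0}v_i$ in $\Trop Y_\rho$ (in the sense of Proposition~\ref{TropGivesDeg}, equivalently the length defined via quotient by $\exp(H(\sigma_i))$) agrees with $m_{\sigma_i}$. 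For this I would pick $w' = w + \epsilon v_i$ with $\epsilon>0$ small rational, so $w'$ lies in the relative interior of $\sigma_i$, and use Lemma~\ref{PerturbInit} to write $\In_{w'} X = \In_{v_i}\In_w X$. Passing to the quotient by $\exp(H(\sigma_i))$, which contains $\exp(H(\rho))$, the length of $Y_{\sigma_i}$ is computed the same way whether we go through $\In_{w'} X$ directly or through $\In_{v_i} Y_\rho$; in the latter guise it is exactly the tropical multiplicity of the ray $\QQ_{\geq 0}v_i$ in $\Trop Y_\rho$.

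Once this identification is in place, I apply Proposition~\ref{ZeroTensionForCCCurves} to the curve $Y_\rho$: its unbounded rays, together with their multiplicities, form the degree of $Y_\rho$, and the proposition gives the relation $\sum_{i=1}^k m_{\sigma_i} v_i = 0$ in $\Lambda/(\Lambda\cap H(\rho))$. This is exactly the zero tension condition at $\rho$. Since $\rho$ was an arbitrary codimension-one face, $\Sigma$ is a zero tension complex.

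The hard part will be the compatibility of multiplicities in step two of the argument — making sure that the ``length of $Y_{\sigma_i}$'' defined by quotienting the big initial degeneration $\In_{w'}X$ by $\exp(H(\sigma_i))$ coincides with the ``length along the ray $v_i$'' in $\Trop Y_\rho$. The cleanest way is probably to factor the quotient $\exp(H(\sigma_i)) = \exp(H(\rho))\cdot \exp(\QQ v_i)$ (after choosing a splitting), use Lemma~\ref{PerturbInit} to identify $\In_{w'}X$ with $\In_{v_i}\In_w X$, and then observe that taking the further quotient by $\exp(\QQ v_i)$ on $Y_\rho$ yields a zero-dimensional scheme whose length is simultaneously $m_{\sigma_i}$ and the ray-multiplicity demanded by Proposition~\ref{ZeroTensionForCCCurves}. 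Once this bookkeeping is settled, the rest of the argument is formal.
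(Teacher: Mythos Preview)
Your reduction is exactly the one the paper makes: pass to the star at a codimension-one face $\rho$, quotient by $\exp(H(\rho))$, and land on the pure one-dimensional scheme $Y_\rho$ over $\kk$ with constant coefficients. Your bookkeeping for the multiplicity identification via Lemma~\ref{PerturbInit} is also what the paper means by ``one can easily check that the multiplicities of rays of $\Trop Y_\rho$ are the same as the multiplicities of the corresponding faces of $\Trop X$.''

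The gap is in your final step. You invoke Proposition~\ref{ZeroTensionForCCCurves} on $Y_\rho$ directly, but look at how that proposition is stated and proved: it speaks of ``the degree of $C$'' and its proof begins by taking the normalization $\tilde{C}$. Both of these presuppose that $C$ is reduced (indeed, integral). You yourself note that $Y_\rho$ may be non-reduced, and an initial degeneration of an integral $X$ can certainly acquire embedded or multiple components, so you cannot apply Proposition~\ref{ZeroTensionForCCCurves} as a black box here. The paper flags this explicitly: ``If $X$ is reduced and irreducible, then we have already proven the result in the previous proposition. Unfortunately, the possibility that $X$ is not reduced leads to significant difficulties, to which we now turn.''

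What the paper does to close this gap: write the irreducible components of $Y_\rho$ as $X_1,\dots,X_c$ with multiplicities $m_i$ and reductions $Z_i$, and show by a length computation (comparing $X'\cap\{x_n=0\}$ with $Z_i'\cap\{x_n=0\}$ via flatness over the $x_n$-line and a generic fiber argument) that the multiplicity of each ray in $\Trop Y_\rho$ equals $\sum_i m_i$ times its multiplicity in $\Trop Z_i$. Since each $Z_i$ is an honest reduced irreducible curve, Proposition~\ref{ZeroTensionForCCCurves} applies to each $Z_i$, and summing the resulting relations with weights $m_i$ gives the zero tension identity for $Y_\rho$. You need to supply this piece (or an equivalent one) to complete the argument.
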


\begin{proof}
Let $\rho$ be a face of $\Sigma$ of dimension $d-1$. Let $\Star_{\rho}$ denote the star of $\Trop X$ at $\rho$, then $\Star_{\rho}$ is invariant under translation by $H(\rho)$ and $\Star_{\rho}/H(\rho) = \Trop Y_{\rho}$. One can easily check that the multiplicities of rays of $\Trop Y_{\rho}$ are the same as the multiplicities of the corresponding faces of $\Trop X$. Thus, we are reduced to checking the claim for $Y_{\rho}$, which is purely one-dimensional and has constant coefficients.

We may thus switch notations, calling what used to be called $Y_{\rho}$ by the name $X$, and assume that $X$ is purely one-dimensional and has constant coefficients. If $X$ is reduced and irreducible, then we have already proven the result in the previous proposition. Unfortunately, the possibility that $X$ is not reduced leads to significant difficulties, to which we now turn.

Let $X_1$, \dots $X_c$ be the irreducible components of $X$ and let $m_i$ be the multiplicity of $X_i$. Let $Z_i$ be the reduction of $X_i$. Clearly, $\Trop X=\bigcup \Trop X_i=\bigcup \Trop Z_i$, since description (1) of $\Trop X$ in Theorem~\ref{DefnsOfTrop} only sees the structure of $X$ as a point set. We claim that, for $w \in \QQ \otimes \Lambda$ spanning a ray of $\Trop X$, the multiplicity of $w$ in $\Trop X$ is the sum, over the components $Z_i$ of $X$, of $m_i$ times the multiplicity of $w$ in $\Trop Z_i$.

As in the proof of  Proposition \ref{ZeroTensionForCCCurves}, we may assume that $w=e_n$. Set $Y=(\kk^*)^{n-1} \times \kk$. Let $X'$ be the closure of $X$ in $Y$ and let $Z'_i$ be the closure of $Z_i$. Then the multiplicity of $w$ in $\Trop X$ is the length of the scheme theoretic intersection $X' \cap \{ x_n =0 \}$ and the multiplicity of $w$ in $\Trop Z'_i$ is the length of $Z'_i \cap \{ x_n=0 \}$. So we want to show that
$$\ell\left(  X' \cap \{ x_n =0 \} \right) =\sum m_i \ell\left(  Z'_i \cap \{ x_n =0 \} \right).$$

We project $Y$ onto the $n^{\textrm{th}}$ coordinate, giving a map $Y \to \kk$. Since $X'$ and $Z'_i$ are defined as the closures of varieties in $(\kk^*)^n$, they do not have any associated primes over the point $0$ of $\kk$. So $X'$ and $Z'_i$ are flat over the point $0$ of $\kk$ and thus, for a generic $\alpha \in \kk^*$, we have  $\ell\left(  X' \cap \{ x_n =0 \} \right)=\ell\left(  X' \cap \{ x_n =\alpha \} \right)=\ell \left(  X \cap \{ x_n =\alpha \} \right)$ and the same applies to each $Z'_i$. So we are reduced to proving
$$\ell\left(  X \cap \{ x_n =\alpha \} \right) =\sum m_i \ell\left(  Z_i \cap \{ x_n =\alpha \} \right)$$
for a generic $\alpha \in \kk^*$. 

For a generic $\alpha$, $\{ x_n= \alpha \}$ will not contain any of the intersections of the $Z_i$, nor any embedded points of the $Z_i$, nor any components $Z_i$ for which $x_n=\textrm{constant}$. For such an $\alpha$, $X \cap \{ x_n =\alpha \}$ is the disjoint union of the schemes $X_i \cap \{ x_n =\alpha \}$ and the length of $X_i \cap \{ x_n =\alpha \}$ is $m_i$ times the length of $Z_i \cap \{ x_n =\alpha \}$.

So, we have now shown that the rays of $\Trop X$ are precisely the rays that occur in $\bigcup \Trop Z_i$ and the multiplicity of a ray in $\Trop X$ is the sum (on $i$) of $m_i$ times that ray's multiplicity in $\Trop Z_i$. Since each $Z_i$ is a reduced curve, we know from Proposition~\ref{ZeroTensionForCCCurves} that the rays of each $\Trop Z_i$, counted with multiplicity, sum to zero. So the same holds for $\Trop X_i$.
\end{proof}

We now know that, if $C$ is a curve, then $\Trop C$ is a zero tension curve of the correct degree. What remains is to show that the first betti number of $\Trop C$ is bounded by the genus of $C$. Unfortunately, if we take the most obvious zero tension curve $(\iota, \Gamma, m)$ with $\iota(\Gamma)=\Trop C$, this may not be true. Consider the following example, which we discussed earlier as Example~\ref{Unfolded}: take $\KK$ to be a power series field over $\kk$. For simplicity, we take $\kk$ not to have characteristic $2$ or $3$. We map $\PP_{\KK}^1$ to $(\KK^*)^3$ by the rational map
$$\phi' : u \mapsto \left( \frac{(u+1)^2}{u(u+t^{-1})},  \frac{(u+t^{-1})^2}{(u+1)},  \frac{(u-1)(u-2 t^{-1})}{(u+1)(u+t^{-1})} \right).$$  

Let $C$ be the open subset of $\PP^1$ on which $\phi'$ is defined. As we computed in Example~\ref{Unfolded}, $\Trop \phi'(C)$ is as shown in Figure \ref{Genus0WithLoop}. Let us check directly that the point $w:=(0,-2,0)$ is in $\Trop \phi'(C)$. Suppose that $u=at+\cdots$, where $a \in \kk \setminus \{ -1,0,1 \}$ and the ellipsis represents an element of valuation higher than the displayed leading term. Then $\phi(u)=(1/a+\cdots, t^{-2}+\cdots,2+\cdots)$. Similarly, if $u=b t^{-2}+\cdots$ with $b \in \kk^*$, then $\phi(u)=(1+\cdots,b t^{-2}+\cdots,1+\cdots)$. So the points of $C$ that are of the form $\phi(at+\cdots)$ or $\phi(b t^{-2}+\cdots)$ have valuation $(0,-2,0)$. All points of $C$ with valuation $(0,-2,0)$ are of one of these forms, as can be verified using the techniques of Section~\ref{GenusZeroProof}.  This shows that, as a point set, $\In_w \phi'(C)=( \kk^* \times \{ 1 \} \times \{ 2 \}) \sqcup ( \{ 1 \} \times \kk^* \times \{1 \})$. In fact, this is an equality of schemes. 

Note, in particular, that $\In_w C$ is disconnected. Intuitively, it seems that $\Trop C$ passes through itself at $w$ and should really be thought of as a tree which just happens to pass by itself at that point. It turns out that the two components of $\In_w C$ correspond to the two branches of $\Trop C$ passing through $w$. We can modify this example by making the third coordinate $(u-1)(u-t^{-1})/(u+1)(u+t^{-1})$, giving the map $\phi$ also discussed in Example~\ref{Unfolded}. Then $\Trop \phi(C)$ is the same as before, but $\In_w \phi(C)$ is now two crossed lines with an embedded point at the crossing. (In other words, the ideal of $\In_w C$ is $\langle (x-1)(y-1), (x-1)(z-1), (y-1)(z-1), (z-1)^2 \rangle$.) So we see that, while the disconnected nature of $\In_w \phi'(C)$ is related to our problems, it is possible to have this difficulty in a case where the initial scheme is connected.

Despite examples such as the above, it is true that we can always find a zero tension curve  $(\iota, \Gamma, m)$ with $\iota(\Gamma)=\Trop C$ and $b_1(\Gamma) \leq g$. We give a slightly more precise statement.

\begin{Theorem} \label{GenusBound}
Continue the above notations $\phi$ and $C$, in particular continue the assumption that $\In_w C$ is reduced for every vertex $w$ of $\Trop C$. Then there is a zero tension curve $(\iota, \Gamma, m)$ with $\Gamma$ connected and the genus of $\Gamma$ less than or equal to the genus of $C$, such that $\iota(\Gamma)=\Trop \phi(C)$. Moreover, if $e$ is any edge of (a good subdivision of) $\Trop \phi(C)$, we have $m_e=\sum_{f \in \iota^{-1}(e)} m_f$.
\end{Theorem}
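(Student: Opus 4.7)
The plan is to build $\Gamma$ out of the irreducible decompositions of $\In_w C$ at vertices of a good subdivision, then obtain the genus bound from a compatible semistable degeneration. Fix a good subdivision $\Delta$ of $\Trop \phi(C)$. At each vertex $w$ of $\Delta$ the hypothesis gives a reduced decomposition $\In_w C = \bigsqcup_\alpha C_{w,\alpha}$ into irreducible curves in $\TT(\kk, \Lambda)$. I will take the vertices of $\Gamma$ to be the pairs $(w,\alpha)$ and set $\iota(w,\alpha)=w$. For an edge $e$ of $\Delta$ with endpoints $w_1, w_2$ and an interior point $v$, Lemma~\ref{PerturbInit} gives $\In_v C = \In_{v-w_i} \In_{w_i} C = \bigsqcup_\alpha \In_{v-w_i} C_{w_i,\alpha}$. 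Each $\In_{v-w_i} C_{w_i,\alpha}$ is invariant under the one-parameter subgroup $\exp(H(e))$, so its scheme-theoretic components are $\exp(H(e))$-translates with positive integer multiplicities. Matching these translates across the two endpoints of $e$ (both decompositions describe the same scheme $\In_v C$), I get the edges of $\Gamma$ above $e$, each carrying the common multiplicity of the corresponding scheme component.

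Next I verify the combinatorial conditions. The edges of $\Gamma$ incident to $(w,\alpha)$ are in bijection with the rays of $\Trop C_{w,\alpha}$, with matching slopes and multiplicities. Since $C_{w,\alpha}$ is a reduced irreducible curve with constant coefficients, Proposition~\ref{ZeroTensionForCCCurves} says the weighted rays of $\Trop C_{w,\alpha}$ sum to zero, which is exactly the zero tension condition at $(w,\alpha)$. The identity $m_e = \sum_{f \in \iota^{-1}(e)} m_f$ follows by comparing the length of $Y_e = \In_v C / \exp(H(e))$ with the decomposition $Y_e = \bigsqcup_\alpha \bigl(\In_{v-w}C_{w,\alpha}/\exp(H(e))\bigr)$ for either endpoint $w$. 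That $\iota(\Gamma)=\Trop\phi(C)$ as a set is immediate, and the assignment $w \mapsto$ (components of $\In_w C$) together with the edge-matching above gives $\iota$ continuously on $\Gamma$.

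The main obstacle is the bound $b_1(\Gamma) \leq g$, which I handle by constructing a compatible semistable model. Choose a regular polyhedral subdivision $\widetilde{\Delta}$ of $\QQ \otimes \Lambda$ extending $\Delta$; this yields a toric degeneration $\mathcal{T} \to \Spec \OO$ whose central fiber is a reducible union of toric varieties indexed by vertices of $\widetilde{\Delta}$. Let $\mathcal{C}$ be the (schematic) closure of $C$ in $\mathcal{T}$ after a finite base change. By the reducedness hypothesis, $\mathcal{C}_0 \cap \mathcal{T}_0^w$ is the closure of the reduced scheme $\In_w C$, so the irreducible components of $\mathcal{C}_0$ over vertex strata are precisely the $C_{w,\alpha}$, matching the vertices of $\Gamma$. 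After a further base change $t \mapsto t^N$ and torically-adapted blow-ups along edge-strata (to resolve the possibly non-reduced structure of $\In_v C$ for $v$ in an edge interior), $\mathcal{C}_0$ becomes a semistable nodal curve whose dual graph, with each scheme-theoretic node contributing its multiplicity, is isomorphic to $\Gamma$. Flatness gives $p_a(\mathcal{C}_0) = g$, and for a semistable curve the formula $p_a = \sum_i g_i + b_1(\text{dual graph})$ yields $b_1(\Gamma) \leq g$, with strict inequality when some $C_{w,\alpha}$ has positive geometric genus. Connectedness of $\Gamma$ follows from connectedness of $\mathcal{C}_0$, which is a flat degeneration of the connected curve $C$. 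The technically delicate step is arranging a base change and blow-up that produces a genuine semistable model while preserving the identification of the dual graph with $\Gamma$; this is where the reducedness hypothesis is essential, since without it the components over vertex strata acquire multiplicities that corrupt the dual-graph computation.
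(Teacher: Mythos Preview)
Your overall strategy---build the graph from components of initial degenerations and bound its first Betti number via a degeneration of $C$---matches the paper's, but the execution differs in two substantive ways, one of which creates a genuine gap.

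First, the paper takes the vertices of $\Gamma$ to be the \emph{connected} components of $\overline{Y_\sigma}$, whereas you use \emph{irreducible} components of $\In_w C$. This matters for connectedness of $\Gamma$. If two irreducible components $C_{w,\alpha}$ and $C_{w,\beta}$ of $\In_w C$ intersect inside the torus, those intersection points become nodes of $\mathcal{C}_0$ lying entirely within the vertex stratum over $w$. They contribute edges to the dual graph of $\mathcal{C}_0$ but not to your $\Gamma$, since your edges come only from edge strata of $\Trop C$. Thus the dual graph of $\mathcal{C}_0$ is not isomorphic to your $\Gamma$; it has extra edges. The inequality $b_1(\Gamma)\le b_1(\text{dual graph})\le g$ still goes through, but your connectedness argument fails: $\mathcal{C}_0$ being connected only gives connectedness of its dual graph, not of $\Gamma$. (Concretely, take a genus-one curve whose tropicalization is a tree with the genus concentrated at a single vertex $w$ where $\In_w C$ is two rational curves meeting in two points; your $\Gamma$ splits into two disjoint trees.) With connected components, as in the paper, the closures $\overline{Y_\sigma^i}$ can only meet along edge strata, so $\Gamma$ is exactly their intersection graph and connectedness of $\mathcal{C}_0$ does give connectedness of $\Gamma$.

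Second, for the genus bound the paper avoids semistable reduction entirely. It proves (Lemma~\ref{ExactComplex}) that the complex of sheaves $0\to\OO_{\XX_0}\to\bigoplus_{\sigma}\OO_{Z_\sigma}\to\bigoplus_{\tau}\OO_{Z_\tau}\to 0$ is exact, and reads off $\chi(\XX_0)=\sum_{\sigma,i}\chi(\OO_{Y_\sigma^i})-\sum_{\tau,j}\chi(\OO_{Y_\tau^j})$. Reducedness (via Proposition~\ref{ReducedClosure}) makes each vertex term at most $1$ and each edge term at least $1$, so $\chi(\XX_0)\le\chi(\Gamma)$ directly. This sidesteps the ``base change plus toric blow-up'' step you sketch, which is essentially the Nishinou--Siebert approach and is correct in outline but requires more care than you provide: after resolution the dual graph is a subdivision of $\Gamma$ with chains of rational curves inserted along edges according to their multiplicities, not $\Gamma$ itself. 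This preserves $b_1$, so the bound survives, but it again contradicts your claimed isomorphism of graphs.
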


In order to prove Theorem~\ref{GenusBound}, we will need two constructions. The first is a construction which, given a polyhedral complex, builds a family over $\Spec \OOO$ whose generic fiber is a toric variety and whose special fiber is a union of toric varieties. The second is a construction which allows us to build a single family over $\OO$ whose general fiber is $X$ and whose special fiber contains every $\In_w X$, as $w$ ranges over $\Trop X$. This is a generalization of a construction of Tevelev in \cite{Tev}.

Let $M=\{ \chi_1, \ldots, \chi_N \}$ be a finite subset of $\Lambda^{\vee}$, with cardinality $N$. Then there is a map $i_M: \TT(\KK, \Lambda) \to \PP^{N-1}$ given by $u \mapsto (\chi_1(u) : \ldots : \chi_N(u))$. Let $P$ be the convex hull of $M$. Define the semigroup $S_P \subset \Lambda^{\vee} \times \ZZ$ by $\{ (\chi,m) : \chi \in m \cdot P \cap \Lambda^{\vee} \}$, then we write $\Toric(P)=\Proj(\KK[S_P])$ where the grading on $\KK[S_P]$ is by the last coordinate. At times we will need to record the dependence of $\Toric(P)$ on the field $\KK$, at which point we will write $\Toric(P,\KK)$. We can also describe $\Toric(P)$ as the toric variety associated to the normal fan of $P$. The following result is well known.

\begin{prop}
The map $i_M$ extends to a map from $\Toric(P)$ to $\PP^{N-1}$. The image of this map is the Zariski closure of $i_M(\TT(\KK, \Lambda))$.
\end{prop}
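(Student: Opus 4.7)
The plan is to realize the extension as the morphism of $\Proj$'s coming from a graded ring homomorphism, then use projectivity of $\Toric(P)$ to identify the image with the Zariski closure of $i_M(\TT(\KK,\Lambda))$.

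First, I would write $\PP^{N-1} = \Proj \KK[y_1,\ldots,y_N]$ with its standard grading, and define a graded $\KK$-algebra homomorphism $\Phi : \KK[y_1,\ldots,y_N] \to \KK[S_P]$ by $y_i \mapsto (\chi_i, 1)$. The element $(\chi_i,1)$ lies in $S_P$ in degree one because $\chi_i \in M \subseteq P = 1 \cdot P$. This $\Phi$ induces a morphism $D_+(\Phi(y_1),\ldots,\Phi(y_N)) \to \PP^{N-1}$, where the source is the open subscheme of $\Toric(P)$ on which at least one $\Phi(y_i)$ is nonvanishing. Over the open torus of $\Toric(P)$, each $\chi_i$ is a unit, so this open subscheme contains $\TT(\KK,\Lambda)$, and by construction the morphism sends $u \mapsto (\chi_1(u) : \cdots : \chi_N(u)) = i_M(u)$, agreeing with $i_M$.

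The next (and main) step is to show that $D_+(\Phi(y_1),\ldots,\Phi(y_N))$ is all of $\Toric(P)$. Here $\Toric(P)$ is the toric variety attached to the normal fan of $P$, so its $\TT$-orbits are in bijection with the faces of $P$. On the orbit corresponding to a face $F$, the character $\chi_i$ restricts nontrivially precisely when $\chi_i$ lies in $F$, and vanishes otherwise. Every face of $P$ contains at least one vertex of $P$, and because $P = \conv(M)$ every vertex of $P$ is an element of $M = \{\chi_1,\ldots,\chi_N\}$. Hence on every orbit, some $\chi_i$ is nonvanishing, so the $\Phi(y_i)$ do not simultaneously vanish at any scheme-theoretic point of $\Toric(P)$, and the morphism extends to all of $\Toric(P)$.

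Finally, for the image statement: by Gordan's lemma, $S_P$ is a finitely generated semigroup, so $\KK[S_P]$ is a finitely generated graded $\KK$-algebra and $\Toric(P) = \Proj \KK[S_P]$ is projective over $\KK$. Hence the extended morphism $\Toric(P) \to \PP^{N-1}$ is proper and its set-theoretic image is closed. This image contains $i_M(\TT(\KK,\Lambda))$, and since the torus is dense in $\Toric(P)$ and the image of a dense subset under a continuous map is dense in the image of the whole, the image coincides with the Zariski closure of $i_M(\TT(\KK,\Lambda))$.

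The main obstacle is the second step: verifying that the tuple $(\Phi(y_1),\ldots,\Phi(y_N))$ has empty common vanishing locus on $\Toric(P)$. Everything else is formal; this step requires the orbit-face correspondence for $\Toric(P)$ together with the convex-geometric observation that $\conv(M)$ has all its vertices in $M$. One can alternatively argue orbit-by-orbit by base-changing along the toric chart of a cone of the normal fan, but the face-vertex argument is cleaner.
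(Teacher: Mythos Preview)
Your proof is correct. The paper itself does not prove this proposition --- it is introduced with ``the following result is well known'' and left without argument --- so there is no paper proof to compare against directly.

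That said, the paper does prove a relative version over $\Spec\OOO$ in the proposition immediately following, and there the key step (the analogue of your second step) is handled algebraically rather than geometrically: the author shows that the radical of the ideal generated by the $t^{v_i}e^{\chi_i,1}$ equals the irrelevant ideal of $\sss$ by writing any $\chi/m\in P$ as a convex combination of vertices of a face of the regular subdivision, clearing denominators, and exhibiting a power of $e^{\chi,m}$ as a monomial in the generators. Your orbit--face argument accomplishes the same thing (common vanishing locus is empty $\Leftrightarrow$ radical of the ideal is irrelevant) but phrased in the language of the torus stratification. The algebraic version has the advantage of working uniformly in the family over $\OOO$ without invoking the orbit structure on each fiber; your version is cleaner for the absolute case and makes the role of the vertices of $P=\conv(M)$ transparent.

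One small remark on phrasing: in your second step, the $\chi_i$ are not functions on $\Toric(P)$ but degree-one elements of $\KK[S_P]$, i.e.\ sections of $\OO(1)$; what you mean is that the section $e^{(\chi_i,1)}$ vanishes on the orbit $O_F$ if and only if $\chi_i\notin F$. This is the standard fact and your argument goes through, but the wording ``the character $\chi_i$ restricts nontrivially'' could be tightened.
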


We will need to have a generalization of this result that works in families over $\OOO$. Let $M= \{ \chi_1, \ldots, \chi_N \}$ be a finite subset of $\Lambda^{\vee}$ and let $v_1$, \dots, $v_N \in \ZZ$. Then there is a map $i_{M,v} : \TT(\KK, \Lambda) \to \PP^{N-1}(\KK)$ by $u \mapsto (t^{v_1} \chi_1(u) : \ldots : t^{v_N} \chi_N(u) )$. We will now define a variety $\T(M,v)$ over $\Spec \OO$ which allows us to extend $i_{M,v}$ in a manner similar to that in which $\Toric(P)$ allows us to extend $i_M$.

Let $P$ denote the convex hull of $M$. Let $\psi: P \to \RR$ be the function defined as follows: $\psi(x)=\sup_{\{ \lambda : \lambda(\chi_i) \leq v_i \}} \lambda(x)$, where the supremum is taken over all affine linear functions $\lambda$ such that $\lambda(\chi_i) \leq v_i$ for $i=1$, \dots, $N$. The function $\psi$ is piecewise linear and its domains of linearity are polytopes which are convex hulls of subsets of $M$. Let $P_1$, \ldots, $P_r$ be the domains of linearity of $\psi$, then $\{ P_1, \ldots, P_r \}$ is called the \textbf{regular subdivision of $P$ induced by $v$}. We denote this subdivision by $\DD(P,v)$. By taking a branched cover of $\OOO$, we may assume that, on each $P_i$, the slope of $\psi$ is in $\Lambda$. When this occurs, we will say that $\psi$ is integral.

Let $\KKK[\Lambda^{\vee} \times \NN]$ be the semigroup ring associated to the semigroup $\Lambda^{\vee} \times \NN$. We write elements of $\KKK[\Lambda^{\vee} \times \NN]$ as $\sum a_{\chi,m} e^{\chi,m}$ where $a_{\chi,m}$ is in $\KKK$ and $\chi$ and $m$ are in $\Lambda^{\vee}$ and $\NN$ respectively. Here $e^{\chi,m}$ is a formal symbol where $e^{\chi_1 + \chi_2, m_1+m_2}=e^{\chi_1,m_1} e^{\chi_2,m_2}$. Let $\sss$ be the subring of $\KKK[\Lambda^{\vee} \times \NN]$ consisting of those sums $\sum a_{\chi,m} e^{\chi,m}$ where $\chi \in m \cdot P$ and $v(a_{\chi,m}) \geq m \psi(\chi/m)$. Clearly, $\sss$ is an $\OOO$-algebra, and $\sss$ inherits a grading by $\mathrm{deg}(e^{\chi,m})=\chi$. Let $\T(M,v)=\Proj \sss$. 

\begin{prop}
Suppose that $\psi$ is integral. We have the following consequences: The variety $\T(M,v)$ is flat and projective over $\Spec(\OOO)$. The fiber of $\T(M,v)$ over $\Spec \KKK$ is $\Toric(P, \KKK)$. The fiber of $\T(M,v)$ over $\Spec \kk$ is a union of the toric varieties $\Toric(P_i, \kk)$, glued according to the overlaps between the $P_i$. There is a map $\T(M,v) \to \PP^{N-1}_{\OOO}$ which extends $i_{M,v}$.
\end{prop}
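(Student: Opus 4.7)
The plan is to work graded piece by graded piece. For each $m \geq 0$, the piece $\sss_m$ is the free $\OOO$-module on the ``extremal monomials'' $t^{m\psi(\chi/m)} e^{\chi,m}$ indexed by $\chi \in mP \cap \Lambda^{\vee}$: an element $\sum a_{\chi,m} e^{\chi,m}$ lies in $\sss_m$ iff $v(a_{\chi,m}) \geq m\psi(\chi/m)$, and (since $\psi$ is integral) dividing off $t^{m\psi(\chi/m)}$ writes it uniquely as an $\OOO$-combination of these generators. This immediately gives flatness over $\OOO$. For finite generation, note that $S_P := \{(\chi,m) \in \Lambda^{\vee} \times \NN : \chi \in mP\}$ is a finitely generated semigroup (the lattice points in the rational polyhedral cone over $P \times \{1\}$), so lifting a finite semigroup-generating set via $(\chi,m) \mapsto t^{m\psi(\chi/m)} e^{\chi,m}$ produces a finite $\OOO$-algebra generating set for $\sss$; hence $\T(M,v) = \Proj \sss$ is flat and projective over $\Spec \OOO$. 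One also checks that $\sss$ is closed under multiplication because $\psi$, as the supremum of affine functions, is convex, giving $m_1\psi(\chi_1/m_1) + m_2\psi(\chi_2/m_2) \geq (m_1+m_2)\psi((\chi_1+\chi_2)/(m_1+m_2))$.

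For the generic fiber, inverting $t$ makes every valuation inequality achievable, so $\sss[t^{-1}] = \KKK[S_P]$ and $\Proj \KKK[S_P] = \Toric(P,\KKK)$ by the standard realization of a projective toric variety as the Proj of the semigroup ring of its polytope. For the special fiber, let $E_{\chi,m}$ denote the image of $t^{m\psi(\chi/m)} e^{\chi,m}$ in $\sss/t\sss$. These form a $\kk$-basis, and one reads off the multiplication rule
\[
E_{\chi_1,m_1} \cdot E_{\chi_2,m_2} =
\begin{cases}
E_{\chi_1+\chi_2,\, m_1+m_2}, & m_1\psi\!\left(\tfrac{\chi_1}{m_1}\right) + m_2\psi\!\left(\tfrac{\chi_2}{m_2}\right) = (m_1+m_2)\psi\!\left(\tfrac{\chi_1+\chi_2}{m_1+m_2}\right), \\
0, & \text{otherwise.}
\end{cases}
\]
Since $\psi$ is piecewise affine with domains of linearity $P_1,\ldots,P_r$, the equality case holds precisely when $\chi_1/m_1$ and $\chi_2/m_2$ lie in a common cell $P_k$.

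It follows that, for each $k$, quotienting $\sss/t\sss$ by the ideal generated by $\{E_{\chi,m} : \chi/m \notin P_k\}$ yields the semigroup ring $\kk[S_{P_k}]$, whose $\Proj$ is $\Toric(P_k,\kk)$; moreover the natural map $\sss/t\sss \to \prod_k \kk[S_{P_k}]$ is injective with image the tuples $(f_k)$ whose restrictions to $\kk[S_{P_k \cap P_l}]$ agree for every pair $(k,l)$. This identifies $\T(M,v)_{\kk}$ with the scheme-theoretic union $\bigcup_k \Toric(P_k,\kk)$ glued along the toric subvarieties $\Toric(P_k \cap P_l,\kk)$ corresponding to the overlaps. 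Finally, the map to $\PP^{N-1}_{\OOO}$ is induced by the graded homomorphism $\OOO[X_1,\ldots,X_N] \to \sss$ sending $X_j \mapsto t^{v_j} e^{\chi_j,1}$; this lands in $\sss_1$ because the constraints in the supremum definition of $\psi$ force $\psi(\chi_j) \leq v_j$. On the torus of the generic fiber the resulting morphism is $u \mapsto (t^{v_1}\chi_1(u):\cdots:t^{v_N}\chi_N(u)) = i_{M,v}(u)$ as required, and base-point freeness on each $\Toric(P_k,\kk)$ holds because each $P_k$ is the convex hull of a subset of $M$, so its vertices appear among $\chi_1,\ldots,\chi_N$.

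The hard part will be the special-fiber computation: checking that the rather intricate multiplication table on $\{E_{\chi,m}\}$ really assembles $\sss/t\sss$ as the equalizer of the two maps $\prod_k \kk[S_{P_k}] \rightrightarrows \prod_{k<l} \kk[S_{P_k \cap P_l}]$, so that $\Proj$ produces precisely the advertised gluing rather than some non-reduced or otherwise larger scheme. Once this is pinned down, the flatness, generic-fiber, and projective-space-map statements are routine consequences of the graded-piece description of $\sss$.
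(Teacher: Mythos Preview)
Your approach matches the paper's almost exactly: torsion-freeness for flatness, the explicit basis $t^{m\psi(\chi/m)}e^{\chi,m}$ for each graded piece, the same multiplication table on $\sss/t\sss$ identifying the special fiber with the glued union $\bigcup_k \Toric(P_k,\kk)$, and the graded map $X_j\mapsto t^{v_j}e^{\chi_j,1}$ to $\PP^{N-1}_{\OOO}$. The one substantive difference is in the last step: the paper verifies well-definedness of the morphism by a direct radical computation in $\sss$ (showing every $ce^{\chi,m}$ with $m>0$ has a power in the ideal generated by the $t^{v_j}e^{\chi_j,1}$), whereas you argue base-point-freeness fiber by fiber. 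Both are valid; yours is arguably cleaner once you know the fiber decomposition.

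There is one small gap worth patching. Your finite-generation argument lifts a Hilbert basis of $S_P$ via $(\chi,m)\mapsto t^{m\psi(\chi/m)}e^{\chi,m}$, but a product of such lifts yields $t^{\sum n_j m_j\psi(\chi_j/m_j)}e^{\chi,m}$, and by convexity of $\psi$ the exponent is at least $m\psi(\chi/m)$, with equality only when all the $\chi_j/m_j$ lie in a single cell $P_k$. So to hit the extremal monomial $t^{m\psi(\chi/m)}e^{\chi,m}$ you need a decomposition staying inside one cell; this is not guaranteed by generators of $S_P$ alone (e.g.\ when some $S_{P_k}$ is not generated in degree one and its extra generators are not forced to lie in a Hilbert basis of $S_P$). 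The fix is immediate: take instead the union over $k$ of Hilbert bases of the $S_{P_k}$, which is finite, and then equality in the convexity inequality is automatic. The paper, for what it's worth, simply asserts finite generation without proof.
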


Here $\PP^{N-1}_{\OOO}$ is $\Proj \OOO[x_1,\ldots,x_N]$. 

\begin{proof}
Clearly, $\sss$ is a torsion free $\OOO$-module, and hence flat over $\OOO$. Also, $\sss$ is finitely generated as an $\OOO$-algebra, with generators of positive degree, so the map is projective. The fiber of $\T(M,v)$ over $\Spec \KKK$ is $\Proj (\sss \otimes_{\OOO} \KKK)$ and $\sss \otimes_{\OOO} \KKK$ consists of those sums $\sum a_{\chi,m} e^{\chi,m}$ where $\chi \in m \cdot P$ and $a_{\chi,m}$ is any element of $\KKK$. This is precisely $\KKK[S_P]$.

Now, let us consider $\sss \otimes_{\OOO} \kk$. As an additive group, we see that $\sss \otimes_{\OOO} \kk$ is a $\kk$ vector space with basis $f^{\chi,m}:=t^{m \psi(\chi/m)} e^{\chi,m} \otimes 1$, where $(\chi,m)$ ranges over pairs with $\chi \in m \cdot P$.\footnote{This is where we need the slopes of $\psi$ to be in $\Lambda$, otherwise $m \psi(\chi/m)$ might not be an integer.} The multiplicative structure of $\sss \otimes_{\OOO} \kk$ is that $f^{\chi_1,m_1} f^{\chi_2,m_2}=f^{\chi_1 + \chi_2, m_1+m_2}$ if $(m_1+m_2) \psi((\chi_1+\chi_2)/(m_1+m_2))=m_1 \psi(\chi_1/m_1) + m_2 \psi(\chi_2/m_2)$ and $f^{\chi_1,m_1} f^{\chi_2 m_2}=0$ otherwise. In other words, $f^{\chi_1,m_1} f^{\chi_2,m_2}=f^{\chi_1 + \chi_2, m_1+m_2}$ if the convex function $\psi$ is linear on the line segment connecting $\chi_1/m_1$ to $\chi_2/m_2$. So $\sss \otimes_{\OOO} \kk$ is isomorphic to the subring of $\bigoplus \kk[S_{P_i}]$ where, if $(\chi,m)$ is in both $S_{P_{i_1}}$ and $S_{P_{i_2}}$ then we require that $e^{\chi,m}$ have the same coefficient in $\kk[S_{P_{i_1}}]$ and in $\kk[S_{P_{i_2}}]$. Now, $\Proj \left( \bigoplus \kk[S_{P_i}] \right)$ is $\bigsqcup \Toric(P_i, \kk)$ and imposing the stated conditions on the coefficients of the $e^{\chi,m}$ precisely glues these toric varieties together.

Finally, we want to describe the map $\T(M,v) \to \PP^{N-1}_{\OOO}$. We know that $\PP^{N-1}_{\OOO}=\Proj \OOO[x_1,\ldots,x_N]$. We map $\OOO[x_1,\ldots,x_N]$ to $\sss$ by sending $x_i$ to $t^{v_i} e^{\chi_i,1}$. Let $I$ be the ideal of $\sss$ generated by $t^{v_i} e^{\chi_i,1}$ as $i$ runs from $1$ to $N$; this is the preimage of the irrelevant ideal of $ \OOO[x_1,\ldots,x_N]$. In order to show that this gives a map $\T(M,v) \to \PP^{n-1}_{\OOO}$ we just have to check that the radical of $I$ is the irrelevant ideal of $\sss$.  Clearly, $I$ is homogenous for the grading of $\sss$ by $\Lambda^{\vee} \times \ZZ$, so it is enough to check that, for any monomial $c e^{\chi,m}$ which is in the irrelevant ideal of $\sss$, some power of $c e^{\chi,m}$ is in $I$. The condition that $c e^{\chi,m}$ be in the irrelevant ideal of $\sss$ is simply that $m$ is positive.

So, consider some monomial $c e^{\chi,m}$ of $\sss$ with $m >0$. Then $\chi/m \in P$ and $v(c) \geq m \psi(\chi/m)$. Let $P_i$ be the domain of linearity for $\psi$ in which $\chi/m$ falls. Let $\chi_{i_1}$, \dots, $\chi_{i_k}$ be the vertices of $P_i$. Then $v_{i_j}=\psi(\chi_{i_j})$ for $j=1$, \dots, $k$. Since $\chi/m$ is in the convex hull of the $v_{i_j}$, we can write $\chi/m$ as $\sum a_j \chi_{i_j}$ where $a_{j}$ are nonnegative rational numbers with $\sum a_j =1$. Since $\psi$ is linear on $P_i$, we have $\psi(\chi/m)=\sum a_j \psi(\chi_{i_j})=\sum a_j v_{i_j}$. Clearing out denominators, we can find nonnegative integers $b_j$ with $\sum b_j=M$, $\sum b_j \chi_{i_j}=M (\chi/m)$ and $\sum b_j v_{i_j}=M \psi(\chi/m)$. We may also assume that $m$ divides $M$. Then 
$$\left( t^{m \psi(\chi/m)} e^{\chi,m} \right)^{(M/m)}=\prod \left( t^{v_{i_j}} e^{\chi_{i_j},1} \right)^{b_j}.$$
So we see that $ t^{m \psi(\chi/m)} e^{\chi,m}$ is in the radical of $I$. But $v(c) \geq m \psi(\chi/m)$ so $c e^{\chi,m}$ is in the $\OOO$-module generated by the radical of $I$ and is, hence, in the radical of $I$ itself.

We now have shown that there is a well defined map $\T(M,v) \to \PP_{\OOO}^{N-1}$. The torus $\TT(\KKK, \Lambda)$ is canonically an open subvariety of $\Toric(P, \KKK)$ and hence an open subvariety of $\T(M,v)$. Explicitly, $\TT(\KKK, \Lambda)$ corresponds to the localization $\KKK[\Lambda^{\vee}]$ of $\sss$. The map $\OOO[x_1,\ldots,x_N] \to \KKK[\Lambda^{\vee}]$ is given by $x_i \to t^{w_i} e^{\chi_i,1}$, which precisely corresponds to $i_{M,v}$.
\end{proof}

\textbf{Remark:} There are two variants of this construction which have been previously published. Firstly, one may consider the limit inside $\PP^{N-1}$ of the image of $i_{M,v}$. As a point set, this limit is a union of toric varieties, but it can have a quite sophisticated nonreduced structure, see \cite{GBTV}. By contrast, after a base extension to ensure that $\psi$ is integral, the fiber of $\T(M,v)$ over $\Spec \kk$ is always reduced, but the map from this fiber to $\PP^{N-1}_{\kk}$ may fail to be a closed immersion. Secondly, just as toric varieties can be described by fans rather than by polytopes, there is a description of this construction in terms of fans in \cite{SiebNish}. Our approach may be thought of as an intermediate ground between these approaches, which keeps the map to projective space available but retains the reduced varieties of the more abstract version.

\begin{prop} \label{twLandsWhere}
Let $w \in \QQ \otimes \Lambda$. Extend $\OOO$ if necessary so that $t^{-w}$ is a point of $\TT(\KKK, \Lambda)$ and $\psi$ is integral. Let $F$ be the face of the regular decomposition $\DD(P,v)$ of $P$ on which $\psi(x)- \langle w,x \rangle$ is minimized. Then the limit in $\T(M,v)$ of $t^{-w}$ is in the interior of the stratum $\Toric(F, \kk)$ of $\T(M,v) \times_{\Spec \OOO} \Spec \kk$. 
\end{prop}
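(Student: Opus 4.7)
The plan is to use the valuative criterion of properness for $\T(M,v)\to\Spec\OOO$ to extend the $\KKK$-point $t^{-w}$ to a unique $\OOO$-section, and then read off its value on the closed fibre by working in explicit affine charts of $\T(M,v)$. After extending $\OOO$ we may assume $w\in\Lambda$ and that $\psi$ is integral.

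For each vertex $\chi_j$ of $\DD(P,v)$---equivalently, each index $j$ with $\psi(\chi_j)=v_j$---let $U_j:=D_+(t^{v_j}e^{\chi_j,1})\subset\T(M,v)$. A routine computation in $\sss$ (localise and take the degree-zero piece) shows that $\OOO[U_j]$ is generated over $\OOO$ by the monomials $t^{d}e^{\mu,0}$, where $\mu$ ranges over $K_j\cap\Lambda^{\vee}$ (with $K_j$ the cone on $P$ at $\chi_j$) and $d$ over integers satisfying $d\geq\psi_j(\mu)$. Here $\psi_j$ is the homogeneous, piecewise linear function on $K_j$ given by $\psi_j(\mu):=\max_{P_i\ni\chi_j}\lambda_i(\mu)$, where $\lambda_i$ is the unique linear function agreeing with $\psi|_{P_i}-v_j$ on translates of $P_i-\chi_j$. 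The ring map corresponding to $t^{-w}$ sends $t^{d}e^{\mu,0}\mapsto t^{d-\langle\mu,w\rangle}$.

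I would then check that this map extends to $\OOO$ precisely when $\chi_j$ is a vertex of $F$. The extension exists iff $d-\langle\mu,w\rangle\geq 0$ for every admissible $(\mu,d)$; taking $d=\psi_j(\mu)$ to be the smallest such, the condition becomes $\langle\mu,w\rangle\leq\psi_j(\mu)$ on all of $K_j$. Since $\psi_j$ is the local support function at $\chi_j$ of the convex function $\psi$, this says exactly that $\chi_j$ is a local minimum of $\psi-\langle\cdot,w\rangle$ on $P$; convexity of $\psi-\langle\cdot,w\rangle$ then promotes this to a global minimum, giving $\chi_j\in F$. Hence the limit of $t^{-w}$ lies in $U_j$ if and only if $\chi_j$ is a vertex of $F$.

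Finally I would identify the specialization itself. In $U_j$ the value of $t^{d}e^{\mu,0}$ on the closed fibre is the reduction of $t^{d-\langle\mu,w\rangle}$: it is $1$ if $d=\langle\mu,w\rangle$ and $0$ if $d>\langle\mu,w\rangle$. Choosing $d=\psi_j(\mu)$ minimal, the character $t^{\psi_j(\mu)}e^{\mu,0}$ is nonzero precisely for those $\mu\in K_j$ with $\psi_j(\mu)=\langle\mu,w\rangle$; unpacking $\psi_j=\max_{P_i\ni\chi_j}\lambda_i$ and using $\chi_j\in F$, this locus is exactly the cone on $F$ at $\chi_j$. Applied to $\mu=\chi_l-\chi_j$ for $\chi_l$ another vertex of $\DD(P,v)$ sharing a maximal cell with $\chi_j$, this says $\chi^{\chi_l-\chi_j}$ survives iff $\chi_l\in F$, so the specialization lies in the stratum $\Toric(F,\kk)$, and inside it all characters parallel to $F$ remain invertible, placing the limit in the dense open torus orbit. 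The main technical obstacle is the convex-analytic local-to-global step together with the bookkeeping that matches the cones of linearity of $\psi_j$ with the faces of $\DD(P,v)$ at $\chi_j$; once this is set up the identification of the stratum becomes essentially formal.
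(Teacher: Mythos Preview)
Your argument is correct but follows a different path from the paper's. The paper works entirely on the graded-ring side: the point $t^{-w}$ corresponds to a graded ring map $\sss\to\KKK[z]$, $c\,e^{\chi,n}\mapsto c\,t^{-\langle w,n\chi\rangle}z^n$; after rescaling the degree-$n$ piece by $t^{n\mu}$ (where $\mu=\min_P(\psi-\langle w,\cdot\rangle)$), this lands in $\OOO[z]$, and one then reads off directly that the basis element $f^{\chi,n}=t^{n\psi(\chi/n)}e^{\chi,n}$ of $\sss\otimes_{\OOO}\kk$ dies in $\kk[z]$ exactly when $\chi/n\notin F$. This single rescaling trick replaces your entire passage to affine charts and the convex local-to-global step.

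Your approach is the standard toric one: pass to the affine charts $U_j$ indexed by vertices of the subdivision, compute their coordinate rings via the local support function $\psi_j$, and use convexity of $\psi-\langle\cdot,w\rangle$ to promote a local minimum at $\chi_j$ to membership in $F$. This is perfectly sound and has the advantage of being more transparently geometric (one literally sees which affine patch the limit falls into), at the cost of more bookkeeping---your description of $\psi_j$ and its domains of linearity is a bit terse and would benefit from being written out more carefully. The paper's proof is shorter because the rescaling by $t^{n\mu}$ encodes the minimization globally in one stroke, avoiding the chart-by-chart analysis.
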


\begin{proof}
Let $\mu$ be the minimum value of $\psi(x)- \langle w,x \rangle$ as $x$ ranges through $P$. The embedding of the point $t^{-w}$ into $\T(M,v)$ corresponds to the map of graded rings $\sss \to \KKK[z]$ by $c e^{\chi,n} \mapsto c t^{-\langle w,n \chi \rangle} z^n$. If we rescale the $n$-th graded component of $\KK[z]$ by $\alpha^n$ for some $\alpha \in \KKK^*$, we get the same map $\Proj \KKK[z]=\Spec \KKK \to \T(M,z)$. We rescale by $t^{\mu}$, so that our map is now $c e^{\chi,n} \mapsto c t^{-\langle w,n \chi \rangle - n \cdot \mu} z^n$. Since $c e^{\chi,n} \in \sss$ implies that $v(c) \geq n \psi(\chi) \geq n(  \langle w, \chi \rangle + \mu)$, this map extends to a map $\phi: \sss \to \OOO[z]$. We claim this gives us a map $\Proj \OOO[z]=\Spec \OOO \to \T(M,v)$. Once we know this, the image of $\Spec \kk$ will be the limit of $t^{-w}$. 

Our fear is that the preimage of $0$ under $\sss \otimes_{\OOO} \kk \to \kk[z]$ might be the irrelevant ideal. Now, $\sss \otimes_{\OOO} \kk$ is spanned by the images of $t^{n \psi(\chi/n)} e^{\chi,n}$. We write $f^{\chi,n}$ for the image in $\sss \otimes_{\OOO} \kk$ of $t^{n \psi(\chi/n)} e^{\chi,n}$. We see that $f^{\chi,n}$ maps to $0$ in $\kk[z]$ if and only if $n \psi(\chi/n) -\langle w,n \chi \rangle - n \cdot \mu > 0$, in other words, if and only if $\chi/n \not \in F$. So we see that the preimage of $0$ is not the irrelevant ideal, as it does not contain $f^{\chi,n}$ for $\chi/n \in F$.

Moreover, we see that the map $\Spec \kk \to \T(M,v) \otimes_{\OOO} \kk $ factors through the subvariety $\Toric(F,\kk)$ 
but not through $\Toric(F',\kk)$ for any proper face $F'$ of $F$, so we get that $\Spec \kk$ lands in the interior of $\Toric(F,\kk)$.
\end{proof}

Now, let $X$ be a subvariety of $\TT(\KKK, \Lambda)$. Choose an isomorphism of $\Lambda$ with $\ZZ^{n}$, and hence an embedding of $\TT(\KKK, \Lambda)$ into $\PP^n$. We will therefore feel free to write $(\KKK^*)^n$ in place of $\TT(\KKK, \Lambda)$. Let $\overline{X}$ be the closure of $X$ in $\PP^n$. Then we have an embedding of $(\KKK^*)^n$ into the Hilbert scheme $\Hilb(\PP^n, \KKK)$ by $u \mapsto [ u^{-1} \cdot \overline{X}]$. Let $T$ be the closure of $(\KKK^*)^n$ in $\Hilb(\PP^n,\OOO)$ under that map.\footnote{Here $\Hilb(\PP^n, \OOO)$ is the scheme over $\OOO$ equipped with a universal closed subscheme $U$ of $\Hilb(\PP^n, \OOO) \times_{\OOO} \PP^n_{\OOO}$ so that, for every scheme $S$ with a map $S \to \Spec \OOO$, every closed subscheme of $S \otimes_{\Spec \OOO} \PP^n_{\OOO}$ which is flat over $S$ is  pulled back in a unique manner from $U$ along a map $S \to \Hilb(\PP^n, \OOO)$.}

\begin{prop} \label{HilbProj}
There is a projective embedding of $\iota : T \into \PP_{\OOO}^{N-1}$, for some $N$, and a set $M=\{ \chi_1, \ldots, \chi_N \} \subset \Lambda^{\vee}$ and some scalars $v_1$, \ldots, $v_N \in \ZZ$ such that the restriction to $\TT(\KK, \Lambda)$ of $\iota$ is $i_{M,v}$.
\end{prop}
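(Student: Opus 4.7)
The plan is to apply the Grothendieck--Pl\"ucker construction of a projective embedding of the Hilbert scheme, and then compute the pullback of the Pl\"ucker coordinates along the orbit map $u\mapsto[u^{-1}\overline{X}]$. Let $P$ denote the Hilbert polynomial of $\overline{X}\subset\PP^n_{\KKK}$. For $d$ sufficiently large, standard representability results for the Hilbert functor give a closed embedding
\[
\Hilb_P(\PP^n_\OOO)\;\into\;\Gr\bigl(s,\tbinom{n+d}{n}\bigr)_{\OOO}\;\into\;\PP^{N_0-1}_\OOO,
\]
where $s=\binom{n+d}{n}-P(d)$ and $N_0=\binom{\binom{n+d}{n}}{s}$; the first map sends a flat family $Z$ to $H^0(I_Z(d))\subset H^0(\PP^n,\OO(d))$, the second is Pl\"ucker. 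Restricting to the closed subscheme $T\subset\Hilb_P(\PP^n_\OOO)$ yields a projective embedding into $\PP^{N_0-1}_\OOO$.

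Next I would identify the form of this embedding on the open torus $\TT(\KKK,\Lambda)\subset T$. Fix a $\KKK$-basis $f_1,\dots,f_s$ of $H^0(I_{\overline{X}}(d))$ and expand $f_j=\sum_\alpha c_{j,\alpha}\,y^\alpha$ in the monomial basis of degree-$d$ forms on $\PP^n$. The $\TT(\KKK,\Lambda)$-action on $H^0(\PP^n,\OO(d))$ is diagonal in this basis, scaling $y^\alpha$ by a character $\chi_\alpha\in\Lambda^\vee$, so a basis of $H^0(I_{u^{-1}\overline{X}}(d))$ is $u\cdot f_j=\sum_\alpha c_{j,\alpha}\,\chi_\alpha(u)\,y^\alpha$. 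For each size-$s$ subset $A$ of multi-indices, the $A$-th Pl\"ucker coordinate then simplifies to
\[
p_A(u)=\det\bigl(c_{j,\alpha}\,\chi_\alpha(u)\bigr)_{j,\,\alpha\in A}=\Bigl(\prod_{\alpha\in A}\chi_\alpha(u)\Bigr)\det\bigl(c_{j,\alpha}\bigr)_{j,\,\alpha\in A}=\chi_A(u)\cdot C_A,
\]
since every term of the determinantal expansion carries the same $u$-weight $\chi_A:=\sum_{\alpha\in A}\chi_\alpha\in\Lambda^\vee$. Here $C_A\in\KKK$ is the corresponding minor of the coefficient matrix $(c_{j,\alpha})$.

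Finally I would convert this data into the form $i_{M,v}$. Drop the indices with $C_A=0$; the remaining coordinates still define a closed embedding of $T$, now into a linear subspace of $\PP^{N_0-1}_\OOO$ (which is itself a projective space). After a finite base extension making each $v(C_A)\in\ZZ$, write $C_A=u_A\,t^{v_A}$ with $u_A\in\OOO^\times$ and post-compose with the diagonal $\OOO$-automorphism $(y_A)\mapsto(u_A^{-1}y_A)$, so that the torus restriction becomes $u\mapsto(t^{v_A}\chi_A(u))_A$. If two indices $A,A'$ yield the same character with, say, $v_A\le v_{A'}$, apply the $\OOO$-linear row operation $y_{A'}\mapsto y_{A'}-t^{v_{A'}-v_A}y_A$; the new coordinate vanishes on the open dense torus and hence identically on $T$, so it may be discarded. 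Iterating leaves an embedding $\iota:T\into\PP^{N-1}_\OOO$ whose torus restriction is exactly $i_{M,v}$, with $M$ an honest finite subset of $\Lambda^\vee$ and $v$ the assignment of the minimum valuation for each surviving character.

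The main obstacle, beyond the essentially mechanical Pl\"ucker computation, is the bookkeeping in this last step: handling duplicated characters by $\OOO$-linear row operations that preserve the $\OOO$-integral structure of the embedding, and handling the constants $C_A$ of various valuations by unit rescalings. Everything rests on the single key observation that the torus action on $\PP^n$ is diagonal in the monomial basis, which forces each Pl\"ucker coordinate to split as (a character of $u$)\,$\cdot$\,(a constant in $\KKK$); the rest is organizing that data into the exact shape demanded by the definition of $i_{M,v}$.
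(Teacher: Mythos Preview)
Your approach is essentially the same as the paper's: embed the Hilbert scheme via the Grassmannian and Pl\"ucker, then observe that the torus acts on Pl\"ucker coordinates by characters, so after discarding the identically zero coordinates and rescaling by units the map on the open torus has the form $u\mapsto(t^{v_s}\chi_s(u))_s$. Two minor remarks: the base extension you invoke is unnecessary, since $X$ is already defined over $\KKK$ and the valuation on $\KKK$ is $\ZZ$-valued, so each $C_A\in\KKK^*$ already has $v(C_A)\in\ZZ$; and your row-operation step to eliminate duplicate characters actually patches a small omission in the paper's own argument, which stops at the form $(t^{v_s}\chi_s(u))_s$ without checking that the surviving $\chi_s$ are pairwise distinct as the definition of $i_{M,v}$ (with $M$ a set of cardinality $N$) formally requires.
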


\begin{proof}
This proof amounts to unraveling the proof that $\Hilb(\PP^n)$ is projective. Let $h$ denote the Hilbert polynomial of $\overline{X}$. By a result of Gotzmann \cite{Gotz}, there is a positive integer $d$ such that, if $Y$ is any subscheme of $\PP_{\OOO}^n$ which is flat over $\OOO$ with hilbert polynomial $h$, then $Y$ is determined by the degree $d$ terms of its defining ideal $I(Y)$ and $I(Y)_d$ has codimension $h(d)$ for every such $Y$. Then we get a projective embedding of the component of $\Hilb(\PP^n, \OOO)$ containing $\overline{X}$ by first mapping $Y$ to the point $I(Y)_d$ of the Grassmannian $\Gr(h(d), \Sym^d(\OOO^{n+1}))$ and then embedding this Grassmannian into projective space by the Pl\"ucker embedding. We need to be a bit more specific: we take coordinates on $\Sym^d(\OOO^{n+1})$ to be the obvious coordinates $e_{i_1 \cdots i_d} := e_{i_1} \cdots e_{i_d}$, with $1 \leq i_1 \leq \ldots \leq i_d \leq n$ and use the obvious Pl\"ucker coordinates $p_{(i_1^1 \cdots i_d^1 ) \cdots (i_1^{h(d)} \cdots i_d^{h(d)})} := e_{i_1^1 \cdots i_d^1} \wedge \cdots \wedge e_{i_1^{h(d)} \cdots i_d^{h(d)}}$ on the Grassmannian. Let $S$ be the set of indices $((i_1^1,\ldots, i_d^1), \ldots, (i_1^{h(d)}, \ldots, i_d^{h(d)}))$ for which $p_{(i_1^1 \cdots i_d^1 ) \cdots (i_1^{h(d)} \cdots i_d^{h(d)})} (\overline{X}) \neq 0$. Every Pl\"ucker coordinate not indexed by $S$ is $0$ on $T$, so $T$ is embedded into $\PP^{|S|-1}$ by the Pl\"ucker coordinates $p_s$, $s \in S$. We call this map $\tilde{\iota}$.

Now, it is easy to check that the action of $(\KKK^*)^n$ on $\Hilb(\PP^{n}, \KKK)$ rescales the Pl\"ucker coordinates $p_s$ by a character of $(\KKK^*)^n$. So the map $\tilde{\iota}$, restricted to $(\KKK^*)^n$ is of the form $u \mapsto ( a_s \chi_s(u))_{s \in S}$ for some characters $\chi_s$ of $(\KKK^*)^n$ and some elements $a_s \in \KKK^*$. Rescaling our target by an element of $(\OOO^*)^S$, we can assume that $a_s$ is of the form $t^{v_s}$ for some $v_s \in \ZZ$. This is the required claim.
\end{proof}

We continue to let $X$ be a subvariety of $(\KKK^*)^n$, which we view as embedded in $\PP^n_{\KKK}$. Let $(M,v)$ be as in the preceding proposition, let $P$ be the convex hull of $M$, let $\psi$ be as before  and let $\DD(P,v)$ be the regular subdivision of $P$ obtained from $\psi$. Extend $\OOO$ if necessary so that $\psi$ is integral. Let $\XX$ be the closure of $X$ in $\T(M,v)$. We'll write $\XX_0$ for the fiber of $\XX$ over $\Spec \kk$ and $\XX_1$ for the fiber of $\XX$ over $\Spec \KKK$. Clearly, $\XX_1$ is a compactification of $X$ over $\KKK$ and $\XX_0$ is a flat degeneration of $\XX_1$. 

\begin{prop} \label{DegenerationLemma}
Let $F$ be a face of $\DD$. Let $w \in \QQ \otimes \Lambda$ be such that the function $x \mapsto \psi(x)-\langle w,x \rangle$ is minimized precisely on $F$; extend $\OOO$ if necessary so that $t^{w} \in \TT(\KKK, \Lambda)$. Let $H \subset \QQ \otimes \Lambda^{\vee}$ be the linear space spanned by $f-f'$, for $f$ and $f' \in F$. Let $H^{\perp} \subset \QQ \otimes \Lambda$ be the orthogonal space to $H$ and let $\exp(H^{\perp}) \subset \TT(\kk, \Lambda)$ be the connected subtorus of $ \TT(\kk, \Lambda)$ associated to $H^{\perp}$. 

Then $\In_w X$ is invariant under $\exp(H^{\perp})$ and the intersection of $\XX_0$ with the interior of $\Toric(F,\kk)$ is $\In_w X/\exp(H^{\perp})$.
\end{prop}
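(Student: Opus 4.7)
The proof splits into verifying the invariance claim and computing the scheme-theoretic intersection $\XX_0\cap \Toric(F,\kk)^{\circ}$.

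For the invariance, take $u \in H^{\perp}$. Since $\langle u,\cdot\rangle$ is constant on $F$, for sufficiently small $\epsilon > 0$ the function $\psi - \langle w+\epsilon u,\cdot\rangle$ on $P$ is still minimized precisely on $F$. I will argue that this combinatorial compatibility with the Hilbert-scheme subdivision $\DD$ forces $\In_{w+\epsilon u}X = \In_w X$, using that the construction of $\T(M,v)$ was designed so that each face of $\DD$ records a single initial degeneration. Granting this, Proposition~\ref{PerturbInit} gives $\In_w X = \In_u \In_w X$, and the right-hand side is manifestly $\exp(\QQ\cdot u)$-invariant since any initial ideal in direction $u$ has this property. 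Letting $u$ range over a basis of $H^{\perp}$ yields invariance under all of $\exp(H^{\perp})$.

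For the scheme equality, I would work in an affine chart of $\T(M,v)$ containing $\Toric(F,\kk)^{\circ}$. Fix a vertex $\chi_{i_0}$ of $F$ and consider the chart $U_{i_0} \subset \T(M,v)$ obtained by dehomogenizing at the section $t^{v_{i_0}}e^{\chi_{i_0},1}$. On $U_{i_0}$ the coordinates are $a_i := t^{v_i - v_{i_0}}e^{\chi_i - \chi_{i_0}}$ for $i = 1,\dots,N$, and directly from the description of $\sss \otimes \kk$ given before Proposition~\ref{twLandsWhere}, the stratum $\Toric(F,\kk)^{\circ}$ is cut out in the special fiber of $U_{i_0}$ by $a_i = 0$ for $\chi_i \notin F$ together with nonvanishing of the remaining $a_i$. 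A direct substitution gives, for $x \in X$,
\[
a_i(t^{-w}x) \;=\; t^{\epsilon_i}\,\chi_{\chi_i - \chi_{i_0}}(x),\qquad \epsilon_i \;=\; (v_i - \langle\chi_i,w\rangle) - \mu,
\]
where $\mu$ denotes the common minimum value of $\psi - \langle w,\cdot\rangle$ on $F$. One checks $\epsilon_i \geq 0$, with equality if and only if $\chi_i$ is a vertex of $F$, using $\psi(\chi_i) \leq v_i$ (with equality at vertices of $\DD$) together with the minimization hypothesis on $w$.

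Hence as $t\to 0$, the coordinates $a_i$ with $\chi_i \notin F$ vanish automatically, while the $a_i$ with $\chi_i$ a vertex of $F$ reduce to the classes of the characters $\chi_i - \chi_{i_0}$ on $(\overline{t^{-w}\cdot X})\otimes\kk = \In_w X$. Since the characters $\chi_i - \chi_{i_0}$ for $\chi_i$ ranging over vertices of $F$ span the sublattice $\Lambda^{\vee}\cap H$, the resulting map factors through the quotient $\TT(\kk,\Lambda) \to \TT(\kk,\Lambda)/\exp(H^{\perp})$ — which is the open torus of $\Toric(F,\kk)$ — and identifies $\In_w X/\exp(H^{\perp})$ with $\XX_0 \cap \Toric(F,\kk)^{\circ}$. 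The main obstacle will be justifying that computing the intersection with $\overline{t^{-w}\cdot X}$ recovers the same scheme-theoretic intersection as with the true closure $\XX$ of $X$: the two differ on the generic fiber by the torus automorphism $t^{-w}$, and Proposition~\ref{twLandsWhere} ensures that it is precisely the $t^{-w}$-translate whose limit lies in $\Toric(F,\kk)^{\circ}$; combining this with flatness of $\T(M,v)\to\Spec\OOO$ should transfer the computation back to $\XX$, but the details require care.
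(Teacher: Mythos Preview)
Your proposal has two genuine gaps, and the paper's argument takes a quite different route that sidesteps both.

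\textbf{Invariance.} You assume that $\In_{w+\epsilon u} X = \In_w X$ whenever $w$ and $w+\epsilon u$ determine the same face $F$, justifying this by ``the construction of $\T(M,v)$ was designed so that each face of $\DD$ records a single initial degeneration.'' But this constancy of $\In_w X$ along faces is precisely Corollary~\ref{GoodSubDiv}, which in the paper is \emph{deduced from} the present proposition. So as written the argument is circular. (It can be repaired: one can check that the limits of $t^{w}$ and $t^{w+\epsilon u}$ in $\T(M,v)$ coincide when $u\in H^{\perp}$, hence have the same image in the Hilbert scheme, hence the closures of $\In_w X$ and $\In_{w+\epsilon u} X$ in $\PP^n$ agree --- but carrying this out is essentially the paper's Hilbert-scheme argument in disguise.)

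\textbf{Scheme equality.} The obstacle you flag at the end is the crux, not a technicality. Your chart computation produces the special fiber of the closure of $t^{-w}\cdot X$, whereas $\XX$ is the closure of $X$ itself. Multiplication by $t^{-w}$ is an automorphism of the open torus $\TT(\KKK,\Lambda)$ but does \emph{not} extend to an automorphism of $\T(M,v)$ (the $t$-weights built into $\sss$ are rigid), so the two closures are not related by any automorphism of the ambient space. Flatness of $\T(M,v)\to\Spec\OOO$ alone does not transport one to the other; what is missing is a mechanism linking the whole family of translates $\{u^{-1}\cdot X:u\in\TT\}$ to the single closure $\XX$.

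The paper supplies exactly that mechanism, following Tevelev. One pulls back the universal family from $\Hilb(\PP^n,\OOO)$ to obtain a family $\EE\to\T(M,v)$ whose fiber over $u\in\TT(\KKK,\Lambda)$ is $u^{-1}\cdot\overline{X}$, and then observes the key identity $\XX=p(\EE\cap Z)$, where $Z$ is the constant section $(1:\cdots:1)$ of the ambient $(\KK^*)^n$-bundle and $p$ the projection. This single identity does both jobs at once: over $\Toric(F,\kk)^\circ$ the diagonal $\TT(\kk,\Lambda)$-action carries $\EE$ to itself, so every fiber (in particular $\In_w X$, the fiber over $z_w$) is $\exp(H^{\perp})$-invariant; and writing $\EE|_{\Toric(F)^\circ}=\{([u],u^{-1}x):x\in\In_w X\}$, intersecting with $Z$ and projecting yields exactly $\In_w X/\exp(H^{\perp})$. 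Your affine-chart calculation is, morally, computing the fiber of $\EE$ over $z_w$ and correctly recovers $\In_w X$ there --- but without the $\EE\cap Z$ device there is no bridge back to $\XX_0$.
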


This proof is closely modeled on an argument of Tevelev, see \cite{Tev}. 

\begin{proof}

By our construction of $(M,v)$, there is a map $\T(M,v) \to \Hilb(\PP^n, \OOO)$. Let $\tilde{\EE}$ be the pull back to $\T(M,v)$ of the universal family over $\Hilb(\PP^n, \OOO)$ and let $\EE$ be the intersection of $\tilde{\EE}$ with the open torus in $\PP^n$.  We will write $\Pi$ for the torus bundle over $\T(M,v)$ in which $\EE$ lives. Let $Z$ be the section of $\Pi \to \T(M,v)$ given by the point $(1:\ldots:1)$ in each fiber. (Note that this fiber is sometimes $(\KKK^*)^n$ and sometimes $(\kk^*)^n$.) As in \cite{Tev}, we see that the closure of $X$ in $\T(M,v)$ is the projection to $\T(M,v)$ of $Z \cap \EE$.  

Consider $t^w$ as a point in $(\KK^*)^{n}$. Let $z_w$ be the limit of $t^w$ over the zero fiber of $\T(M,v)$. (\emph{I.e.} take the Zariski closure of the point $t^w$ in $\T(M,v)$ and intersect it with the zero fiber.) The fiber of $\EE$ over $z_w$ is $\In_w X$. By Proposition~\ref{twLandsWhere}, $z_w$ is in the interior of $\Toric(F)$. 

Now, let us consider what $\Pi$, $\EE$ and $Z$ look like over the interior of $\Toric(F)$. All statements in this paragraph are implicitly limited to the parts of our families that live over the interior of $\Toric(F)$. The interior of $\Toric(F)$ is the torus $\TT(\kk, \Lambda)/\exp(H^{\perp})$. The torus bundle $\Pi$ is isomorphic to $\left( \TT(\kk, \Lambda)/\exp(H^{\perp}) \right) \times \TT(\kk, \Lambda) $. The torus $\TT(\kk, \Lambda)$ acts on this product by multiplication by $(u,u^{-1})$, and $\EE$ is taken to itself under this action. Thus, in particular, each fiber of $\EE$ is preserved by $\exp(H^{\perp})$. We have seen in the preceding paragraph that the fiber of $\EE$ over $z_w$ is $\In_w X$, so we see that $\In_w X$ is preserved by $\exp(H^{\perp})$. Moreover, since this torus action takes $\EE$ to itself, we see that, identifying $\Pi$ with $\left( \TT(\kk, \Lambda)/\exp(H^{\perp}) \right) \times \TT(\kk, \Lambda) $, the part of $\EE$ over the interior of $\Toric(F)$ is $\{ ([u], u^{-1} \cdot x): u \in \TT(\kk, \Lambda), x \in \In_w X \}$ where $[u]$ is the class of $u$ in $\TT(\kk, \Lambda)/\exp(H^{\perp})$.  So $\EE \cap Z$ is $\{ ([x], (1:\ldots:1)) : x \in \In_w X \}$ and the projection of $\EE \cap Z$ onto the interior of $\Toric(F)$ is $\In_w X/\exp(H^{\perp})$ as desired.
\end{proof}

\begin{cor} \label{GoodSubDiv}
Let $\tilde{\Sigma}$ be the polyhedral subdivision of $\QQ \otimes \Lambda$ for which $w_1$ and $w_2$ are in the relative interior of the same face if $\psi(x)-\langle w_1,x \rangle$ and $\psi(x)-\langle w_2,x \rangle$ are minimized on the same face of $\DD$. Then $\Trop X$ is supported on a subcomplex of $\tilde{\Sigma}$ and the induced subdivision of $\Trop X$ is a good subdivision.
\end{cor}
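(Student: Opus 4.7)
The plan is to reduce both assertions to a single statement: if $w$ and $w'$ lie in the relative interior of the same face of $\tilde{\Sigma}$, then $\In_w X = \In_{w'} X$. Once this is established, membership in $\Trop X$ (via criterion (5) of Proposition~\ref{DefnsOfTrop}, i.e.\ $\In_w X \ne \emptyset$) and the very definition of ``good subdivision'' both depend only on which face of $\tilde{\Sigma}$ contains the point in its relative interior, which yields the two claims.

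First I would make explicit that $\tilde{\Sigma}$ really is a polyhedral subdivision of $\QQ \otimes \Lambda$: the assignment $w \mapsto F(w)$, where $F(w)$ is the face of $\DD(P,v)$ on which $x \mapsto \psi(x) - \langle w, x \rangle$ is minimized, is the standard duality between a regular subdivision and its secondary (normal) polyhedral complex. For each face $F$ of $\DD(P,v)$, the locus $\{w : F(w) = F\}$ is the relative interior of a rational polyhedron, and these polyhedra fit together into a polyhedral complex covering $\QQ \otimes \Lambda$.

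Next I would invoke Proposition~\ref{DegenerationLemma}. Fix a face $\sigma$ of $\tilde{\Sigma}$ and let $F$ be the corresponding face of $\DD(P,v)$, with $H \subset \QQ \otimes \Lambda^{\vee}$ the linear span of $F - F$ and $H^{\perp}$ its annihilator in $\QQ \otimes \Lambda$. For any $w$ in the relative interior of $\sigma$, Proposition~\ref{DegenerationLemma} gives that $\In_w X$ is $\exp(H^{\perp})$-invariant and that
\[
\XX_0 \cap \mathrm{int}(\Toric(F,\kk)) \;=\; \In_w X / \exp(H^{\perp}).
\]
The right-hand side does not depend on the choice of $w$ in the relative interior of $\sigma$, since it is intrinsically attached to the fixed face $F$. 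Hence for $w, w'$ in the relative interior of $\sigma$, the two subschemes $\In_w X$ and $\In_{w'} X$ of $\TT(\kk, \Lambda)$ are both $\exp(H^{\perp})$-invariant and have the same image under the principal $\exp(H^{\perp})$-bundle $\TT(\kk,\Lambda) \to \TT(\kk,\Lambda)/\exp(H^{\perp})$, so $\In_w X = \In_{w'} X$.

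From this the two conclusions follow immediately. On the one hand, whether $\In_w X$ is nonempty depends only on the open face of $\tilde{\Sigma}$ containing $w$, so $\Trop X$ is a union of relatively open faces of $\tilde{\Sigma}$, i.e.\ is supported on a subcomplex. On the other hand, this is exactly the condition that the induced subdivision of $\Trop X$ be good. The only subtle point — and the one I would double-check — is descending equality of quotients back to equality of the $\exp(H^{\perp})$-invariant subschemes upstairs; this is routine because $\exp(H^{\perp})$ acts freely on $\TT(\kk,\Lambda)$ and the quotient morphism is faithfully flat, so invariant closed subschemes are determined by their images.
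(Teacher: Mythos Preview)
Your proof is correct and follows essentially the same approach as the paper: both reduce to the claim that $\In_w X$ depends only on the face of $\tilde{\Sigma}$ containing $w$ in its relative interior, and both extract this from Proposition~\ref{DegenerationLemma}. Your version is in fact more careful than the paper's, which simply asserts ``we showed that $\In_w X$ depends only on the face''; you make explicit the descent step (recovering $\In_w X$ from $\In_w X/\exp(H^{\perp})$ via faithful flatness of the free torus quotient), which the paper leaves implicit.
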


We will write $\Sigma$ for the subcomplex of $\tilde{\Sigma}$ containing $\Trop X$.

\begin{proof}
We showed that $\In_w X$ depends only on the face of $\tilde{\Sigma}$ in whose relative interior $w$ lies. Now, $w \in \Trop X$ if and only if $\In_w X$ is empty. So $\Trop X$ is supported on a subcomplex of $\tilde{\Sigma}$. The fact that $\In_w X$ is constant on the relative interiors of the faces of $\tilde{\Sigma}$ is the definition of a good subdivision.
\end{proof}

We may hence use our notation $Y_{\sigma}$, where $\sigma$ is a face of $\Sigma$. We summarize our accomplishments:

\begin{prop}
Let $X$ be a subvariety of $\TT(\KKK, \Lambda)$. Then, after possibly taking a branched cover of $\OOO$, there is a compactification $\XX_1$ of $X$ (over $\KKK$), a flat projective degeneration (over $\OOO$) of $\XX_1$ to a variety $\XX_0$ (over $\kk$) and a good subdivision $\Sigma$ of $\Trop X$ such that $\XX_0=\bigcup_{\sigma \in \Sigma} Y_{\sigma}$, with the $Y_{\sigma}$ disjoint inside $\XX_0$ and $Y_{\sigma_1}$ in the closure of $Y_{\sigma_2}$ only if $\sigma_2 \subseteq \sigma_1$. 
\end{prop}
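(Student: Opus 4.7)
The plan is to assemble the preceding constructions into a single coherent statement. First I would apply Proposition~\ref{HilbProj} to $X$ (after choosing an isomorphism $\Lambda \cong \ZZ^n$) to obtain data $M = \{\chi_1, \ldots, \chi_N\} \subset \Lambda^\vee$ and integers $v_1, \ldots, v_N$ whose associated map $i_{M,v}$ extends to a projective embedding of the orbit closure $T$ into $\PP^{N-1}_{\OOO}$. Set $P = \conv(M)$, let $\psi : P \to \RR$ be the piecewise linear function defined above, and pass to a branched cover of $\OOO$ so that $\psi$ is integral. This gives the toric degeneration $\T(M,v) \to \Spec \OOO$, whose generic fiber is $\Toric(P, \KKK)$ and whose special fiber is the union $\bigcup_F \Toric(F, \kk)$ of toric varieties glued according to the regular subdivision $\DD = \DD(P,v)$.

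Next I would take $\XX$ to be the scheme-theoretic closure of $X \subset \TT(\KKK, \Lambda) \subset \T(M,v)$ inside $\T(M,v)$, and set $\XX_1 := \XX \times_{\OOO} \KKK$ and $\XX_0 := \XX \times_{\OOO} \kk$. Since $\OOO$ is a DVR, the subscheme $\XX$ is automatically flat over $\OOO$ (the closure of a subscheme of the generic fiber is torsion-free), and projectivity is inherited from $\T(M,v)$. Thus $\XX_1$ is a projective compactification of $X$ and $\XX_0$ is a flat projective degeneration of $\XX_1$.

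For the subdivision and the decomposition, I would invoke Corollary~\ref{GoodSubDiv} to obtain the good subdivision $\Sigma$ of $\Trop X$; by construction a face $\sigma$ of the dual subdivision $\tilde{\Sigma}$ of $\QQ \otimes \Lambda$ lies in $\Sigma$ exactly when the corresponding face $F_\sigma$ of $\DD$ satisfies $\XX_0 \cap \mathring{\Toric}(F_\sigma, \kk) \neq \emptyset$. For any such $\sigma$, Proposition~\ref{DegenerationLemma} identifies $\XX_0 \cap \mathring{\Toric}(F_\sigma, \kk)$ with $\In_w X / \exp(H^\perp)$, which is precisely our definition of $Y_\sigma$. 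Since the interiors $\mathring{\Toric}(F, \kk)$ are pairwise disjoint and partition the special fiber of $\T(M,v)$ set-theoretically, the pieces $Y_\sigma$ are pairwise disjoint inside $\XX_0$ and their union covers $\XX_0$.

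Finally, the closure relation follows from the standard correspondence between the regular subdivision $\DD$ of $P$ and the dual subdivision $\tilde{\Sigma}$ of $\QQ \otimes \Lambda$: a face $F_{\sigma_1}$ of $\DD$ is contained as a face of $F_{\sigma_2}$ if and only if $\sigma_2 \subseteq \sigma_1$, and in the toric stratification of $\T(M,v) \times_{\OOO} \kk$ one has $\overline{\mathring{\Toric}(F_{\sigma_2}, \kk)} \supseteq \mathring{\Toric}(F_{\sigma_1}, \kk)$ precisely in that case. Intersecting with $\XX_0$ gives the required implication that $Y_{\sigma_1}$ can lie in the closure of $Y_{\sigma_2}$ only when $\sigma_2 \subseteq \sigma_1$. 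The main nuisance I expect is purely bookkeeping the duality between $\DD$ and $\tilde{\Sigma}$; the real geometric content has already been extracted in Proposition~\ref{DegenerationLemma} and Corollary~\ref{GoodSubDiv}, so no further work should be needed beyond collecting those statements.
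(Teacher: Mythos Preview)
Your proposal is correct and matches the paper's approach exactly: the paper presents this proposition explicitly as a summary (``We summarize our accomplishments'') of Proposition~\ref{HilbProj}, the construction of $\T(M,v)$, Proposition~\ref{DegenerationLemma}, and Corollary~\ref{GoodSubDiv}, and gives no separate proof. Your reading that the geometric content has already been extracted and only bookkeeping remains is precisely the paper's own stance.
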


We are now closing in on the proof of Theorem~\ref{GenusBound}, in the case where $\In_w C$ is reduced for every vertex $w$ of $\Trop C$. Let $C_0$ be the degeneration of $C$ described above. The idea is that, since $C_0$ is a degeneration of a genus $g$ curve, its components can't form a graph of first betti number more than $g$. We have built $C_0$ so that the structure of $\Trop C$ more or less captures the structure of the components of $C_0$. There are two main things that can go wrong. First, if $\sigma$ is a vertex of $\Trop C$, it is possible that $Y_{\sigma}$ is disconnected and should thus correspond to multiple vertices of $\Gamma$. We will deal with this by defining our graph $\Gamma$ appropriately. Secondly, if $\tau$ is an edge of $\Trop C$, we might worry that the local structure of $C_0$ near $Y_{\tau}$ looks like two crossing lines with some extra non-reduced structure at the crossing point, in which case it shouldn't correspond to an edge of $\Gamma$. To deal with the latter obstacle, we need the following lemma. The proof was suggested to me by Paul Hacking.
 
 \begin{lemma} \label{ExactComplex}
 Let $X$, $\Sigma$, $\XX_1$ and $\XX_0$ continue to have the above meanings. For $\sigma$ a face of  $\Sigma$, let $Z_{\sigma}$ be the scheme theoretic closure of $Y_{\sigma}$. Then the complex of sheaves
 $$0 \to \OO_{\XX_0} \to \bigoplus_{\dim \sigma=0} \OO_{Z_{\sigma}} \to \bigoplus_{\dim \sigma=1} \OO_{Z_{\sigma}} \to \cdots \to \bigoplus_{\dim \sigma=\dim X} \OO_{Z_{\sigma}}\to 0$$
 is exact. Here the maps are the obvious restriction maps from $\OO_{Z_{\sigma}}$ to $\OO_{Z_{\tau}}$ whenever $\sigma$ is a facet of $\tau$, sign twisted in the usual fashion.
 \end{lemma}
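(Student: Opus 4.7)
The plan is to prove exactness by reducing to a local stalk computation and then invoking a standard \v{C}ech--Mayer--Vietoris argument associated to a contractible nerve. Since exactness of a complex of sheaves can be checked on stalks, I would fix a point $p \in \XX_0$ and identify which strata contain $p$. Let $\sigma_0$ be the unique face of $\Sigma$ with $p \in Y_{\sigma_0}$; uniqueness comes from the fact that the $Y_\sigma$ are disjoint inside $\XX_0$. The stratification hypothesis that $Y_{\sigma_1}$ lies in the closure of $Y_{\sigma_2}$ only when $\sigma_2 \subseteq \sigma_1$ then implies that $p \in Z_\sigma$ if and only if $\sigma \subseteq \sigma_0$. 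Consequently, the stalk complex at $p$ is indexed by the face poset of the polytope $\sigma_0$, and only the terms $\OO_{Z_\sigma,p}$ with $\sigma \subseteq \sigma_0$ contribute.

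Next I would establish the key scheme-theoretic intersection formula: locally at $p$, for every subset $S$ of vertices of $\sigma_0$, the scheme-theoretic intersection $\bigcap_{v \in S} Z_v$ equals $Z_{\sigma_S}$, where $\sigma_S$ is the smallest face of $\sigma_0$ containing $S$. (Such a smallest face always exists because the face lattice of a polytope is a lattice.) The geometric source of this is Proposition~\ref{DegenerationLemma}: locally $\XX_0$ embeds into the special fiber of $\T(M,v)$, whose stratification by toric strata $\Toric(F,\kk)$ has completely transparent intersection behavior indexed by the poset of faces of $\DD(P,v)$. Pulling back to $\XX_0$, and using reducedness of the strata $Y_\sigma$ (which the hypothesis of Theorem~\ref{NecessaryConditions} in the reduced case gives us at vertices and, for curves, propagates to all $w$ by taking further initial ideals), preserves this clean intersection behavior.

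With the intersection formula in hand, the stalk complex at $p$ is precisely the \v{C}ech--Mayer--Vietoris complex associated to the finite closed cover of $\XX_0$ near $p$ by its irreducible components $\{Z_v : v \text{ a vertex of }\sigma_0\}$. Standard homological algebra then gives exactness: the \v{C}ech complex of a closed cover is acyclic whenever the nerve of the cover is contractible, and here the nerve is the full simplex on the vertices of $\sigma_0$ (every subset of vertices is contained in the common face $\sigma_0$), which is certainly contractible. This yields exactness of the complex of stalks at every $p$, and hence of the complex of sheaves.

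The main obstacle is the intersection formula of the second paragraph. In general, scheme-theoretic intersections of irreducible components can acquire unexpected embedded or nilpotent structure, and any such extra structure would break exactness at some spot (most visibly at the first differential $\OO_{\XX_0} \to \bigoplus_{\dim\sigma=0}\OO_{Z_\sigma}$). Controlling this is exactly where reducedness and the toric local model of $\T(M,v)$ have to be combined carefully: the intersection in $\T(M,v) \times_{\OOO}\Spec \kk$ of toric strata is always another toric stratum, and one must verify that pulling back to $\XX_0$ neither creates extra components nor introduces embedded primes. The remainder of the proof would be a routine bookkeeping exercise once this local toric description of the $Z_\sigma$ is in place.
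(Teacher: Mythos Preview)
Your approach is different from the paper's, and it contains a genuine gap. The key misstep is the sentence ``the stalk complex at $p$ is precisely the \v{C}ech--Mayer--Vietoris complex associated to the finite closed cover of $\XX_0$ near $p$ by its irreducible components.'' This identification is only correct when $\sigma_0$ is a \emph{simplex}. In general $\Sigma$ is a polyhedral (not simplicial) complex --- it arises from a regular subdivision $\DD(P,v)$ --- so $\sigma_0$ may, for instance, be a square. In that case the lemma's complex at $p$ has four terms in degree $1$ (the four edges of the square), whereas the \v{C}ech complex for the cover by the four $Z_{v_i}$ has six terms in degree $1$ (the $\binom{4}{2}$ pairwise intersections, two of which your intersection formula would identify with $Z_{\sigma_0}$ itself, not with any edge). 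Proving the \v{C}ech complex exact does not immediately yield exactness of the face-indexed complex that the lemma actually asserts; bridging the two requires an additional argument (a comparison spectral sequence, or a reduction to the simplicial case) that you have not supplied. For the curve case $\dim X=1$ every $\sigma_0$ is a vertex or an edge, hence a simplex, so your argument happens to go through there; but the lemma is stated and used for arbitrary $X$.

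The paper avoids this difficulty by never localizing at a point of $\XX_0$. Instead it observes that the analogous face-indexed complex for the toric special fibre $T_0=\T(M,v)\times_{\Spec\OOO}\Spec\kk$ and its toric strata $T(\sigma)$ is exact (a direct combinatorial check, done monomial by monomial in the torus grading), then \emph{pulls back} along the flat map $\EE\to T_0$ coming from the universal family, intersects with the transverse section $Z$, and pushes forward isomorphically to $\XX_0$. Flatness preserves exactness under pullback and transversality preserves it under intersection, so the face-indexed complex stays exact throughout; no reducedness hypothesis is needed. This ``transfer from the toric model'' is the idea you gesture at when you invoke Proposition~\ref{DegenerationLemma}, but you use it only to justify the intersection formula rather than to transport the entire exact sequence, and that is where the argument loses traction. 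A secondary point: the paper's proof does not need the reducedness of $\In_w X$ that you invoke, so your version would prove strictly less than what the lemma claims.
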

 
 \begin{proof}
 Recall our notation $\T(M,v)$ for the toric degeneration in which $\XX_0$ sits. Let $T_0$ be the fiber of $\T(M,v)$ over $\Spec \kk$ and let $T(F)$ be the closed face of $T_0$ associated to the face $F$ of $\Sigma$. Then the complex of sheaves
 $$0 \to \OO_{T_0} \to \bigoplus_{\dim \sigma=0} \OO_{T(\sigma)} \to \bigoplus_{\dim \sigma=1} \OO_{T(\sigma)} \to \cdots \to \bigoplus_{\dim \sigma=n} \OO_{T(\sigma)} \to 0$$
 is easily exact. Recall the notations $\Pi$, $\EE$ and $Z$ from the proof of Proposition~\ref{DegenerationLemma}., and write $p$ for the projection $\EE \to \T(M,v)$. Let $\EE(\sigma)$ denote the portion of $\EE$ lying over $T(\sigma)$. Then 
 $\EE$ is flat over $T_0$ (it is pulled back from the universal family over the Hilbert scheme) so the complex
 $$0 \to \OO_{\EE(T_0)} \to \bigoplus_{\dim \sigma=0} \OO_{\EE(\sigma)} \to \bigoplus_{\dim \sigma=1} \OO_{\EE(\sigma)} \to \cdots \to \bigoplus_{\dim \sigma=n} \OO_{\EE(\sigma)} \to 0$$
 is exact. But $\EE$ is transverse to $Z$ inside $\Pi$, so the corresponding complex with $\EE(\sigma)$ replaced by $\EE(\sigma) \cap Z$ is exact. As shown in the proof of Proposition~\ref{DegenerationLemma}, $\EE \cap Z$ projects down isomorphically to $\XX_0$. We claim that, similarly, $\EE(\sigma) \cap Z$ projects down isomorphically to $Z_{\sigma}$, so the complexes $\bigoplus_{\dim \sigma=\bullet} \OO_{Z_{\sigma}}$ and $\bigoplus_{\dim \sigma=\bullet} \OO_{\EE(\sigma) \cap T(\sigma)}$ are the same. Since $p(\EE(\sigma) \cap T(\sigma)) \supseteq Y_{\sigma}$ and $p(\EE(\sigma) \cap T(\sigma))$ is closed, we know that $p(\EE(\sigma) \cap Z) \supseteq Z_{\sigma}$. We want to show that if, on some open set $U$, a function $f$ vanishes on $U \cap Y_{\sigma}$ then it vanishes on $p(\EE(\sigma) \cap Z) \cap U$. 

Write $\EE^{\circ}(\sigma)$ for the part of $\EE$ over the interior of $T(\sigma)$. Since $\EE$ is flat over $T(\sigma)$, we know that, as schemes, $\EE(\sigma)$ is the closure of $\EE^{\circ}(\sigma)$. By the transversality result, $\EE(\sigma) \cap Z$ is the closure of $\EE^{\circ}(\sigma) \cap Z$. With $f$ and $U$ as in the previous paragraph, since $p: \EE^{\circ}(\sigma) \cap Z \to Y_{\sigma}$ is an isomorphism, we know that $p^* f$ vanishes on $\EE^{\circ}(\sigma) \cap Z \cap p^{-1}(U)$. Then $p^* f$ also vanishes on $\EE(\sigma) \cap Z \cap p^{-1}(U)$ and hence, using that $p$ is an isomorphism once more, $f$ vanishes on $p(\EE(\sigma) \cap Z) \cap U$. 
 \end{proof}

We also need the following lemma:

\begin{prop} \label{ReducedClosure}
In the above notation, assume that $Y_{\sigma}$ is reduced. Then $\overline{Y_{\sigma}}$ is also reduced.
\end{prop}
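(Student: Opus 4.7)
The plan is to observe that this is an instance of a completely general fact: the scheme-theoretic closure of a reduced locally closed subscheme is reduced, with no use of the toric structure of $\T(M,v)$ or the dimension-theoretic content of the preceding propositions.

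First I would note that $Y_\sigma = \XX_0 \cap \mathrm{int}(\Toric(F_\sigma,\kk))$ is a locally closed subscheme of $\XX_0$, and is in particular quasi-compact (being of finite type over $\kk$). Hence the inclusion $j : Y_\sigma \hookrightarrow \XX_0$ is a quasi-compact immersion, which guarantees that the pushforward $j_* \OO_{Y_\sigma}$ is quasi-coherent and that the scheme-theoretic closure $\overline{Y_\sigma}$ is cut out by the quasi-coherent ideal sheaf
$$I := \ker\bigl(\OO_{\XX_0} \to j_* \OO_{Y_\sigma}\bigr).$$
In particular $\OO_{\overline{Y_\sigma}} = \OO_{\XX_0}/I$ injects into $j_* \OO_{Y_\sigma}$.

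Next I would argue that $j_* \OO_{Y_\sigma}$ is a sheaf of reduced rings. For any open $V \subseteq \XX_0$ one has $(j_* \OO_{Y_\sigma})(V) = \OO_{Y_\sigma}(j^{-1}V)$. The ring of sections of the structure sheaf of any reduced scheme embeds into the product of its stalks, because a section is determined by its germs; and each stalk of $\OO_{Y_\sigma}$ is reduced by the hypothesis that $Y_\sigma$ is reduced. Therefore $(j_* \OO_{Y_\sigma})(V)$ is a subring of a product of reduced rings, hence reduced. It follows that $\OO_{\overline{Y_\sigma}}$, being a subsheaf of $j_* \OO_{Y_\sigma}$, consists of reduced rings as well.

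There is essentially no obstacle here; the only point that needs verification is the identification of the scheme-theoretic closure with the vanishing scheme of $\ker(\OO_{\XX_0} \to j_* \OO_{Y_\sigma})$, which is standard once one knows the immersion is quasi-compact. Neither the flatness of $\XX$ over $\OOO$, the purity of the strata $Y_\tau$, nor the particular form of the torus quotient $\In_w X / \exp(H^\perp)$ are used — the statement is purely formal reducedness lore applied in the context of the preceding propositions.
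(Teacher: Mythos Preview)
Your proof is correct and follows essentially the same idea as the paper's: the scheme-theoretic closure has structure sheaf injecting into that of $Y_\sigma$, and a subring of a reduced ring is reduced. The paper phrases this affine-locally (pulling back the kernel of $S^{-1}A \to D$ to $A$ and noting a subring of a reduced ring is reduced), while you give the global sheaf-theoretic version with explicit attention to quasi-compactness of the immersion; both amount to the same formal observation, and you are right that none of the ambient toric or flatness structure is used.
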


 \begin{proof}
 In general, if $U$ is an open subscheme of some variety $V$, and $W$ is a closed subscheme of $U$, then $\overline{W}$ is reduced. This is a local fact, so we can check the corresponding affine property: Let $A$ be a ring, $S$ a multiplicative subset of $A$, and $D$ a domain with $\pi:S^{-1} A \to D$ a surjection. Then we are to show that the pullback of $\ker pi$ to $A$ is a prime ideal or, in other words, the image of $A$ under $A \to S^{-1} A \to D$ is a domain.  A subring of a domain is always a domain.
 \end{proof}
   
 Now, finally, we can prove Theorem~\ref{GenusBound}.

 \begin{proof}[Proof of Theorem~\ref{GenusBound}]
 Let $C$ be a curve in $\T(\KK, \Lambda)$ and let, $\Sigma$, $C_1$ and $C_0$ be as above. Then $\Trop C$ is one dimensional. $\Trop C$ inherits the structure of a one dimensional polyhedral complex, that is to say, a graph, from $\Sigma$.  Define a graph $\Gamma$ as follows: for each vertex $\sigma$ of $\Trop C$, let $Y_{\sigma}^1$, \dots, $Y_{\sigma}^{r_{\sigma}}$ be the connected components of the closure of $Y_{\sigma}$. There will be a vertex $[Y_{\sigma}^i]$ of $\Gamma$ for each $Y_{\sigma}^i$, as $\sigma$ ranges over all the vertices of $\Trop C$. 
 
 Now, consider any edge $\tau$ of $\Trop C$. Let $Y_{\tau}^1$, \ldots, $Y_{\tau}^{r_{\tau}}$ be the connected components of $Y_{\tau}$. If $Y_{\tau}^j \subset Y_{\sigma}^{i}$ for some $(\sigma, i)$ then we must have $\sigma \subset \tau$, \emph{i.e.}, $\sigma$ must be one of the two end points of $\tau$. Moreover, if $\sigma$ is one of the endpoints of $\tau$, then there is clearly at most one $i$ such that $Y_{\tau}^j \subset Y_{\sigma}^i$ because we defined $Y_{\sigma}^i$ to be the connected components of $\overline{Y_{\sigma}^i}$. In fact, we claim that there is such a $Y_{\sigma}^i$. Let $u$ be the direction of the edge $\tau$ pointing away from $\sigma$, then $Y_{\tau} \times \kk^* \isomorph \In_u Y_{\sigma}$. So every point of $Y_{\tau}$ is the limit of some path in $Y_{\sigma}$. There is an edge of $\Gamma$ for each connected component $Y_{\tau}^j$ of a $Y_{\tau}$, as $\tau$ ranges over the edges of $\Trop X$. This edge is denoted $[Y_{\tau}^j]$. The endpoints of the edge $[Y_{\tau}^j]$ are $[Y_{\sigma_1}^{i_1}]$ and $[Y_{\sigma_2}^{i_2}]$ where $\sigma_1$ and $\sigma_2$ are the endpoints of $\tau$ and $Y_{\sigma_r}^{i_r}$ is the connected component of $\overline{Y_{\sigma_r}}$ containing $Y_{\tau}^j$.
 
 We now define the weighting on the edges of $\Gamma$. The edge $[Y_{\tau}^j]$ is given weight $\ell(Y_{\tau}^j)$, the length of the zero dimensional scheme $Y_{\tau}^j$. Next, we describe the map $\iota$ from $\Gamma$ to $\Trop C$. We map $[Y_{\sigma}^i]$ to $\sigma$ and we map $[Y_{\tau}^j]$ to $\tau$. 
 
 It is transparent that $\Trop C$ is the image of $\iota$. The condition on weights holds because $\ell(Y_{\tau})=\sum_{j=1}^{r_j} \ell(Y_{\tau}^j)$. The zero tension condition holds at vertices of $\Gamma$ because, for every vertex $[Y_{\sigma}^i]$, $\Trop Y_{\sigma}^i$ is simply given by the rays in the direction of $\tau$ for those edges $\tau$ such that $Y_{\tau}$ is in $Y_{\sigma}^i$, so the zero tension condition for $\Gamma$ holds by the zero tension condition applied to the $Y_{\sigma}^i$. 
 
Finally, (using Proposition~\ref{ReducedClosure}), each $Y_{\sigma}^{i}$ is a reduced, connected, projective curve. So the holomorphic Euler characteristic of each $Y^{i}_{\sigma}$ is at most $1$. On the other hand, each $Y^{j}_{\tau}$ is zero dimensional. So the holomorphic Euler characteristic of each $Y^{j}_{\tau}$ is at least $1$. Now, by Proposition~\ref{ExactComplex}, $\chi(C_0)=\sum \chi(\OO_{Y_{\sigma}^{i}}) - \sum \chi(\OO_{Y_{\tau}^{j}})$. So $\chi(C_0)$ is at most the difference between the number of vertices of $\Gamma$ and the number of edges, \emph{i.e.}, the Euler characteristic of $\Gamma$. But $C_0$ is a flat degeneration of $C$, which is a curve of genus $g$, so $\chi(C_0) =1-g$. We deduce that $\chi(\Gamma) \geq 1-g$ and hence $b_1(\Gamma)$ is at most $g$. If any of the components $Y_{\sigma}$ is not rational, then $b_1(\Gamma)$ is strictly less then $g$.
 \end{proof}

We see that, if any of the components of $C_0$ are not rational, or if any of them meet along singularities other than nodes, we can take $\Gamma$ to have first Betti number less than $g$. A similar situation occurs for the zero tension curve constructed in the proof of Siebert and Nishinou, where $C_0$ is replaced by the stable limit of $C$. We would like to use this observation to prove that the stable limit of $C$ consists only of rational components in certain cases. Unfortunately, it is possible that $C_0$ does not have this form but that there is some other $(\iota', \Gamma',m')$ for which $b_1(\Gamma')=g$. However, when $(\phi,C)$ is trivalent, we can rule this possibility out.

\begin{Corollary} \label{TotRat}
Suppose that $(\phi,C)$ is trivalent and that $(\iota, \Gamma, m)$ is a zero tension curve with $\iota(\Gamma)=\Trop \phi(C)$, with $\iota$ injective and such that the genus of $C$ is equal to $b_1(\Gamma)$. Let $\overline{C}$ be the projective curve (over $\KKK$) containing $C$ and let $\{ x_1, \dots, x_N \} = \overline{C} \setminus C$.

Consider $(\overline{C}, \{x_1, \dots, x_N \})$ as a stable curve over $\KKK$ with $N$ marked points and (taking a branched cover if necessary) extend  $(\overline{C}, \{x_1, \dots, x_N \})$ to a flat, proper family over $\Spec \OOO$, of stable curves with marked points. The zero fiber of this family is a union of genus zero curves.
\end{Corollary}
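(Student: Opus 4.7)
The plan is to combine the Tevelev-style degeneration constructed in the proof of Theorem~\ref{GenusBound} with the trivalence hypothesis via Lemma~\ref{trivalent}, and then transfer the resulting conclusion about the Tevelev special fiber to the stable special fiber by standard birational geometry over a discrete valuation ring.

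First I would apply the construction from the proof of Theorem~\ref{GenusBound} to $(\phi,C)$, producing (after possibly replacing $\OOO_v$ by a finite extension) a projective flat degeneration $\XX$ over $\Spec \OOO$ whose generic fiber is a compactification of $C$ and whose special fiber $C_0 = \bigcup_\sigma Z_\sigma$ is indexed by the faces of a good subdivision $\Sigma$ of $\Trop \phi(C)$. The proof of that theorem furnishes a zero tension curve $(\iota_0,\Gamma_0,m_0)$ whose vertices correspond to the connected components $Y_\sigma^i$ of the $Z_\sigma$, whose edges correspond to the components $Y_\tau^j$ of the one-dimensional strata, which satisfies the weight-sum condition $m_e = \sum_{f \in \iota_0^{-1}(e)} m_f$ required by Lemma~\ref{trivalent}, and which obeys $b_1(\Gamma_0) \leq g$ with strict inequality unless every $Y_\sigma^i$ is rational.

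Since $(\phi,C)$ is trivalent and the weight-sum condition holds, Lemma~\ref{trivalent} supplies a subgraph $\Gamma_0' \subseteq \Gamma_0$ which maps isomorphically onto $\Trop \phi(C)$. The hypothesis that $\iota$ is injective identifies the given $\Gamma$ with $\Trop \phi(C)$ as a metric graph, so $b_1(\Trop \phi(C)) = b_1(\Gamma) = g$. Consequently
\[
g \;=\; b_1(\Gamma_0') \;\leq\; b_1(\Gamma_0) \;\leq\; g,
\]
and equality holds throughout. Tracing back through the proof of Theorem~\ref{GenusBound}, this equality forces $\chi(\OO_{Y_\sigma^i})=1$ for every vertex component; being a reduced, connected, projective curve of arithmetic genus zero, each $Y_\sigma^i$ is rational. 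Thus every irreducible component of the Tevelev special fiber $C_0$ is rational.

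Finally I would transfer this conclusion from $\XX$ to the stable family $\overline{\mathcal{C}}$. Both arise by base change from proper flat families over a common discrete valuation ring; their generic fibers are birational, since the normalization of the generic fiber of $\XX$ is $\overline{C}$. Taking a proper common resolution $\widetilde{\mathcal{C}}$ dominating both the normalization of $\XX$ and $\overline{\mathcal{C}}$ over this DVR, its special fiber is obtained from the normalization of $C_0$ by a sequence of blow-ups at closed points: normalization preserves rationality of components, and blow-ups on a regular arithmetic surface introduce only $\PP^1$'s as exceptional divisors, so every component of $\widetilde{\mathcal{C}}_0$ is rational. The stable fiber $\overline{\mathcal{C}}_0$ is obtained from $\widetilde{\mathcal{C}}_0$ by contracting unstable rational components; the surviving components are unchanged as abstract curves and hence still rational. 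The main technical obstacle here is the birational surface geometry step; it is resolved by first passing to the original discrete valuation ring $\OOO_v$, where total spaces of these families are two-dimensional and Noetherian and the standard theory of relative models of curves over DVRs applies directly.
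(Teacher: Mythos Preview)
Your approach differs from the paper's, and the final ``transfer'' step contains a genuine gap.

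The paper's proof is a two-line contradiction using the \emph{full} Theorem~\ref{NecessaryConditions} (the Nishinou--Siebert statement, not only the reduced case proved in the appendix): if the stable special fiber has a non-rational component, that theorem produces a zero-tension curve $(\iota',\Gamma',m')$ with $\iota'(\Gamma')=\Trop\phi(C)$ and $b_1(\Gamma')<g$; Lemma~\ref{trivalent} then replaces $\Gamma'$ by a subgraph mapping isomorphically onto $\Trop\phi(C)$, whence $b_1(\Gamma')\ge b_1(\Trop\phi(C))=b_1(\Gamma)=g$, a contradiction. Since the Nishinou--Siebert statement is already about the stable model, no transfer between models is needed.

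Your route via the appendix Tevelev degeneration breaks at step~5. You assert that a common regular model $\widetilde{\mathcal C}$ has special fiber obtained from the normalization of $C_0$ by blow-ups at closed points, which introduce only $\PP^1$'s. But $\widetilde{\mathcal C}\to\XX^{\mathrm{norm}}$ is a resolution of a normal, generally \emph{singular}, surface, and exceptional divisors of normal surface singularities need not be rational (the cone over a positive-genus curve is the standard counterexample). Concretely, a genus one curve with potentially good reduction admits a proper flat model over a DVR whose special fiber is a cuspidal rational cubic, while its stable special fiber after base change is a smooth elliptic curve; so ``all components of some model's special fiber are rational'' does \emph{not} by itself force the stable special fiber to be rational. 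Your argument, read literally, would prove that it does. Repairing this would require specific control over the singularities of the Tevelev total space, which you do not establish; the clean way around it is exactly the paper's move of invoking Theorem~\ref{NecessaryConditions} for the stable model directly.

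There is also a smaller gap earlier: the appendix Theorem~\ref{GenusBound} you invoke carries the standing hypothesis that $\In_w C$ is reduced at every vertex $w$. Trivalence forces all edge multiplicities to be $1$, hence generic reducedness of each $\In_w C$, but it does not obviously exclude embedded points; without reducedness the inequality $\chi(\OO_{Y_\sigma^i})\le 1$ used in that proof can fail.
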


\begin{proof}
If not, then there is some other zero tension curve $(\iota', \Gamma', m')$ for $\phi(X)$ with $b_1(\Gamma')<g$, as discussed above. But then, by Lemma~\ref{trivalent}, we may make $\iota'$ injective by restricting to a subgraph of $\Gamma'$; this can only lower $b_1(\Gamma')$. So we may assume that $\iota'$, like $\iota$ is injective, and thus $\Gamma \isomorph \Gamma'$.  But $b_1(\Gamma)=g > b_1(\Gamma')$, a contradiction.
\end{proof}

\thebibliography{99}

\bibitem{BG} R.~Bieri and J.R.J.~Groves:
``The geometry of the set of characters induced by valuations''
\emph{J.~reine und angewandte Mathematik}
 {\bf 347} (1984) 168--195.

\bibitem{CTV}
T. Bogart, A. Jensen, D. Speyer, B. Sturmfels and R. Thomas: ``Computing Tropical Varieties" \emph{Journal of Symbolic Computation}  \textbf{42}, no. 1-2  (2007) 54--73.

\bibitem{EKL}  M.~Einsiedler, M.~Kapranov and D.~Lind: ``Non-Archimedean amoebas and tropical 
varieties'', \emph{J.~reine und angewandte Mathematik} \textbf{601} (2006) 139--157.

\bibitem{Eis}
D. Eisenbud: \emph{Commutative Algebra with a View Toward Algebraic Geometry} Springer-Verlag 1995

\bibitem{Fult}
W. Fulton, \emph{Introduction to Toric Varieties} Princeton University Press (1993)

\bibitem{FultSturm} W. Fulton and B. Sturmfels: ``Intersection theory on toric varieties" \emph{Topology} \textbf{36} (1997), no. 2, 335--353

\bibitem{GKM}
A. Gathmann, M. Kerber and H. Markwig, ``Tropical fans and the moduli spaces of tropical curves'' \texttt{arXiv:arXiv:0708.2268}

\bibitem{GM}
A. Gathmann and H. Markwig, ``The Caporaso-Harris formula and plane relative Gromov-Witten invariants in tropical geometry'' \emph{Math. Ann.} \textbf{338} (2007), 845--868

\bibitem{Gotz}
G. Gotzmann: ``Eine Bedingung f\"ur die Flachheit und das Hilbertpolynom eines graduierten Ringes"
\emph{Mathematische Zeitschrift} \textbf{158} (1978) 61--70

\bibitem{KS}
M. Kalkbrener and B. Sturmfels: ``Initial Complexes of Prime Ideals''   \emph{Advances in Mathematics} \textbf{116} (1995) no.2 365--376

\bibitem{KKM}
E. Katz, M. Kerber and H. Markwig: ``The $j$-invariant of a plane tropical cubic" Preprint \texttt{arXiv:math/0709.3785}

\bibitem{jInv}
M. Kerber and H. Markwig: ``Counting tropical elliptic plane curves with fixed $j$-invariant" Preprint \texttt{arXiv:math/0608472}

\bibitem{Mac}
D. Maclagan, ``Antichains of Monomial Ideals are Finite'' \emph{Proceedings of the American Mathematical Society} \textbf{129} (2001) 1609--1615

\bibitem{Mikh1} G.~Mikhalkin, 
``Enumerative Tropical Geometry in $\RR^2$'' JAMS, to appear \texttt{arXiv:math.AG/0312530}

\bibitem{Mikh2} G. Mikhalkin, ``Tropical Geometry and its Applications", Proceedings of the ICM 2006 Madrid, Spain, 827-852

\bibitem{Mikh3}
G. Mikhalkin, ``moduli spaces of rational tropical curves", to appear in the proceedings of the 2006 G\"okova Geometry-Topology Conference. \texttt{ http://arxiv.org/abs/0704.0839}

\bibitem{MS}
J. Morgan and P. Shalen: ``Valuations, trees, and degenerations of hyperbolic structures I''  \emph{Ann. of Math. (2)}  \textbf{120}  (1984) no. 3 401--476

\bibitem{SiebNish}
T. Nishinou and B. Siebert: ``Toric degenerations of toric varieties and tropical curves'' \texttt{arXiv:math.AG/0409060}

\bibitem{Payne}
S. Payne, ``Fibers of Tropicalization'', \texttt{arXiv:0705.1732}

\bibitem{Roq}
P. Roquette
\emph{Analytic theory of elliptic functions over local fields} 
Vandenhoeck and Ruprecht 1970

\bibitem{ST}
Shustin, E. and Tyomkin, I.
``Patchworking singular algebraic curves. I.''
Israel J. Math. \textbf{151} (2006), 125--144.

\bibitem{Silv} J. Silverman: \emph{Advanced topics in the theory
of elliptic curves} Grad. Texts in Math. vol. 151, Springer-Verlag 1994

\bibitem{Thesis}
D. Speyer ``Tropical Geometry'', PhD thesis, UC Berkeley, 2005

\bibitem{SpeySturm} D. Speyer and B. Sturmrfels, ``The Tropical Grassmannian'',
\emph{Adv. Geom.} \textbf{4} (2004), no. 3, 389--411. 

\bibitem{GB+CP} B.~Sturmfels: \emph{Gr\"obner Bases and Convex Polytopes},
University Lecture Series {\bf 8}, American Mathematical Society, 1996

\bibitem{SPE} B. Sturmfels: \emph{Solving Systems of Polynomial Equations} Amer.Math.Soc., CBMS Regional Conferences Series, No 97 (2002)

\bibitem{GBTV}
B.~Sturmfels: ``Gr\"obner bases of toric varieties" \emph{Tohoku Math. J. (2)} \textbf{43} (1991), no. 2, 249--261.

\bibitem{Implicit1}
B.~Sturmfels and J.~Yu: ``Tropical Implicitization and Mixed Fiber Polytopes" preprint \texttt{arxiv:0706.0564}

\bibitem{Implicit2}
B.~Sturmfels, J. Tevelev and J.~Yu: ``The Newton Polytope of the Implicit Equation" preprint \texttt{arXiv:math/0607368}

\bibitem{Tev} J. Tevelev: ``Compactifications of Subvarieties of Tori", \emph{Amer. J. Math} \textbf{129} no. 4 (2007) 

\bibitem{Zieg}
G. Ziegler: \emph{Lectures on Polytopes} Graduate Texts in Mathematics
vol. 152, Springer-Verlag: New York 1995

\end{document}